\numberwithin{equation}{subsection}
\numberwithin{table}{subsection}
\newlist{clist}{enumerate}{1}
\setlist*[clist]{label=(\roman*), nosep}
\newenvironment{Ack}%
{\par \vspace{\baselineskip}%
 \noindent \textbf{Acknowledgements.}}%
{\par \vspace{\baselineskip}}
\crefname{thm}{Theorem}{Theorems}
\crefname{dfn}{Definition}{Definitions}
\crefname{prp}{Proposition}{Propositions}
\crefname{lem}{Lemma}{Lemmas}
\crefname{cor}{Corollary}{Corollaries}
\crefname{fct}{Fact}{Facts}
\crefname{rmk}{Remark}{Remarks}
\crefname{eg}{Example}{Examples}
\crefname{mth}{Theorem}{Theorem}
\crefname{figure}{Figure}{Figures}
\crefname{table}{Table}{Tables}
\crefname{section}{\S\!}{\S\S\!}
\crefname{subsection}{\S\!}{\S\S\!}
\crefname{subsubsection}{\S\!}{\S\S\!}
\crefname{appendix}{\S}{\S\S\!}
\crefname{equation}{equation}{equations}
\theoremstyle{definition}
\newtheorem{thm}{Theorem}[subsection]
\newtheorem{dfn}[thm]{Definition}
\newtheorem{prp}[thm]{Proposition}
\newtheorem{lem}[thm]{Lemma}
\newtheorem{cor}[thm]{Corollary}
\newtheorem{fct}[thm]{Fact}
\newtheorem{rmk}[thm]{Remark}
\newtheorem{eg}[thm]{Example}
\newcommand{\ol}{\overline}
\newcommand{\wh}{\widehat}
\newcommand{\wt}{\widetilde}
\newcommand{\ve}{\varepsilon}
\newcommand{\hf}{\frac{1}{2}} 
\newcommand{\thf}{\tfrac{1}{2}} 
\newcommand{\ceq}{\coloneqq} %using mathtool package
\newcommand{\totimes}{\mathbin{\widetilde{\otimes}}}
\newcommand{\xr}{\xrightarrow}
\newcommand{\xrr}[1]{\xrightarrow{\, #1 \, }}
\newcommand{\inj}{\hookrightarrow}
\newcommand{\lto}{\longrightarrow}
\newcommand{\mto}{\mapsto}
\newcommand{\lmto}{\longmapsto}
\newcommand{\sto}{\xr{\sim}}
\newcommand{\lsto}{\xrr{\sim}}
\newcommand{\lrto}{\leftrightarrow}
\newcommand{\op}{\textup{op}}
\newcommand{\cop}{\textup{cop}}
\newcommand{\tor}{\textup{tor}}
\newcommand{\tprd}{{\textstyle \prod}}
\newcommand{\bbC}{\mathbb{C}}
\newcommand{\bbF}{\mathbb{F}}
\newcommand{\bbN}{\mathbb{N}}
\newcommand{\bbQ}{\mathbb{Q}}
\newcommand{\bbZ}{\mathbb{Z}}
\newcommand{\frg}{\mathfrak{g}}
\newcommand{\frS}{\mathfrak{S}}
\newcommand{\clA}{\mathcal{A}}
\newcommand{\clP}{\mathcal{P}}
\newcommand{\clQ}{\mathcal{Q}}
\newcommand{\fgl}{\mathfrak{gl}}
\newcommand{\fsl}{\mathfrak{sl}}
\newcommand{\frh}{\mathfrak{h}}
\newcommand{\wtH}{\wt{H}}
\newcommand{\wtgm}{\wt{\gamma}}
\newcommand{\abs}[1]{\left| #1 \right|}
\newcommand{\dbr}[1]{\llbracket #1 \rrbracket} %using stmaryrd package
\newcommand{\dpr}[1]{(\!( #1 )\!)}
\newcommand{\trs}[1]{{}^t \! #1}
\newcommand{\ipd}[2]{\left( #1 \, | \, #2 \right)}
\newcommand{\rst}[2]{\left. #1 \right|_{#2}}
\newcommand{\bnm}[3]{\genfrac{[}{]}{0pt}{}{#1}{#2}_{#3}}
\newcommand{\pair}[1]{\langle #1 \rangle}
\DeclareMathOperator{\id}{id}
\DeclareMathOperator{\clM}{\mathcal{M}}
\DeclareMathOperator{\End}{End}
\DeclareMathOperator{\diag}{diag}
\DeclareMathOperator{\Frac}{Frac}
\newenvironment{psm}{\left(\begin{smallmatrix}}{\end{smallmatrix}\right)}
\title{A dynamical analogue of Ding-Iohara quantum algebras}
\author{Masamune Hattori, Shintarou Yanagida}
\date{2022.10.05}
\address{Graduate School of Mathematics, Nagoya University.
 Furocho, Chikusaku, Nagoya, Japan, 464-8602.}
\email{m21039e@math.nagoya-u.ac.jp, yanagida@math.nagoya-u.ac.jp}
\keywords{Hopf algebroids, Ding-Iohara algebras, elliptic quantum groups, Drinfeld comultiplication}
\begin{document}

\begin{abstract}
We introduce a family of dynamical Hopf algebroids $U_{q,p}(g,X_l)$ depending on a complex parameter $q$, a formal parameter $p$, a set $g$ of structure functions satisfying the so-called Ding-Iohara condition, and a finite root system of type $X_l$. If $g$ is set to be certain theta functions, then our family recovers the elliptic algebras $U_{q,p}(\widehat{\mathfrak{g}})$ for untwisted affine Lie algebras $\wh{\frg}$ studied by Konno (1998, 2009), Jimbo-Konno-Odake-Shiraishi (1999) and Farghly-Konno-Oshima (2014). Also, taking the limit $p \to 0$ in the case $X_l=A_l$, we recover the Hopf algebras $U_q(\overline{g},A_l)$ of type $A_l$ with structure functions $\overline{g} := \lim_{p \to 0} g$, introduced by Ding-Iohara (1998) as a generalization of Drinfeld quantum affine algebras. Thus, our Hopf algebroid $U_{q,p}(g,X_l)$ can be regarded as a dynamical analogue of the Ding-Iohara quantum algebras. As a byproduct, we obtain an extension of the Ding-Iohara quantum algebras to those of non-simply-laced type.
\end{abstract}

\maketitle

\setcounter{tocdepth}{2}
\tableofcontents

%%%%%%%%%%%%%%%%%%%%%%%%%%%%%%%%%%%%%%%%%%%%%%%%%%%%%%%%%%%%%%%%%%%%%%%%%%%%%%%%%%%%%%%%%%
%%%%%%%%%%%%%%%%%%%%%%%%%%%%%%%%%%%%%%%%%%%%%%%%%%%%%%%%%%%%%%%%%%%%%%%%%%%%%%%%%%%%%%%%%%
\setcounter{section}{-1}
\section{Introduction}\label{s:0}

%%%%%%%%%%%%%%%%%%%%%%%%%%%%%%%%%%%%%%%%%%%%%%%%%%%%%%%%%%%%%%%%%%%%%%%%%%%%%%%%%%%%%%%%%%
\subsection{Motivation and backgrounds}

The motivation of this note is to investigate the Hopf algebroid structure of elliptic quantum groups and its relation to the Hopf algebra structure of Ding-Iohara quantum algebras. Let us explain briefly these two objects, Ding-Iohara quantum algebras and elliptic quantum groups.

%%%%%%%%%%%%%%%%%%%%%%%%%%%%%%%%%%%%%%%%%%%%%%%%%%%%%%%%%%%%%%%%%%%%%%%%%%%%%%%%%%%%%%%%%%
\subsubsection*{Ding-Iohara quantum algebras}

Ding and Iohara \cite{DI} introduced a family of (topological or formal) Hopf algebras which generalize the Drinfeld realization \cite{D} of the quantum affine algebras $U_q(\wh{\fsl}_{l+1})$ with the Drinfeld comultiplication \cite{DI0}. This Hopf algebra depends on a quantum parameter $q$ and a set $g$ of structure functions satisfying certain conditions. we denote it by $U_q(g,A_l)$ and call it \emph{the Ding-Iohara quantum algebra}. A brief review will be given in \cref{ss:pre:DI}.

Let us briefly recall the algebra structure of the Ding-Iohara quantum algebra $U_q(g,A_l)$. See \cref{dfn:DI:U} for the detail. The algebra $U_q(g,A_l)$ is compactly presented by generating currents 
\[
 e_i(z) \ceq \sum_{n \in \bbZ}e_{i,n}z^{-n}, \quad 
 f_i(z) \ceq \sum_{n \in \bbZ}f_{i,n}z^{-n}, \quad 
 \psi^+_i(z) \ceq \sum_{n \in \bbZ}\psi^+_{i,n} z^{-n}, \quad 
 \psi^-_i(z) \ceq \sum_{n \in \bbZ}\psi^-_{i,n} z^{-n},
\]
for $i=1,\dotsc,l$ and current relations among them. The genuine generators of the algebra are the Fourier modes $e_{i,n},f_{i,n},\psi^\pm_{i,n}$, and the genuine relations are obtained by expanding the current relations. See \cref{rmk:DI:top} for the precise treatment. The current relations involve the structure functions $g=\{g_{ij}(z) \mid i,j =1,\dotsc,l\}$. If we set $g_{ij}(z) \ceq q^{-a_{ij}}(1-q^{a_{ij}}z)/(1-q^{-a_{ij}}z)$ with $(a_{ij})_{i,j=1}^l$ being the Cartan matrix of type $A_l$, then the algebra $U_q(g,A_l)$ recovers the Drinfeld realization \cite{D} of the quantum affine algebra $U_q(\wh{\fsl}_{l+1})$. See \cref{eg:DI:qaff} for the detail.

The algebra structure of $U_q(g,A_l)$ is designed to admit the \emph{Drinfeld-type comultiplication}, which involves infinite-summation of the genuine generators. In the case $g_{ij}(z) = q^{-a_{ij}}(1-q^{a_{ij}}z)/(1-q^{-a_{ij}}z)$, this comultiplication reduces to the Drinfeld comultiplication \cite{DI0} of the quantum affine algebra $U_q(\wh{\fsl}_{l+1})$. To admit such a comultiplication, the structure functions $g$ must satisfy a certain condition, which we call the Ding-Iohara condition. See \eqref{eq:DI:cond0} and \eqref{eq:DI:cond} for the detail.

Let us give one more comment on the algebra structure of $U_q(g,A_l)$, which is the starting point of our study. Among the current relations, we have the \emph{Serre-type relations} \eqref{eq:DI:Serre}. For the generating currents $e_i(z)$ and $e_j(z)$ with $a_{ij}=-1$, the Serre-type relation is given by 
\begin{align*}
   & e_i(z_1) e_i(z_2) e_j(w) - h_{ij}(\tfrac{z_1}{w},\tfrac{z_2}{w}) e_i(z_1) e_j(w) e_i(z_2) 
 +e_j(w) e_i(z_1) e_i(z_2) \\ 
+\, &e_i(z_2) e_i(z_1) e_j(w) - h_{ij}(\tfrac{z_2}{w},\tfrac{z_1}{w}) e_i(z_2) e_j(w) e_i(z_1) 
 +e_j(w) e_i(z_2) e_i(z_1) = 0,
\end{align*}
where $h_{ij}(x,y)$ is given in terms of the structure functions $g_{ij}(z)$ as \eqref{eq:DI:hij}:
\begin{align}\label{eq:0:hij}
 h_{ij}(x,y) \ceq \frac{(g_{ii}(x/y)+1) (g_{ij}(x) g_{ij}(y)+1)}{g_{ij}(y)+g_{ii}(x/y)g_{ij}(x)}.
\end{align}
If the structure functions are set to be $g_{ij}(z) = q^{-a_{ij}}(1-q^{a_{ij}}z)/(1-q^{-a_{ij}}z)$, then the function $h_{ij}$ is equal to the $q$-binomial coefficient $\bnm{2}{1}{q}=(q^2-q^{-2})/(q-q^{-1})$, and the above relation is nothing but the Serre relation of the quantum affine algebra $U_q(\wh{\fsl}_{l+1})$.

As mentioned in the final paragraph of \cite{DI}, the argument carries over the simply-laced types $D_l$ and $E_{6,7,8}$, and we have \emph{the Ding-Iohara quantum algebra $U_q(g,X_l)$ of simply-laced type $X_l$}. At present, this class of quantum algebras appears in various contexts, some of which will be reviewed in \cref{ss:pre:DI}.

%%%%%%%%%%%%%%%%%%%%%%%%%%%%%%%%%%%%%%%%%%%%%%%%%%%%%%%%%%%%%%%%%%%%%%%%%%%%%%%%%%%%%%%%%%
\subsubsection*{Elliptic quantum groups}

Elliptic quantum groups are still under active investigation, and the meaning of the word ``elliptic quantum group'' is less definite than the word ``quantum group'', as far as we understand. Roughly speaking, the notion of elliptic quantum groups refers to an ``elliptic deformation'' or a ``dynamical analogue'' of quantum groups. It originates in the classification of solutions of the Yang-Baxter equation: the rational, trigonometric, and elliptic solutions, and also in the (classical or quantum) dynamical Yang-Baxter equation. We refer to \cite[\S1.1]{Ko} for the history and backgrounds of elliptic quantum groups.

At present, there are several objects in literature which are called elliptic quantum groups. This note focuses on \emph{the elliptic algebra $U_{q,p}(\wh{\frg})$ for the untwisted affine Lie algebra $\wh{\frg}$ of a finite-dimensional simple Lie algebra $\frg$}, which was first introduced by Konno \cite{Ko98} for the case $\frg=\fsl_2$, and by Jimbo-Konno-Odake-Shiraishi \cite[Appendix]{JKOS} for general $\frg$. Here $q$ denotes the quantum parameter, and $p$ denotes the elliptic nome. We will give a review of $U_{q,p}(\wh{\frg})$ in \cref{ss:pre:E}. For a comprehensive review of the relation between the elliptic algebra $U_{q,p}(\wh{\frg})$ and the other ``elliptic quantum groups'', we refer to \cite[\S1.2]{Ko}.

Here we give a brief explanation on the algebra structure of the elliptic algebra $U_{q,p}(\wh{\frg})$. See \cref{dfn:E:U} for the detail. 
Let $\frg$ be a simple Lie algebra of rank $l$, and $\wh{\frg}$ be the untwisted affine Lie algebra of $\frg$.  The algebra $U_{q,p}(\wh{\frg})$ is presented by generators and generating currents
\begin{align}\label{eq:0:Egen}
 \clM(\wtH^*), \ q^{\pm c/2}, \ d, \ K^{\pm}_i, \ 
 e_i(z), \ f_i(z), \ \psi^\pm_i(z)
\end{align}
for $i=1,\dotsc,l$, subject to the relations given in \cref{dfn:E:U}. Here $\clM(\wtH^*)$ denotes the field of meromorphic functions on the affine space $\wtH$ of dimension $2l$, given in \eqref{eq:E:wtH}. There is a distinguished basis \eqref{eq:E:wtHbas} of $\wtH$, and the elements $P_i$ among them are called \emph{the dynamical parameters}. See \cref{sss:E:dyn} for the detail. 

As mentioned in \cite[p.613]{JKOS}, if $\frg$ is of simply-laced type $X_l$, then the subalgebra of $U_q(\wh{\frg})$ generated by the currents $e_i(z),f_i(z),\psi^\pm_i(z)$ is essentially the special case of the Ding-Iohara quantum algebra $U_q(g,X_l)$ with structure function $g$ set to be certain theta functions. Note that we should take care of the Serre-type relations in the case $l \ge 2$.

Although the algebra structure of the elliptic algebra $U_{q,p}(\wh{\frg})$ has a definite sense, the situation seems to be complicated on the coalgebra structure. As far as we understand, there are two coalgebra structures in literature.
\begin{itemize}
\item 
The paper \cite{JKOS} gave the quasi-Hopf formulation of $U_{q,p}(\wh{\frg})$ based on the Drinfeld realization of the quantum affine algebra $U_q(\wh{\frg})$. Hence the quasi-Hopf algebra $U_{q,p}(\wh{\frg})$ has a comultiplication induced by the Drinfeld comultiplication on $U_q(\wh{\frg})$. We call the induced comultiplication $\Delta$ on $U_{q,p}(\wh{\frg})$ the Drinfeld-type comultiplication. Note that this comultiplication $\Delta$ is not coassociative in terms of the ordinary tensor product $\otimes$.
\item
After a decade, another coalgebra structure $\Delta'$ on $U_{q,p}(\wh{\frg})$ was introduced for the case $\frg=\fsl_2$ by Konno \cite{Ko09}, and for general $\frg$ by Farghly-Konno-Oshima \cite{FKO}. They used the notion of $\frh$-Hopf algebroids in the sense of Etingof and Varchenko \cite{EV}, which will be reviewed in \cref{ss:pre:Halgd}. Here we only remark that one uses the modified tensor product $\totimes$ in the Hopf algebroid formulation. The comultiplication $\Delta'$ can be written neatly in terms of the $L$-operators.
\end{itemize}

One of the motivations of this note is to investigate the Drinfeld-type coalgebra structure $\Delta$ of the elliptic algebra $U_{q,p}(\wh{\frg})$ in terms of the $\frh$-Hopf algebroid formulation.

%%%%%%%%%%%%%%%%%%%%%%%%%%%%%%%%%%%%%%%%%%%%%%%%%%%%%%%%%%%%%%%%%%%%%%%%%%%%%%%%%%%%%%%%%%
\subsubsection*{The goal of this note}

At this point, it is natural to ask the following questions.
\begin{itemize}
\item
Is there an extension $U_q(g,X_l)$ of the Ding-Iohara quantum algebra for a simple Lie algebra $\frg$ of \emph{non-simply-laced type} $X_l=B_l,C_l,F_4$ or $G_2$ so that the quantum affine algebra $U_q(\wh{\frg})$ of the same type $X_l$ is a special case of $U_q(g,X_l)$ with structure functions $g$ set to be some rational functions?

\item
Is there an algebra $U_{q,p}(g,X_l)$ which extends the above $U_q(g,X_l)$, has the additional elliptic parameter $p$, and gives the elliptic algebra $U_{q,p}(\wh{\frg})$ of the same type $X_l$ if the structure functions $g$ set to be some theta functions?

\item 
Does the algebra $U_{q,p}(g,X_l)$ admit a Hopf algebroid structure with Drinfeld-type comultiplication?
\end{itemize}

The goal of this note is to introduce a family of Hopf algebroids $U_{q,p}(g,X_l)$ which gives a simultaneous answer to the above questions.

%%%%%%%%%%%%%%%%%%%%%%%%%%%%%%%%%%%%%%%%%%%%%%%%%%%%%%%%%%%%%%%%%%%%%%%%%%%%%%%%%%%%%%%%%%
\subsection{Results and organization}

%%%%%%%%%%%%%%%%%%%%%%%%%%%%%%%%%%%%%%%%%%%%%%%%%%%%%%%%%%%%%%%%%%%%%%%%%%%%%%%%%%%%%%%%%%
\subsubsection*{Summary of results}

As stated above, the goal of this note is to introduce a family of Hopf algebroids which unifies the elliptic algebras $U_{q,p}(\wh{\frg})$ and the Ding-Iohara quantum algebras $U_q(g,A_l)$. 
For this, we will construct a Hopf algebroid $U_{q,p}(g,X_l)$ depending on
\begin{itemize}
\item 
a complex parameter $q$ (\emph{quantum parameter}),
\item
a formal parameter $p$ (\emph{elliptic nome}),
\item
a finite root system of type $X_l$,
\item
a set $g=\{g_{ij}(z;p) \mid i,j =1,\dotsc,l\}$ of functions depending on $p$ (\emph{structure functions}),
\end{itemize}
and require the following conditions.
\begin{clist}
\item \label{0:g:1}
Specializing the structure functions $g$ appropriately, we recover the elliptic algebras $U_{q,p}\bigl(\wh{\frg}(X_l)\bigr)$ with the Drinfeld-type comultiplication $\Delta$.

\item \label{0:g:2}
In the case $X_l=A_l$, we recover the Ding-Iohara quantum algebra $U_q(\ol{g},A_l)$ by taking the limit $p \to 0$ in $U_{q,p}(g,A_l)$ and setting the structure functions to be $\ol{g} \ceq \lim_{p \to 0}g$.
\end{clist}

Let us give a summary of the results obtained in this note. We will introduce the following three objects.
\begin{itemize}
\item 
\emph{The dynamical Ding-Iohara algebroid $U_{q,p}(g,X_l)$}, a Hopf algebroid satisfying the above conditions \ref{0:g:1} and \ref{0:g:2}. The generators are \eqref{eq:0:Egen}, the same as the elliptic algebra $U_q(\wh{\frg})$:
\[
 \clM(\wtH^*), \ q^{\pm c/2}, \ d, \ K^{\pm}_i, \ 
 e_i(z), \ f_i(z), \ \psi^\pm_i(z)
\]
The precise definitions are given in the following places.
\begin{itemize}
\item 
\cref{dfn:U:ADE,lem:U:Halg,lem:U:Delta,lem:ADE:ve,thm:ADE:Halgd} for simply-laced type $X_l=A_l,D_l,E_{6,7,8}$

\item
\cref{dfn:U:BCF} and \cref{thm:BCF:Halgd} for type $X_l=B_l,C_l,F_4$.
\item
\cref{dfn:U:G} and \cref{thm:G:Halgd} for type $G_2$.
\end{itemize}

\item
A Hopf algebroid structure on the elliptic algebra $U_{q,p}(\wh{\frg})$ whose comultiplication is the Drinfeld-type comultiplication $\Delta$, given in \cref{prp:E:Halgd}.

\item
The Ding-Iohara quantum algebra $U_q(g,X_l)$ of non-simply-laced type $X_l=B_l,C_l,F_4$ or $G_2$, 
given in \cref{dfn:ns:DI} and \cref{prp:ns:DI}.
\end{itemize}
The first and third objects are definitely new, and the first one is the main object of this note. The second object might be known already, although we cannot find it in literature. The main statements of this note are the check of the conditions \ref{0:g:1} and \ref{0:g:2} for the dynamical Ding-Iohara algebroid $U_{q,p}(g,X_l)$. Note that \ref{0:g:2} makes sense for general $X_l$ after the extension of Ding-Iohara quantum algebras. The check are given in:
\begin{itemize}
\item 
\cref{cor:U:ADE}, and \cref{prp:ADE:p=0} for type $X_l=A_l,D_l,E_{6,7,8}$.
\item
\cref{prp:U:BCF} and \cref{prp:ns:p=0} for type $X_l=B_l,C_l,F_4$.
\item
\cref{prp:U:G} and \cref{prp:ns:p=0}  for type $G_2$.
\end{itemize}

Our Hopf algebroid $U_{q,p}(g,X_l)$ is an example of $H$-Hopf algebroid in the sense of Etingof-Varchenko \cite[Chap.\ 4]{EV}, where $H$ is a certain finite-dimensional commutative Lie algebra constructed from the root system of type $X_l$ (see \eqref{eq:U:H}). Let us also mention that $H$-Hopf algebroids form a subclass of \emph{dynamical Hopf algebroids} introduced in \cite[Chap.\ 5]{EV}. Thus, our $U_{q,p}(g,X_l)$ is a dynamical Hopf algebroid, as indicated in the title and abstract of this note.

There is a non-trivial point in the algebra structure of our dynamical Ding-Iohara algebroid $U_{q,p}(g,X_l)$: the Serre-type relations. To spell out the point, let us cite from \cref{dfn:U:ADE} the Serre-type relations in the case of simply-laced type $X_l=A_l,D_l,E_{6,7,8}$. For the generating currents $e_i(z), e_j(z)$ with the Cartan matrix entry $a_{ij}=-1$, the Serre-type relation is given by \eqref{eq:U:eS}:
\begin{align}\label{eq:0:eS}
\begin{split}
&\wt{g}^*_{ii}(\tfrac{z_1}{z_2})
 \wt{g}^*_{ij}(\tfrac{z_1}{z  })
 \wt{g}^*_{ij}(\tfrac{z_2}{z  }) e_i(z_1) e_i(z_2) e_j(z)
-h_{ij}
 \wt{g}^*_{ii}(\tfrac{z_1}{z_2})
 \wt{g}^*_{ij}(\tfrac{z_1}{z  }) e_i(z_1) e_j(z) e_i(z_2) \\
&
+\wt{g}^*_{ii}(\tfrac{z_1}{z_2}) e_j(z) e_i(z_1) e_i(z_2)
+\wt{g}^*_{ij}(\tfrac{z_1}{z  }) 
 \wt{g}^*_{ij}(\tfrac{z_2}{z  }) e_i(z_2) e_i(z_1) e_j(z) \\
&
-h_{ij}
 \wt{g}^*_{ij}(\tfrac{z_2}{z  }) e_i(z_2) e_j(z) e_i(z_1)
+e_j(z) e_i(z_2) e_i(z_1)=0.
\end{split}
\end{align}
Here $\wh{g}^*_{ij}(z)$ is determined from the structure function $g_{ij}(z;p)$, and 
the function $h_{ij}$ is given by \eqref{eq:U:hij}:
\begin{align*}
 h_{ij} \ceq 
 \frac{(\ol{g}_{ii}(\frac{z_1}{z_2})+1)(\ol{g}_{ij}(\frac{z_1}{z  })\ol{g}_{ij}(\frac{z_2}{z})+1)}
      { \ol{g}_{ij}(\frac{z_2}{z  })+   \ol{g}_{ii}(\frac{z_1}{z_2})\ol{g}_{ij}(\frac{z_1}{z})   },
\end{align*}
which is essentially equal to \eqref{eq:0:hij}. The Serre-type relation \eqref{eq:0:eS} is guessed from \eqref{eq:DI:hij} of the Ding-Iohara algebra $U_q(g,X_l)$ and \eqref{eq:E:eS} of the elliptic algebra $U_{q,p}(\wh{\frg})$. If we specialize the structure function $g_{ij}(z;p)$ to the theta function as \eqref{eq:U:gijE}, then $h_{ij}$ is equal to the $q$-binomial $\bnm{2}{1}{q}$, and the above Serre relation \eqref{eq:U:eS} coincides with \eqref{eq:E:eS}. Thus the dynamical Ding-Iohara algebroid under the specialization coincides with the elliptic algebra $U_{q,p}(\wh{\frg})$ (see \cref{prp:U:ADE}). 

However, in the case of non-simply-laced type $X_l=B_l,C_l,F_4,G_2$ explained in \cref{s:ns}, we need special care since there seems no definition of the Ding-Iohara algebra of type $X_l$ in literature. To find out correct Serre-type relations for non-simply-laced types, we build certain principles to find an admissible form of the function $h_{ij}$ for the Cartan matrix entry $a_{ij}=-2,-3$, and the result is summarized in the ``strange formulas'' in \cref{ss:ns:h}. Due to the behavior of the obtained formulas, we need to divide the argument into the two cases: type $B_l,C_l,F_4$ and type $G_2$. The former case is treated in \cref{ss:ns:BCF}, and the latter is in \cref{ss:ns:G}. The Serre-type relation of the generating currents $e_i(z),e_j(z)$ with the Cartan matrix entry $a_{ij}=-2$ in the type $X_l=B_l,C_l,F_4$ is given by \eqref{eq:BCF:eS2} in \cref{dfn:U:BCF}:
\begin{align*}
&\Phi_2(z;z_1,z_2,z_3)
+\wt{g}^*_{ii}(z_{12}) \Phi_2(z;z_2,z_1,z_3) \\ &
+\wt{g}^*_{ii}(z_{12}) \wt{g}^*_{ii}(z_{13}) \Phi_2(z;z_2,z_3,z_1)
+\wt{g}^*_{ii}(z_{23}) \Phi_2(z;z_1,z_3,z_2) \\ & 
+\wt{g}^*_{ii}(z_{13}) \wt{g}^*_{ii}(z_{23}) \Phi_2(z;z_3,z_1,z_2)
+\wt{g}^*_{ii}(z_{12}) \wt{g}^*_{ii}(z_{13}) \wt{g}^*_{ii}(z_{23}) \Phi_2(z;z_3,z_2,z_1) = 0.
\end{align*}
Here $\Phi_2$ is given by \eqref{eq:BCF:Phi2}:
\begin{align*}
 \Phi_2(z;z_1,z_2,z_3) \ceq \ 
&\wt{g}^*_{ij}(z_{10}) \wt{g}^*_{ij}(z_{20}) \wt{g}^*_{ij}(z_{30}) e_i(z_3)e_i(z_2)e_i(z_1)e_j(z) \\ 
&-h^3_{ij} 
 \wt{g}^*_{ij}(z_{20}) \wt{g}^*_{ij}(z_{30}) e_i(z_3)e_i(z_2)e_j(z)e_i(z_1) \\
&+h^3_{ij} 
 \wt{g}^*_{ij}(z_{30}) e_i(z_3)e_j(z)e_i(z_2)e_i(z_1) 
 -e_j(z)e_i(z_3)e_i(z_2)e_i(z_1)
\end{align*}
with the function $h^3_{ij} \ceq \bigl(\wh{g^{123}_{000}}-\wh{1}\bigr)/\bigl(\wh{g^{23}_{00}}-\wh{g^3_0}\bigr)$ given in \eqref{eq:ns:h3}. For the Serre-type relations of type $G_2$, there involve two functions $h^{4,1}_{ij}$ and $h^{4,2}_{ij}$, given in \eqref{eq:G:h4}. See \cref{dfn:U:G} for the detail.

There is one problem on these Serre-type relations which remains unclear. 
For all the type $X_l$ except $G_2$, our Serre-type relations are ``unique'' in the sense that there is no other cubic or quartic relation satisfying the following two conditions.
\begin{itemize}
\item 
It respects the Drinfeld-type comultiplication in \cref{lem:U:Delta}.
\item
It reduces to the Serre-type relation of the elliptic algebra $U_{q,p}(\wh{\frg})$ under the specialization of structure functions $g$ to the theta functions \eqref{eq:U:gijE}.
\end{itemize}
See \cref{rmk:ADE:h,rmk:BCF:h} for the detail. 
However, for the type $G_2$, the compatibility of the Drinfeld-type comultiplication only yields the vanishing condition on a certain linear combination of the functions $h^{4,1}_{ij}$ and $h^{4,2}_{ij}$, and there remains freedom of choice of these functions (the ideal $J$ in \eqref{eq:G:h4}). This problem will be mentioned in \cref{rmk:G:h} and \cref{s:cnc}.

%%%%%%%%%%%%%%%%%%%%%%%%%%%%%%%%%%%%%%%%%%%%%%%%%%%%%%%%%%%%%%%%%%%%%%%%%%%%%%%%%%%%%%%%%%
\subsubsection*{Organization}

Let us explain the organization of this note.
The starting \cref{s:pre} is a preliminary part, consisting of three parts. The first part is \cref{ss:pre:Halgd}, where we explain the notion of $\frh$-Hopf algebroids in the sense of Etingof and Varchenko \cite{EV}. We also introduce the basic terminology of Hopf algebras. The second part is \cref{ss:pre:DI}, where we review the Ding-Iohara quantum algebras $U_q(g,A_l)$ \cite{DI}. The last part \cref{ss:pre:E} gives a review of the elliptic algebra $U_{q,p}(\wh{\frg})$. We also give in \cref{prp:E:Halgd} a Hopf algebroid structure on $U_{q,p}(\wh{\frg})$ with Drinfeld-type comultiplication $\Delta$. Some part of the verification of \cref{prp:E:Halgd} will be given in the following sections.

In \cref{s:ADE}, we introduce the dynamical Ding-Iohara algebroid $U_{q,p}(g,X_l)$ for simply-laced type $X_l=A_l,D_l$ or $E_{6,7,8}$. In \cref{ss:ADE:alg}, we give the algebra structure on $U_{q,p}(g,X_l)$ in \cref{dfn:U:ADE}, and show that the elliptic algebra $U_{q,p}(\wh{\frg})$ is a specialization of $U_{q,p}(g,X_l)$ in \cref{prp:U:ADE}. Then, in \cref{ss:ADE:Halgd}, we give the $H$-Hopf algebroid structure, and check in \cref{cor:U:ADE} that the special case $U_{q,p}(\wh{\frg})$ recovers the Drinfeld-type comultiplication $\Delta$ mentioned above. Finally, in \cref{ss:ADE:p=0}, we check that the $p \to 0$ limit of $U_{q,p}(g,X_l)$ recovers the general Ding-Iohara algebra $U_q(\ol{g},X_l)$ with structure functions $\ol{g} \ceq \lim_{p \to 0}g$.

In \cref{s:ns}, we extend our Hopf algebroid $U_{q,p}(g,X_l)$ to non-simply-laced types $X_l=B_l,C_l,F_4$ and $G_2$. After the search of possible Serre-type relations explained in \cref{ss:ns:h}, we treat the former case in \cref{ss:ns:BCF} and the latter in \cref{ss:ns:G}. The organization of each subsection is similar to \cref{ss:ADE:alg} and \cref{ss:ADE:Halgd}, and we omit the explanation. The limit $p \to 0$ is treated in \cref{ss:ns:DI}, where we first introduce the Ding-Iohara quantum algebra for non-simply-laced type $X_l$ in \cref{dfn:ns:DI} and \cref{prp:ns:DI}, and then check that the condition \ref{0:g:2} holds.

We close this note by concluding remarks in \cref{s:cnc}

%%%%%%%%%%%%%%%%%%%%%%%%%%%%%%%%%%%%%%%%%%%%%%%%%%%%%%%%%%%%%%%%%%%%%%%%%%%%%%%%%%%%%%%%%%
\subsubsection*{Global notations}

Here are the notations and terminology used throughout in this paper. 
\begin{itemize}[nosep]
\item
We denote by $\bbN = \bbZ_{\ge 0} \ceq \{0,1,2,\ldots\}$ the set of non-negative integers.

\item
We denote by $\delta_{i,j}$ the Kronecker delta on a set $I \ni i,j$.

\item
The symbol $\diag(x_1,\dotsc,x_n)$ denotes the diagonal matrix of size $n$ with diagonal entries $x_1,\dotsc,x_n$.
\item
The word `ring' or `algebra' means a unital associative one unless otherwise stated.
The unit of a ring $R$ is denoted by $1_R$.

\item
For a commutative ring $k$, we use the word ``an algebra over $k$'' and do not use the word ``a $k$-algebra'' to distinguish it from ``an $\frh$-algebra'' for a finite-dimensional Lie algebra $\frh$  which will be explained in \cref{ss:pre:Halgd}.

\item
For a commutative ring $k$ and a variable $x$, we denote the Laurent polynomial ring by $k[x^{\pm1}]$, the formal series ring by $k\dbr{x}$, the Laurent series ring by $k\dpr{x}$, and the module of formal Laurent series by $k\dbr{x^{\pm1}}$.

\item
We use Gasper and Rahman's notation \cite{GR} for $q$-shifted factorials 
\begin{align}\label{eq:ntn:qsf}
 (x;q)_\infty \ceq \prod_{n=0}^\infty (1-x q^n), \quad 
 (x_1,\dotsc,x_n;q)_\infty \ceq \prod_{i=1}^n (x_i;q)_\infty.
\end{align}
%If $q$ is a formal parameter, then these are understood as formal series of $q$.

\item
We denote by $\frS_n$ the symmetric group of degree $n$.
\end{itemize}

%%%%%%%%%%%%%%%%%%%%%%%%%%%%%%%%%%%%%%%%%%%%%%%%%%%%%%%%%%%%%%%%%%%%%%%%%%%%%%%%%%%%%%%%%%
%%%%%%%%%%%%%%%%%%%%%%%%%%%%%%%%%%%%%%%%%%%%%%%%%%%%%%%%%%%%%%%%%%%%%%%%%%%%%%%%%%%%%%%%%%
\section{Recollections on Ding-Iohara quantum algebras and elliptic algebras}
\label{s:pre}

%%%%%%%%%%%%%%%%%%%%%%%%%%%%%%%%%%%%%%%%%%%%%%%%%%%%%%%%%%%%%%%%%%%%%%%%%%%%%%%%%%%%%%%%%%
\subsection{Hopf algebroid}\label{ss:pre:Halgd}

We begin with the definition of $\frh$-Hopf algebroid for a finite-dimensional Lie algebra $\frh$ in the sense of Etingof and Varchenko \cite[Chap.\ 4]{EV}. 

Let $\frh$ be a finite-dimensional commutative Lie algebra over the complex number field $\bbC$. We denote the field $\clM(\frh^*)$ of meromorphic functions on the linear dual $\frh^*$, regarded as a complex affine space, by
\[
 \bbF \ceq \clM(\frh^*).
\]

An algebra $A$ over $\bbC$ is called an \emph{$\frh$-algebra} if it is equipped with an $\frh^*$-bigrading $A = \bigoplus_{\alpha, \beta \in \frh^*}A_{\alpha, \beta}$, %called \emph{the weight decomposition}, 
and with two algebra embeddings $\mu_l, \mu_r\colon \bbF \to A_{0, 0}$, called \emph{the left} and \emph{right moment maps}, such that 
\begin{align*}
 \mu_l(F)a = a \mu_l(T_{\alpha}F), \quad \mu_r(F)a=a\mu_{r}(T_{\beta}F)
 \quad (F \in \bbF, \, a \in A_{\alpha,\beta}), 
\end{align*}
where $T_{\alpha}$ is the shift operator 
\[
 (T_{\alpha}F)(x) \ceq F(x+\alpha) \quad (x \in \frh^*).
\]
An \emph{$\frh$-algebra morphism $A \to B$} is an algebra homomorphism consistent with the moment maps. Such a morphism automatically preserves the $\frh^*$-bigrading.

We define \emph{the modified tensor product} of two $\frh$-algebras $A$ and $B$ to be the $\frh$-algebra 
\begin{align}\label{eq:Halgd:mtp}
 \bigl(A \totimes B,\bigoplus_{\alpha,\beta \in \frh^*}(A \totimes B)_{\alpha,\beta},
  \mu_l^{A \totimes B},\mu_r^{A \totimes B}\bigr)
\end{align}
as follows. We set
\begin{align}\label{eq:Halgd:to}
 A \totimes B \ceq \bigoplus_{\alpha,\beta \in \frh^*}(A \totimes B)_{\alpha,\beta}, \quad
(A \totimes B)_{\alpha,\beta} \ceq 
 \bigoplus_{\gamma \in \frh^*}A_{\alpha,\gamma} \otimes_{\bbF} B_{\gamma,\beta},  
\end{align}
where $A_{\alpha,\gamma} \otimes_{\bbF} B_{\gamma,\beta}$ is the quotient of the standard tensor product $A_{\alpha,\gamma} \otimes_{\bbC} B_{\gamma,\beta}$ of linear spaces over $\bbC$ by the linear subspace generated by the elements
\begin{align}\label{eq:Halgd:totimes}
 \mu^A_r(F)a \otimes b - a \otimes \mu^B_l(F)b \quad (a \in A, \, b \in B, \, F \in \bbF). 
\end{align}
We denote an element of $A \totimes B$ by $a \totimes b$ for $a \in A$ and $b \in B$. The modified  tensor product $A \totimes B$ is again an $\frh$-algebra by the multiplication
\[
 (a \totimes b)(a' \totimes b') \ceq (a a') \totimes (b b')
\]
and the moment maps
\begin{align}\label{eq:Halgd:mu-totimes}
 \mu^{A \totimes B}_l(F) \ceq \mu_l^A(F) \totimes 1_B, \quad 
 \mu^{A \totimes B}_r(F) \ceq 1_A \totimes \mu_r^B(F)  \quad (F \in \bbF).
\end{align}
We also have the natural notion of \emph{the modified tensor product $f \totimes g$ of $\frh$-algebra homomorphisms $f$ and $g$}, whose detail is omitted.

The category of $\frh$-algebras has a monoidal structure whose tensor product is the above $\totimes$ and the unit object is \emph{the $\frh$-algebra $D_\frh$ of difference operators on the meromorphic function field $\bbF=\clM(\frh^*)$}. As a $\bbC$-linear space, $D_\frh$ is defined to be the subspace of $\bbC$-linear endomorphisms $\End_{\bbC}(\bbF)$ spanned by the operators of the form $f T_{\alpha}$ with $f \in \bbF$ and $\alpha \in \frh^*$. It is a subalgebra of $\End_{\bbC}(\bbF)$, and has the $\frh^*$-bigrading
\begin{align}\label{eq:Halgd:Dh}
 D_\frh = \bigoplus_{\alpha,\beta \in \frh^*} (D_\frh)_{\alpha,\beta}, \quad 
(D_\frh)_{\alpha,\beta} \ceq 
 \begin{cases} \{f T_{-\alpha} \mid f \in \bbF\} & (\alpha = \beta) \\ 0 & (\alpha \ne \beta) \end{cases}.
\end{align}
Note that $(D_\frh)_{0,0}=\bbF T_0 =\bbF \id_{\bbF}$. The moment maps are given by
\[
 \mu_l=\mu_r\colon \bbF \lto (D_\frh)_{0,0} = \bbF T_0, \quad f \lmto f T_0.
\]
These data give $D_\frh$ a structure of an $\frh$-algebra, and it is a monoidal unit with respect to the modified tensor product $\totimes$ \cite[Lemma 4.1]{EV}.

An \emph{$\frh$-bialgebroid} is a triple $(A,\Delta,\ve)$ consisting of an $\frh$-algebra $A$ and two $\frh$-algebra homomorphisms $\Delta\colon A \to A \totimes A$ and $\ve\colon A \to D_\frh$ satisfying \emph{the coassociativity and counit axioms}
\begin{align}\label{eq:Halgd:Delta-ve}
\begin{split}
 (\Delta \totimes \id_A) \Delta = (\id_A \totimes \Delta) \Delta &: A \lto 
 (A \totimes A) \totimes A \cong A \totimes (A \totimes A), 
\\
 (\ve \totimes \id_A)\Delta = \id_A = (\id_A \totimes \ve)\Delta &: A \lto 
 D_{\frh} \totimes A \cong A \cong A \totimes D_{\frh},
\end{split}
\end{align} 
where $\totimes$ denotes the modified tensor product of $\frh$-algebra homomorphisms, and $\cong$'s denote the natural isomorphisms. 
The map $\Delta$ is called \emph{the comultiplication}, and $\ve$ is called \emph{the counit} of the $\frh$-bialgebroid $A$.

Let $(A,\Delta,\ve)$ be an $\frh$-bialgebroid. A linear map $S \in \End_{\bbC}A$ is called an \emph{antipode} if the following relations hold for $f \in \bbF$ and $a \in A_{\alpha,\beta}$.
\begin{align}\label{eq:Halgd:S}
\begin{split}
 S(\mu_r(f)a) = S(a)\mu_l(f), \quad &S(a\mu_l(f)) = \mu_r(f)S(a), \\
 m(\id_A \otimes S)\Delta(a) = \mu_l(\ve(a)1), \quad 
 &m(S \otimes \id_A)\Delta(a) = \mu_r\bigl(T_{\alpha}(\ve(a)1)\bigr),
\end{split}
\end{align}
where $\ve(a)1$ denotes the action of the operator $\ve(a) \in D_{\frh}$ on the constant function $1$, and $\otimes$ in the second line denotes the ordinary tensor product of linear maps over $\bbC$. An $\frh$-bialgebroid equipped with an antipode is called an \emph{$\frh$-Hopf algebroid}.

Note that for $\frh=0$, the zero-dimensional commutative Lie algebra, we have $\bbF=\bbC$ and $\frh$-algebra is nothing but an algebra $A$ over $\bbC$ whose moment maps are the unit $\eta\colon \bbC \to A$. Then the modified tensor product $\totimes$ is equal to the standard tensor product $\otimes_{\bbC}$ and $D_\frh=\bbC$. Moreover, an $\frh$-bialgebroid $(A,\Delta,\ve)$ is a bialgebra over $\bbC$ with comultiplication $\Delta\colon A \to A \otimes A$ and counit $\ve\colon A \to \bbC$, and an $\frh$-Hopf algebroid $(A,\Delta,\ve,S)$ is a Hopf algebra with antipode $S$. Thus, an ordinary Hopf algebra is a special case of $\frh$-Hopf algebroid.

To close this \cref{ss:pre:Halgd}, let us mention the relation of $\frh$-bialgebroids and Hopf algebroids in the sense of \cite{Lu,Xu}. As far as we know, what is today commonly called a Hopf algebroid means the one introduced in loc.\ cit. Our presentation follows \cite[\S 3]{Xu}.

A Hopf algebroid is a tuple $(A,R,\alpha,\beta,m,\Delta,\ve)$ consisting of:
\begin{clist}
\item 
An algebra $(A,m)$.
\item
Another algebra $R$.
\item
An algebra homomorphism $\alpha\colon R \to A$ and an algebra anti-homomorphism $\beta\colon R \to A$ such that the images of $\alpha$ and $\beta$ commute in $A$.
\end{clist}
At this point, the algebra $A$ has an $R$-$R$-bimodule structure with $r.a \ceq \alpha(r)a$ and $a.r \ceq \beta(r)a$ for $a \in A$ and $r \in R$. In particular, we have the $R$-$R$-bimodule product $A \otimes_R A$, which admits a natural $R$-$R$-bimodule structure.
\begin{clist}[resume]
\item 
An $R$-$R$-bimodule homomorphism $\Delta\colon A \to A \otimes_R A$.

\item
An $R$-$R$-bimodule homomorphism $\ve\colon A \to R$.
\end{clist}
These should satisfy the following conditions.
\begin{enumerate}
\item
The map $\Delta$ satisfies $\Delta(1_A) = 1_A \otimes 1_A$ and $(\Delta \otimes_R \id_A)=(\id_A \otimes_R \Delta)$ under the natural isomorphism $(H \otimes_R H) \otimes_R H \cong H \otimes_R (H \otimes_R H)$.

\item
The maps $m$ and $\Delta$ are compatible in the sense that $\Delta(a)\bigl(\beta(r) \otimes 1_A - 1_A \otimes \alpha(r)\bigr)=0$ for any $r \in R$ and $a \in A$, and that $\Delta(a b)=\Delta(a)\Delta(b)$ for any $a,b \in A$.

\item
The map $\ve$ satisfies $\ve(1_A)=1_R$ and $(\ve \otimes_R \id_A)\Delta = \id_A = (\id_A \otimes_R \ve)\Delta$ under the natural isomorphisms $R \otimes_R A \cong A \cong A \otimes_R R$.
\end{enumerate}

An $\frh$-bialgebroid $(A,\Delta,\ve)$ is an Hopf algebroid in the above sense by setting
$R \ceq \bbF = \clM(\frh)$ and $\alpha \ceq \mu_l$, $\beta \ceq \mu_R$. In fact, $\bbF$ is commutative so that the commuting condition in (iii) holds. Then we find that $A \otimes_{\bbF} A \cong A \totimes A$ (see \eqref{eq:Halgd:totimes}). The conditions (1)--(3) are checked immediately.

%%%%%%%%%%%%%%%%%%%%%%%%%%%%%%%%%%%%%%%%%%%%%%%%%%%%%%%%%%%%%%%%%%%%%%%%%%%%%%%%%%%%%%%%%%
\subsection{Ding-Iohara quantum algebras}\label{ss:pre:DI}

Here we give an overview of the quantum algebras introduced by Ding and Iohara in \cite{DI}.
%slightly changing the symbols in loc.\ cit.\ to be compatible with those in \cite{Ko}.

For $l \in \bbZ_{>0}$, let $(a_{ij})_{i,j=1}^l$ be the Cartan matrix of type $A_l$, i.e., 
\[
 a_{ij} = \begin{cases} 2 & (i=j) \\ {-1} & (\abs{i-j}=1) \\ 0 & (\text{otherwise}) \end{cases}.
\]
We will also denote it as $(a_{ij})_{i,j \in I}$ with $I \ceq \{1,\dotsc,l\}$.

Also, we denote 
\begin{align}\label{eq:DI:clA}
 \clA \ceq 
 \{\text{analytic functions of $z$ without poles except $z=0$ or $\infty$}\},
\end{align}
regarded as a commutative algebra over $\bbC$. Let 
\[
 g = \{g_{ij}(z) \mid i,j \in I\}
\]
be a set of analytic functions satisfying the following condition.
\begin{clist}
\item
There are some $G_{ij}^{\pm}(z) \in \clA$ such that 
\begin{align}\label{eq:DI:cond0}
 g_{ij}(z) = G_{ij}^+(z)/G_{ij}^-(z).
\end{align}
\item
We have
\begin{align}\label{eq:DI:cond}
 g_{ij}(z) = g_{ji}(z^{-1})^{-1}.
\end{align}
\end{clist}
We call the set $g$ \emph{the structure functions}, and call the condition on $g$ \emph{the Ding-Iohara condition}. Note that the condition $G^{\pm}_{ij}(z) \in \clA$ implies that we can expand $G^{\pm}_{ij}(z)$ and $g_{ij}(z) = G_{ij}^+(z)/G_{ij}^-(z)$ as
\begin{align}\label{eq:DI:gexp}
 G^{\pm}_{ij}(z) = \sum_{n=-L^{\pm}}^{M^{\pm}} G^{\pm}_{ij,n} z^n \in \bbC[z^{\pm1}], \quad 
 g_{ij}(z)^{\pm1} = \sum_{n=-N^{\pm}}^\infty g_{ij,n} z^n \in \bbC\dpr{z}
\end{align}
for some $L^{\pm}, M^{\pm}, N^{\pm} \in \bbN$.

Let also $q$ be a complex number such that $\abs{q} \ne 0,1$, called \emph{the quantum parameter}. 
Using these data $A,g$ and $q$, we define the topological algebra $U_q(g,A_l)$ as follows.
See \cref{rmk:DI:top} for the meaning of the phrase ``topological''.

\begin{dfn}\label{dfn:DI:U}
Using the Cartan matrix $A=(a_{ij})_{i,j \in I}$ of type $A_l$, the structure functions $g$ and the quantum parameter $q$,  we define 
\[
 U = U_q(g,A_l), 
\]
to be the topological algebra over $\bbC$ generated by the elements 
\begin{align}\label{eq:DI:gen}
 q^{\pm c/2}, \ e_{i,n}, \ f_{i,n}, \ \psi^+_{i,n}, \ \psi^-_{i,n} \quad (i \in I, \, n \in \bbZ)
\end{align}
with the following defining relations. 
We will use the \emph{generating currents} 
\begin{align}\label{eq:DI:gc}
 e_i(z) \ceq \sum_{n \in \bbZ}e_{i,n}z^{-l}, \quad 
 f_i(z) \ceq \sum_{n \in \bbZ}f_{i,n}z^{-l}, \quad 
 \psi^\pm_i(z) \ceq \sum_{n \in \bbZ}\psi^\pm_{i,n} z^{-n},
\end{align}
where $z$ is a formal variable.
\begin{itemize}
\item 
The generators $q^{\pm c/2}$ satisfy that
\begin{align}\label{eq:DI:qc}
 \text{$q^{\pm c/2}$ are central and $q^{c/2}q^{-c/2}=1$.} 
\end{align}
Below we denote $q^{r c/2} \ceq (q^{c/2})^r$ for $r \in \bbZ$.

\item
The generating currents $\psi^{\pm}(z)$ satisfy that 
\begin{align}
 \text{$\psi^{\pm}(z)$ are invertible as series of $z$,}
\end{align}
which means that $\psi^{\pm}(z)$ are bounded above or bounded below, and the initial terms are invertible in $U$. Explicitly, in the case $\psi^+(z)$ is bounded above, we have
\begin{align*}%\label{eq:DI:psi-inv}
 \text{there is some $N \in \bbZ$ such that $\psi^+_{i,n}=0$ for $n>N$, 
       and $\psi^+_{i,N}$ is invertible in $U$.}
\end{align*}
\item 
For any $i,j \in I$, we have in terms of the formal variables $z$ and $w$ that 
\begin{align}
%\label{eq:DI:pp}
 \psi^+_i(z) \psi^+_j(w) = \psi^+_j(w) \psi^+_i(z), \quad 
&\psi^-_i(z) \psi^-_j(w) = \psi^-_j(w) \psi^-_i(z), 
\\
%\label{eq:DI:pppm}
 \psi^+_i(z) \psi^-_j(w)   \psi^+_i(z)^{-1} \psi^-_j(w)^{-1} = \ 
&g_{ij}(q^{-c}z/w) g_{ij}(q^c z/w)^{-1}, 
\\
\label{eq:DI:ppe}
 \psi^+_i(z) e_j(w) = g_{ij}(q^{-c/2}z/w)^{  } e_j(w) \psi^+_i(z), \quad 
&\psi^+_i(z) f_j(w) = g_{ij}(q^{ c/2}z/w)^{-1} f_j(w) \psi^+_i(z), 
\\
\label{eq:DI:pme}
 \psi^-_i(z) e_j(w) = g_{ij}(q^{ c/2}z/w)^{  } e_j(w) \psi^-_i(z), \quad 
&\psi^-_i(z) f_j(w) = g_{ij}(q^{-c/2}z/w)^{-1} f_j(w) \psi^-_i(z), 
\\
\label{eq:DI:ee}
 G_{ij}^-(z/w) e_i(z) e_j(w) = G_{ij}^+(z/w) e_j(w) e_i(z), \quad 
&G_{ij}^+(z/w) f_i(z) f_j(w) = G_{ij}^-(z/w) f_j(w) f_i(z), 
\\
\label{eq:DI:ef}
 [e_i(z),f_j(w)] = \frac{\delta_{i,j}}{q-q^{-1}} 
 \bigl(\delta(q^{-c}z/w) &\psi^-_i(q^{c/2}w) - 
       \delta(q^{ c}z/w) \psi^+_i(q^{c/2} z) \bigr).
\end{align}
In each equality, we expand both sides as series of $z$ and $w$ using \eqref{eq:DI:gexp} and \eqref{eq:DI:gc}, and compare the coefficients of $z^r w^s$ for each $(r,s) \in \bbZ^2$ to get genuine relations. 
%, noting that we can expand $G_{ij}^\pm(x)$ and $g_{ij}(x)^{\pm1}$ as series of $\bbC\dbr{x^{\pm1}}$ in some annulus region $\{x \in \bbC \mid r<\abs{x}<R\}$ by the Ding-Iohara condition. 
Here we should consider an appropriate topology for general structure functions $g$, which will be explained in \cref{rmk:DI:top}. Also, in \eqref{eq:DI:ef}, we used the commutator $[X,Y] \ceq X Y- Y X$ and the formal delta function
\begin{align}\label{eq:DI:delta}
 \delta(x) \ceq \sum_{n \in \bbZ} x^n.
\end{align}

\item
For $i,j \in I$ with $a_{ij}=0$, we have in terms of the formal variables $z$ and $w$ that 
\begin{align}\label{eq:DI:eeff}
 [e_i(z),e_j(w)] = [f_i(z),f_j(w)] = 0.
\end{align}
The meaning of these equalities are the same as the previous item.

\item
For $i,j \in I$ such that $a_{ij}={-1}$, we have in terms of the formal variables $z_1,z_2$ and $w$ \emph{the Serre-type} (or \emph{cubic}) \emph{relations} 
\begin{align}\label{eq:DI:Serre}
\begin{split}
 X_i(z_1) X_i(z_2) X_j(w) &- h_{ij}(\tfrac{z_1}{w},\tfrac{z_2}{w}) X_i(z_1) X_j(w) X_i(z_2) 
+X_j(w) X_i(z_1) X_i(z_2) \\ &+ (\text{three terms given by $z_1 \lrto z_2$}) = 0
\end{split}
\end{align}
for $X=e$ or $f$. Here we set 
\begin{align}\label{eq:DI:hij}
 h_{ij}(x,y) \ceq \frac{(g_{ii}(x/y)+1) (g_{ij}(x) g_{ij}(y)+1)}{g_{ij}(y)+g_{ii}(x/y)g_{ij}(x)}, 
\end{align}
regarded as a series belonging to $\bbC\dpr{x,y,x/y}$ by expanding $g_{ij}$ and $g_{ii}$ as in \eqref{eq:DI:gexp}.
\end{itemize}
\end{dfn}

\begin{rmk}\label{rmk:DI:top}
Let us comment on the topology of $U$. As mentioned after \eqref{eq:DI:ef}, the genuine relations are obtained by expanding the current relations. For example, the first equality of \eqref{eq:DI:ee} yields
\[
 \sum_{k=-L^-}^{M^-} G_{ij,k}^- e_{i,m} e_{j,n} = \sum_{k'=-L^+}^{M^+} G_{ij,k}^+ e_{j,n+k-k'} e_{i,m-k+k'}
\]
for each $m,n \in \bbZ$, under the expansion \eqref{eq:DI:gexp}.
This involves finite summations only, and we can apply the ordinary diamond lemma \cite{Be}. 
On the other hand, the first equality in \eqref{eq:DI:ppe} yields
\begin{align}\label{eq:DI:infsum}
 \psi_{i,m}^+ e_{j,n} = \sum_{k=-N^+}^\infty g_{ij,k} q^{-k c/2} e_{j,n-k} \psi^+_{i,m+k}
\end{align}
for each $m,n \in \bbZ$. Thus, unless $\psi^+_i(z)$ is bounded above or $e_j(z)$ is bounded on both sides, we have an infinite summation in the defining relation, and cannot obtain a genuine algebra. Largely speaking, there are two solutions of this problem.
\begin{clist}
\item \label{i:rmk:DI:top:b}
Require $\psi^+_i(z)$ to be bounded above, and $\psi^-_i(z)$ to be bounded below, so that the genuine relations contain finite summations only. This is the case of the Drinfeld realization of quantum affine algebras \cite{D}, recalled in \cref{eg:DI:qaff}.

\item \label{i:rmk:DI:top:p}
Equip the structure functions $g_{ij}(z)$ with a parameter, say $p$, and require $g_{ij}(z)=g_{ij}(z;p)$ to have a series expansion $g_{ij}(z;p)=\sum_{k=-N}^\infty g_{ij,k}(z) p^k \in \bbC[z^{\pm1}]\dpr{p}$. Now we equip $U$ with the $p$-adic topology. Then, since $g_{ij,k}(z) \in \bbC[z^{\pm1}]$, we see that each $p$-adic component of the relations like \eqref{eq:DI:infsum} contains finite summation only, Thus we can regard $U$ as a complete algebra with respect to the $p$-adic topology. We only consider this situation in the following text.
\end{clist}
In a general situation, where no parameters are available, we are not sure how to resolve the problem. One possible solution may be to use the diamond lemma adapted to the formal power series \cite{H}, which requires an appropriate choice of the norm on the formal power series ring.
\end{rmk}

\begin{rmk}
There are some typos in the quadratic relations of \cite[Definitions 2.1,2.2]{DI}. We should define the relations between the generating currents $\psi^-_i(z)$ and $e_j(w),f_j(w)$ as \eqref{eq:DI:pme}, which are consistent with those of the Drinfeld realization of quantum affine algebras \cite[\S2, Remark]{D} and of the quantum toroidal algebras of simply-laced type \cite[\S 3]{GKV}.
\end{rmk}

\begin{fct}[{\cite[Theorem 2.2]{DI}}]\label{fct:DI:U}
The topological algebra $U=U_q(g,A_l)$ has the following topological Hopf algebra structure.
\begin{itemize}
\item Formal comultiplication $\Delta$:
\begin{gather}
\label{eq:DI:Delta0}
 \Delta(q^{\pm c/2}) \ceq q^{c/2} \otimes q^{\pm c/2}, \quad 
 \\
\label{eq:DI:Delta1}
 \Delta(e_i(z)) \ceq e_i(z) \otimes 1 + \psi_i^+(q^{c_1/2} z) \otimes e_i(q^{c_1} z), \quad
 \Delta(f_i(z)) \ceq 1 \otimes f_i(z) + f_i(q^{c_2} z) \otimes \psi^-_i(q^{c_2/2}z),
 \\
\label{eq:DI:Delta2}
 \Delta(\psi^+_i(z)) \ceq \psi^+_i(q^{-c_2/2}z) \otimes \psi^+_i(q^{ c_1/2}z), \quad
 \Delta(\psi^-_i(z)) \ceq \psi^-_i(q^{ c_2/2}z) \otimes \psi^-_i(q^{-c_1/2}z), 
\end{gather}
where $q^{r c_1/2} \ceq q^{r c/2} \otimes 1$ and $q^{r c_2/2} \ceq 1 \otimes q^{r c/2}$ for $r \in \bbZ$. In each equality, we expand both sides to get the value of $\Delta$ on each generator. See \cite{DI0,DI} for the detail of such a calculation. Since infinite summations appear in this calculation, the obtained comultiplication $\Delta$ is called formal. The infinite sums make sense if we take into account the topology given by a good gradation on the generators \eqref{eq:DI:gen}.

\item Counit $\ve$:
\begin{align}
 \ve(q^c) \ceq 1, \quad \ve(\psi^{\pm}_i(z)) \ceq 1, \quad \ve(e_i(z))=\ve(f_i(z))=0.
\end{align}
Again, we expand the left hand sides to get the value of $\ve$ on each generator.

\item Formal antipode $S$:
\begin{align}
 S(q^c) \ceq q^c \otimes q^c, \quad 
&S(\psi^\pm_i(z)) \ceq \psi^\pm_i(z)^{-1} \\
\label{eq:DI:S}
 S(e_i(z)) \ceq -\psi^+_i(q^{-c/2}z)^{-1} e_i(q^{-c}z), \quad 
&S(f_i(z)) \ceq -f_i(q^{-c}z)\psi_i^-(q^{-c/2}z)^{-1}.
\end{align}
\end{itemize}
We call this topological Hopf algebra $U_q(g,A_l)$ \emph{the Ding-Iohara quantum algebra of type $A_l$},
and \emph{the Ding-Iohara algebra} for short.
\end{fct}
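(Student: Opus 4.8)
The plan is to verify in sequence that $\Delta$ and $\ve$ are homomorphisms of topological algebras, that they satisfy the coassociativity and counit axioms, and finally that $S$ is an antipode. Since $U_q(g,A_l)$ is presented by generators and the defining relations of \cref{dfn:DI:U}, the substance of the theorem is that the assignments \eqref{eq:DI:Delta1}, \eqref{eq:DI:Delta2} and \eqref{eq:DI:S} on the generating currents descend to well-defined maps; once this is established, the coalgebra axioms reduce to mechanical identities among currents. Before checking relations I would pin down the target of $\Delta$: expanding the shifted currents $\psi^\pm_i(q^{\pm c_j/2}z)$ and $e_i(q^{c_1}z)$ into Fourier modes produces infinite sums, so $\Delta$ a priori lands in a completion of $U \otimes U$. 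Working in the setting of \cref{rmk:DI:top}, where $g_{ij}(z)=g_{ij}(z;p)\in\bbC[z^{\pm1}]\dpr{p}$ and $U$ carries the $p$-adic topology, I would check that each $p$-adic homogeneous component of $\Delta(x)$ is a finite sum, so that $\Delta$ is continuous with values in the $p$-adically completed tensor product; this is exactly what legitimizes the word ``formal''.

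The main step is that $\Delta$ preserves each defining relation. Centrality \eqref{eq:DI:qc} and the invertibility of $\psi^\pm$ are immediate from \eqref{eq:DI:Delta0} and \eqref{eq:DI:Delta2}, since $\Delta(\psi^\pm_i(z))$ is a product of invertible currents. For the quadratic relations \eqref{eq:DI:ppe}, \eqref{eq:DI:pme} and \eqref{eq:DI:ee}, I would substitute the coproduct formulas, normal-order the four resulting summands by applying the same quadratic relations inside each tensor factor, and collect the structure functions. The cross terms combine correctly precisely because of the Ding-Iohara condition \eqref{eq:DI:cond}: the factors $g_{ij}$ and $g_{ji}^{-1}$ produced in the two tensor slots are forced to cancel. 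For \eqref{eq:DI:ef} one additionally needs the standard formal-delta identity $\delta(x)F(x)=\delta(x)F(1)$ to align the two $\psi^\pm$ contributions with the correct powers of $q^{c/2}$, and \eqref{eq:DI:eeff} is the degenerate $a_{ij}=0$ case of the $ee$/$ff$ check.

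The genuinely delicate point is that $\Delta$ respects the Serre-type relation \eqref{eq:DI:Serre}. I would apply $\Delta$ to each of the six cubic monomials, expand each into the eight tensor terms coming from the two summands of $\Delta(e_i)$, and normal-order everything to a common ordering using \eqref{eq:DI:ppe} and \eqref{eq:DI:ee}. The assertion is that the grand total vanishes, and the mechanism is that the rational function $h_{ij}(x,y)$ of \eqref{eq:DI:hij} is exactly the one for which the coefficient collected in front of each surviving tensor monomial is zero; this is an identity among the series $g_{ii},g_{ij}$ that one checks after clearing denominators. I expect this to be the main obstacle: the cubic expansion is heavy, and the vanishing is not a formal consequence of the quadratic checks but rather forces the specific shape \eqref{eq:DI:hij} of $h_{ij}$.

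Granting the homomorphism property, coassociativity follows by evaluating $(\Delta\otimes\id)\Delta$ and $(\id\otimes\Delta)\Delta$ on each generating current and matching them, the only subtlety being the consistent tracking of the central factors $q^{c_1/2},q^{c_2/2}$ and their threefold analogues. The counit $\ve$ visibly preserves every relation, as it annihilates $e_i,f_i$ and sends $\psi^\pm_i\mapsto 1$, and the counit axioms $(\ve\otimes\id)\Delta=\id=(\id\otimes\ve)\Delta$ are read directly off \eqref{eq:DI:Delta1}--\eqref{eq:DI:Delta2}. Finally, for the antipode I would first confirm that $S$ is an algebra anti-homomorphism respecting the relations --- where the Ding-Iohara condition again enters --- and then verify $m(S\otimes\id)\Delta=\eta\ve=m(\id\otimes S)\Delta$ on currents, using $S(\psi^\pm_i(z))=\psi^\pm_i(z)^{-1}$ from \eqref{eq:DI:S} to telescope the $e$- and $f$-terms against their $\psi$-companions.
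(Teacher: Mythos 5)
This statement is quoted in the paper as a \emph{Fact}, imported from \cite[Theorem 2.2]{DI}; the paper itself supplies no proof of it. Your outline is nevertheless a correct reconstruction of the standard argument, and it coincides with the strategy the paper does spell out for its dynamical analogues: checking that $\Delta$, $\ve$, $S$ preserve the defining relations (with the Ding-Iohara condition \eqref{eq:DI:cond} driving the quadratic checks and the delta-function identity entering \eqref{eq:DI:ef}), isolating the Serre-type relation as the delicate step where the specific form \eqref{eq:DI:hij} of $h_{ij}$ is forced, and then verifying the coalgebra and antipode axioms on currents --- compare \cref{lem:U:Delta}, \cref{rmk:ADE:h}, \cref{lem:U:S} and \cref{thm:ADE:Halgd}. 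Your identification of the Serre-relation compatibility as the point that uniquely pins down $h_{ij}$ is exactly the mechanism the paper highlights in \cref{rmk:ADE:h}, and your handling of the infinite sums via the $p$-adic option of \cref{rmk:DI:top} is a legitimate substitute for the gradation-based topology mentioned in the Fact itself.
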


Let us give an example of the Ding-Iohara algebra.

\begin{eg}\label{eg:DI:qaff}
The motivational example of the algebra $U=U_q(g,A_l)$ is given in \cite[Example 2.1, Case I]{DI} by setting the structure functions $g=\{g_{ij}(z) \mid i,j \in I\}$ to be  
\begin{align*}%\label{eq:DI:qaff}
 g_{ij}(z) = \frac{G^+_{ij}(z)}{G^-_{ij}(z)} \ceq 
 \frac{q^{-a_{ij}} (1-q^{a_{ij}}z)}{1-q^{-a_{ij}}z}.
 %g_{ij}(z) \ceq q^{-a_{ij}} \frac{\tau(q^{a_{ij}}z)}{\tau(q^{-a_{ij}}z)}, \quad 
 %\tau(z) \ceq 1-z.
\end{align*}
If we quote this $U$ by the relations $\psi^{\pm}_{i,n}=0$ for $n>0$ and $\psi^+_{i,0}\psi^-_{i,0}=1$, then the resulting object is isomorphic as a topological Hopf algebra to the quantum affine algebra $U_q(\wh{\fsl}_{l+1})$ equipped with Drinfeld comultiplication. See also \cite{DI0} for the related discussion. 
\end{eg}

Let us mention that the quantum toroidal algebra $U_{q,t}(\fgl_{1,\tor})$, also called the Ding-Iohara-Miki algebra, is a version of the Ding-Iohara quantum algebra of type $A_1$ in the above meaning. This quantum algebra is nowadays under extensive studies. See \cite{FFJMM1,FHHSY,FT,M} for example. The structure function is set to be
\begin{align}\label{eq:DI:qtor}
 g(z) \ceq \frac{(1-q z)(1-z/t)(1-t z/q)}{(1-z/q)(1-t z)(1-q z/t)}.
\end{align}
The additional parameter $t$ corresponds to the (Hecke) parameter of the Macdonald symmetric polynomials $P_\lambda(x;q,t)$. 

\begin{rmk}\label{rmk:DI:ADE}
As mentioned in the last paragraph of \cite{DI}, replacing the root system $A_l$ by the simply-raced system $X_l=D_l$ or $E_{6,7,8}$, we get a topological Hopf algebra $U_q(g,X_l)$, which we call \emph{the Ding-Iohara quantum algebra of the simply-laced type $X_l$}.
\end{rmk}

%%%%%%%%%%%%%%%%%%%%%%%%%%%%%%%%%%%%%%%%%%%%%%%%%%%%%%%%%%%%%%%%%%%%%%%%%%%%%%%%%%%%%%%%%%
\subsection{Elliptic algebras \texorpdfstring{$U_{q,p}(\wh{\frg})$}{Uqp(g)}}\label{ss:pre:E}

Here we explain the elliptic algebra $U_{q,p}(\wh{\frg})$ associated to each untwisted affine Lie algebra $\wh{\frg}$. As mentioned in \cref{s:0}, there are many objects named ``elliptic quantum groups'' or ``elliptic algebras'' in literature. The algebra $U_{q,p}(\wh{\frg})$ was introduced first in \cite{JKOS} as a quasi-Hopf twist of the quantum affine algebra $U_q(\wh{\frg})$ associated to $\wh{\frg}$ with Drinfeld comultiplication. In the paper \cite[\S2]{FKO}, the algebra $U_{q,p}(\wh{\frg})$ was reformulated as a Hopf algebroid (see \cref{ss:pre:Halgd}), and our explanation mainly follows this Hopf algebroid formulation.

%%%%%%%%%%%%%%%%%%%%%%%%%%%%%%%%%%%%%%%%%%%%%%%%%%%%%%%%%%%%%%%%%%%%%%%%%%%%%%%%%%%%%%%%%%
\subsubsection{Preliminaries on finite root systems}\label{sss:E:root}

Let us consider the Cartan matrix $A=(a_{ij})_{i,j=1}^l$ of finite type $X_l$ in the sense of Kac \cite[Theorem 4.3, \S 4.8, TABLE Fin]{Ka}. Hence, we have $X_l=A_{l \ge 1}$, $B_{l \ge 3}$, $C_{l \ge 2}$, $D_{l \ge 4}$, $E_{6,7,8}$, $F_4$ or $G_2$ as usual. We denote the index set as 
\[
 I \ceq \{1,2,\dotsc,l\}, 
\]
so that we can write $A=(a_{ij})_{i,j \in I}$. For definiteness, we write down some examples of the Cartan matrix $A=A(X_l)$ of non-simply-laced type $X_l$. Note that, except for type $G_2$, they are transpose of those in Bourbaki \cite{B}.
\[
 A(B_4) = \begin{psm} 2 & {-1} \\ {-1} & 2 & {-1} \\ & {-1} & 2 & {-1} \\ & & {-2} & 2 \end{psm}, 
 \quad 
 A(C_4) = \begin{psm} 2 & {-1} \\ {-1} & 2 & {-1} \\ & {-1} & 2 & {-2} \\ & & {-1} & 2 \end{psm}, 
 \quad 
 A(F_4) = \begin{psm} 2 & {-1} \\ {-1} & 2 & {-1} \\ & {-2} & 2 & {-1} \\ & & {-1} & 2 \end{psm}, 
 \quad 
 A(G_2) = \begin{psm} 2 & {-1} \\ {-3} & 2 \end{psm}.
\]

Let us take a symmetrization $A = D B$ of the Cartan matrix $A$ \cite[\S2.3]{Ka}. In other words, $B$ is symmetric, $D$ is invertible and diagonal, and $B$ and $D$ have rational entries. We denote the entries of $B$ and $D$ as $B = (b_{ij})_{i,j \in I}$ and $D^{-1} = \diag(d_i)_{i \in I}$. Note that $d_i$ is defined to be the entry of the \emph{inverse} of $D$, so that we have 
\begin{align}\label{eq:E:BDA}
 b_{ij} = d_i a_{ij}, \quad b_{ij}=b_{ji} \quad (i,j \in I),
\end{align}
matching the convention in \cite[A.1]{JKOS} and \cite{FKO}. Also, recall from \cite[Remark 2.3]{Ka} that $D$ is uniquely determined by the condition $\trs{(D^{-1} A)}=D^{-1} A$ up to a constant factor, where $\trs{M}$ denotes the transpose of a matrix $M$.

Any choice of $D$ will work in the following discussion (see \cref{rmk:E:U} \ref{i:rmk:E:U:d'} for the detail). For definiteness, we make the following choice. Let $\wh{A}=(a_{ij})_{i,j \in \wh{I}}$, $\wh{I} \ceq \{0\} \cup I$ be the generalized Cartan matrix of affine type $X_l^{(1)}$ in the sense of Kac \cite[Theorem 4.3, \S 4.8, TABLE Aff1]{Ka}. We have the labels $a_0,a_1,\dotsc,a_l$ and colabels $a_0^\vee,a_1^\vee,\dotsc,a_l^\vee$ \cite[\S 4.8, \S 6.1]{Ka},  which are the sequences of positive coprime integers uniquely determined by the conditions 
\[
 \wh{A} \, \trs{(a_0,a_1,\dotsc,a_l)} = \trs{(0,0,\dotsc,0)}, \quad 
 (a_0^\vee,a_1^\vee,\dotsc,a_l^\vee) \, \wh{A} = (0,0,\dotsc,0).
\]
Then, by \cite[(6.1.2)]{Ka}, the diagonal matrix $\wh{D}^{-1} \ceq \diag(a_0^\vee/a_0,a_1^\vee/a_1,\dotsc,a_l^\vee/a_l)$ gives a symmetrization $\wh{A}=\wh{D} \wh{B}$. Since $A=\rst{\wh{A}}{I \times I}$, we obtain the symmetrization $A=D B$ with 
\[
 D^{-1} = \diag(d_1,\dotsc,d_n) = \diag(a_1^\vee/a_1,a_2^\vee/a_2,\dotsc,a_l^\vee/a_l).
\]
Explicitly, the entries $d_i$ are given as:
\begin{align}\label{eq:E:D}
\begin{split}
 A_l, D_l, E_{6,7,8}: &\quad (d_1,\dotsc,d_l)=(1,\dotsc,1), \\ 
 B_l: &\quad (d_1,\dotsc,d_l)=(1,\dotsc,1,\thf), \\
 C_l: &\quad (d_1,\dotsc,d_l)=(1,\dotsc,1,2), \\
 F_4: &\quad (d_1,\dotsc,d_4)=(1,1,\thf,\thf) \\
 G_2: &\quad (d_1,d_2)=(1,\tfrac{1}{3}). 
\end{split}
\end{align}
Note that our choices for types $A,D,E$ and $B,C$ coincide with those in \cite{FKO}.

We denote by $\frg$ the Kac-Moody Lie algebra associated to $A=(a_{ij})_{i,j \in I}$, and by $(\frh,\Pi,\Pi^\vee)$ a fixed realization of $A$ \cite[\S 1.1]{Ka}. Thus, $\frg$ is a simple finite-dimensional Lie algebra of type $X_l$, $\frh \subset \frg$ is the Cartan subalgebra with $\dim \frh=l$, and $\Pi = \{\alpha_i \mid i \in I\} \subset \frh^*$ and $\Pi^\vee = \{\alpha^\vee_i \mid i \in I\} \subset \frh$ are linearly independent indexed subsets of the dual space $\frh^*$ and of $\frh$ satisfying 
\[
 \pair{\alpha_i,\alpha^\vee_j} = a_{ji} \quad (i,j \in I).
\]
Here $\pair{\cdot,\cdot}\colon \frh^* \times \frh \to \bbC$ denotes the canonical pairing of the linear space $\frh$ and its linear dual $\frh^*$. The elements $\alpha_i$'s are called \emph{simple roots}, and $\alpha^\vee_i$'s are called \emph{simple coroots}.

We denote by $\ipd{\cdot}{\cdot}\colon \frh \times \frh \to \bbC$ the invariant form of $\frg$ with the normalization 
\begin{align}\label{eq:E:ipd}
 \ipd{\alpha^\vee_i}{\alpha^\vee_j} = b_{i j} \quad (i,j \in I).
\end{align}
Since $\ipd{\cdot}{\cdot}$ is non-degenerate, it induces a linear isomorphism 
\begin{align}\label{eq:E:nu}
 \nu\colon \frh \lsto \frh^* \quad \text{s.t.} \quad 
 \pair{\nu(h),h'} = \ipd{h}{h'} \quad (h,h' \in \frh).
\end{align} 
The induced bilinear form on $\frh$ is denoted by the same symbol $\ipd{\cdot}{\cdot}$. 
We have
\begin{align}\label{eq:E:ipd*}
 \alpha_i = \nu(\alpha^\vee_i), \quad 
 \ipd{\alpha_i}{\alpha_j} = b_{ij} \quad (i,j \in I).
\end{align}

%%%%%%%%%%%%%%%%%%%%%%%%%%%%%%%%%%%%%%%%%%%%%%%%%%%%%%%%%%%%%%%%%%%%%%%%%%%%%%%%%%%%%%%%%%
\subsubsection{Dynamical parameters}\label{sss:E:dyn}

We keep the notation in the last \cref{sss:E:root}. In particular, $I=\{1,\dotsc,l\}$, and $A=(a_{ij})_{i,j \in I}$ denotes the Cartan matrix of finite type $X_l$. We also fix a realization $(\frh,\Pi,\Pi^\vee)$ of $A$, and denote by $\frg$ the corresponding simple Lie algebra.

Let $R \subset \frh^*$ be the root system of $\frg$, which is a reduced irreducible root system of type $X_l$ in the sense of Bourbaki \cite{B}. The set $\Pi = \{\alpha_i \mid i \in I\} \subset R$ gives the simple roots of $R$, which spans \emph{the root lattice} $\clQ \ceq \sum_{i \in I}\bbZ \alpha_i \subset \frh^*$. Similarly, the set $\Pi^\vee = \{\alpha^\vee_i \mid i \in I\} \subset \frh$ gives the simple coroots of $R$, spanning \emph{the coroot lattice} $\clQ^\vee \ceq \sum_{i \in I}\bbZ \alpha^\vee_i \subset \frh$. Then \emph{the weight lattice} $\clP \subset \frh^*$ is defined to be $\clP \ceq \{\lambda \in \frh^* \mid \forall i \in I, \, \pair{\lambda,\alpha_i^\vee} \in \bbZ\}$, where $\pair{\cdot,\cdot}\colon \frh^* \times \frh \to \bbC$ is the canonical pairing as before. We have $\clP = \sum_{i \in I}\bbZ \varpi_i \subset \frh^*$, where $\varpi_i \in \frh^*$ is \emph{the fundamental weight} determined uniquely by the condition $\pair{\varpi_i,\alpha_j^\vee}=\delta_{i,j}$ for any $j \in I$. Recall the well-known inclusions
\[
 \clQ \subset \clP \subset \clQ \otimes_{\bbZ} \bbQ \subset \frh^*.
\]

Let us consider a copy of the $\bbC$-linear space $\frh^*$, regarded as $\frh^* = \clP \otimes_{\bbZ} \bbC$, and denote it by 
\begin{align}\label{eq:E:H}
 H \ceq \{P_\mu \mid \mu \in \frh^*\},
\end{align}
following the notation in \cite{FKO,Ko}. Thus we have $P_{c_1 \mu_1 + c_2 \mu_2}=c_1 P_{\mu_1}+c_2 P_{\mu_2}$ for $c_1,c_2 \in \bbC$ and $\mu_1,\mu_2 \in \frh^*$. A basis of $H$ is given by
\begin{align}\label{eq:E:Hbas}
 P_{\alpha_1},\dotsc,P_{\alpha_l}.
\end{align}

Next, we introduce the $\bbC$-linear space $\wtH$ of dimension $2l$ spanned by the expressions 
$P_i+h_i$ and $P_i$ ($i \in I$):
\begin{align}\label{eq:E:wtH}
 \wtH \ceq \sum_{i \in I}\bbC\bigl(P_i+h_i\bigr) + \sum_{i \in I}\bbC P_i.
\end{align}
Following the notation in \cite{FKO,Ko}, for $h=\sum_{i \in I}c_i \alpha^\vee_i \in \frh$, we denote $P+h \ceq \sum_{i \in I}c_i(P_i+h_i)$. Identifying $(P_i+h_i)-P_i = h_i$ with $\alpha^\vee_i$, we have an isomorphism 
\begin{align}\label{eq:E:wtH=Hh}
 \wtH \cong H \oplus \frh.
\end{align}
Under this identification, the linear space $\wtH$ has the basis
\begin{align}\label{eq:E:wtHbas}
 \{P_i, h_i \mid i \in I\}, \quad P_i = P_{\alpha_i}, \quad h_i \cong \alpha_i^\vee,
\end{align}
which extends the basis \eqref{eq:E:Hbas} of $H$. The elements $P_i$ are called \emph{the dynamical parameters}.

Hereafter we regard $\wtH$ as a commutative Lie algebra over $\bbC$, and consider the field $\clM(\wtH^*)$ of meromorphic functions on the dual space of $\wtH$ as in \cref{ss:pre:Halgd}. We can express an element $F \in \clM(\wtH^*)$ as a function 
\begin{align}\label{eq:E:finF}
 F = F(P+h,P)
\end{align}
of variables $P+h = (P_i+h_i)_{i \in I}$ and $P = (P_i)_{i \in I}$. Then the value of $F$ at $\mu \in \wtH^*$ can be written as $F(\mu) = F\bigl(\pair{\mu,P+h},\pair{\mu,P}\bigr)$, where $\pair{\cdot,\cdot}\colon \wtH^* \times \wtH \to \bbC$ denotes the canonical pairing. In particular, we regard the elements of $\wtH$ as meromorphic functions on $\wtH^*$. We also denote by 
\begin{align}\label{eq:E:clM}
 f(P+h), \, f(P) \ \in \ \clM(\wtH^*)
\end{align}
a meromorphic function containing the variables $P+h=(P_i+h_i)_{i \in I}$ only, and one containing the variables $P=(P_i)_{i \in I}$ only.

Let us also introduce the dual spaces $H^*$ and $\wtH^*$ as 
\begin{align}\label{eq:E:H*}
\begin{split}
&H^* = \{Q_\alpha \mid \alpha \in \frh^*\} = \sum_{i \in I}\bbC Q_i, \quad 
 Q_i \ceq Q_{\alpha_i}, \\ 
&\wtH^* = \sum_{i \in I}\bbC(Q_i+\alpha_i) + \sum_{i \in I}\bbC Q_i.
\end{split}
\end{align}
Here we consider $H^*$ as a copy of $\frh^*$ regarded as $\frh^* = \clQ \otimes_{\bbZ} \bbC$.
Thus $H^*$ has a basis 
\begin{align}\label{eq:E:H*bas}
 \{Q_i, \alpha_i \mid i \in I\}, \quad Q_i \ceq Q_{\alpha_i}.
\end{align}
We call $Q_i$ \emph{the dual} (\emph{dynamical}) \emph{parameters}.
The choice of $Q_i$ and $\alpha_i$ are as follows: For this basis of $H^*$ and the basis \eqref{eq:E:wtHbas} of $H$, the canonical pairing $\pair{\cdot,\cdot}\colon H^* \times H \to \bbC$ has the values 
\begin{align}\label{eq:E:Hpair}
 \pair{Q_i,P_j} = \ipd{\alpha_i}{\alpha_j} = b_{ij}, \quad \pair{\alpha_i,h_j} = a_{ji}, 
\end{align}
and the other pairings are $0$.

We close this part with 

\begin{rmk}
\begin{enumerate}
\item
We modified the notations in \cite{FKO} in several places. In particular, the linear space $\wtH$ is denoted as $H$ in loc.\ cit., and defined to be an extension of ours by $\bbC c$.

\item 
We use the symbol $P+h$ in two meanings: the one is an element $P+h \in \wtH$ associated to $h=\sum_{i \in I}c_i \alpha^\vee_i \in \frh$, and the other is the set of variables $P+h=\bigl(P_i+\nu(h_i)\bigr)_{i \in I}$.

\item
Let us give an example of the calculation of the pairing \eqref{eq:E:Hpair}:
$\pair{Q_i+\alpha_i,P_j+h_j}=\pair{Q_i,P_j}+\pair{\alpha_i,h_j}=\pair{Q_i,P_j}+\pair{\alpha_i,h_j}=b_{ij}+a_{ji}$.
\end{enumerate}
\end{rmk}

%%%%%%%%%%%%%%%%%%%%%%%%%%%%%%%%%%%%%%%%%%%%%%%%%%%%%%%%%%%%%%%%%%%%%%%%%%%%%%%%%%%%%%%%%%
\subsubsection{Definition}\label{sss:E:EA}

Keep the notations in the previous \cref{sss:E:root} and \cref{sss:E:dyn}.
We borrow from \cite[p.106]{FKO} the notations on $q$-numbers. 
Let $q$ be a complex number with $0<\abs{q}<1$, and $i \in I=\{1,2,\dotsc,l\}$.
For $m,n \in \bbN$ with $m \le n$, we define
\begin{gather}
\label{eq:E:qi}
 q_i \ceq q^{d_i}, \quad [n]_i \ceq \frac{q_i^n-q_i^{-n}}{q_i-q_i^{-1}}, \quad 
 [n)_i \ceq \frac{q^n-q^{-n}}{q_i-q_i^{-1}}, \\
\label{eq:E:bini}
 [n]_i! \ceq [n]_i [n-1]_i \dotsm [1]_i, \quad 
 \bnm{m}{n}{i} \ceq \frac{[m]_i!}{[n]_i! \, [m-n]_i!}.
\end{gather}
Let us also fix a formal parameter $p$. Below a \emph{topological algebra over $\bbC\dbr{p}$} means the one with respect to the $p$-adic topology.

The following definition is due to \cite[Appendix A]{JKOS} and \cite[Definition 2.1]{FKO} for general $\wh{\frg}$, and due to \cite{Ko98}, \cite[\S2.2]{Ko09} and \cite[Definition 2.3.1]{Ko} for $\wh{\frg}=\wh{\fsl}_2$.

\begin{dfn}\label{dfn:E:U}
Let $A = A(X_l) = (a_{ij})_{i,j \in I}$ be the Cartan matrix of type $X_l$.
The elliptic algebra 
\[
 U_{q,p}(\wh{\frg}) = U_{q,p}(\wh{\frg}(X_l))
\]
associated to $A$ is a topological algebra over $\bbC\dbr{p}$ generated by 
\begin{align}\label{eq:E:Ugen}
 \clM(\wtH^*), \ q^{\pm c/2}, \ d, \ K^{\pm}_i, \ e_{i,m}, \ f_{i,m}, \ \alpha^\vee_{i,n} 
 \quad (i \in I, \, m \in \bbZ, \, n \in \bbZ \setminus \{0\})
\end{align}
satisfying the following relations.
\begin{itemize}
\item
$\clM(\wtH^*)$ is a subalgebra.

\item 
$q^{\pm c/2}$ are central and $q^{c/2}q^{-c/2}=1$.

\item
$K^{\pm}_i$ are invertible.

\item
It is useful to introduce the generating currents, regarded as formal Laurent series in $z$, by 
\[
 e_i(z) \ceq \sum_{m \in \bbZ}e_{i,m}z^{-m}, \quad 
 f_i(z) \ceq \sum_{m \in \bbZ}f_{i,m}z^{-m}  \quad (i \in I).
\]
Using the functions $F(P),F(P+h) \in \clM(H^*)$ (see \eqref{eq:E:clM}) and the basis elements $Q_i=Q_{\alpha_i}$ of $H^*$ in \eqref{eq:E:H*bas}, the relations containing the elements in $\bbF$ are given as follows.
\begin{align}
\label{eq:E:U:1}
&F(P+h) \, d       = d \, F(P+h), & 
&F(P  ) \, d       = d \, F(P),  
\\
\label{eq:E:U:2}
&F(P+h) \, K_i^\pm = K^\pm_i \, F\bigl(P+h \mp \pair{Q_i,P+h}\bigr), & 
&F(P  ) \, K_i^\pm = K^\pm_i \, F\bigl(P \mp \pair{Q_i,P  }\bigr), 
\\
\label{eq:E:U:3}
&F(P+h) \, e_i(z)  = e_i(z)  \, F(P+h), & 
&F(P  ) \, e_i(z)  = e_i(z)  \, F(P-\pair{Q_i,P}), 
\\
\label{eq:E:U:4}
&F(P+h) \, f_i(z)  = f_i(z)  \, F(P+h-\pair{Q_i, P+h}), & 
&F(P  ) \, f_i(z)  = f_i(z)  \, F(P), 
\\
\label{eq:E:U:5}
&F(P+h) \, \alpha^\vee_{i,n} = \alpha^\vee_{i,n} \, F(P+h). & 
&F(P  ) \, \alpha^\vee_{i,n} = \alpha^\vee_{i,n} \, F(P  ), 
\end{align}
Similarly as the Ding-Iohara algebra in \cref{ss:pre:DI}, the equalities containing the generating currents are understood as those of formal Laurent series in $z$, and the genuine relations are obtained by comparing the coefficients of $z^k$ ($k \in \bbZ$) in both sides.

\item
The remaining relations containing $d$ are given by 
\begin{align}
\label{eq:E:dK}
 [d,K^\pm_i] = 0, \quad 
 [d,e_{i,m}] = m e_{i,m}, \quad 
 [d,f_{i,m}] = m f_{i,m}, \quad
 [d,\alpha^\vee_{j,n}] = n \alpha^\vee_{i,n}.
\end{align}

\item
The remaining relations containing $K^\pm_i$ are given by 
\begin{align}
\label{eq:E:Ke}
 K^\pm_i e_j(z)  = q_i^{\mp a_{ij}} e_j(z) K^\pm_i, \quad 
 K^\pm_i f_j(z)  = q_i^{\pm a_{ij}} f_j(z) K^\pm_i, \quad 
[K^\pm_i, \alpha^\vee_{j,n}]=0.
\end{align}

\item
Using \eqref{eq:E:qi}, the remaining relations containing $\alpha^\vee_{i,n}$ are given by 
\begin{align}
\label{eq:E:2.14}
&[\alpha^\vee_{i,m},\alpha^\vee_{j,n}] = 
 \delta_{m+n,0} \frac{[a_{ij} m]_i [c m)_j}{m} \frac{1-p^m}{1-p^{* m}} q^{-cm}, 
\\
\label{eq:E:2.15}
&[\alpha^\vee_{i,m},e_j(z)] = 
 \frac{[a_{ij}m]_i}{m} \frac{1-p^m}{1-p^{* m}} q^{-cm}z^m e_j(z),
\\
\label{eq:E:2.16}
&[\alpha^\vee_{i,m},f_j(z)] = -\frac{[a_{ij}m]_i}{m} z^m f_j(z).
\end{align}
Here and hereafter, we use 
\begin{align}\label{eq:E:p*}
 p^* \ceq p q^{-2c},
\end{align}
and the term $\frac{1}{1-p^{*m}}$ is understood as the formal series $\sum_{n \ge 0} p^{* m n}$.

\item
For the other relations, it is useful to introduce additional generating currents 
\begin{gather*}
 \psi^+_i(q^{-c/2}z) \ceq K^+_i 
 \exp\Bigl(-(q_i-q_i^{-1})\sum_{n>0} \frac{   \alpha^\vee_{i,-n}}{1-p^n}z^{ n}\Bigr)
 \exp\Bigl( (q_i-q_i^{-1})\sum_{n>0} \frac{p^n\alpha^\vee_{i, n}}{1-p^n}z^{-n}\Bigr), \\
 \psi^-_i(q^{ c/2}z) \ceq K^-_i 
 \exp\Bigl(-(q_i-q_i^{-1})\sum_{n>0} \frac{p^n\alpha^\vee_{i,-n}}{1-p^n}z^{ n}\Bigr)
 \exp\Bigl( (q_i-q_i^{-1})\sum_{n>0} \frac{   \alpha^\vee_{i, n}}{1-p^n}z^{-n}\Bigr),
\end{gather*}
where $\exp(x) \ceq \sum_{n \in \bbN}\frac{1}{n!}x^n$ denotes the formal exponential, and the expression $\frac{1}{1-p^n}$ is understood as $\sum_{k \ge 0} p^{k n}$. One can check that these generating currents have well-defined coefficients in the $p$-adic topology (c.f.\ \cref{rmk:DI:top} \ref{i:rmk:DI:top:p}). Then the remaining relations are given as follows.
\begin{itemize}
\item 
For any $i,j \in I$, the quadratic relations of $e_{i,m}$'s and $f_{i,m}$'s are given by 
\begin{align}
\label{eq:E:ee}
 z \, \gamma(\tfrac{w}{z};q^{ b_{ij}},p^*) e_i(z)e_j(w) &= 
-w \, \gamma(\tfrac{z}{w};q^{ b_{ij}},p^*) e_j(w)e_i(z), \\
\label{eq:E:ff} 
 z \, \gamma(\tfrac{w}{z};q^{-b_{ij}},p  ) f_i(z)f_j(w) &= 
-w \, \gamma(\tfrac{z}{w};q^{-b_{ij}},p  ) f_j(w)f_i(z),
\end{align}
where the structure function $G$ is given by 
\begin{align*}
 \gamma(x;q,p) \ceq \frac{(q x;p)_\infty}{(p q^{-1}x;p)_\infty}.
\end{align*}
See \eqref{eq:ntn:qsf} for the shifted factorial $(x;p)_\infty$. We regard $G(x;q,p) \in \bbC[x^{\pm1}]\dbr{p}$ by expanding the denominator using $\frac{1}{1-x p^k}=\sum_{n \ge 0}(x p^k)^n$. The quantity $b_{ij}$ is the entry of the symmetrization $A=D B$ of the Cartan matrix $A$ with the diagonal matrix $D^{-1}=(d_1,\dotsc,d_l)$ given by \eqref{eq:E:D}. As before, the genuine relations are obtained by expanding both sides into formal Laurent series of $z,w$ and comparing the coefficients of $z^m w^n$ ($m,n \in \bbZ$) in both sides.

\item
For $i,j \in I$ with $i \ne j$, assume $a \ceq 1-a_{ij} \ge 2$. We define 
\begin{align}\label{eq:E:wtgam}
 \wtgm(x;q,p) \ceq \frac{(p q x;p)_\infty}{(p q^{-1} x;p)_\infty},
\end{align}
regarded as a formal series of $p$.
Then, using \eqref{eq:E:bini}, the Serre-type relation of $e_i(z),e_j(z)$ is
\begin{align}\label{eq:E:eS}
\begin{split}
 \sum_{\sigma \in \frS_a} 
&\prod_{1 \le m<n \le a} \wtgm(\tfrac{z_{\sigma(n)}}{z_{\sigma(m)}};q^2,p^*) \\
&\cdot
 \sum_{s=0}^a (-1)^s \bnm{a}{s}{i} 
 \prod_{m=  1}^s \wtgm(\tfrac{w}{z_{\sigma(m)}};q^{b_{ij}},p^*)
 \prod_{m=s+1}^a \wtgm(\tfrac{z_{\sigma(m)}}{w};q^{b_{ij}},p^*) \\
&\cdot
 e_i(z_{\sigma(1)})   \dotsm e_i(z_{\sigma(s)}) e_j(w) 
 e_i(z_{\sigma(s+1)}) \dotsm e_i(z_{\sigma(a)}) = 0,
\end{split}
\end{align}
and that of $f_i(z),f_j(z)$ is
\begin{align}
\label{eq:E:fS}
\begin{split}
 \sum_{\sigma \in \frS_a} 
&\prod_{1 \le m<n \le a} \wtgm(\tfrac{z_{\sigma(n)}}{z_{\sigma(m)}};q^{-2},p) \\
&\cdot
 \sum_{s=0}^a (-1)^s \bnm{a}{s}{i} 
 \prod_{m=  1}^s \wtgm(\tfrac{w}{z_{\sigma(m)}};q^{-b_{ij}},p)
 \prod_{m=s+1}^a \wtgm(\tfrac{z_{\sigma(m)}}{w};q^{-b_{ij}},p) \\
&\cdot
 f_i(z_{\sigma(1)})   \dotsm f_i(z_{\sigma(s)}) f_j(w) 
 f_i(z_{\sigma(s+1)}) \dotsm f_i(z_{\sigma(a)}) = 0.
\end{split}
\end{align}
Here $\frS_a$ denotes the symmetric group of degree $a$.

\item
Finally, the relations of $e_{i,m}$ and $f_{i,m}$ are given by 
\begin{align}
\label{eq:E:ef}
 [e_i(z),f_j(w)] = \frac{\delta_{i,j}}{q-q^{-1}}
 \bigl(\delta(q^{-c}\tfrac{z}{w}) \psi^-_i(q^{ c/2}w)
      -\delta(q^{ c}\tfrac{z}{w}) \psi^+_i(q^{-c/2}z)\bigr),
\end{align}
where $\delta$ denotes the formal delta function 
$\delta(x) \ceq \sum_{n \in \bbZ} x^n$ in \eqref{eq:DI:delta}.
\end{itemize}
\end{itemize}
\end{dfn}

\begin{rmk}\label{rmk:E:U}
Some comments on \cref{dfn:E:U} are in order.
\begin{enumerate}
\item
If we take $p$ to be a complex number such that $\abs{p}<1$, then $U_{p,q}(\wh{g})$ is a genuine algebra over $\bbC$.

\item
The definition in \cite[Definition 2.1]{FKO} and \cite[Definition 2.3.1]{Ko} has a typo. There $U_{q,p}(\wh{\frg})$ is defined to be a topological algebra over $\bbF\dbr{p}$ with $\bbF \ceq \clM(P(\frh)^*)$, but then the functions $F(P+h),F(P) \in \bbF$ would commute with the other generators. A correct definition of the $\wh{\fsl}_2$ case is given in \cite[\S2.2]{Ko09}.

\item 
The relations $[d,K^\pm_i]=0$ are missed in \cite[Definition 2.1]{FKO}.

\item
The relations \eqref{eq:E:ee} and \eqref{eq:E:ff} can be rewritten as 
\begin{align}
\label{eq:E:ee''}
 z \, \theta^*(q^{ b_{ij}}\tfrac{w}{z}) e_i(z) e_j(w) &= 
-w \, \theta^*(q^{ b_{ij}}\tfrac{z}{w}) e_j(w) e_i(z), \\
\label{eq:E:ff''}
 z \, \theta  (q^{-b_{ij}}\tfrac{w}{z}) f_i(z) f_j(w) &= 
-w \, \theta  (q^{-b_{ij}}\tfrac{z}{w}) f_j(w) f_i(z).
\end{align}
where we used the theta series 
\begin{align}\label{eq:E:theta}
 \theta(x) \ceq \theta(x;p), \quad \theta^*(x) \ceq \theta(x;p^*), \quad 
 \theta(x;p) \ceq (x,p x^{-1};p)_{\infty} \in \bbC[x^{\pm1}]\dbr{p}.
\end{align}
See \eqref{eq:ntn:qsf} for the symbol of infinite product.
These relations are given in \cite[(A.17), (A.18)]{JKOS}.

\item \label{i:rmk:E:U:d'}
The definition of $U=U_{q,p}$ is independent of the choice of symmetrization $A=D B$, $D^{-1}=(d_1,\dotsc,d_l)$, of the Cartan matrix $A$ up to isomorphism. Indeed, if we take another symmetrization $A=D' B'$ with $D'^{-1}=(d'_1,\dotsc,d'_l)$, then by \cite[Remark 2.4]{Ka}, there is an $r \in \bbQ \setminus \{0\}$ such that $d'_i = r d_i$ for any $i \in I$. Thus, replacing 
\[
 q \lmto q' \ceq q^{1/r}, \quad c \lmto c' \ceq r c,
\]
we have $q_i=q^{d_i}=(q')^{d'_i}=q'_i$, $q^{-2c}=(q')^{-2c'}$ and $q^{b_{ij}}=q^{d_i a_{ij}}=(q')^{d'_i a_{ij}}=(q')^{b'_{ij}}$, so that the defining relations \eqref{eq:E:Ke}--\eqref{eq:E:2.16}, \eqref{eq:E:ee}--\eqref{eq:E:ef} are preserved. Thus, we have an isomorphism $U_{q,p} \cong U_{q',p}$ of topological algebras.
\end{enumerate}
\end{rmk}

For the later purpose, let us rewrite some of the defining relations.

\begin{lem}\label{lem:E:pef}
The relations \eqref{eq:E:2.15} and \eqref{eq:E:2.16} are equivalent (under \eqref{eq:E:Ke}) to the following current relations.
\begin{align}
% \psi_i^{\pm}(z)\psi_j^{\pm}(w) &= 
% \frac{g^*_{ij}(\frac{z}{w})}{g_{ij}(\frac{z}{w})}\psi_j^{\pm}(w)\psi_i^{\pm}(z), &
% \psi_i^{+}(z)\psi_j^{-}(w) &= 
% \frac{g^*_{ij}(q^{-c}\frac{z}{w})}{g_{ij}(q^c\frac{z}{w})}\psi_j^{-}(w)\psi_i^{+}(z), \\
\label{eq:E:ppe'}
 \psi_i^+(z)e_j(w) &= g_{ij}^*(q^{-c/2}\tfrac{z}{w})e_j(w)\psi_i^+(z), &
 \psi_i^+(z)f_j(w) &= g_{ij}(q^{c/2}\tfrac{z}{w})^{-1}f_j(w)\psi_i^+(z), 
\\
\label{eq:E:pme'}
 \psi_i^-(z)e_j(w) &= g_{ij}^*(q^{c/2}\tfrac{z}{w})e_j(w)\psi_i^-(z), &
 \psi_i^-(z)f_j(w) &= g_{ij}(q^{-c/2}\tfrac{z}{w})^{-1}f_j(w)\psi_i^-(z).
\end{align}
Here the formal series $g_{ij}(x)=g_{ij}(x;p)$, $g_{ij}^*(x)=g_{ij}(x;p^*)$ are the expansion of 
\begin{align}\label{eq:E:gij}
 g_{ij}(x;p) = \frac{G^+_{ij}(x;p)}{G^-_{ij}(x;p)} \ceq 
 \frac{q^{-b_{ij}} \theta(q^{b_{ij}}x;p)}{\theta(q^{-b_{ij}}x;p)}
\end{align}
with respect to $p$, where $\theta (x;p)$ is given in \eqref{eq:E:theta}.
\end{lem}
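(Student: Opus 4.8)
The plan is to verify the current relations \eqref{eq:E:ppe'} and \eqref{eq:E:pme'} by conjugating $e_j(w)$ and $f_j(w)$ by $\psi^\pm_i$, reading off the resulting scalar factor, and then observing that every step is reversible, which yields the claimed equivalence. The crucial structural input is that, by \eqref{eq:E:2.15} and \eqref{eq:E:2.16}, each mode $\alpha^\vee_{i,m}$ acts on $e_j(w)$ and $f_j(w)$ by multiplication by a scalar Laurent monomial in $w$; hence for any $A$ in the completed span of the $\alpha^\vee_{i,m}$ we have $[A,e_j(w)]=\mu\,e_j(w)$ with $\mu$ a central scalar series, so that $e^A e_j(w) e^{-A}=e^\mu e_j(w)$ (the higher terms of the adjoint expansion collapse because $\mu$ is central), and likewise for $f_j(w)$.

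First I would factor $\psi^+_i(q^{-c/2}z)=K^+_i\,e^{A^+}e^{B^+}$ according to its defining formula, where $A^+$ collects the negative modes $\alpha^\vee_{i,-n}$ with coefficient $-(q_i-q_i^{-1})\frac{z^n}{1-p^n}$, and $B^+$ collects the positive modes $\alpha^\vee_{i,n}$. Conjugating $e_j(w)$ then produces three scalar contributions: the factor $q_i^{-a_{ij}}=q^{-b_{ij}}$ from $K^+_i$ via \eqref{eq:E:Ke}, and two exponentiated scalar series $\mu_A,\mu_B$ computed from \eqref{eq:E:2.15}. Using $p^*=pq^{-2c}$ from \eqref{eq:E:p*}, the elementary identities $\frac{1-p^{-n}}{1-p^n}=-p^{-n}$ and $\frac{1}{1-p^{*-n}}=-\frac{p^{*n}}{1-p^{*n}}$ collapse the various $p$- and $q^c$-shifts, and I expect to obtain
\[
 \mu_A+\mu_B = -(q_i-q_i^{-1})\sum_{n>0}\frac{[a_{ij}n]_i}{n(1-p^{*n})}\bigl(x^n-p^{*n}x^{-n}\bigr), \quad x=q^{-c}z/w.
\]

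Next I would match this against the structure function. Expanding the theta factors in \eqref{eq:E:gij} as formal $p$-series via $\log\theta(y;p)=-\sum_{n\ge1}\frac{1}{n(1-p^n)}(y^n+p^ny^{-n})$ and using $q^{nb_{ij}}-q^{-nb_{ij}}=(q_i-q_i^{-1})[a_{ij}n]_i$, one finds $\log g_{ij}(x;p)=-b_{ij}\log q-(q_i-q_i^{-1})\sum_{n\ge1}\frac{[a_{ij}n]_i}{n(1-p^n)}(x^n-p^nx^{-n})$; replacing $p$ by $p^*$ gives exactly $\log g^*_{ij}(x)=-b_{ij}\log q+(\mu_A+\mu_B)$. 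Hence $\psi^+_i(q^{-c/2}z)e_j(w)\psi^+_i(q^{-c/2}z)^{-1}=g^*_{ij}(q^{-c}z/w)e_j(w)$, which is the first relation in \eqref{eq:E:ppe'} after the substitution $z\mapsto q^{c/2}z$. The $f_j$ relation is analogous but uses \eqref{eq:E:2.16}, which carries neither the $\frac{1-p^m}{1-p^{*m}}$ factor nor the $q^{-cm}$ shift; this is precisely why it matches $g_{ij}$ (with nome $p$) rather than $g^*_{ij}$, and the prefactor from $K^+_i$ is now $q_i^{+a_{ij}}=q^{b_{ij}}$, producing the reciprocal $g_{ij}(q^{c/2}z/w)^{-1}$. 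The relations \eqref{eq:E:pme'} for $\psi^-_i$ follow by the same computation applied to $\psi^-_i(q^{c/2}z)$, with the roles of the two nomes and the signs of the $q^c$-shifts interchanged.

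Finally, for the converse implication I would run the computation backwards. Because the exponent of $\psi^+_i$ involves $\alpha^\vee_{i,-n}$ only through positive powers $z^n$ and $\alpha^\vee_{i,n}$ only through negative powers $z^{-n}$, the positive and negative Fourier parts of the conjugation relation decouple; since the coefficients $-(q_i-q_i^{-1})\frac{1}{1-p^n}$ and $(q_i-q_i^{-1})\frac{p^n}{1-p^n}$ are invertible, comparing the coefficient of each $z^{\pm n}$ in \eqref{eq:E:ppe'} recovers the single commutator $[\alpha^\vee_{i,\mp n},e_j(w)]$, that is, \eqref{eq:E:2.15}; likewise \eqref{eq:E:pme'} returns \eqref{eq:E:2.16}. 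I expect the main obstacle to be purely bookkeeping: keeping the interlocking $p$-versus-$p^*$ substitutions, the $q^{\pm c}$ spectral shifts, and the $q^{\mp b_{ij}}$ normalization constants consistent through the logarithmic expansion, while checking at each stage that the exponents and the $\log\theta$ expansion are well defined as formal power series in $p$, so that all manipulations are legitimate in the $p$-adic topology of \cref{rmk:DI:top}\ref{i:rmk:DI:top:p}.
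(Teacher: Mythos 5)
Your proposal is correct and follows essentially the same route as the paper's proof: you conjugate $e_j(w)$ (resp.\ $f_j(w)$) by the exponential factors of $\psi^\pm_i$, use that the commutators are central scalar series so the adjoint expansion collapses, and match the resulting exponential against the $p^*$- (resp.\ $p$-) expansion of $g_{ij}$, where your logarithmic expansion of $\theta$ is just the additive form of the paper's identity $\exp\bigl(-\sum_{n>0}x^n/n\bigr)=1-x$. The paper in fact only writes out the forward derivation of the single relation $\psi^+_i(z)e_j(w)=g^*_{ij}(q^{-c/2}z/w)\,e_j(w)\,\psi^+_i(z)$ and leaves the remaining cases and the converse implicit, so your additional treatment of the $f_j$ and $\psi^-_i$ cases and your (admittedly sketchy) mode-by-mode argument for the converse go beyond what the paper records.
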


\begin{proof}
We only derive $\psi_i^+(z)e_j(w) = g_{ij}^*(q^{-c/2}\tfrac{z}{w})e_j(w)\psi_i^+(z)$ from \eqref{eq:E:2.15} and \eqref{eq:E:Ke}. Using the relation $\exp(A) \exp(B) = \exp([A,B])\exp(B) \exp(A)$ in case $[A,B]$ commutes with $A$ and $B$, we have $\psi^+_i(z) e_j(w) = C(\frac{z}{w}) e_j(w) \psi^+_i(z)$ with 
\begin{align*}
 C(x) \ceq q_i^{-a_{ij}} 
 \exp\Bigl[-\sum_{n >0} \frac{1}{n} \frac{q_i^{a_{ij} n}-q_i^{-a_{ij}n}}{1-p^{*n}} 
            (q^{-c/2} x)^n\Bigr]
 \exp\Bigl[ \sum_{n >0} \frac{1}{n} \frac{q_i^{a_{ij} n}-q_i^{-a_{ij}n}}{1-p^{*n}}
            (p^* q^{ c/2}x^{-1})^n\Bigr].
\end{align*}
Then, using $\exp(-\sum_{n>0}\frac{1}{n}x^n)=1-x$ and $q_i^{a_{ij}}=q^{d_i a_{ij}}=q^{b_{ij}}$, we have
\begin{align*}
 C(x) = q^{-b_{ij}} 
 \frac{(    q^{ b_{ij}}q^{-c/2}x;p^*)_\infty}
      {(    q^{-b_{ij}}q^{-c/2}x;p^*)_\infty}
 \frac{(p^* q^{-b_{ij}}q^{ c/2}x^{-1};p^*)_\infty}
      {(p^* q^{ b_{ij}}q^{ c/2}x^{-1};p^*)_\infty} 
    = q^{-b_{ij}} \frac{\theta(q^{ b_{ij}}q^{-c/2}x;p^*)}{\theta(q^{-b_{ij}}q^{-c/2}x;p^*)}
    = g^*_{ij}(q^{-c/2}x).
\end{align*}
Thus we have the desired equality.
\end{proof}

\begin{lem}\label{lem:E:pp}
The relations \eqref{eq:E:2.14} are equivalent (under \eqref{eq:E:Ke}) to the current relations
\begin{align}\label{eq:E:pp'}
 \psi_i^{\pm}(z)\psi_j^{\pm}(w) = 
 \frac{g^*_{ij}(\frac{z}{w})}{g_{ij}(\frac{z}{w})}\psi_j^{\pm}(w)\psi_i^{\pm}(z), \quad 
 \psi_i^{+}(z)\psi_j^{-}(w) = 
 \frac{g^*_{ij}(q^{-c}\frac{z}{w})}{g_{ij}(q^c\frac{z}{w})}\psi_j^{-}(w)\psi_i^{+}(z)
\end{align}
with $g_{ij}(x)=g_{ij}(x;p)$ and $g_{ij}^*(x)=g_{ij}(x;p^*)$ given in \eqref{eq:E:gij}.
\end{lem}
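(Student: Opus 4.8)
The plan is to imitate the proof of \cref{lem:E:pef}: I would write each current $\psi^\pm_i(z)$ as a Cartan group-like element $K^\pm_i$ times two exponentials, one built from the modes $\alpha^\vee_{i,-n}$ ($n>0$) and one from the $\alpha^\vee_{i,n}$ ($n>0$), and then reorder two currents by the elementary identity $\exp(A)\exp(B)=\exp([A,B])\exp(B)\exp(A)$, valid whenever $[A,B]$ is central. Since the Heisenberg bracket \eqref{eq:E:2.14} of any two modes is a scalar, every commutator of exponents encountered is indeed central, so this identity applies. Moreover, by \eqref{eq:E:Ke} the $K^\pm_i$ commute with all $\alpha^\vee_{j,n}$ and with each other, so the $K$-factors give $K^\pm_iK^\pm_j=K^\pm_jK^\pm_i$ on both sides and cancel; likewise the diagonal self-commutators $[\alpha^\vee_{i,-m},\alpha^\vee_{i,n}]$ produce the same order-independent scalar prefactor on both sides and drop out. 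Thus the entire content of the lemma is carried by one cross-commutator.

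Concretely, after the substitution $z\mapsto q^{c/2}z$ in the defining formula I set $\psi^+_i(z)=K^+_i\exp(E^-_i(z))\exp(E^+_i(z))$ and $A_i(z)\ceq E^-_i(z)+E^+_i(z)$, so that the reordering scalar $C(z/w)$ in $\psi^+_i(z)\psi^+_j(w)=C(z/w)\,\psi^+_j(w)\psi^+_i(z)$ equals $\exp([A_i(z),A_j(w)])$. Because \eqref{eq:E:2.14} carries the factor $\delta_{m+n,0}$, only the mixed brackets $[E^+_i(z),E^-_j(w)]$ and $[E^-_i(z),E^+_j(w)]$ survive, each collapsing to a single sum over $m>0$. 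Using $q_i^{a_{ij}}=q^{d_ia_{ij}}=q^{b_{ij}}$ together with the definitions \eqref{eq:E:qi}, the prefactor $(q_i-q_i^{-1})(q_j-q_j^{-1})[a_{ij}m]_i[cm)_j$ telescopes to $(q^{b_{ij}m}-q^{-b_{ij}m})(q^{cm}-q^{-cm})$, so that I expect
\[
 [A_i(z),A_j(w)]=\sum_{m>0}\frac{(q^{b_{ij}m}-q^{-b_{ij}m})(q^{cm}-q^{-cm})q^{-cm}\,p^m}{m\,(1-p^m)(1-p^{*m})}\bigl((\tfrac{z}{w})^m-(\tfrac{w}{z})^m\bigr).
\]

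It then remains to identify this series with $\log\bigl(g^*_{ij}(z/w)/g_{ij}(z/w)\bigr)$. I would expand the theta quotient in \eqref{eq:E:gij} via $\theta(x;p)=(x;p)_\infty(px^{-1};p)_\infty$ and $\log(1-u)=-\sum_{m\ge1}u^m/m$, obtaining $\log g_{ij}(x;p)$ as a $p$-independent constant minus $\sum_{m\ge1}\frac{q^{b_{ij}m}-q^{-b_{ij}m}}{m(1-p^m)}(x^m-p^mx^{-m})$; replacing $p$ by $p^*$ gives $\log g^*_{ij}$, and the constant cancels in the ratio. Subtracting, the difference of the two bracketed terms simplifies to $(p^{*m}-p^m)(x^m-x^{-m})/\bigl((1-p^m)(1-p^{*m})\bigr)$, and substituting $p^*=pq^{-2c}$ from \eqref{eq:E:p*} turns $p^{*m}-p^m$ into $-p^m(1-q^{-2cm})=-p^m(q^{cm}-q^{-cm})q^{-cm}$, reproducing the series above exactly. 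This yields the first relation in \eqref{eq:E:pp'}; the $\psi^-\psi^-$ relation comes from the same computation, and the mixed $\psi^+\psi^-$ relation is identical up to tracking the extra central shifts $q^{\mp c}$ through the arguments of $g_{ij}$ and $g^*_{ij}$. The converse implication follows by taking logarithms of \eqref{eq:E:pp'} and comparing the coefficient of each $z^mw^{-m}$, which recovers the mode bracket \eqref{eq:E:2.14}.

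The hard part will be purely the bookkeeping: correctly propagating the substitutions $z\mapsto q^{\pm c/2}z$, keeping the distinction between $p$ and $p^*=pq^{-2c}$ straight, and matching the signs between the commutator sum and the logarithmic expansion of the theta quotient, together with the central shifts $q^{\pm c}$ that appear only in the $\psi^+\psi^-$ case.
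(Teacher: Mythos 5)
Your proposal is correct and follows the same core strategy as the paper's proof: both write $\psi^\pm_i(z)$ as $K^\pm_i$ times two exponentials in the modes $\alpha^\vee_{i,\pm n}$, reorder two currents with the identity $\exp(A)\exp(B)=\exp([A,B])\exp(B)\exp(A)$ (the $K$'s commuting by \eqref{eq:E:Ke}), and reduce the lemma to identifying a central scalar with $g^*_{ij}/g_{ij}$; your series for $[A_i(z),A_j(w)]$ agrees exactly with the paper's $C_{ji}(\tfrac{z}{w})\,C_{ij}(\tfrac{w}{z})^{-1}$. The one place where you genuinely diverge is the final identification: the paper exponentiates the commutator series into two-parameter shifted factorials $(x;p,p^*)_\infty$ and then telescopes them via $\frac{(xp;p,p^*)_\infty}{(xp^*;p,p^*)_\infty}=\frac{(xp;p)_\infty}{(xp^*;p^*)_\infty}$ down to theta quotients, whereas you take logarithms of the theta quotient \eqref{eq:E:gij} and match coefficients term by term, using $p^m-p^{*m}=p^m(q^{cm}-q^{-cm})q^{-cm}$. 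Both verify the same identity; your route avoids introducing the double infinite products and the telescoping lemma, at the cost of working with formal logarithmic series, and your remark that the mixed $\psi^+\psi^-$ case only differs by tracking the shifts $q^{\pm c/2}$ through the arguments (producing the asymmetric $q^{\mp c}$ in \eqref{eq:E:pp'}) is accurate — it checks out by the same coefficient comparison.
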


\begin{proof}
Let us derive $\psi_i^{\pm}(z)\psi_j^{\pm}(w) = \frac{g^*_{ij}(\frac{z}{w})}{g_{ij}(\frac{z}{w})}\psi_j^{\pm}(w)\psi_i^{\pm}(z)$. The same argument as in \cref{lem:E:pef} yields 
\[
 \psi_i^{\pm}(z) \psi_j^{\pm}(w) = 
 C_{ji}(\tfrac{z}{w}) C_{ij}(\tfrac{w}{z})^{-1} \psi_j^{\pm}(w) \psi_i^{\pm}(z),
\]
where, using $(x;p,p^*)_\infty \ceq \prod_{n \in \bbN}(x p^{*n};p)_\infty = \prod_{m,n\in\bbN}(1-xp^m p^{*n})$, we set 
\[
 C_{ji}(x) \ceq \exp\Bigl[\sum_{n>0} \frac{1}{n} 
  \frac{q_j^{a_{ji}n}-q_j^{-a_{ji}n}}{1-p^n} \frac{1-q^{-2cn}}{1-p^{*n}} (p x)^n\Bigr]. 
\]
By $\exp(-\sum_{n>0}\frac{1}{n}x^n)=1-x$ and $q_j^{a_{ji}}=q^{d_j a_{ji}}=q^{b_{ji}}$, we have
\[
 C_{ji}(x) = \frac{(q_j^{-a_{ji}}x p  ,q_i^{a_{ji}}x p^*;p,p^*)_\infty}
                  {(q_j^{-a_{ji}}x p^*,q_j^{a_{ji}}x p  ;p,p^*)_\infty}
           = \frac{(q^{-b_{ji}}x p  ,q^{b_{ji}}x p^*;p,p^*)_\infty}
                  {(q^{-b_{ji}}x p^*,q^{b_{ji}}x p  ;p,p^*)_\infty}.
\]
Then, using 
\[
 \frac{(x p;p,p^*)_\infty}{(x q;p,p^*)_\infty} = 
 \frac{\prod_{m,n \in \bbN}(1-x p^{m+1}p^{*n})}{\prod_{m,n \in \bbN}(1-x p^m (p^*)^{n+1})} = 
 \frac{\prod_{m \in \bbN}(1-x p^{m+1})}{\prod_{n \in \bbN}(1-x (p^*)^{n+1})} = 
 \frac{(x p;p)_\infty}{(x p^*;p^*)_\infty}
\]
and $b_{ij}=b_{ji}$, we have
\begin{align*}
  C_{ji}(x) C_{ij}(x^{-1})^{-1} 
&=\frac{(q^{-b_{ji}}x     p  ;p  )_\infty}{(q^{-b_{ji}}x p^*,p^*)_\infty}
  \frac{(q^{ b_{ji}}x     p^*;p^*)_\infty}{(q^{ b_{ji}}x p  ;p  )_\infty} \cdot 
  \frac{(q^{ b_{ij}}x^{-1}p^*;p^*)_\infty}{(q^{ b_{ij}}x^{-1}p  ;p  )_\infty}
  \frac{(q^{ b_{ij}}x^{-1}p  ;p  )_\infty}{(q^{ b_{ij}}x^{-1}p^*;p^*)_\infty} \\
&=\frac{(q^{-b_{ij}}x,p q^{b_{ij}}/x;p)_\infty}{(q^{b_{ij}}x,p q^{-b_{ij}}/x;p)_\infty}
  \frac{1-q^{b_{ij}}x}{1-q^{-b_{ij}}x} \cdot 
  \frac{(q^{b_{ij}}x,p^* q^{-b_{ij}}/x;p^*)_\infty}{(q^{-b_{ij}}x,p^* q^{b_{ij}}/x;p^*)_\infty}
  \frac{1-q^{-b_{ij}}x}{1-q^{b_{ij}}x} \\
&=\frac{\theta(q^{ b_{ij}}x;p^*)}{\theta(q^{-b_{ij}}x;p^*)} 
  \frac{\theta(q^{-b_{ij}}x;p  )}{\theta(q^{ b_{ij}}x;p)}
 =\frac{g_{ij}^*(x)}{g_{ij}(x)}.
\end{align*}
\end{proof}

\begin{lem}\label{lem:E:ee}
The relations \eqref{eq:E:ee} and \eqref{eq:E:ff} can be rewritten as 
\begin{align}
\label{eq:E:ee'}
 G_{ij}^{-*}(\tfrac{z}{w}) e_i(z) e_j(w)  = 
 G_{ij}^{+*}(\tfrac{z}{w}) e_j(w) e_i(z), \quad
 G_{ij}^{+ }(\tfrac{z}{w}) f_i(z) f_j(w) = 
 G_{ij}^{- }(\tfrac{z}{w}) f_j(w) f_i(z),
% z \, G_{ij}^{+*}(\tfrac{w}{z}) e_i(z) e_j(w) &= 
%-w \, G_{ij}^{+*}(\tfrac{z}{w}) e_j(w) e_i(z), \\
% z \, G_{ij}^{- }(\tfrac{w}{z}) f_i(z) f_j(w) &= 
%-w \, G_{ij}^{- }(\tfrac{z}{w}) f_j(w) f_i(z),
\end{align}
respectively, where we used $G_{ij}^{\pm}(x) \ceq G_{ij}^{\pm}(x;p)$ and $G_{ij}^{\pm *}(x) \ceq G_{ij}^{\pm}(x;p^*)$ with $G_{ij}^{+}(x;p) \ceq q^{-b_{ij}} \theta(q^{b_{ij}}x;p)$, $G_{ij}^-(x;p) \ceq \theta(q^{-b_{ij}}x;p)$ as introduced in \eqref{eq:E:gij}.
\end{lem}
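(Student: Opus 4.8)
The plan is to show that each of the current relations \eqref{eq:E:ee} and \eqref{eq:E:ff} passes to the corresponding relation in \eqref{eq:E:ee'} after multiplying through by a single \emph{invertible} series, so that the two presentations generate the same relation. I treat the $e$-relation in detail; the $f$-relation is handled verbatim after replacing $p^*$ by $p$ and $b_{ij}$ by $-b_{ij}$. Dividing \eqref{eq:E:ee} through by its common prefactor $w$ and writing $x \ceq z/w$, both coefficients become functions of $x$ alone and the relation reads $x\,\gamma(x^{-1};q^{b_{ij}},p^*)\,e_i(z)e_j(w) = -\gamma(x;q^{b_{ij}},p^*)\,e_j(w)e_i(z)$, while the definitions $G^+_{ij}(x;p)=q^{-b_{ij}}\theta(q^{b_{ij}}x;p)$ and $G^-_{ij}(x;p)=\theta(q^{-b_{ij}}x;p)$ turn the first relation of \eqref{eq:E:ee'} into $\theta(q^{-b_{ij}}x;p^*)\,e_i(z)e_j(w)=q^{-b_{ij}}\theta(q^{b_{ij}}x;p^*)\,e_j(w)e_i(z)$. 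Both have the form $A\,e_i(z)e_j(w)=B\,e_j(w)e_i(z)$, so it suffices to exhibit a unit $M(x)$ carrying the pair $\bigl(x\,\gamma(x^{-1}),\,-\gamma(x)\bigr)$ to a common scalar multiple of $\bigl(\theta(q^{-b_{ij}}x;p^*),\,q^{-b_{ij}}\theta(q^{b_{ij}}x;p^*)\bigr)$.

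The multiplier is $M(x)\ceq (p^*q^{-b_{ij}}x;p^*)_\infty\,(p^*q^{-b_{ij}}x^{-1};p^*)_\infty$. Expanding $\gamma$ and $\theta$ through $\gamma(x;q,p)=(qx;p)_\infty/(pq^{-1}x;p)_\infty$ and $\theta(x;p)=(x;p)_\infty(px^{-1};p)_\infty$, the first factor of $M$ cancels the denominator of $\gamma(x;q^{b_{ij}},p^*)$ and gives at once $M(x)\,\gamma(x;q^{b_{ij}},p^*)=\theta(q^{b_{ij}}x;p^*)$. The only genuine computation is the companion identity $M(x)\,x\,\gamma(x^{-1};q^{b_{ij}},p^*)=-q^{b_{ij}}\theta(q^{-b_{ij}}x;p^*)$: once the second factor of $M$ cancels the denominator of $\gamma(x^{-1};q^{b_{ij}},p^*)$, one applies the elementary recursion $(a;p^*)_\infty=(1-a)(p^*a;p^*)_\infty$ to $(q^{b_{ij}}x^{-1};p^*)_\infty$ and to $(q^{-b_{ij}}x;p^*)_\infty$, whereupon the leftover rational prefactors collapse via $x(1-q^{b_{ij}}x^{-1})=x-q^{b_{ij}}=-q^{b_{ij}}(1-q^{-b_{ij}}x)$. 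Multiplying the reduced form of \eqref{eq:E:ee} by $M(x)$ then yields $-q^{b_{ij}}\theta(q^{-b_{ij}}x;p^*)\,e_i(z)e_j(w)=-\theta(q^{b_{ij}}x;p^*)\,e_j(w)e_i(z)$, and dividing by $-q^{b_{ij}}$ reproduces exactly the target relation.

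It remains to record that $M(x)$ is a unit. Each factor equals $\prod_{n\ge1}\bigl(1-q^{-b_{ij}}x^{\pm1}(p^*)^n\bigr)$ and hence lies in $1+p\,\bbC[x^{\pm1}]\dbr{p}$, once one notes that $p^*=pq^{-2c}$ with $q^{\pm c/2}$ central and invertible, so that the $p$- and $p^*$-adic topologies coincide; therefore $M(x)$ is invertible. Since multiplication by $M(x)$ (a unit) and by $w^{-1}$ (a Laurent-degree shift) are both reversible operations on formal Laurent series in $z,w$, the relations \eqref{eq:E:ee} and the first relation of \eqref{eq:E:ee'} are equivalent. The $f$-case runs identically, the analogous companion identity producing the scalar $-q^{-b_{ij}}$ and reproducing the second relation of \eqref{eq:E:ee'}. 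The main obstacle is nothing more than the bookkeeping in the companion identity, where the asymmetric prefactor $x$ inherited from the $z,w$ coefficients in \eqref{eq:E:ee} must be absorbed by a single $p^*$-shift of the theta product to produce the correct sign and power of $q$; there is no conceptual difficulty.
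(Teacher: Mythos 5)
Your proof is correct, and it is worth comparing with the paper's, which is organized differently. The paper does not work from \eqref{eq:E:ee} directly: it first invokes the rewritten form \eqref{eq:E:ee''} from \cref{rmk:E:U}(4) (quoted from \cite[(A.17), (A.18)]{JKOS} rather than proved), which is precisely the passage from the $\gamma$-coefficients to the symmetric $\theta^*$-coefficients, and then concludes with a single theta-inversion identity relating $G_{ij}^{+*}$ and $G_{ij}^{-*}$. Your multiplier $M(x)$ accomplishes both steps in one stroke: the identity $M(x)\,\gamma(x;q^{b_{ij}},p^*)=\theta(q^{b_{ij}}x;p^*)$ is exactly the content of the remark, and your companion identity $M(x)\,x\,\gamma(x^{-1};q^{b_{ij}},p^*)=-q^{b_{ij}}\theta(q^{-b_{ij}}x;p^*)$ encapsulates the inversion; I checked both, and the chain does land on \eqref{eq:E:ee'}, with the $f$-case following by the substitution $p^*\to p$, $b_{ij}\to -b_{ij}$ as you say. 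What your route buys is self-containedness and rigor on exactly the point the paper leaves implicit: you record that $M(x)\equiv 1 \pmod{p}$ is a unit, which is what upgrades ``multiply both sides'' into a genuine equivalence of defining relations rather than a one-way implication. It also sidesteps a typo in the paper's proof: the inversion identity is stated there as $x^{-1} G_{ij}^{+*}(x^{-1}) = G_{ij}^{-*}(x)$, whereas the identity your computation effectively establishes (and the correct one) is $x\,G_{ij}^{+*}(x^{-1})=-G_{ij}^{-*}(x)$. One small imprecision on your side: since $p^*=p q^{-2c}$, the coefficients of $M(x)$ involve the central element $q^{-2c}$, so $M(x)$ lies in $1+p\,Z[x^{\pm1}]\dbr{p}$ with $Z$ the central subalgebra generated by $q^{\pm c/2}$, not literally in $1+p\,\bbC[x^{\pm1}]\dbr{p}$; as you yourself note, centrality and invertibility of $q^{\pm c/2}$ make this harmless, but the statement should be phrased accordingly.
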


\begin{proof}
Recall the rewritten form \eqref{eq:E:ee''} of \eqref{eq:E:ee}. 
Then, \eqref{eq:E:ee''} can be rewritten as 
\[
 z \, G_{ij}^{+*}(\tfrac{w}{z}) e_i(z) e_j(w) = -w \, G_{ij}^{+*}(\tfrac{z}{w}) e_j(w) e_i(z),
\]
which is equivalent to the first half of \eqref{eq:E:ee'} by the equality $x^{-1} G_{ij}^{+*}(x^{-1}) = G_{ij}^{-*}(x)$. We can similarly show the equivalence between \eqref{eq:E:ff} and the second half of \eqref{eq:E:ee'}.
\end{proof}

\begin{rmk}
Note the similarity between the fundamental relations of the Ding-Iohara quantum algebra $U_q(g,A_l)$ and those of the elliptic algebra $U_{q,p}(\wh{\frg})$ of type $A_l$: \eqref{eq:DI:ppe} and \eqref{eq:E:ppe'}, \eqref{eq:DI:pme} and \eqref{eq:E:pme'}, \eqref{eq:DI:ee} and \eqref{eq:E:ee'}.
%\eqref{eq:DI:pp'} and \eqref{eq:E:pp'},
\end{rmk}

%%%%%%%%%%%%%%%%%%%%%%%%%%%%%%%%%%%%%%%%%%%%%%%%%%%%%%%%%%%%%%%%%%%%%%%%%%%%%%%%%%%%%%%%%%
\subsubsection{Hopf algebroid structure of $U_{q,p}(\wh{\frg})$}

%Following \cite{Ko09, FKO, Ko}, 
Now we explain a Hopf algebroid structure on the elliptic algebra $U_{q,p}(\wh{\frg})$ of \cref{dfn:E:U}. 

Let us first remark that the definitions on Hopf algebroids in \cref{ss:pre:Halgd} are given for algebras over $\bbC$, and since $U_{q,p}(\wh{\frg})$ is not an algebra over $\bbC$ but a topological algebra over $\bbC\dbr{p}$, we should modify the definitions in \cref{ss:pre:Halgd} appropriately. The obtained notions will be called topological $H$-algebras, topological $H$-bialgebroids, and topological $H$-Hopf algebroids. We leave it for the reader to write down the precise definitions.

Let us keep the notation in \S\S \ref{sss:E:root}--\ref{sss:E:EA}, and fix the Cartan matrix $A=(a_{ij})_{i,j \in I}$ of finite type $X_l$. For simplicity, we often denote the elliptic algebra associated to $A$ by 
\[
 U \ceq U_{q,p}(\wh{\frg}).
\]

Recall the linear space $H$ in \eqref{eq:E:H} and the basis $\{P_i \mid i \in I\}$ in \eqref{eq:E:wtHbas}. We denote the field of meromorphic functions on the dual space $H^*$ by 
\[
 \bbF = \clM(H^*).
\]
Let us consider the extension $\bbF\dbr{p} \ceq \bbF \otimes_{\bbC}\dbr{p}$ since we will work over $\bbC\dbr{p}$. Following the notation \eqref{eq:E:finF}, we express an element $F \in \bbF\dbr{p}$ as 
\begin{align}\label{eq:E:fPp}
 F = F(P,p) = \sum_{n \in \bbN}p^n F_n(P), 
\end{align}
where $F_n(P)$ is an element of $\bbF$ for each $n \in \bbN$.

Let us also recall from \eqref{eq:E:H} that the linear space $\wtH$ spanned by $P_i+h_i$, $P_i$ ($i \in I$). By \cref{dfn:E:U}, the extension $\clM(H^*)\dbr{p}$ of the meromorphic function field on $H^*$ is a subalgebra of $U$ over $\bbC\dbr{p}$.
%Thus we have two algebra embedding $\bbF\dbr{p} \inj U$.

Now, let us regard $H$ as a finite-dimensional commutative Lie algebra, so that we have the notion of $H$-algebra (see \cref{ss:pre:Halgd}). In order to define the $H^*$-bigrading on $U$, we consider the actions of $q^{P+h},q^P \in \clM(\wtH^*)$ on $U$, where we regard $q^{P+h} = \exp((P+h) \log q)$, $q^P = \exp(P \log q)$ and apply \eqref{eq:E:U:1}--\eqref{eq:E:U:5}.

\begin{lem}\label{lem:E:Halg}
The elliptic algebra $U$ has the following topological $H$-algebra structure. 
\begin{clist}
\item 
The $H^*$-bigrading $U = \bigoplus_{\alpha,\beta \in H^*} U_{\alpha,\beta}$ is given by
\begin{align*}
%U_{\alpha,\beta} \ceq \{a \in U \mid q^{P+h} a q^{-(P+h)} = q^{\pair{\alpha,P+h}}a, \ 
%q^P a q^{-P} = q^{\pair{\beta,P}} a \quad (\forall \, P+h,P \in \wtH)\}.
 U_{\alpha,\beta} \ceq \{a \in U \mid q^{P} a q^{-P} = q^{\pair{\alpha,P}}a, \ 
 q^{P+h} a q^{-(P+h)} = q^{\pair{\beta,P+h}}a \quad (\forall P,P+h \in \wtH) \},
\end{align*}
Here $\pair{\cdot,\cdot}$ denotes the the canonical pairing on $\wtH^* \times \wtH$ (see \eqref{eq:E:Hpair}) restricted to $H^* \times \wtH$ with respect to the embedding $H =\sum_i \bbC Q_i \inj \wtH=\sum_i \bbC(Q_i+\alpha_i)+\sum_i \bbC Q_i$, $Q_i \mto Q_i$.

\item
The moment maps $\mu_l,\mu_r\colon \bbF\dbr{p} \to U_{0,0}$ are defined by
\begin{align}\label{eq:E:mu}
 \mu_l(F) \ceq F(P,p^*), \quad \mu_r(F) \ceq F(P+h,p)
\end{align}
for $F = F(P,p) \in \bbF\dbr{p}$.  
\end{clist}
Moreover, the following statements hold.
\begin{enumerate}
\item \label{i:lem:E:gen}
The images $\mu_l(\bbF)$ and $\mu_r(\bbF)$ generate $\clM(\wtH^*)$.

\item \label{i:lem:E:Halg:Q}
The generators have the following bigrading: 
\begin{align*}
 e_i(z) \in U_{-Q_i,0}, \quad f_i(z) \in U_{0,-Q_i}, \quad
 \psi^\pm_i(z) \in U_{-Q_i,-Q_i}, \quad  K^\pm_i \in U_{-Q_i,-Q_i},
\end{align*}
and the others belong to $U_{0,0}$, where, for a current $x(z)=\sum_{m \in \bbZ} x_m z^{-m}$, the expression $x(z) \in U'_{\alpha,\beta}$ means $x_m \in U_{\alpha,\beta}$ for each $m \in \bbZ$. In particular, the bigrading values in the lattice $\clQ$:  
\[
 U = \bigoplus_{\alpha,\beta \in \clQ} U_{\alpha,\beta}, \quad 
 \clQ \ceq \bigoplus_{i \in I} \bbZ Q_i \subsetneq H^*.
\]
\end{enumerate}
\end{lem}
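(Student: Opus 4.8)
The entire structure is read off from the adjoint action of the grouplike elements $q^{P_\mu},q^{(P+h)_\nu}\in\clM(\wtH^*)$ on $U$, where $q^{P_\mu}\ceq\exp(P_\mu\log q)$ and likewise for $P+h$. First I would note that, for fixed $\mu,\nu\in\frh^*$, conjugation by $q^{P_\mu}$ and by $q^{(P+h)_\nu}$ are commuting algebra automorphisms of $U$; they commute because $\clM(\wtH^*)$ is a commutative subalgebra. Evaluating these automorphisms on the generators via \eqref{eq:E:U:1}--\eqref{eq:E:U:5}, \eqref{eq:E:dK}, \eqref{eq:E:Ke} and the definitions of $\psi^\pm_i(z)$, one finds each generator is a simultaneous eigenvector: for instance \eqref{eq:E:U:3} gives $q^{P_\mu}e_i(z)q^{-P_\mu}=q^{-\pair{Q_i,P_\mu}}e_i(z)$ and $q^{(P+h)_\nu}e_i(z)q^{-(P+h)_\nu}=e_i(z)$, whence $e_i(z)\in U_{-Q_i,0}$, and analogously for the others. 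This produces the weight table of part~(2); the homogeneity of \eqref{eq:E:ef}, whose two sides must carry weight $(-Q_i,-Q_i)=(-Q_i,0)+(0,-Q_i)$, is a convenient cross-check for the weights of $\psi^\pm_i(z)$ and hence of $K^\pm_i$.

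\textbf{The bigrading is a genuine decomposition.} Since conjugation is multiplicative, every monomial in the generators is homogeneous with weight the sum of the weights of its factors; as these lie in $\clQ=\bigoplus_i\bbZ Q_i$, the algebra $U$ is topologically spanned by homogeneous elements with weights in $\clQ\times\clQ$, giving $U=\sum_{\alpha,\beta}U_{\alpha,\beta}$. Directness follows because distinct $(\alpha,\beta)\in H^*\times H^*$ yield distinct characters of the abelian family $\{q^{P_\mu},q^{(P+h)_\nu}\}$: the left part is separated by the pairing $\pair{Q_i,P_j}=b_{ij}$ of \eqref{eq:E:Hpair}, which is nondegenerate because $(b_{ij})$ is the symmetrized Cartan matrix, and $\log q\neq0$ since $0<\abs{q}<1$; the right part is separated in the same way using $q^{(P+h)_\nu}$. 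This establishes part~(i).

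\textbf{Moment maps.} The maps $\mu_l(F)=F(P,p^*)$ and $\mu_r(F)=F(P+h,p)$ take values in $\clM(\wtH^*)\dbr{p}$, which is fixed under conjugation by $q^{P_\mu}$ and $q^{(P+h)_\nu}$; hence $\mu_l,\mu_r$ land in $U_{0,0}$. Both are algebra homomorphisms: the substitution $p\mapsto p^*=pq^{-2c}$ is a continuous ring endomorphism because $q^{-2c}$ is central and invertible, and $F\mapsto F(P+h,p)$ is the embedding induced by passing to the $P+h$ coordinates; both are injective, so they are embeddings. For the axioms $\mu_l(F)a=a\,\mu_l(T_\alpha F)$ and $\mu_r(F)a=a\,\mu_r(T_\beta F)$ with $a\in U_{\alpha,\beta}$, I would check the identity on each generator using \eqref{eq:E:U:2}--\eqref{eq:E:U:5}: the coordinate shift $P\mapsto P\mp\pair{Q_i,P}$ appearing there is precisely the translation $T_{\mp Q_i}$ on $H^*$ under \eqref{eq:E:Hpair}, and the extra substitution $p\mapsto p^*$ in $\mu_l$ commutes with these shifts because $q^{\pm c/2}$ is central, so the axioms hold coefficientwise in the $p$-adic expansion.

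\textbf{Remaining statement and the main difficulty.} Part~(1) is then immediate: $\mu_l$ applied to functions of $P$ produces all meromorphic functions in the coordinates $P_i$, $\mu_r$ produces all those in the coordinates $P_i+h_i$, and since $\{P_i\}\cup\{P_i+h_i\}$ is a basis of $\wtH$ the subalgebra generated by the two images is all of $\clM(\wtH^*)$. I expect the main obstacle to be not any single eigenvalue computation but two global points: first, justifying the direct-sum decomposition in the topological setting, i.e.\ that $U$ is $p$-adically spanned by homogeneous monomials and that each $U_{\alpha,\beta}$ is $p$-adically closed; and second, the sign-and-shift bookkeeping in the moment-map axioms, where one must match the translations $T_{\pm Q_i}$ with the coordinate shifts in \eqref{eq:E:U:2}--\eqref{eq:E:U:5} while keeping the $p\leftrightarrow p^*$ substitution consistent---here the homogeneity of \eqref{eq:E:ef} again gives the cleanest consistency check.
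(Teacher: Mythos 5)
Your proposal is correct and takes essentially the same route as the paper: the paper's own proof just invokes \cref{lem:U:Halg}, whose verification is precisely your generator-by-generator check --- the moment-map axioms via the shift relations \eqref{eq:E:U:2}--\eqref{eq:E:U:5} (the coordinate shift $P \mapsto P-\pair{Q_i,P}$ being the translation $T_{-Q_i}$) together with the weight table of the generators. You in fact supply details the paper leaves implicit, namely the directness of the bigraded decomposition, the generation statement for $\clM(\wtH^*)$, and the $p$-adic caveats.
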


\begin{proof}
This is a special case of \cref{lem:U:Halg}. 
\end{proof}

Note that $\clQ$ is isomorphic to the root lattice of the Lie algebra $\frg$. We also have $\clM(\wtH^*)\dbr{p} \subset U_{0,0}$, so that the values \eqref{eq:E:mu} of the moment maps do belong to $U_{0,0}$.

\begin{prp}\label{prp:E:Halgd}
The topological $H$-algebra $U$ in \cref{lem:E:Halg} has the following topological $H$-Hopf algebroid structure $(\Delta,\ve,S)$.
\begin{enumerate}
\item \label{i:E:Halgd:Delta}
The comultiplication $\Delta\colon U \to U \totimes U$ is given by the Drinfeld-type formulas.
For $F=F(P,p) \in \bbF\dbr{p}$, we have
\begin{align*}
 \Delta(\mu_l(F)) \ceq \mu_l(F) \totimes 1, \quad 
 \Delta(\mu_r(F)) \ceq 1 \totimes \mu_r(F).
\end{align*}
By \cref{lem:E:Halg} \ref{i:lem:E:gen}, these define $\Delta$ on the whose $\clM(\wtH^*)$.
The formulas for the generators $q^{\pm c/2}$, $d$ and $K^\pm_i$ are 
\[
 \Delta(q^{\pm c/2}) \ceq q^{\pm c/2} \totimes q^{\pm c/2}, \quad 
 \Delta(d)       \ceq d \totimes 1 + 1 \totimes d, \quad 
 \Delta(K^\pm_i) \ceq K^\pm_i \totimes K^\pm_i. 
\]
The remaining formulas are given in terms of the generating currents:
\begin{align} 
\label{eq:E:De}
&\Delta(e_i(z)) \ceq e_i(z) \totimes 1 + \psi^+_i(q^{c_1/2}z) \totimes e_i(q^{c_1}z), \\
\label{eq:E:Df}
&\Delta(f_i(z)) \ceq 1 \totimes f_i(z) + f_i(q^{c_2}z) \totimes \psi^-_i(q^{c_2/2}z), \\
\label{eq:E:Dp+}
&\Delta(\psi_i^+(z)) \ceq \psi_i^+(q^{ c_2/2}z) \totimes \psi_i^+(q^{-c_1/2}z), \\
\label{eq:E:Dp-}
&\Delta(\psi_i^-(z)) \ceq \psi_i^-(q^{-c_2/2}z) \totimes \psi_i^-(q^{ c_1/2}z).
\end{align}
For the symbols $q^{\pm c_1/2}$ and $q^{\pm c_2/2}$, see the comment after \eqref{eq:DI:Delta2}.

\item \label{i:E:Halgd:ve}
The counit $\ve\colon U \to D_H\dbr{p}$ is given by 
\begin{gather*}
 \ve(\mu_l(F)) = \ve(\mu_r(F)) \ceq F T_0, \quad 
 \ve(q^{\pm c/2}) \ceq 1, \quad \ve(d) \ceq 0, \quad \ve(K^\pm_i) \ceq T_{Q_i}, \\
 \ve(e_i(z))  = \ve(f_i(z)) \ceq 0, \quad \ve(\psi_i^{\pm}(z)) \ceq T_{Q_i}.
\end{gather*}
Here $D_H$ denotes the $H$-algebra of difference operators on $H^*$ (see the paragraph of \eqref{eq:Halgd:Dh}), and $D_H\dbr{p} \ceq D_H \otimes_{\bbC} \bbC\dbr{p}$ is its $\bbC\dbr{p}$-extension with the naturally induced topological $H$-algebra structure.

\item \label{i:E:Halgd:S}
The antipode $S\colon U \to U$ is given by 
\begin{gather*}
 S(\mu_l(F)) = \mu_r(F), \quad S(\mu_r(F)) \ceq \mu_l(F), \quad
 S(d) \ceq d, \quad S(q^{\pm c/2}) \ceq q^{\mp c/2}, \\
 S(e_i(z)) \ceq -\psi^+_i(q^{-c/2}z)^{-1}e_i(q^{-c}z), \quad
 S(f_i(z)) \ceq -f_i(q^{-c}z)\psi^-_i(q^{-c/2}z), \\
 S(K^\pm_i) \ceq (K^\pm_i)^{-1}, \quad 
 S(\psi_i^{\pm}(z)) \ceq \psi_i^{\pm}(z)^{-1}.
\end{gather*}
\end{enumerate}
\end{prp}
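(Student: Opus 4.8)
The plan is to verify the topological $H$-Hopf algebroid axioms of \cref{ss:pre:Halgd} directly on the generators, organizing the computation around the structural parallel between the elliptic relations and the Ding-Iohara relations that is recorded in \cref{lem:E:pef}, \cref{lem:E:pp}, \cref{lem:E:ee} and the Remark following them. First I would confirm that $\Delta$, $\ve$ and $S$ are well defined. For $\Delta$ this amounts to three checks: (i) compatibility with the moment maps, which is immediate since the defining formulas $\Delta(\mu_l(F)) = \mu_l(F)\totimes 1$ and $\Delta(\mu_r(F)) = 1\totimes\mu_r(F)$ are exactly $\mu_l^{U\totimes U}$ and $\mu_r^{U\totimes U}$ of \eqref{eq:Halgd:mu-totimes}; (ii) preservation of the $H^*$-bigrading of \cref{lem:E:Halg} \ref{i:lem:E:Halg:Q}, a bookkeeping check on each summand (for instance both terms of $\Delta(e_i(z))$ in \eqref{eq:E:De} lie in $(U\totimes U)_{-Q_i,0}$, the second via the middle index $\gamma=-Q_i$ in \eqref{eq:Halgd:to}); and (iii) respect for every defining relation of \cref{dfn:E:U}. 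The check (iii) is where the bulk of the work lies, and by \cref{lem:E:Halg} \ref{i:lem:E:gen} it suffices to treat the listed generators and currents.

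For (iii) I would first record that the theta-function structure functions $g_{ij}(x;p)$ of \eqref{eq:E:gij} satisfy the Ding-Iohara condition $g_{ij}(x;p) = g_{ji}(x^{-1};p)^{-1}$, using $b_{ij}=b_{ji}$ and the inversion $\theta(x^{-1};p) = -x^{-1}\theta(x;p)$. Granting this, the rewritten forms in \cref{lem:E:pef}--\cref{lem:E:ee} show that the quadratic relations \eqref{eq:E:ee'}, the mixed relations \eqref{eq:E:ppe'}, \eqref{eq:E:pme'}, \eqref{eq:E:pp'}, and the $[e,f]$ relation \eqref{eq:E:ef} take precisely the Ding-Iohara form, so that applying $\Delta$ and expanding in $U\totimes U$ reduces to the computation underlying \cref{fct:DI:U}, the telescoping cancellations being forced exactly by the Ding-Iohara condition. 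The genuinely new ingredient relative to \cref{fct:DI:U} is the dynamical twist: on the left tensor factor the structure function appears in its $p^*$-specialized form $g^*_{ij}$, and the definition $\mu_l(F) = F(P,p^*)$, $\mu_r(F) = F(P+h,p)$ with $p^* = pq^{-2c}$ (see \eqref{eq:E:p*}) is engineered so that the identification \eqref{eq:Halgd:totimes} defining $\totimes$ over $\bbF$ transports $p^*$ on one factor to $p$ on the other. I would check that the central shifts $q^{c_1}$, $q^{c_2}$ carried by the currents in \eqref{eq:E:De}--\eqref{eq:E:Dp-} convert $p$ to $p^*$ consistently across the tensor product; this simultaneously disposes of the field relations \eqref{eq:E:U:1}--\eqref{eq:E:U:5} and of the $K^\pm_i$ and $d$ relations.

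Next I would verify the bialgebroid axioms \eqref{eq:Halgd:Delta-ve}. Coassociativity I would check generatorwise: it is immediate for $q^{\pm c/2}$, $K^\pm_i$, $d$ and for the grouplike-up-to-shift currents $\psi^\pm_i$, while for $e_i$ and $f_i$ it follows from the two-step expansion of the Drinfeld-type formulas, the only subtlety being to track the superposition of $q^{c_1}$, $q^{c_2}$, $q^{c_3}$. The counit axioms reduce to checking on generators that $(\ve\totimes\id)\Delta$ and $(\id\totimes\ve)\Delta$ act as the identity, using $\ve(\psi^\pm_i)=T_{Q_i}$ and the action of $D_H$ through shift operators, where the shift produced by $\psi^+_i(q^{c_1/2}z)$ is absorbed by the bigrading of the surviving factor. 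For the antipode I would verify the four identities of \eqref{eq:Halgd:S}: compatibility with the moment maps is the defining relation $S(\mu_l(F))=\mu_r(F)$, and the two convolution identities $m(\id\otimes S)\Delta = \mu_l(\ve(a)1)$ and $m(S\otimes\id)\Delta = \mu_r(T_\alpha(\ve(a)1))$ are checked on $e_i$, $f_i$, $\psi^\pm_i$ from the Drinfeld-type formulas, again mirroring the antipode computation of \cref{fct:DI:U}.

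The main obstacle will be the Serre-type relations \eqref{eq:E:eS} and \eqref{eq:E:fS}: unlike the quadratic and mixed relations, these do not reduce verbatim to \cref{fct:DI:U}, and showing that $\Delta$ preserves them requires demonstrating that the deformed Serre polynomial of \eqref{eq:E:eS}, after comultiplication and sorting into $U\totimes U$, reorganizes into $\bbF$-linear combinations of lower Serre polynomials that vanish. I would handle this by recognizing \eqref{eq:E:eS} as the theta-function specialization of the general dynamical Serre relation of the form \eqref{eq:0:eS}, whose coefficient $h_{ij}$ in \eqref{eq:0:hij} is exactly the function dictated by $\Delta$-compatibility, and then establishing the $\Delta$-compatibility at the level of the general structure functions before specializing $g$ to the theta functions. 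This is precisely the content that the paper defers to the later sections on $U_{q,p}(g,X_l)$, and I expect the Serre check to be the only genuinely delicate step, with all remaining verifications being systematic bookkeeping organized around \cref{lem:E:pef}--\cref{lem:E:ee}.
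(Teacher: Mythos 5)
Your overall strategy coincides with the paper's: the axioms are verified generatorwise, and the delicate Serre-type relations are handled by lifting to the algebroid $U_{q,p}(g,X_l)$ with \emph{general} structure functions and only then specializing $g$ to the theta functions \eqref{eq:U:gijE}. Indeed, the paper proves \cref{prp:E:Halgd} precisely by this deferral, quoting \cref{prp:U:ADE} and \cref{cor:U:ADE} for the simply-laced types and \cref{prp:U:BCF}, \cref{prp:U:G} for the rest; your bookkeeping for the moment maps, the bigrading, the $p$ versus $p^*=pq^{-2c}$ mechanism across $\totimes$, and the coassociativity/counit/antipode checks is consistent with what is carried out in \cref{ss:ADE:Halgd}.

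The genuine gap is coverage of the non-simply-laced types. The proposition is stated for \emph{every} finite type $X_l$, but your treatment of the Serre relations invokes only the cubic relation \eqref{eq:0:eS} with coefficient \eqref{eq:0:hij}, i.e.\ the case $a_{ij}=-1$. For types $B_l,C_l,F_4$ (where some $a_{ij}=-2$) and $G_2$ (where $a_{ij}=-3$), the elliptic Serre relations \eqref{eq:E:eS} are quartic and quintic ($a=3,4$), and there is no relation "of the form \eqref{eq:0:eS}" to recognize them as specializations of: one must construct new dynamical Serre relations whose coefficient functions $h^3_{ij}$ and $h^{4,1}_{ij},h^{4,2}_{ij}$ are given by the "strange formulas" \eqref{eq:ns:h3} and \eqref{eq:ns:h4}, together with the verification (\cref{cor:ns:BCF}, \cref{cor:ns:G}) that under the theta specialization these reduce to $\bnm{3}{1}{i}$, $[4]_i$ and $\bnm{4}{2}{i}$. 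That construction is the bulk of the work in \cref{s:ns} and is not produced by your plan. Worse, your guiding principle --- that the coefficient is "exactly the function dictated by $\Delta$-compatibility" --- is valid for $a_{ij}=-1$ and $-2$ (\cref{rmk:ADE:h}, \cref{rmk:BCF:h}) but \emph{fails} for $G_2$: by \cref{rmk:G:h}, compatibility with the Drinfeld-type comultiplication only constrains a linear combination of $h^{4,1}_{21}$ and $h^{4,2}_{21}$ (the ambiguity by the ideal $J$ in \eqref{eq:G:h4}), so for that type your principle cannot by itself determine the required functions; the paper pins them down by additionally demanding the $q$-binomial reduction, found via computer algebra (\cref{ss:ns:h}). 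As written, your argument therefore establishes the proposition only for $X_l=A_l,D_l,E_{6,7,8}$.
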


\begin{proof}
The simply-laced types will be proved in \cref{s:ADE}, \cref{prp:U:ADE} and \cref{cor:U:ADE}. 
The non-simply types will be treated in \cref{s:ns}. The $BCF$ types will be proved in \cref{prp:U:BCF}, and the type $G_2$ will be proved in \cref{prp:U:G}.
\end{proof}

The operations in \cref{prp:E:Halgd} are collected and inspected from those in \cite[Appendix B]{JKOS}, and the topological $H$-algebra structure in \cref{lem:E:Halg} is designed to fit with these operations.

%%%%%%%%%%%%%%%%%%%%%%%%%%%%%%%%%%%%%%%%%%%%%%%%%%%%%%%%%%%%%%%%%%%%%%%%%%%%%%%%%%%%%%%%%%
\subsubsection{Opposite Hopf algebroid structure}

In this part, we will explain another $H$-algebra structure on the elliptic algebra $U_{q,p}(\wh{\frg})$ appearing in the literature \cite[\S3]{Ko09}, \cite[Proposition 2.4]{FKO}\footnote{There are some typos in \cite{FKO}, Proposition 2.4 and after, on the choice of the finite-dimensional commutative Lie algebra $H$. The right choice should be our $H$, denoted by $P_{\ol{\frh}}$ in loc.\ cit.} and \cite[Proposition 3.1.1]{Ko}. 

We start with a general argument on $\frh$-algebras. As in \cref{ss:pre:Halgd}, let $\frh$ be a finite-dimensional commutative Lie algebra over $\bbC$, $\bbF \ceq \clM(\frh^*)$, and $A=\bigl(A,\bigoplus_{\alpha,\beta \in \frh^*}A_{\alpha,\beta},\mu_l,\mu_r\bigr)$ be an $\frh$-algebra with moment maps $\mu_l,\mu_r\colon \bbF \to A_{0,0}$. Then, exchanging the bigrading and the moment maps, we have another $\frh$-algebra $A^\dagger=\bigl(A^\dagger,\bigoplus_{\alpha,\beta \in \frh^*}A^\dagger_{\alpha,\beta},\mu^\dagger_l,\mu^\dagger_r\bigr)$ with
\begin{align}\label{eq:E:Adag}
 A^\dagger \ceq A, \quad A^\dagger_{\alpha,\beta} \ceq A_{\beta,\alpha}, \quad 
 \mu^\dagger_l \ceq \mu_r, \quad \mu^\dagger_r \ceq \mu_l.
\end{align}
We now want to introduce an $\frh$-Hopf algebroid structure on the $\frh$-algebra $A^\dagger$. 
Recall the modified tensor product $\totimes$ of $\frh$-algebras \eqref{eq:Halgd:mtp}.

\begin{lem}\label{lem:E:wtau}
We have an $\frh$-algebra isomorphism 
\begin{align}
 \wt{\tau}\colon A \totimes A \lto (A^\dagger \totimes A^\dagger)^\dagger, \quad 
 a \totimes b \lmto b \totimes a.
\end{align}
\end{lem}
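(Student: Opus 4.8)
The plan is to unwind the two $\frh$-algebra structures explicitly and then verify directly that the flip $\wt{\tau}(a \totimes b) = b \totimes a$ is a bijective $\frh$-algebra morphism, with inverse given by the reverse flip. First I would record the data of both sides. By \eqref{eq:Halgd:to} the source has graded pieces $(A \totimes A)_{\alpha,\beta} = \bigoplus_{\gamma} A_{\alpha,\gamma} \otimes_\bbF A_{\gamma,\beta}$, with the $\otimes_\bbF$ identification \eqref{eq:Halgd:totimes} generated by $\mu_r^A(F)a \otimes b - a \otimes \mu_l^A(F)b$, and moment maps $\mu_l^A(F) \totimes 1$, $1 \totimes \mu_r^A(F)$. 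On the other hand, substituting the dagger data \eqref{eq:E:Adag} into the same formulas gives $(A^\dagger \totimes A^\dagger)_{\alpha,\beta} = \bigoplus_\gamma A_{\gamma,\alpha}\otimes_\bbF A_{\beta,\gamma}$, with $\otimes_\bbF$ generated by $\mu_l^A(F)x \otimes y - x \otimes \mu_r^A(F)y$ and moment maps $\mu_r^A(F)\totimes 1$, $1 \totimes \mu_l^A(F)$; applying one more dagger swaps these last two and the outer bigrading, so that $\bigl((A^\dagger \totimes A^\dagger)^\dagger\bigr)_{\alpha,\beta} = \bigoplus_\gamma A_{\gamma,\beta} \otimes_\bbF A_{\alpha,\gamma}$, with left moment map $1 \totimes \mu_l^A(F)$ and right moment map $\mu_r^A(F) \totimes 1$.

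With this bookkeeping in hand the verification is mechanical. For $a \in A_{\alpha,\gamma}$ and $b \in A_{\gamma,\beta}$ the element $b \totimes a$ lies in $A_{\gamma,\beta} \otimes_\bbF A_{\alpha,\gamma}$, which is precisely the $\gamma$-summand of $\bigl((A^\dagger\totimes A^\dagger)^\dagger\bigr)_{\alpha,\beta}$; hence $\wt{\tau}$ preserves the $\frh^*$-bigrading. It is an algebra homomorphism because the dagger leaves the product untouched and the flip intertwines the componentwise multiplications, $(bb') \totimes (aa') = (b \totimes a)(b' \totimes a')$. Compatibility with the moment maps reduces to the two identities $\wt{\tau}(\mu_l^A(F) \totimes 1) = 1 \totimes \mu_l^A(F)$ and $\wt{\tau}(1 \totimes \mu_r^A(F)) = \mu_r^A(F)\totimes 1$, whose right-hand sides are exactly the left and right moment maps of $(A^\dagger \totimes A^\dagger)^\dagger$ computed above.

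The one genuinely substantive point — and the step I expect to be the main obstacle — is the well-definedness of $\wt{\tau}$ on the quotients defining $\otimes_\bbF$. Here I would apply the raw flip $u \otimes v \mapsto v \otimes u$ to the generator $\mu_r^A(F)a \otimes b - a \otimes \mu_l^A(F)b$ of the source relation and observe that it produces $b \otimes \mu_r^A(F)a - \mu_l^A(F)b \otimes a$, which is exactly the negative of the generator $\mu_l^A(F)b \otimes a - b \otimes \mu_r^A(F)a$ of the target relation. Thus the flip carries the defining subspace of the source $\otimes_\bbF$ onto that of the target and descends to a well-defined map on the quotient; this is precisely the place where the swap $\mu_l^\dagger = \mu_r$, $\mu_r^\dagger = \mu_l$ of \eqref{eq:E:Adag} is used in an essential way. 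Finally, bijectivity is immediate: the reverse flip $b \totimes a \mapsto a \totimes b$ is well-defined by the same subspace computation read backwards, and since the raw flip on the underlying $\otimes$ is an involution, the two composites are the identity, so $\wt{\tau}$ is an isomorphism of $\frh$-algebras.
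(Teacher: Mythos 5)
Your proof is correct and takes essentially the same approach as the paper's: unwind the bigrading and moment maps of $(A^\dagger \totimes A^\dagger)^\dagger$, verify well-definedness of the flip by checking it carries the defining subspace of the source $\otimes_{\bbF}$ onto that of the target (this is exactly where the swap $\mu_l^\dagger=\mu_r$, $\mu_r^\dagger=\mu_l$ enters), and then confirm compatibility with the bigrading and the moment maps. The only cosmetic differences are that you argue on generators of the defining subspaces while the paper flips an arbitrary relation element, and that you spell out the bijectivity that the paper declares clear.
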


\begin{proof}
We prove that $\wt{\tau}$ is well-defined as an $\frh$-algebra homomorphism. Note that, by definition, $(A^\dagger \totimes A^\dagger)^\dagger=A^\dagger \totimes A^\dagger$ as an algebra, and the bigrading and the moment maps are given by
\begin{align*}
&(A^\dagger \totimes A^\dagger)^\dagger_{\alpha,\beta} \ceq (A^\dagger \totimes A^\dagger)_{\beta,\alpha} 
 = \bigoplus_{\gamma \in \frh^*} A^\dagger_{\beta,\gamma} \otimes_{\bbF} A^\dagger_{\gamma,\alpha}
 = \bigoplus_{\gamma \in \frh^*} A_{\gamma,\beta} \otimes_{\bbF} A_{\alpha,\gamma}, \\
&\mu_l^{\totimes^\dagger} \ceq 1 \totimes \mu^\dagger_r = 1 \totimes \mu_l, \quad
 \mu_r^{\totimes^\dagger} \ceq \mu^\dagger_l \totimes 1 = \mu_r \totimes 1.
\end{align*} 
%The symbol $\otimes_{\bbF}$ in the first line denotes the ordinary tensor product over $\bbC$ modulo the following relations
%\begin{align*}
% \mu^\dagger_r(f)a \totimes b = a \totimes \mu^\dagger_l(f)b \iff
% \mu_l (f)a \totimes b = a \totimes \mu_r (f)b \quad (a,b \in A, \, f \in \bbF). 
%\end{align*}
Now, let $a,a' \in A_{\alpha,\gamma}$ and $b,b' \in A_{\gamma,\beta}$, and assume $a \totimes b = a' \totimes b'$ in $A \totimes A$. Then, by the definition \eqref{eq:Halgd:totimes} of $\totimes$, we have
\begin{align*}
 a \otimes b - a' \otimes b' 
=\sum_i(\mu_r(f_i)a_i \otimes b_i-a_i \otimes \mu_l(f_i)b_i)
\end{align*}
for some $f_i \in \bbF$ and $a_i,b_i \in A$, where $\otimes$ denotes the ordinary tensor product over $\bbC$ of $A_{\alpha, \gamma}$ and $A_{\gamma, \beta}$. 
The definition of the moment maps of $A'$ implies
\begin{align*}
  b \otimes a -b' \otimes a' 
= \sum_i(b_i \otimes \mu_r (f_i)a_i -\mu_l (f_i)b_i \otimes a_i) 
= \sum_i(b_i \otimes \mu'_l(f_i)a_i -\mu'_r(f_i)b_i \otimes a_i),
\end{align*}
which yields $\wt{\tau}(a \totimes b) = b \totimes a = b' \totimes a' = \wt{\tau}(a' \totimes b')$ in $(A^\dagger \totimes A^\dagger)^\dagger$. Thus $\wt{\tau}$ is well-defined as an algebra homomorphism. 

Next, if $a \in A_{\alpha,\gamma}$ and $b \in A_{\gamma,\beta}$, then $a \totimes b \in (A \totimes A)_{\alpha,\beta}$ and $b \totimes a \in (A^\dagger \totimes A^\dagger)^\dagger_{\alpha,\beta}$. Also, for $f \in \bbF$, we have 
\begin{align*}
&\wt{\tau}(\mu^{\totimes}_l(f)) = \wt{\tau}(\mu_l(f) \totimes 1) =
 1 \totimes \mu_l(f) = \mu^{\totimes'}_l(f), \\
&\wt{\tau}(\mu^{\totimes}_r(f)) = \wt{\tau}(1 \totimes \mu_r(f)) =
 \mu_r(f) \totimes 1 = \mu^{\totimes'}_r(f).
\end{align*} 
Thus $\wt{\tau}$ is an $\frh$-algebra homomorphism. The bijectivity is clear. 
\end{proof}

Recall (see \cite[Corollary III.3.5]{Kas} for example) that, for a Hopf algebra $H=(H,m,\Delta,S)$ with multiplication $m$, comultiplication $\Delta$ and antipode $S$, we can associate the following new Hopf algebras.
\[
 H^{\op}      \ceq (H,m^{\op},\Delta,      S^{-1}), \quad 
 H^{\cop}     \ceq (H,m,      \Delta^{\op},S^{-1}), \quad 
 H^{\op,\cop} \ceq (H,m^{\op},\Delta^{\op},S),
\]
where the opposite operations are defined as $m^{\op} \ceq m \circ \tau$ and $\Delta^{\op} \ceq \tau \circ \Delta$, using the flip $\tau\colon H \otimes H \to H \otimes H$, $a \otimes b \mto b \otimes a$. Moreover, the antipode $S$ induces Hopf algebra isomorphisms $H \sto H^{\op,\cop}$ and $H^{\op} \sto H^{\cop}$.

Using the $\frh$-algebra isomorphism $\wt{\tau}$ in \cref{lem:E:wtau}, we have a natural analogue of the above opposite theory for $\frh$-Hopf algebroids. Besides $A^\dagger$ in \eqref{eq:E:Adag}, we introduce another $\frh$-algebra $\dot{A}=(A,\bigoplus_{\alpha,\beta}\dot{A}_{\alpha,\beta},\mu_l,\mu_r)$ with 
\[
 \dot{A}_{\alpha,\beta} \ceq A_{-\alpha,-\beta}.
\]

\begin{prp}\label{prp:E:Aopcop}
Let $A$ be an $\frh$-algebra, and $(m,\Delta,S)$ be an $\frh$-Hopf algebroid structure on $A$ with multiplication $m\colon A \otimes A \to A$, comultiplication $\Delta\colon A \to A \totimes A$ and antipode $S\colon A \to A$. Then we have the following new $\frh$-Hopf algebroids.
\begin{align}\label{eq:E:Hop}
 A^{\op}      \ceq (\dot{A},  m^{\op},\Delta,      S^{-1}), \quad 
 A^{\cop}     \ceq (A^\dagger,m,      \Delta^{\op},S^{-1}), \quad 
 A^{\op,\cop} \ceq (\dot{A}^\dagger,m^{\op},\Delta^{\op},S).
\end{align}
Here we defined the opposite operations as $m^{\op} \ceq m \circ \tau$ and $\Delta^{\op} \ceq \wt{\tau} \circ \Delta$, using the flip $\tau\colon A \otimes A \to A \otimes A$, $a \otimes b \mto b \otimes a$ and the map $\wt{\tau}\colon A \totimes A \to (A^\dagger \totimes A^\dagger)^\dagger$ in \cref{lem:E:wtau}. Moreover, the antipode $S$ induces $\frh$-Hopf algebroid isomorphisms $A \sto A^{\op,\cop}$ and $A^{\op} \sto A^{\cop}$.
%$\Phi$ induces another coproduct $\Delta^{op} \ceq \Phi \circ \Delta : A \to A \totimes^{op} A$.
\end{prp}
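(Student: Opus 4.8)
The plan is to transcribe, in the $\frh$-algebra setting, the classical construction of opposite and co-opposite Hopf algebras (\cite[Corollary III.3.5]{Kas}), systematically replacing the ordinary flip $\tau$ on $\otimes$ by the isomorphism $\wt{\tau}$ of \cref{lem:E:wtau} on $\totimes$. Two bookkeeping operations are in play: the dagger $B \mapsto B^\dagger$ of \eqref{eq:E:Adag} and the grading negation $B \mapsto \dot{B}$ with $\dot{B}_{\alpha,\beta} \ceq B_{-\alpha,-\beta}$; both are involutive and preserve the $\frh$-algebra axioms, so I would first record that $\dot{A}$, $A^\dagger$ and $\dot{A}^\dagger$ are $\frh$-algebras. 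For $A^{\op}$ the only nontrivial point is compatibility of $m^{\op}$ with the moment maps: for $a \in \dot{A}_{\alpha,\beta} = A_{-\alpha,-\beta}$ one computes $\mu_l(F)\cdot_{\op}a = a\mu_l(F) = \mu_l(T_\alpha F)a = \mu_l(T_\alpha F)\cdot_{\op}a$, using the original relation with $T_{-\alpha}(T_\alpha F) = F$; this is exactly why the grading must be negated in $\dot{A}$. The counit $\ve$ is unchanged throughout, and since $D_\frh$ is concentrated on the diagonal $\alpha=\beta$ it satisfies $D_\frh^\dagger = D_\frh$ and carries a natural anti-automorphism, which lets one identify the relevant monoidal unit and check that $\ve$ remains a homomorphism into it in each case.

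Next I would verify that the comultiplications are $\frh$-algebra homomorphisms into the correct modified tensor products. For $A^{\op}$ this is immediate: the defining subspace \eqref{eq:Halgd:totimes} of $\totimes$ involves only the moment maps, which are unchanged in $\dot{A}$, so $\dot{A} \totimes \dot{A}$ agrees with $(A \totimes A)^{\op}$ and the homomorphism $\Delta\colon \dot{A} \to \dot{A}\totimes\dot{A}$ transports without change. For $A^{\cop}$ I would exploit that the dagger acts as the identity on underlying maps: applying it to $\Delta\colon A \to A\totimes A$ yields an $\frh$-algebra homomorphism $\Delta^\dagger\colon A^\dagger \to (A\totimes A)^\dagger$, and postcomposing with $\wt{\tau}^\dagger\colon (A\totimes A)^\dagger \to A^\dagger\totimes A^\dagger$ (which is $\wt{\tau}$ on underlying maps and is an isomorphism by \cref{lem:E:wtau}) produces $\Delta^{\op}=\wt{\tau}\circ\Delta$ as a bona fide homomorphism $A^\dagger \to A^\dagger\totimes A^\dagger$.

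The core of the argument, and the step I expect to be the main obstacle, is coassociativity of $\Delta^{\op}$. Because $\wt{\tau}$ is a \emph{twisted} flip taking values in a daggered tensor product rather than a braiding on a single fixed monoidal category, the identity $(\Delta^{\op}\totimes\id)\Delta^{\op} = (\id\totimes\Delta^{\op})\Delta^{\op}$ does not follow by a purely formal braided-category argument. I would instead establish a coherence lemma for $\wt{\tau}$: its compatibility with the associativity constraint $(A\totimes A)\totimes A \cong A\totimes(A\totimes A)$ (a hexagon-type identity) together with the ``twisted involutivity'' $\wt{\tau}^\dagger\circ\wt{\tau} = \id$ noted above. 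Combined with the coassociativity \eqref{eq:Halgd:Delta-ve} of $\Delta$ itself, this reduces the coassociativity of $\Delta^{\op}$ to the already-known one for $\Delta$. The counit axioms are easier: they follow from those for $A$ once $\wt{\tau}$ is fed through the symmetry $D_\frh\totimes A \cong A \cong A\totimes D_\frh$.

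Finally, for the antipodes I would check the four relations \eqref{eq:Halgd:S}. The cleanest route is to show directly that $S\colon A \to A^{\op,\cop}$ is an $\frh$-Hopf algebroid homomorphism: $S$ is an anti-homomorphism for $m$ and, via $\wt{\tau}$, for $\Delta$, and it intertwines the moment-map and counit data; this endows $A^{\op,\cop}$ with antipode $S$ with no invertibility hypothesis. Under the standing assumption that $S$ is invertible (which is needed even to write $S^{-1}$ in $A^{\op}$ and $A^{\cop}$), the same computation with $S^{-1}$ verifies the antipode relations for $A^{\op}$ and $A^{\cop}$, and the maps $S\colon A \sto A^{\op,\cop}$ and $S\colon A^{\op}\sto A^{\cop}$ then become isomorphisms. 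Here one must track carefully the grading-dependent shift $T_\alpha$ appearing in the third relation of \eqref{eq:Halgd:S}, since passing to $\dot{A}$ and $A^\dagger$ swaps the roles of $\mu_l,\mu_r$ and of $\alpha,\beta$; but once the coherence of $\wt{\tau}$ is in hand this is routine bookkeeping.
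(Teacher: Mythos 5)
The paper does not actually prove this proposition: its ``proof'' is the sentence that the verification is straightforward, together with the remark that for $\frh=0$ one recovers the classical opposite Hopf algebras of Kassel. So there is no argument of the paper to compare yours against line by line, and your plan must stand on its own. In outline it does: the grading negation in $\dot{A}$ is correctly explained (your moment-map computation is the right one, up to the harmless slip that its last expression should read $a \cdot_{\op} \mu_l(T_\alpha F)$ rather than $\mu_l(T_\alpha F)\cdot_{\op}a$), the construction of $\Delta^{\op}$ by daggering and composing with $\wt{\tau}$ is correct, your insistence that coassociativity of $\Delta^{\op}$ needs a coherence statement for $\wt{\tau}$ against the associativity isomorphisms is legitimate and is the right repair (note you will need the two-algebra version $\wt{\tau}\colon A \totimes B \to (B^\dagger \totimes A^\dagger)^\dagger$ of \cref{lem:E:wtau}, whose proof is verbatim the same), and invertibility of $S$ is indeed a standing hypothesis of the statement.

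Two points, however, are genuine gaps as written. First, the claim that ``the counit $\ve$ is unchanged throughout'' is false for $A^{\op}$ and $A^{\op,\cop}$, where the multiplication is reversed: as a map $\dot{A} \to D_\frh$ the original $\ve$ sends $\dot{A}_{\alpha,\alpha} = A_{-\alpha,-\alpha}$ into $(D_\frh)_{-\alpha,-\alpha}$ rather than $(D_\frh)_{\alpha,\alpha}$, so it does not preserve the bigrading; and it is an algebra \emph{anti}-homomorphism for $m^{\op}$, which is fatal because $D_\frh$ is noncommutative. The correct counit is $\ve^{\op}\ceq S_D \circ \ve$, where $S_D\colon D_\frh \to D_\frh$, $F T_{-\alpha} \mto (T_\alpha F) T_\alpha$, is the canonical anti-automorphism you allude to; equivalently, one identifies $D_\frh^{\op} \cong D_\frh$ via $S_D$. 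You mention this anti-automorphism, so the ingredient is half-present, but it contradicts ``unchanged'', and the twist must be carried explicitly through every axiom involving the counit (for $A^{\cop}$, where $m$ is untouched, $\ve$ really is unchanged). Second, your ``cleanest route'' to the antipode statements --- declaring $S\colon A \to A^{\op,\cop}$ a morphism --- silently uses that $S$ is an algebra anti-homomorphism and a coalgebra anti-homomorphism; but in this paper's \eqref{eq:Halgd:S} the antipode is only a linear map subject to four identities, so those properties are themselves theorems to be proved or cited, not definitional. The economical fix is to verify \eqref{eq:Halgd:S} for the three structures directly from the axioms of $A$, where the $S_D$-twisted counit does exactly the needed bookkeeping: writing $\ve^{\op,\cop}\ceq S_D\circ\ve$ and using $S_D(\ve(a))1 = T_\alpha(\ve(a)1)$ for $a \in A_{\alpha,\alpha}$, one gets
\[
 m^{\op}(S \otimes \id)\Delta^{\op}(a) = m(\id \otimes S)\Delta(a) = \mu_l(\ve(a)1)
 = \mu_l\bigl(T_{-\alpha}\bigl(S_D(\ve(a))1\bigr)\bigr)
 = \mu_r^{\op,\cop}\bigl(T_{-\alpha}(\ve^{\op,\cop}(a)1)\bigr),
\]
which is precisely the fourth relation of \eqref{eq:Halgd:S} for $A^{\op,\cop}$ (in whose bigrading $a$ sits in degree $(-\alpha,-\alpha)$); the remaining relations reduce to those of $A$ in the same way and never need the anti-homomorphism property of $S$.
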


The proof is straightforward, and we omit it. Note that in the case $\frh=0$, we have $\totimes=\otimes$ and $\wt{\tau}=\tau$, and recover the opposite Hopf algebras in \eqref{eq:E:Hop}.

Now we go back to the elliptic algebra $U=U_{q,p}(\wh{\frg})$. We equip it with the $H$-algebra structure in \cref{lem:E:Halg}. Then the $H$-algebra $U^\dagger$ in \eqref{eq:E:Adag} has the $H^*$-bigrading $U^\dagger = \bigoplus_{\alpha,\beta \in H^*} U^\dagger_{\alpha,\beta}$ with
\[
 U^\dagger_{\alpha,\beta} \ceq \{a \in U \mid 
  q^{P+h} a q^{-(P+h)} = q^{\pair{\alpha,P+h}}a, \, 
  q^P a q^{-P} = q^{\pair{\beta,P}} a \ (\forall \, P+h,P \in \wtH)\},
\] 
and the moment maps are $\mu^\dagger_l(F) \ceq F(P+h,p^*)$ and $\mu^\dagger_r(F) \ceq F(P,p)$ for $F=F(P,p) \in \bbF\dbr{p}$. 

\begin{rmk}
The $H$-algebra structure on $U^\dagger$ is used in \cite[\S3.1]{Ko09} and \cite[\S3.2]{Ko} to study the comultiplication given in terms of the $L$-operators. Also, it is used in \cite{FKO} to analyze the quantum $Z$-algebra. The tensor product $(U^\dagger \totimes U^\dagger)^\dagger$ is nothing but the opposite tensor product of $U$ used in \cite{Ko09,Ko}. Let us denote 
\[
 U \totimes^{\op} U \ceq U^\dagger \totimes U^\dagger.
\]
Then, the opposite comultiplication $\Delta^{\op}$ in \cref{prp:E:Aopcop} can be written as
\begin{align*}
&\Delta^{\op}(e_i(z)) = 1 \totimes^{\op} e_i(z) + e_i(q^{c_1}z) \totimes^{\op} \psi^+_i(q^{c_1/2}z), \\
&\Delta^{\op}(f_i(z)) = f_i(z) \totimes^{\op} 1 + \psi^-_i(q^{c_2/2}z) \totimes^{\op} f_i(q^{c_2}z), \\
&\Delta^{\op}(\psi_i^+(z)) = \psi_i^+(q^{-c_1/2}z) \totimes^{\op} \psi_i^+(q^{ c_2/2}z), \\
&\Delta^{\op}(\psi_i^-(z)) = \psi_i^-(q^{ c_1/2}z) \totimes^{\op} \psi_i^-(q^{-c_2/2}z).
\end{align*}
These formulas coincide with the Drinfeld-type comultiplication in \cite[Appendix B]{JKOS} up to $q$-factors. See also \cite{Fa,KO} where similar comultiplication appears.
\end{rmk}

%%%%%%%%%%%%%%%%%%%%%%%%%%%%%%%%%%%%%%%%%%%%%%%%%%%%%%%%%%%%%%%%%%%%%%%%%%%%%%%%%%%%%%%%%%
%%%%%%%%%%%%%%%%%%%%%%%%%%%%%%%%%%%%%%%%%%%%%%%%%%%%%%%%%%%%%%%%%%%%%%%%%%%%%%%%%%%%%%%%%%
\section{Dynamical Ding-Iohara algebroids of simply-laced root systems}\label{s:ADE}

In this section, we introduce an family of Hopf algebroids which includes the elliptic algebras $U_{q,p}(\wh{g})$ (see \cref{ss:pre:E}) of simply-laced types, and whose Hopf algebra degeneration is the Ding-Iohara quantum algebra (see \cref{ss:pre:DI}). As in the previous \cref{ss:pre:E}, we use the word ``a topological algebra'' to mean the algebra over $\bbC\dbr{p}$ with $p$-adic topology. 

%%%%%%%%%%%%%%%%%%%%%%%%%%%%%%%%%%%%%%%%%%%%%%%%%%%%%%%%%%%%%%%%%%%%%%%%%%%%%%%%%%%%%%%%%%
\subsection{Algebra structure}\label{ss:ADE:alg}

We use the notation of finite root systems in \cref{sss:E:root}. Thus, let 
\[
 A = A(X_l) = (a_{ij})_{i,j \in I}, \quad I=\{1,\dotsc,l\}
\]
be the Cartan matrix of finite type $X_l$ in the sense of Kac \cite[Theorem 4.3, \S 4.8, TABLE Fin]{Ka}. At this moment, we do not restrict $X_l$ to be simply-laced. We choose the symmetrization \eqref{eq:E:BDA}: 
\[
 A = D B, \quad B = (b_{ij})_{i,j \in I}, \quad D^{-1} = \diag(d_i)_{i \in I}
\]
with $(d_1,\dotsc,d_l)$ in \eqref{eq:E:D}, and fix a realization $(\frh,\Pi,\Pi^\vee)$ of $A$, consisting of the Cartan subalgebra $\frh \subset \frg$, the simple roots $\Pi = \{\alpha_i \mid i \in I\} \subset \frh^*$ and the simple coroots $\Pi^\vee = \{\alpha^\vee_i \mid i \in I\} \subset \frh$.

We also use the notation of dynamical parameters in \cref{sss:E:dyn}.
Thus, we have the linear spaces \eqref{eq:E:H}, \eqref{eq:E:wtH}, \eqref{eq:E:H*}
\begin{align}\label{eq:U:H}
\begin{split}
    H   = \sum_{i \in I} \bbC P_i \ \subset \ 
&\wtH   = \sum_{i \in I} \bbC(P_i+h_i) + \sum_{i \in I} \bbC P_i, \\
    H^* = \sum_{i \in I} \bbC Q_i \ \subset \ 
&\wtH^* = \sum_{i \in I} \bbC(Q_i+\alpha_i) + \sum_{i \in I} \bbC Q_i
\end{split}
\end{align}
equipped with the bilinear form \eqref{eq:E:Hpair}:
\[
 \pair{Q_i,P_j} = b_{ij}, \quad \pair{\alpha_i,h_j} = a_{ji}, \quad 
 \pair{\text{others}} = 0.
\]
We will regard $H$ as a commutative Lie algebra, and apply the formalism of $H$-Hopf algebroids in \cref{ss:pre:Halgd}.

Finally, we introduce the structure functions satisfying a modified Ding-Iohara condition.

\begin{dfn}\label{dfn:U:DI}
Let $p$ be a formal parameter, and $\{G^\pm_{ij}(z;p) \mid i,j \in I\}$ be a set of functions satisfying either of the following \emph{Ding-Iohara condition}.
%\item \label{i:dfn:ADE:sDI}
%$\{G^\pm_{ij}(z;p)\}$ satisfies \emph{the strong Ding-Iohara condition}:
\begin{clist}
\item
$G^\pm_{ij}(z;p) \in \bbC[z^{\pm1}]\dbr{p}$, and is invertible in $\bbC\dpr{z}\dbr{p}$.
\item
The series 
\[
 g_{ij}(z;p) \ceq \frac{G^+_{ij}(z;p)}{G^-_{ij}(z;p)} \in \bbC\dpr{z}\dbr{p}
\]
satisfies 
\[
 g_{ij}(z^{-1};p) = g_{ji}(z;p)^{-1}.
\]
\end{clist}
We call $g \ceq \{g_{ij}(z;p) \mid i,j \in I\}$ \emph{the structure functions}.
\end{dfn}

Note that we have definite limits 
\begin{align}\label{eq:U:olG}
 \ol{G}^\pm_{ij}(z) \ceq \lim_{p \to 0}G^\pm_{ij}(z;p)
\end{align}
which belong to $\bbC[z^{\pm1}]$, and these limits are invertible in the field $\bbC\dpr{z}$.
In particular, we have a well-defined function
\begin{align}\label{eq:U:olg}
 \ol{g}_{ij}(z) \ceq \frac{\ol{G}^+_{ij}(z)}{\ol{G}^-_{ij}(z)} \ \in \bbC\dpr{z},
\end{align}
which satisfy the condition \eqref{eq:DI:cond}:
\begin{align}\label{eq:U:olg-cond}
 \ol{g}_{ij}(z^{-1}) = \ol{g}_{ji}(z)^{-1}.
\end{align}

Recall that we are considering the $p$-adic topology. Now let us introduce:

\begin{dfn}\label{dfn:U:ADE}
Let $A=A(X_l)$ be the Cartan matrix of type $X_l=A_l,D_l$ or $E_{6,7,8}$, and take the symmetrization $A=D B$ with $D^{-1}=(d_1,\dotsc,d_l)$ in \eqref{eq:E:D}, a complex parameter $q$ with $0<\abs{q}<1$, a formal parameter $p$, and the structure functions $g=\{g_{i,j} \mid i,j \in I\}$ as in \cref{dfn:U:DI}. 
We define 
\[
 U_{q,p}(g,X_l)
\]
to be the topological algebra over $\bbC\dbr{p}$ generated by 
\begin{align}\label{eq:ADE:gen}
 \clM(\wtH^*), \ q^{\pm c/2}, \ d, \ K^{\pm}_i, \ 
 e_{i,m}, \ f_{i,m}, \ \psi^{\pm}_{i,m} \quad (i \in I, \, m  \in \bbZ)
\end{align} 
with the following relations. Below we use the generating currents
\[
        e_i(z) \ceq \sum_{m \in \bbZ}        e_{i,m} z^{-m}, \quad 
        f_i(z) \ceq \sum_{m \in \bbZ}        f_{i,m} z^{-m}, \quad 
 \psi^\pm_i(z) \ceq \sum_{m \in \bbZ} \psi^\pm_{i,m} z^{-m},
\]
and the genuine relations are obtained by expansion with respect to $p$, as in \cref{dfn:E:U} of the elliptic algebra $U_{q,p}(\wh{\frg})$.
\begin{itemize}
\item
The generators $\clM(\wtH^*),q^{\pm c/2},\psi_i^{\pm}(z)$ and $K^\pm_i$ should satisfy:
\begin{align}\label{eq:U:qc}
\begin{split}
&\text{The field $\clM(\wtH^*)$ is a subalgebra.} \\
&\text{$q^{\pm c/2}$ are central and $q^{c/2}q^{-c/2}=1$.} \\
&\text{$\psi_i^{\pm}(z)$ and $K^\pm_i$ are invertible.} 
\end{split}
\end{align}

\item
For $F(P),F(P+h) \in \clM(\wtH^*)$, we have 
\begin{align}
\label{eq:U:Fd}
&F(P+h) d      = d       F(P+h), & 
&F(P  ) d      = d       F(P), \\
\label{eq:U:FK}
&F(P+h) K^\pm_i = K^\pm_i F(P+h \mp \pair{Q_i,P+h}), & 
&F(P  ) K^\pm_i = K^\pm_i F(P   \mp \pair{Q_i,P  }), \\
\label{eq:U:Fe}
&F(P+h) e_i(z)  = e_j(z) F(P+h), & 
&F(P  ) e_i(z)  = e_i(z) F(P  -\pair{Q_i,P}), \\
\label{eq:U:Ff}
&F(P+h) f_i(z)  = f_i(z) F(P+h-\pair{Q_i,P+h}), &
&F(P  ) f_i(z)  = f_i(z) F(P), \\
\label{eq:U:Fp}
&F(P+h) \psi^\pm_i(z) = \psi^\pm_i(z) F(P+h-\pair{Q_i,P+h}), &
&F(P  ) \psi^\pm_i(z) = \psi^\pm_i(z) F(P  -\pair{Q_i,P  }).
\end{align}

\item
The remaining relations with $d$ are 
\begin{align}
 [d,K^\pm_i] = 0, \quad 
 [d,e_{i,m}] = m e_{i,m}, \quad 
 [d,f_{i,m}] = m f_{i,m}, \quad
 [d,\psi^\pm_i] = m \psi^\pm_{i,m}.
\end{align}

\item
The remaining relations containing $K^\pm_i$ are given by 
\begin{align}\label{eq:U:Ke}
 K^\pm_i e_j(z)  = q_i^{\mp a_{ij}} e_j(z) K^\pm_i, \quad 
 K^\pm_i f_j(z)  = q_i^{\pm a_{ij}} f_j(z) K^\pm_i, \quad 
[K^\pm_i, \psi^\pm_{i,m}]=0.
\end{align}

\item
Hereafter we use
\[
 g^{ }_{ij}(z) \ceq g_{ij}(z;p  ), \quad 
 g^{*}_{ij}(z) \ceq g_{ij}(z;p^*).
\]
Then the remaining relations containing $\psi^\pm_i$ are given by 
\begin{align}
\label{eq:U:pp}
&\psi^\pm_i(z) \psi^\pm_j(w) =
 \frac{g^*_{ij}(      \frac{z}{w})}{g_{ij}(   \frac{z}{w})} \psi^\pm_j(w) \psi^\pm_i(z), &  
&\psi^{+}_i(z) \psi^{-}_j(w) =
 \frac{g^*_{ij}(q^{-c}\frac{z}{w})}{g_{ij}(q^c\frac{z}{w})} \psi^{-}_j(w) \psi^{+}_i(z), 
\\
\label{eq:U:ppe}
&\psi^+_i(z) e_j(w) = g_{ij}^*(q^{-c/2}\tfrac{z}{w})     e_j(w) \psi^+_i(z), &
&\psi^+_i(z) f_j(w) = g_{ij}  (q^{ c/2}\tfrac{z}{w})^{-1}f_j(w) \psi^+_i(z), 
\\
\label{eq:U:pme}
&\psi^-_i(z) e_j(w) = g_{ij}^*(q^{c/2}\tfrac{z}{w})e_j(w) \psi^-_i(z), &
&\psi^-_i(z) f_j(w) = g_{ij}  (q^{-c/2}\tfrac{z}{w})^{-1}     f_j(w) \psi^-_i(z).
\end{align}

\item
Hereafter we use
\[
 G^{\pm  }_{ij}(z) \ceq G^{\pm}_{ij}(z;p  ), \quad 
 G^{\pm *}_{ij}(z) \ceq G^{\pm}_{ij}(z;p^*).
\]
Then the remaining relations consist of the quadratic relations 
\begin{gather}
\label{eq:U:ee}
 G^{-*}_{ij}(\tfrac{z}{w}) e_i(z) e_j(w) = 
 G^{+*}_{ij}(\tfrac{z}{w}) e_j(w) e_i(z), \quad 
 G^{+ }_{ij}(\tfrac{z}{w}) f_i(z) f_j(w) = 
 G^{- }_{ij}(\tfrac{z}{w}) f_j(w) f_i(z), 
\\
\label{eq:U:ef}
 [e_i(z), f_j(w)] = \frac{\delta_{i,j}}{q-q^{-1}}
 \bigl(\delta(q^{-c}\tfrac{z}{w})\psi^-_i(q^{c/2}w)
      -\delta(q^{ c}\tfrac{z}{w})\psi^+_i(q^{c/2}z)\bigr)
\end{gather}
for any $i,j \in I$, and the following Serre-type relations \eqref{eq:U:eS} and \eqref{eq:U:fS} for $i,j \in I$ such that $a_{ij}=-1$. Using $\wt{g}^*_{ij}(z) \ceq \ol{g}_{ij}(z)/g^*_{ij}(z)$ with $\ol{g}_{ij}(z)$ given in \eqref{eq:U:olg} and 
\begin{align}\label{eq:U:hij}
 h_{ij} \ceq 
 \frac{(\ol{g}_{ii}(\frac{z_1}{z_2})+1)(\ol{g}_{ij}(\frac{z_1}{z  })\ol{g}_{ij}(\frac{z_2}{z})+1)}
      { \ol{g}_{ij}(\frac{z_2}{z  })+   \ol{g}_{ii}(\frac{z_1}{z_2})\ol{g}_{ij}(\frac{z_1}{z})   },
\end{align}
the Serre-type relation for $e_i(z),e_j(z)$ is given by 
\begin{align}\label{eq:U:eS}
\begin{split}
&\wt{g}^*_{ii}(\tfrac{z_1}{z_2})
 \wt{g}^*_{ij}(\tfrac{z_1}{z  })
 \wt{g}^*_{ij}(\tfrac{z_2}{z  }) e_i(z_1) e_i(z_2) e_j(z)
-h_{ij}
 \wt{g}^*_{ii}(\tfrac{z_1}{z_2})
 \wt{g}^*_{ij}(\tfrac{z_1}{z  }) e_i(z_1) e_j(z) e_i(z_2) \\
&
+\wt{g}^*_{ii}(\tfrac{z_1}{z_2}) e_j(z) e_i(z_1) e_i(z_2)
+\wt{g}^*_{ij}(\tfrac{z_1}{z  }) 
 \wt{g}^*_{ij}(\tfrac{z_2}{z  }) e_i(z_2) e_i(z_1) e_j(z) \\
&
-h_{ij}
 \wt{g}^*_{ij}(\tfrac{z_2}{z  }) e_i(z_2) e_j(z) e_i(z_1)
+e_j(z) e_i(z_2) e_i(z_1)=0.
\end{split}
\end{align}
Similarly, using $\wt{g}_{ij}(z) \ceq g_{ij}(z)/\ol{g}_{ij}(z)$, the Serre-type relation for $f_i(z),f_j(z)$ is given by
\begin{align}\label{eq:U:fS}
\begin{split}
&\wt{g}_{ii}(\tfrac{z_1}{z_2})
 \wt{g}_{ij}(\tfrac{z_1}{z  })
 \wt{g}_{ij}(\tfrac{z_2}{z  }) f_i(z_1)f_i(z_2)f_j(z)
-h_{ij}
 \wt{g}_{ii}(\tfrac{z_1}{z_2})
 \wt{g}_{ij}(\tfrac{z_1}{z  }) f_i(z_1)f_j(z)f_i(z_2) \\
&
+\wt{g}_{ii}(\tfrac{z_1}{z_2}) f_j(z)f_i(z_1)f_i(z_2)
+\wt{g}_{ij}(\tfrac{z_1}{z  })
 \wt{g}_{ij}(\tfrac{z_2}{z  }) f_i(z_2)f_i(z_1)f_j(z) \\
&
-h_{ij}
 \wt{g}_{ij}(\tfrac{z_2}{z}) f_i(z_2)f_j(z)f_i(z_1)
+f_j(z)f_i(z_2)f_i(z_1)=0.
\end{split}
\end{align}
%\begin{align}\label{eq:U:fS}
%\begin{split}
%&\frac{g_{ii}(\frac{z_1}{z_2})}{\ol{g}_{ii}(\frac{z_1}{z_2})}
% \frac{g_{ij}(\frac{z_1}{z  })}{\ol{g}_{ij}(\frac{z_1}{z  })}
% \frac{g_{ij}(\frac{z_2}{z  })}{\ol{g}_{ij}(\frac{z_2}{z  })} f_i(z_1)f_i(z_2)f_j(z)
%-h_{ij}
% \frac{g_{ii}(\frac{z_1}{z_2})}{\ol{g}_{ii}(\frac{z_1}{z_2})}
% \frac{g_{ij}(\frac{z_1}{z  })}{\ol{g}_{ij}(\frac{z_1}{z  })} f_i(z_1)f_j(z)f_i(z_2) \\
%&
%+\frac{g_{ii}(\frac{z_1}{z_2})}{\ol{g}_{ii}(\frac{z_1}{z_2})} f_j(z)f_i(z_1)f_i(z_2)
%+\frac{g_{ij}(\frac{z_1}{z  })}{\ol{g}_{ij}(\frac{z_1}{z  })}
% \frac{g_{ij}(\frac{z_2}{z  })}{\ol{g}_{ij}(\frac{z_2}{z  })} f_i(z_2)f_i(z_1)f_j(z) \\
%&
%-h_{ij}
% \frac{g_{ij}(\frac{z_2}{z  })}{\ol{g}_{ij}(\frac{z_2}{z  })} f_i(z_2)f_j(z)f_i(z_1)
%+f_j(z)f_i(z_2)f_i(z_1)=0.
%\end{split}
%\end{align}
Note that the function $h_{ij}$ of \eqref{eq:U:hij} coincides with \eqref{eq:DI:hij} appearing in the Serre-type relations for the Ding-Iohara quantum algebra with $g_{ij}(z)$ replaced by $\ol{g}_{ij}(z)$.
\end{itemize}
\end{dfn}

\begin{rmk}
By the same discussion of \cref{rmk:E:U} \ref{i:rmk:E:U:d'}, the topological algebra $U_{q,p}(g,X_l)$ is independent, up to isomorphism, of the choice of a symmetrization of the Cartan matrix $A$.
\end{rmk}

\begin{rmk}\label{rmk:ADE:top}
Instead of \cref{dfn:U:DI}, we can use broader class of the structure functions $g=\{g_{ij}(z;p) \mid i,j \in I\}$ with some care. Let us explain two such classes.
\begin{enumerate}
\item \label{i:rmk:ADE:top:p'}
Let $p$ and $p'$ be formal parameters, and $\{G^\pm_{ij}(z;p) \mid i,j \in I\}$ be a set of series satisfying the following two conditions.
%\emph{the weak Ding-Iohara condition}:
\begin{clist}
\item
$G^\pm_{ij}(z;p) \in \bbC[z^{\pm1}]\dbr{p'}\dbr{p}$, and is invertible in $\bbC\dpr{z}\dbr{p'}\dbr{p}$.

\item
The series $g_{ij}(z;p) \ceq G^+_{ij}(z;p)/G^-_{ij}(z;p) \in \bbC\dpr{z}\dbr{p'}\dbr{p}$
satisfies $g_{ij}(z^{-1};p) = g_{ji}(z;p)^{-1}$.
\end{clist}
Then we have the definite limits $\ol{g}_{ij}(z) \in \bbC\dbr{z}\dbr{p'}$, and considering the $p$- and $p'$-adic topology, we obtain the topological algebra $U$.

\item
%\item \label{i:dfn:ADE:wDI}
Let $\{G^\pm_{ij}(z;p) \mid i,j \in I\}$ satisfy the following two conditions.
%\emph{the weak Ding-Iohara condition}:
\begin{clist}
\item
$G^\pm_{ij}(z;p) \in \clA \dbr{p}$, and is invertible in $(\Frac \clA)\dbr{p}$.
Here we used the function ring $\clA$ in  \eqref{eq:DI:clA}.

\item
The series $g_{ij}(z;p) \ceq G^+_{ij}(z;p)/G^-_{ij}(z;p) \in (\Frac \clA)\dbr{p}$
satisfies $g_{ij}(z^{-1};p) = g_{ji}(z;p)^{-1}$.
\end{clist}
Then we have the definite limits $\ol{g}_{ij}(z) \in \Frac \clA$, and obtain the genuine relations of $U$ by expanding generating currents and series first in terms of $p$, and second in terms of the current/series variable. However, this process yields relations containing infinite summations in general, and we should take some solution such as \ref{i:rmk:DI:top:b} in \cref{rmk:DI:top}. 
\end{enumerate}
\end{rmk}

The Serre-type relations \eqref{eq:U:eS} and \eqref{eq:U:fS} look rather complicated, but the other defining relations are the same as those of the elliptic algebra $U_{q,p}(\wh{g})$ of type $X_l=A_l,D_l$ or $E_{6,7,8}$ in \cref{dfn:E:U}. Indeed, recalling \eqref{eq:E:gij}, we have:

\begin{prp}\label{prp:U:ADE}
In \cref{dfn:U:ADE} of $U_{q,p}(g,X_l)$ for type $X_l=A_l,D_l,E_{6,7,8}$, 
set the structure functions $g=\{g_{ij}(z;p) \mid i,j \in I\}$ as
\begin{align}\label{eq:U:gijE}
 g_{ij}(x;p) = \frac{G_{ij}^+(x;p)}{G_{ij}^-(x;p)} \ceq 
 \frac{q^{-b_{ij}} \theta(q^{b_{ij}}x;p)}{\theta(q^{-b_{ij}}x;p)},
\end{align}
where $B=(b_{ij})_{i,j \in I}$ is the symmetrization $A=D B$ of the Cartan matrix $A=A(X_l)$. Note that these satisfy the Ding-Iohara condition (\cref{dfn:U:DI}). Then we have an isomorphism of topological algebras
\[
 U_{q,p}(g,X_l) \cong U_{q,p}(\wh{g}).
\]
\end{prp}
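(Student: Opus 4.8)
The plan is to exhibit a mutually inverse pair of homomorphisms between the two topological algebras over $\bbC\dbr{p}$, matching generators and checking that the defining relations of \cref{dfn:U:ADE} become, after the specialization \eqref{eq:U:gijE}, exactly the defining relations of \cref{dfn:E:U}. The only structural discrepancy between the two presentations is that $U_{q,p}(\wh{\frg})$ is generated by the Heisenberg modes $\alpha^\vee_{i,n}$, whereas $U_{q,p}(g,X_l)$ is generated by the Cartan currents $\psi^\pm_i(z)$ directly. First I would show these are interchangeable. In $U_{q,p}(\wh{\frg})$ the currents $\psi^\pm_i(z)$ are defined by the exponential formulas of \cref{dfn:E:U} and are automatically invertible; conversely, since $K^\pm_i$ commutes with $\alpha^\vee_{i,n}$ by \eqref{eq:E:Ke}, the assignment from $\{\alpha^\vee_{i,n}\}$ to the modes of $(K^+_i)^{-1}\psi^+_i(z)$ and $(K^-_i)^{-1}\psi^-_i(z)$ is $p$-adically triangular, with the leading linear coefficient of $z^{\pm n}$ proportional to $(q_i-q_i^{-1})/(1-p^n)$, a unit in $\bbC\dbr{p}$. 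One therefore recovers each $\alpha^\vee_{i,n}$ recursively as a noncommutative expression in the modes $\psi^\pm_{i,m}$ and $(K^\pm_i)^{-1}$. Thus $U_{q,p}(\wh{\frg})$ admits the equivalent presentation on the generators \eqref{eq:ADE:gen}, and it suffices to compare relations.

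The non-Serre relations are then immediate from the preparatory lemmas. With $g_{ij}(x;p)$ as in \eqref{eq:U:gijE} — which is precisely \eqref{eq:E:gij} — the relations \eqref{eq:U:pp} are \cref{lem:E:pp}, the relations \eqref{eq:U:ppe} and \eqref{eq:U:pme} are \cref{lem:E:pef}, the quadratic relations \eqref{eq:U:ee} are \cref{lem:E:ee}, and \eqref{eq:U:ef} coincides verbatim with \eqref{eq:E:ef}. The relations \eqref{eq:U:Fd}--\eqref{eq:U:Fp} involving $\clM(\wtH^*)$, the action of $d$, and \eqref{eq:U:Ke} for $K^\pm_i$ follow from \eqref{eq:E:U:1}--\eqref{eq:E:U:5}, \eqref{eq:E:dK} and \eqref{eq:E:Ke} together with the exponential presentation of $\psi^\pm_i(z)$; for instance $[d,\psi^\pm_{i,m}]=m\psi^\pm_{i,m}$ and $[K^\pm_i,\psi^\pm_{i,m}]=0$ drop out of $[d,\alpha^\vee_{i,n}]=n\alpha^\vee_{i,n}$ and $[K^\pm_i,\alpha^\vee_{j,n}]=0$.

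The main obstacle is the matching of the Serre-type relations \eqref{eq:U:eS}, \eqref{eq:U:fS} with \eqref{eq:E:eS}, \eqref{eq:E:fS} in the case $a_{ij}=-1$, i.e. $a\ceq 1-a_{ij}=2$. I would first compute the limit $\ol{g}_{ij}(z)=\lim_{p\to0}g_{ij}(z;p)=q^{-b_{ij}}(1-q^{b_{ij}}z)/(1-q^{-b_{ij}}z)$ from \eqref{eq:U:olg}, which for simply-laced type ($d_i=1$, so $b_{ij}=a_{ij}$) is exactly the Ding-Iohara quantum-affine structure function of \cref{eg:DI:qaff}. Substituting into \eqref{eq:U:hij} and simplifying then yields $h_{ij}=\bnm{2}{1}{q}=q+q^{-1}=[2]_i$, the $q$-binomial appearing in \eqref{eq:E:eS} for $a=2$. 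It remains to reconcile the coefficient functions: expanding \eqref{eq:E:eS} over $\frS_2$ and over $s=0,1,2$ produces six monomials in $e_i,e_j$ whose arrangement matches the six terms of \eqref{eq:U:eS}, and one checks that the theta-quotients $\wt{g}^*_{ij}(z)=\ol{g}_{ij}(z)/g^*_{ij}(z)$ reproduce the products of $\wtgm(\,\cdot\,;q^{b_{ij}},p^*)$ and $\wtgm(\,\cdot\,;q^2,p^*)$ by the factorization $\theta(y;p)=(1-y)(py;p)_\infty(p/y;p)_\infty$, after clearing a common invertible factor. The relation for $f$ is handled identically, with $p$ in place of $p^*$ and $\wt{g}_{ij}$ in place of $\wt{g}^*_{ij}$.

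Finally, the two homomorphisms built above are inverse to each other on generators by construction, and since both sides are topologically generated by \eqref{eq:ADE:gen}, this establishes the isomorphism $U_{q,p}(g,X_l)\cong U_{q,p}(\wh{\frg})$ of topological algebras. The single delicate point to be careful about is the $p$-adic convergence in the recursive inversion recovering $\alpha^\vee_{i,n}$, which is guaranteed because each defining current lies in the relevant $\bbC\dbr{p}$-completion and $1-p^n$ is a unit.
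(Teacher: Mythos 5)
Your proposal is correct and follows essentially the same route as the paper: the non-Serre relations are reduced to \cref{lem:E:pef,lem:E:pp,lem:E:ee}, and the Serre-type relations are matched by computing $h_{ij}=q+q^{-1}=\bnm{2}{1}{i}$ and converting the ratios $\wt{g}^*_{ij}=\ol{g}_{ij}/g^*_{ij}$ into quotients of $\wtgm(\,\cdot\,;q^{b_{ij}},p^*)$ via the factorization $\theta(y;p)=(1-y)(py;p)_\infty(p/y;p)_\infty$, exactly as in \eqref{eq:ADE:olg/g*}--\eqref{eq:ADE:eS1}. Your preliminary step recovering the $\alpha^\vee_{i,n}$ from the modes of $(K^\pm_i)^{-1}\psi^\pm_i(z)$ by $p$-adic triangular inversion is a sound refinement of a point the paper compresses into the phrase ``the generators of both topological algebras coincide.''
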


\begin{proof}
The generators of both topological algebra coincide. Also, using \cref{lem:E:pef,lem:E:pp,lem:E:ee} and comparing the fundamental relations of $U_{q,p}(\wh{g})$ in \cref{dfn:E:U} and those of $U_{q,p}(g,X_l)$ in \cref{dfn:U:ADE}, we immediately find that they coincide except the Serre-type relations \eqref{eq:E:eS} and \eqref{eq:E:fS}. So it remains to check that the specialization \eqref{eq:U:gijE} turns the relations \eqref{eq:U:eS} and \eqref{eq:U:fS} into \eqref{eq:E:eS} and \eqref{eq:E:fS}, respectively. First, since we are considering the simply-laced types, the structure functions $g^*_{ij}(z)=g_{ij}(z;p^*)$ and their limits $\ol{g}_{ij}(z)$ are given by 
\begin{align*}
 g^*_{ii}(z) = q^{-2} \frac{\theta^*(q^{ 2}z)}{\theta^*(q^{-2}z)}, \quad 
 g^*_{ij}(z) = q^{  } \frac{\theta^*(q^{-1}z)}{\theta^*(q^{  }z)}, \quad
 \ol{g}_{ii}(z) = q^{-2} \frac{1-q^{ 2}z}{1-q^{-2}z}, \quad 
 \ol{g}_{ij}(z) = q^{  } \frac{1-q^{-1}z}{1-q^{  }z}
\end{align*}
with $\theta^*(x) \ceq \theta(x;p^*) = (x,p^*/x;p^*)_\infty$. 
Then, using \eqref{eq:E:bini}, the function $h_{ij}$ in \eqref{eq:U:hij} is equal to 
\begin{align}\label{eq:ADE:h=2q}
 h_{ij} = q+q^{-1} = \bnm{2}{1}{i}
\end{align}
for any $i \ne j$. (See \cref{ss:ns:h} for a generalization of this equality.) Also, denoting $z_{a0} \ceq z_a/z_0$ and $z_{0a} \ceq z_0/z_a$ for $a=1,2$, we have
\begin{align}\label{eq:ADE:olg/g*}
 \frac{\ol{g}_{ij}(z_{a0})}{g^*_{ij}(z_{a0})} = 
 \frac{(p^* q z_{a0},p^*/(q z_{a0});p^*)}{(p^* q^{-1}z_{a0},p^*/(q^{-1} z_{a0});p^*)} =
 \frac{\wtgm^*(z_{0a};q^{-1})}{\wtgm^*(z_{a0};q^{-1})},
\end{align}
where $\wtgm^*(z;q) \ceq \wtgm(z;q,p^*)=(p^* q x;p^*)_\infty/(p^* q^{-1} z;p^*)_\infty$ using the function $\wtgm$ in \eqref{eq:E:wtgam}. 
Now, we can rewrite the last second term of \eqref{eq:U:eS} as
\begin{align}\label{eq:ADE:eS5}
-h_{ij} \frac{\ol{g}_{ij}(z_{20})}{g^*_{ij}(z_{20})} e_i(z_2) e_j(z) e_i(z_1) = 
-\bnm{2}{1}{i}
 \frac{\wtgm^*(z_{12};q^{ 2})}{\wtgm^*(z_{12};q^{ 2})}
 \frac{\wtgm^*(z_{02};q^{-1})}{\wtgm^*(z_{20};q^{-1})}
 \frac{\wtgm^*(z_{10};q^{-1})}{\wtgm^*(z_{10};q^{-1})} e_i(z_2) e_j(z) e_i(z_1),
\end{align}
where we denoted $z_{12} \ceq z_1/z_2$. 
Note that we inserted two trivial fractions in the right hand side. 
We can similarly rewrite the fourth term of \eqref{eq:U:eS} as
\begin{align}\label{eq:ADE:eS4}
 \frac{\ol{g}_{ij}(z_{10})}{g^*_{ij}(z_{10})}
 \frac{\ol{g}_{ij}(z_{20})}{g^*_{ij}(z_{20})} e_i(z_2) e_i(z_1) e_j(z) = 
 \frac{\wtgm^*(z_{12};q^{ 2})}{\wtgm^*(z_{12};q^{ 2})}
 \frac{\wtgm^*(z_{01};q^{-1})}{\wtgm^*(z_{10};q^{-1})}
 \frac{\wtgm^*(z_{02};q^{-1})}{\wtgm^*(z_{20};q^{-1})} e_i(z_2) e_i(z_1) e_j(z).
\end{align}
Next, using 
\begin{align}\label{eq:ADE:olg/g}
 \frac{\ol{g}_{ii}(z_{12})}{g^*_{ii}(z_{12})} = 
 \frac{(p^* q^{-2} z_{12},p^*/(q^{-2} z_{12});p^*)}{(p^* q^2 z_{12},p^*/(q^2 z_{12});p^*)} =
 \frac{\wtgm^*(z_{21};q^2)}{\wtgm^*(z_{12};q^2)}
\end{align}
with $z_{21} \ceq z_2/z_1$, we can rewrite the third term of \eqref{eq:U:eS} as
\begin{align}\label{eq:ADE:eS3}
 \frac{\ol{g}_{ii}(z_{12})}{g^*_{ii}(z_{12})} e_j(z) e_i(z_1) e_i(z_2) = 
 \frac{\wtgm^*(z_{21};q^{ 2})}{\wtgm^*(z_{12};q^{ 2})}
 \frac{\wtgm^*(z_{10};q^{-1})}{\wtgm^*(z_{10};q^{-1})}
 \frac{\wtgm^*(z_{20};q^{-1})}{\wtgm^*(z_{20};q^{-1})} e_j(z) e_i(z_1) e_i(z_2),
\end{align}
By a similar calculation, we can rewrite the second term as 
\begin{align}
\label{eq:ADE:eS2}
\begin{split}
&-h_{ij}
 \frac{\ol{g}_{ii}(z_{12})}{g^*_{ii}(z_{12})}
 \frac{\ol{g}_{ij}(z_{10})}{g^*_{ij}(z_{10})} e_i(z_1) e_j(z) e_i(z_2) \\
&=-\bnm{2}{1}{i}
 \frac{\wtgm^*(z_{21};q^{ 2})}{\wtgm^*(z_{12};q^{ 2})}
 \frac{\wtgm^*(z_{01};q^{-1})}{\wtgm^*(z_{10};q^{-1})}
 \frac{\wtgm^*(z_{20};q^{-1})}{\wtgm^*(z_{20};q^{-1})} e_i(z_1) e_j(z) e_i(z_2),
\end{split}
\end{align}
and the first terms as
\begin{align}
\label{eq:ADE:eS1}
\begin{split}
&\frac{\ol{g}_{ii}(z_{12})}{g^*_{ii}(z_{12})}
 \frac{\ol{g}_{ij}(z_{10})}{g^*_{ij}(z_{10})}
 \frac{\ol{g}_{ij}(z_{20})}{g^*_{ij}(z_{20})} e_i(z_1) e_i(z_2) e_j(z) \\
&=
 \frac{\wtgm^*(z_{21};q^{ 2})}{\wtgm^*(z_{12};q^{ 2})}
 \frac{\wtgm^*(z_{01};q^{-1})}{\wtgm^*(z_{10};q^{-1})}
 \frac{\wtgm^*(z_{02};q^{-1})}{\wtgm^*(z_{20};q^{-1})} e_i(z_1) e_i(z_2) e_j(z).
\end{split}
\end{align}
Finally, combining \eqref{eq:ADE:eS5}--\eqref{eq:ADE:eS1}, reordering, and multiplying by 
$\wtgm^*(z_{21};q^2) \wtgm^*(z_{20};q^{-1}) \wtgm^*(z_{10};q^{-1})$,
we can rewrite \eqref{eq:U:eS} as
\begin{align*}
0=
&\wtgm^*(z_{21};q^2)\Bigl(
 \wtgm^*(z_{10};q^{-1}) \wtgm^*(z_{20};q^{-1}) e_j(z) e_i(z_1) e_i(z_2)
-\bnm{2}{1}{i}
 \wtgm^*(z_{01};q^{-1}) \wtgm^*(z_{20};q^{-1}) e_i(z_1) e_j(z) e_i(z_2) \\
&\hspace{4em}
+\wtgm^*(z_{01};q^{-1}) \wtgm^*(z_{02};q^{-1}) e_i(z_1) e_i(z_2) e_j(z)\Bigr) 
\\
+
&\wtgm^*(z_{12};q^2)\Bigl(
 \wtgm^*(z_{20};q^{-1}) \wtgm^*(z_{10};q^{-1}) e_j(z) e_i(z_2) e_i(z_1)
-\bnm{2}{1}{i}
 \wtgm^*(z_{02};q^{-1}) \wtgm^*(z_{10};q^{-1}) e_i(z_2) e_j(z) e_i(z_1) \\
&\hspace{4em}
+\wtgm^*(z_{02};q^{-1}) \wtgm^*(z_{01};q^{-1}) e_i(z_2) e_i(z_1) e_j(z)\Bigr),
\end{align*}
which is nothing but \eqref{eq:E:eS} for simply-laced types.
In the same way, we can check that \eqref{eq:U:fS} is equivalent to \eqref{eq:E:fS} under the assumption \eqref{eq:U:gijE}. Hence we have the statement.
\end{proof}

\begin{rmk}
We discovered the combination $\ol{g}_{ij}(z_a/z_b)/g_{ij}(z_a/z_b)$ and $\ol{g}_{ij}(z_a/z_b)/g^*_{ij}(z_a/z_b)$ in the Serre-type relations \eqref{eq:U:eS} and \eqref{eq:U:fS} to establish the equalities \eqref{eq:ADE:olg/g*} and \eqref{eq:ADE:olg/g} with the function $\wt{\gamma}$. The function $h_{ij}$ is discovered by the well-definedness of the Drinfeld-type comultiplication, which will be commented in \cref{rmk:ADE:h}.
\end{rmk}

%%%%%%%%%%%%%%%%%%%%%%%%%%%%%%%%%%%%%%%%%%%%%%%%%%%%%%%%%%%%%%%%%%%%%%%%%%%%%%%%%%%%%%%%%%
\subsection{\texorpdfstring{$H$}{H}-Hopf algebroid structure}\label{ss:ADE:Halgd}

Hereafter in this \cref{ss:ADE:Halgd}, we denote
\[
 U \ceq U_{q,p}(g,X_l), \quad \bbF \ceq \clM(H^*)
\]
for simplicity. Let us introduce an $H$-Hopf algebroid structure on $U$ with $H=\sum_{i \in I}\bbC P_i$ in \eqref{eq:U:H}. We begin with:

\begin{lem}\label{lem:U:Halg}
$U=U_{q,p}(g,X_l)$ is a topological $H$-algebra with the $H^*$-bigrading $U=\bigoplus_{\alpha,\beta \in H^*} U_{\alpha,\beta}$ given by
\begin{align*}
 U_{\alpha,\beta} \ceq \{a \in U \mid 
  q^{P  } a q^{-P  } = q^{\pair{\alpha,P }}a, \ 
  q^{P+h} a q^{-P-h} = q^{\pair{\beta,P+h}}a \quad (P,P+h \in \wtH) \},
\end{align*}
and the moment maps $\mu_l,\mu_r\colon \bbF\dbr{p} \to U_{0,0}$ defined by
\begin{align*}
 \mu_l(F) \ceq F(P,p^*), \quad \mu_r(F) \ceq F(P+h,p)
\end{align*}
for $F=F(P,p) \in \bbF\dbr{p}$. 
Moreover, the bigrading values in the lattice $\clQ \ceq \bigoplus_{i \in I} \bbZ Q_i \subsetneq H^*$.
\end{lem}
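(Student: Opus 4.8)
The plan is to verify directly the two defining properties of a topological $H$-algebra: that the prescribed joint-eigenspace decomposition is a genuine $H^*\times H^*$-bigrading, and that $\mu_l,\mu_r$ are algebra embeddings into $U_{0,0}$ satisfying the moment-map axioms. The whole argument rests on the observation that each listed generator is a joint eigenvector for the conjugation actions of the invertible elements $q^{P}$ ($P\in H$) and $q^{P+h}$ of $\clM(\wtH^*)\subseteq U$, and that every defining relation is bihomogeneous for the resulting bidegrees.

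First I would set up the bigrading. Since $q^{\pm P},q^{\pm(P+h)}\in\clM(\wtH^*)$ are invertible, conjugation by them defines commuting automorphisms of $U$ (they commute because $\clM(\wtH^*)$ is commutative). Taking $F=q^{P}$ in \eqref{eq:U:Fe}--\eqref{eq:U:Fp} and \eqref{eq:U:FK} and reading the shift $F(P-\pair{Q_i,P})$ as $T_{-Q_i}F$, one computes the eigenvalues and reads off the bidegrees $e_i(z)\in U_{-Q_i,0}$, $f_i(z)\in U_{0,-Q_i}$, and $\psi^\pm_i(z),K^\pm_i\in U_{-Q_i,-Q_i}$, while $d$, $q^{\pm c/2}$ and all of $\clM(\wtH^*)$ lie in $U_{0,0}$. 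To promote these generator bidegrees to a decomposition of all of $U$, I would observe that the free topological algebra on \eqref{eq:ADE:gen} carries the evident $H^*\times H^*$-grading and then check that every defining relation is bihomogeneous: those in \eqref{eq:U:Fd}--\eqref{eq:U:Fp}, \eqref{eq:U:FK} are exactly the eigenvector conditions, while in \eqref{eq:U:Ke}--\eqref{eq:U:fS} the scalar factors $G^{\pm*}_{ij}$, $g_{ij}$, $g^*_{ij}$, $\wt g_{ij}$, $\wt g^*_{ij}$ and $h_{ij}$ have bidegree $(0,0)$, so e.g.\ both sides of \eqref{eq:U:ee} have bidegree $(-Q_i-Q_j,0)$, the two sides of \eqref{eq:U:ef} have bidegree $(-Q_i,-Q_i)$ (nonzero only for $i=j$), and every monomial in the Serre relations \eqref{eq:U:eS}, \eqref{eq:U:fS} has bidegree $(-2Q_i-Q_j,0)$, resp.\ $(0,-2Q_i-Q_j)$. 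Hence the closed two-sided ideal of relations is graded and the grading descends to $U$; it coincides with the eigenspace description because conjugation by $q^{P},q^{P+h}$ acts on generators exactly by the characters just computed.

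Next I would treat the moment maps. Both $\mu_l(F)=F(P,p^*)$ and $\mu_r(F)=F(P+h,p)$ are substitution homomorphisms of $\bbF\dbr{p}$ into the commutative subalgebra $\clM(\wtH^*)\dbr{p}\subseteq U_{0,0}$, hence algebra homomorphisms; they are injective because $p^*=pq^{-2c}$ differs from $p$ by the unit $q^{-2c}$ and because the coordinates $P+h$ are algebraically independent on $\wtH^*$. For the moment-map axioms it suffices, by multiplicativity of both sides in $a$, to check them on generators: for $a\in U_{\alpha,\beta}$ the identities $\mu_l(F)a=a\,\mu_l(T_\alpha F)$ and $\mu_r(F)a=a\,\mu_r(T_\beta F)$ are precisely relations \eqref{eq:U:Fe}--\eqref{eq:U:Fp}, \eqref{eq:U:FK} read with the substituted functions $F(\cdot,p^*)$, resp.\ $F(\cdot,p)$, and with the bidegrees found above. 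Finally, since every generator has bidegree in $\clQ\times\clQ$ and $U$ is generated by them, all bidegrees lie in $\clQ$, giving $U=\bigoplus_{\alpha,\beta\in\clQ}U_{\alpha,\beta}$.

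The genuinely nontrivial step is the bihomogeneity check for the Serre-type relations \eqref{eq:U:eS} and \eqref{eq:U:fS}, together with the confirmation that the joint-eigenspace decomposition is a \emph{topological} (i.e.\ $p$-adically completed) direct sum: one must see that each $U_{\alpha,\beta}$ is a $p$-adically closed $\bbC\dbr{p}$-submodule and that homogeneity survives the $p$-adic completion of the relations. The remaining bookkeeping---tracking the dynamical shifts $T_{\pm Q_i}$ and the substitution $p\mapsto p^*=pq^{-2c}$ hidden inside $\mu_l$---is routine but is where sign and shift errors are easiest to make.
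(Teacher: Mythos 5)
Your proposal is correct and takes essentially the same approach as the paper: the paper's proof likewise verifies the moment-map identities directly on the generators via \eqref{eq:U:FK}--\eqref{eq:U:Fp} (e.g.\ $\mu_l(F)e_i(z) = F(P,p^*)e_i(z) = e_i(z)F(P-\pair{Q_i,P},p^*) = e_i(z)\mu_l(T_{-Q_i}F)$) and then reads off the bidegrees $e_i(z)\in U_{-Q_i,0}$, $f_i(z)\in U_{0,-Q_i}$, $\psi^\pm_i(z),K^\pm_i\in U_{-Q_i,-Q_i}$ to conclude that the bigrading takes values in $\clQ$. The only difference is one of thoroughness: you additionally spell out the bihomogeneity of every defining relation (so the eigenspace decomposition genuinely descends from the free algebra to $U$), the injectivity of $\mu_l,\mu_r$, and the $p$-adic completion caveat, all of which the paper leaves implicit.
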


\begin{proof}
The conditions of the moment maps can be checked directly. For instance, 
\begin{align*}
&\mu_l(F)(e_i(z)) = F(P,p^*) e_i(z) = e_i(z) F(P-\pair{Q_i,P},p) =
 e_i(z) \mu_l\bigl(T_{-Q_i}(F)\bigr), \\
&\mu_r(F)(e_i(z)) = F(P+h,p^*)e_i(z) = e_i(z) F(P+h,p) = 
 e_i(z) \mu_r\bigl(T_0(F)\bigr).
\end{align*}
The last statement on the bigrading holds since 
\begin{align*}
 e_i(z) \in U_{-Q_i,0}, \quad f_i(z) \in U_{0,-Q_i}, \quad
 \psi^\pm_i(z) \in U_{-Q_i,-Q_i}, \quad  K^\pm_i \in U_{-Q_i,-Q_i}
\end{align*}
(see \cref{lem:E:Halg} \ref{i:lem:E:gen} for the meaning)
and the other generators belong to $U_{0,0}$ 
\end{proof}

For later use, let us write down the isomorphisms $D_H \totimes U \cong U \cong U \totimes D_H$ in \eqref{eq:Halgd:Delta-ve}. By \eqref{eq:Halgd:to} and \eqref{eq:Halgd:Dh}, we have
\begin{align*}
& (D_H \totimes U)_{\alpha,\beta} 
= (D_H)_{\alpha,\alpha} \otimes_{\bbF} U_{\alpha,\beta}
= \bbF T_{-\alpha} \otimes_{\bbF} U_{\alpha,\beta}, \\
& (U \totimes D_H)_{\alpha,\beta} 
= U_{\alpha,\beta} \otimes_{\bbF} (D_H)_{\beta,\beta}
= U_{\alpha,\beta} \otimes_{\bbF} \bbF T_{-\beta} 
\end{align*}
for $\alpha,\beta \in \clQ \subset H^*$. Then, by \eqref{eq:Halgd:totimes}, we find that the isomorphisms are given by
\begin{align}\label{eq:ADE:DU=U}
 (D_H \totimes U)_{\alpha,\beta} = \bbF T_{-\alpha} \otimes_{\bbF} U_{\alpha,\beta} 
 \lsto U_{\alpha,\beta}, \quad 
 F T_{-\alpha} \totimes a  = T_{-\alpha} \totimes \mu_l(F) a \lmto \mu_l(F) a
\end{align}
and
\begin{align}\label{eq:ADE:UD=U}
 (U \totimes D_H)_{\alpha,\beta} = U_{\alpha,\beta} \otimes_{\bbF} \bbF T_{-\beta} 
 \lsto U_{\alpha,\beta}, \quad 
 a \totimes F T_{-\beta} = \mu_r(F) a \otimes T_{-\beta} \lmto a \mu_r(F)
\end{align}
for $a \in U_{\alpha,\beta}$ and $F \in \bbF\dbr{p}$.

Next, we introduce a Drinfeld-type comultiplication on $U$.

\begin{lem}\label{lem:U:Delta}
We define a homomorphism $\Delta \colon U \to U \totimes U$ of topological algebras by 
\begin{align}\label{eq:U:Delta0}
 \Delta(\mu_l(F)) \ceq \mu_l(F) \totimes 1, \quad 
 \Delta(\mu_r(F)) \ceq 1 \totimes \mu_r(F) 
\end{align}
for $F=F(P,p) \in \bbF\dbr{p}$, 
\begin{align}
 \Delta(q^{\pm c/2}) \ceq q^{\pm c/2} \totimes q^{\pm c/2}, \quad 
 \Delta(d)       \ceq d \totimes 1 + 1 \totimes d, \quad 
 \Delta(K^\pm_i) \ceq K^\pm_i \totimes K^\pm_i, 
\end{align}
and 
\begin{align} 
 \Delta(e_i(z)) \ceq e_i(z) \totimes 1+ \psi^+_i(q^{c_1/2}z) \totimes e_i(q^{c_1}z), \qquad 
&\Delta(f_i(z)) \ceq 1 \totimes f_i(z)+ f_i(q^{c_2}z) \totimes \psi^-_i(q^{c_2/2}z), 
\\
\label{eq:U:Delta1}
 \Delta(\psi_i^+(z)) \ceq \psi_i^+(q^{-c_2/2}z) \totimes \psi_i^+(q^{c_1/2}z), \qquad 
&\Delta(\psi_i^-(z)) \ceq \psi_i^-(q^{c_2/2}z) \totimes \psi_i^-(q^{- c_1/2}z)
\end{align}
for the generating currents. Then $\Delta$ is a topological $H$-algebra homomorphism. 
\end{lem}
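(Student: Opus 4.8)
The plan is to verify that the assignment on generators in \eqref{eq:U:Delta0}--\eqref{eq:U:Delta1} extends to a homomorphism of topological algebras $\Delta\colon U \to U \totimes U$ by checking that it preserves every defining relation of \cref{dfn:U:ADE}, and then to observe that the $H$-algebra homomorphism property is built into the definition. For the latter, the compatibility with the moment maps is immediate: by \eqref{eq:U:Delta0} and the formula \eqref{eq:Halgd:mu-totimes} for the moment maps of $U \totimes U$ we have $\Delta \circ \mu_l = \mu_l^{U \totimes U}$ and $\Delta \circ \mu_r = \mu_r^{U \totimes U}$, so once $\Delta$ is known to be an algebra homomorphism it is automatically an $H$-algebra homomorphism, and in particular preserves the $H^*$-bigrading of \cref{lem:U:Halg}. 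Throughout one works in the $p$-adic topology, so that the infinite sums hidden in the currents and in the structure-function expansions converge; this is the same mechanism as in \cref{rmk:DI:top} \ref{i:rmk:DI:top:p}.

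First I would dispose of the relations not of Serre type. The relations \eqref{eq:U:qc}, those with $d$, and \eqref{eq:U:Ke} follow directly from the grouplike/primitive forms $\Delta(q^{\pm c/2}) = q^{\pm c/2}\totimes q^{\pm c/2}$, $\Delta(d) = d\totimes 1 + 1\totimes d$, $\Delta(K^\pm_i) = K^\pm_i\totimes K^\pm_i$. For the relations \eqref{eq:U:Fd}--\eqref{eq:U:Fp} with $\clM(\wtH^*)$ it suffices, since $\mu_l(\bbF)$ and $\mu_r(\bbF)$ generate it (cf.\ \cref{lem:E:Halg} \ref{i:lem:E:gen}), to verify compatibility with the moment-map relations established in the proof of \cref{lem:U:Halg}; using $\Delta(\mu_l(F)) = \mu_l(F)\totimes 1$ and $\Delta(\mu_r(F)) = 1\totimes\mu_r(F)$ together with the gluing relation \eqref{eq:Halgd:totimes}, the check comes down to the interplay of the two nomes $p$ and $p^*=p q^{-2c}$, which is exactly balanced by the $q^{c_1/2},q^{c_2/2}$ shifts in \eqref{eq:U:Delta1}. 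The quadratic relations \eqref{eq:U:pp}--\eqref{eq:U:ef} are then the standard Drinfeld-comultiplication computation: expand $\Delta$ of each side into four terms, reorder each term by the already-verified $\psi$-$e$, $\psi$-$f$ and $ee$/$ff$ relations, and collect, the Ding-Iohara condition $g_{ij}(z^{-1};p)=g_{ji}(z;p)^{-1}$ and the multiplicativity of $G^\pm_{ij}$ being what make the scalar factors match. This part is formally parallel to \cref{fct:DI:U} and to the elliptic case in \cite{JKOS}.

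The main obstacle is the compatibility of $\Delta$ with the Serre-type relations \eqref{eq:U:eS} and \eqref{eq:U:fS}; indeed, as noted after \cref{prp:U:ADE}, the function $h_{ij}$ of \eqref{eq:U:hij} is forced precisely by this requirement. The organizing idea is to exploit the $H^*$-bigrading. Applying $\Delta$ to the cubic relation \eqref{eq:U:eS} and expanding each of the six factors $\Delta(e_\bullet(\bullet))$ into its two summands produces a sum of monomials in $U\totimes U$, and these split according to the intermediate grading $\gamma$ in $(U\totimes U)_{\alpha,\beta}=\bigoplus_\gamma U_{\alpha,\gamma}\otimes_\bbF U_{\gamma,\beta}$. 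Since the two summands of $\Delta(e_i(z))$ carry the same total bidegree but different intermediate degree ($e_i\totimes 1$ has $\gamma=0$, while $\psi^+_i\totimes e_i$ has $\gamma=-Q_i$), the parameter $\gamma$ records exactly which of the three currents sent its $e$-part into the right tensor factor. Hence $\Delta$ of the left-hand side of \eqref{eq:U:eS} must vanish component-by-component in $\gamma$, and this is where I would concentrate the work.

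In each $\gamma$-component I would normal-order the left tensor factor (a word in the $\psi^+_i$ and $e_i$) and the right tensor factor (a word in the $e$'s) using \eqref{eq:U:ppe}, \eqref{eq:U:pme} and the $ee$-relation \eqref{eq:U:ee}; after pulling out the resulting scalar structure functions $\wt{g}^*$, $G^{\pm*}$ and the factors $g^*_{ij}(q^{\mp c/2}\tfrac{z}{w})$ produced by commuting $\psi^+$ past $e$, the vanishing of the component reduces to a single scalar identity among the $\ol{g}$'s. This identity is exactly the one that the definition \eqref{eq:U:hij} of $h_{ij}$ is engineered to satisfy, namely the ``dynamical'' lift of the $q$-binomial identity \eqref{eq:ADE:h=2q} that makes the classical Serre relation compatible with the Drinfeld comultiplication. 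The case of \eqref{eq:U:fS} is handled by the same argument with the roles of $p$ and $p^*$, and of the left and right tensor slots, interchanged, using $f_i(z)\in U_{0,-Q_i}$ and the lower-triangular form $\Delta(f_i(z))=1\totimes f_i(z)+f_i(q^{c_2}z)\totimes\psi^-_i(q^{c_2/2}z)$. I expect the bookkeeping of the $\gamma$-components and the reduction to the $h_{ij}$-identity to be the only genuinely nontrivial part; everything else is the standard Drinfeld-comultiplication verification adapted to the modified tensor product.
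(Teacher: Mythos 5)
Your proposal is correct in substance and follows the same route as the paper's proof: relation-by-relation verification, with the moment-map compatibility built into \eqref{eq:U:Delta0}, and with the whole difficulty concentrated in the Serre-type relations, whose preservation reduces to the scalar identity that the definition \eqref{eq:U:hij} of $h_{ij}$ is engineered to satisfy (this is exactly \eqref{eq:ADE:DS} and \cref{rmk:ADE:h}). The organizational difference is mild: you expand each $\Delta(e_\bullet)$ and work component-by-component in the intermediate degree $\gamma$ of \eqref{eq:Halgd:to}, whereas the paper, after its quadratic-relation check, reorders the products $\Delta(e_\bullet)\Delta(e_\bullet)\Delta(e_\bullet)$ wholesale to a common order, exhibiting $\Delta$ of \eqref{eq:U:eS} as $(S\totimes 1)\,\Delta(e_j(z))\Delta(e_i(z_2))\Delta(e_i(z_1))$ with $S=0$ being the $h_{ij}$-identity; that treats all $\gamma$-components at once, and your version is the same computation with finer bookkeeping. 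Two details of your sketch need repair, though neither is fatal. First, $\gamma$ does not record \emph{which} current contributed its $e$-part to the right slot: $e_i(z_1)$ and $e_i(z_2)$ both have degree $-Q_i$ (\cref{lem:U:Halg}), so a fixed component mixes the two choices, and only their sum cancels. Second, normal-ordering the two slots using \eqref{eq:U:pp}, \eqref{eq:U:ppe}, \eqref{eq:U:ee} produces scalars carrying three different nomes --- $pq^{-2c_1}$ and $p$ in the left slot, $pq^{-2c_2}$ in the right slot --- and these do \emph{not} assemble into a single identity among the $\ol{g}_{ij}$ until one also invokes the gluing relation \eqref{eq:Halgd:totimes} for $P$-independent functions of $p$, namely $\varphi(p)\,a \totimes b = a \totimes \varphi(p^{*})\,b$, to transfer the right-slot factors to the left slot; this is what the proof of \cref{thm:BCF:Halgd} means by the exchanges in the two tensor slots occurring ``simultaneously''. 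You use precisely this mechanism in your paragraph on the $\clM(\wtH^*)$-relations but omit it from your Serre-type step, where it is the crucial ingredient; with it inserted, your reduction to the $\ol{g}$-identity, and hence the whole argument, goes through.
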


%\begin{rmk}
%By definition, $\Delta$ satisfies
%\begin{align*}
%&\Delta(p^*)=\Delta(pq^{-2c})=pq^{-2c}\totimes q^{-2c}=p^* \totimes q^{-2c} =1 \totimes pq^{-2c}= 1 \totimes p^*\\
%&\Delta(q^{P_i+h_i})=q^{h_i} \totimes q^{P_i+h_i}=q^{P_i+h_i} \totimes 1
%\end{align*}
%\end{rmk}

\begin{proof}
First, we show that $\Delta$ is well-defined as a homomorphism of topological algebras. We will only write down the check for a part of the defining relations of $U$. We can check the block \eqref{eq:U:FK}--\eqref{eq:U:Fp} as
\begin{align*}
  \Delta(F(P))\Delta(e_i(z))
&=\bigl(1 \totimes F(P)\bigr)
  \bigl(1 \totimes e_i(z)+ e_i(q^{c_2}z)\totimes \psi_i^+(q^{c_2/2}z)\bigr) \\
&=1 \totimes F(P)e_i(z)+ e_i(q^{c_2}z)\totimes F(P)\psi_i^+(q^{c_2/2}z) \\
&=1 \totimes e_i(z)F(P-\pair{Q_i,P}) 
  + e_i(q^{c_2}z) \totimes \psi_i^+(q^{c_2/2}z)F(P-\pair{Q_i,P}) \\
&=\Delta(e_i(z))\Delta(F(P-\pair{Q_i, P})).
\end{align*}
The relations \eqref{eq:U:Ke} can be checked as
\begin{align*}
  \Delta(K^\pm_i)\Delta(\psi^+_j(w))
&=\bigl(K^\pm_i \totimes K^\pm_i\bigr)
  \bigl( \psi^+_j(q^{c_2/2}w) \totimes \psi^+_j(q^{-c_1/2}w)\bigr) 
 =K^\pm_i\psi^+_j(q^{c_2/2}w) \totimes K^\pm_i \psi^+_j(q^{-c_1/2}w) \\
&=\psi_j^+(q^{c_2/2}w) K^\pm_i \totimes \psi^+_j(q^{-c_1/2}w) K^\pm_i 
 =\Delta(\psi^+_j(w)) \Delta(K^\pm_i).
\end{align*}
The block \eqref{eq:U:pp}--\eqref{eq:U:pme} can be checked as
\begin{align*}
  \Delta(\psi_i^+(z))\Delta(\psi_j^+(w))
&=\psi_i^+(q^{ c_2/2}z)\psi_j^+(q^{ c_2/2}w) \totimes 
  \psi_i^+(q^{-c_1/2}z)\psi_j^+(q^{-c_1/2}w) \\
&=\frac{g_{ij}^*(\frac{z}{w})}{g_{ij}(\frac{z}{w})}
  \psi_j^+(q^{ c_2/2}w)\psi_i^+(q^{c_2/2}z) \totimes 
  \frac{g_{ij}^*(\frac{z}{w})}{g_{ij}(\frac{z}{w})} 
  \psi_j^+(q^{-c_1/2}w) \psi_i^+(q^{-c_1/2}z) \\
&=\Delta\left(\frac{g_{ij}^*(\frac{z}{w})}{g_{ij}(\frac{z}{w})}\right)
  \Delta(\psi_j^+(w))\Delta(\psi_i^+(z)),
\\
  \Delta(\psi_i^+(z))\Delta(\psi_j^-(w))
&=\psi_i^+(q^{ c_2/2}z) \psi_j^-(q^{-c_2/2}w) \totimes 
  \psi_i^+(q^{-c_1/2}z) \psi_j^-(q^{ c_1/2}w) \\
&=\frac{g_{ij}^*(q^{-c_1+c_2}\frac{z}{w})}{g_{ij}(q^{ c_1+c_2}\frac{z}{w})}
  \psi_j^-(q^{-c_2/2}w)\psi_i^+(q^{ c_2/2}z) \totimes 
  \frac{g_{ij}^*(q^{-c_1-c_2}\frac{z}{w})}{g_{ij}(q^{-c_1+c_2}\frac{z}{w})}
  \psi_j^-(q^{-c_2/2}w)\psi_i^+(q^{ c_2/2}z) \\
&=\Delta\left(\frac{g_{ij}^*(q^{-c}\frac{z}{w})}{g_{ij}(q^{c}\frac{z}{w})}\right)
  \Delta(\psi_j^-(w))\Delta(\psi_i^+(z)),  
\\
  \Delta(\psi_i^+(z))\Delta(e_j(w))
&=\bigl(\psi_i^+(q^{-c_2/2}z) \totimes \psi_i^+(q^{c_1/2}z)\bigr)
  \bigl(e_j(w) \totimes 1+\psi^+_j(q^{c_1/2}w) \totimes e_j(q^{c_1}w)\bigr) \\
&=\psi_i^+(q^{-c_2/2}z)e_j(w) \totimes \psi_i^+(q^{c_1/2}z)
 +\psi_i^+(q^{-c_2/2}z) \psi^+_j(q^{c_1/2}w) \totimes
  \psi_i^+(q^{c_1/2}z) e_j(q^{c_1}w) \\
&=g^*_{ij}(q^{\frac{-c_1-c_2}{2}}\tfrac{z}{w})e_j(w)\psi_i^+(q^{-c_2/2}z) \totimes \psi_i^+(q^{c_1/2}z)
   \\
&\quad 
 +\frac{g_{ij}^*(q^{\frac{-c_1-c_2}{2}}\frac{z}{w})}{g_{ij}(q^{\frac{-c_1-c_2}{2}}\frac{z}{w})} 
  \psi^+_j(q^{c_1/2}w)\psi^+_i(q^{-c_2/2}z) \totimes 
 g_{ij}^*(q^{\frac{-c_1-c_2}{2}}\tfrac{z}{w})
 e_j(q^{c_1}w)\psi^+_i(q^{c_1/2}z) \\
&=\Delta\bigl(g_{ij}(q^{-c/2}\tfrac{z}{w})\bigr) \Delta(e_j(w)) \Delta(\psi_i^+(z)).
\end{align*}
As for the quadratic relation \eqref{eq:U:ee}, we have

\begin{align*}
& [\Delta(e_i(z)), \Delta(f_i(w))] \\
&=\bigl(e_i(z) \totimes 1+\psi^+_i(q^{c_1/2}z) \totimes e_i(q^{c_1}z)\bigr)
  \bigl(1 \totimes f_i(w) + f_i(q^{c_2}w) \totimes \psi^-_i(q^{\frac{c_2}{2}}w)\bigr) \\
&\quad 
 -\bigl(1 \totimes f_i(w) + f_i(q^{c_2}w) \totimes \psi^-_i(q^{\frac{c_2}{2}}w)\bigr)
  \bigl(e_i(z) \totimes 1+\psi^+_i(q^{c_1/2}z) \totimes e_i(q^{c_1}z)\bigr) \\
&=\psi_i^+(q^{c_1/2}z) \totimes [e_i(q^{c_1}z),f_i(w)] + [e_i(z),f_i(q^{c_2}w)] \totimes 
  \psi_i^+(q^{c_2/2}w) \\
&\quad 
+\bigl(\psi^+_i(q^{c_1/2}z)f_i(q^{c_2}w) \totimes e_i(q^{c_1}z)\psi^-_i(q^{c_2/2}w) 
-f_i(q^{c_2}w)\psi^+_i(q^{c_1/2}z) \totimes \psi^-_i(q^{c_2/2}w)e_i(q^{c_1}z)\bigr).
\end{align*}

The defining relations imply that the last term vanishes. The first and second terms are 
\begin{align*}
& \psi_i^+(q^{c_1/2}z) \totimes [e_i(q^{c_1}z),f_i(w)] + [e_i(z),f_i(q^{c_2}w)] \totimes \psi_i^+(q^{c_2/2}w) \\
&=\frac{1}{q-q^{-1}}\Bigl(
  \psi_i^+(q^{c_1/2}z) \totimes 
  \bigl(\delta(q^{c_1-c_2}\tfrac{z}{w}) \psi_i^-(q^{c_2/2}w) - 
        \delta(q^{c_1+c_2}\tfrac{z}{w}) \psi_i^+(q^{c_1+c_2/2}z) \bigr) \\
&\hspace{5em}
 +\bigl(\delta(q^{-c_1-c_2}\tfrac{z}{w}) \psi_i^-(q^{c_1/2+c_2}w) - 
        \delta(q^{ c_1-c_2}\tfrac{z}{w}) \psi_i^+(q^{c_1/2}z) \bigr)
  \totimes \psi_i^-(q^{c_2/2}w) \Bigr) \\
&=\frac{1}{q-q^{-1}}\bigl(   
  \delta(q^{-c_1-c_2}\tfrac{z}{w}) \psi_i^-(q^{c_1/2+c_2}w) \totimes \psi_i^+(q^{c_2/2}w) \totimes - 
  \psi^+_i(q^{c_1/2}z) \totimes 
  \delta(q^{ c_1+c_2}\tfrac{z}{w}) \psi_i^+(q^{c_1+c_2/2}z)  \\
&=\frac{1}{q-q^{-1}}\Bigl(
  \Delta\bigl(\delta(q^{-c}\tfrac{z}{w})\bigr) \Delta\bigl(\psi_i^-(q^{c/2}w)\bigr) - 
  \Delta\bigl(\delta(q^{ c}\tfrac{z}{w})\bigr) \Delta\bigl(\psi_i^+(q^{c/2}z)\bigr)\Bigr),
\end{align*}
by which we have checked \eqref{eq:U:ee}.
As for the Serre-type relation \eqref{eq:U:eS}, we have 
\begin{align}
\label{eq:ADE:DS}
\begin{split}
& \Delta\Bigl(
   \frac{\ol{g}_{ii}(\frac{z_1}{z_2})}{g^*_{ii}(\frac{z_1}{z_2})}
   \frac{\ol{g}_{ij}(\frac{z_1}{z})}{g^*_{ij}(\frac{z_1}{z})}
   \frac{\ol{g}_{ij}(\frac{z_2}{z})}{g^*_{ij}(\frac{z_2}{z})}\Bigr)
  \Delta(e_i(z_1)) \Delta(e_i(z_2)) \Delta(e_j(z)) \\
&-\Delta\Bigl(h_{ij} 
   \frac{\ol{g}_{ii}(\frac{z_1}{z_2})}{g^*_{ii}(\frac{z_1}{z_2})}
   \frac{\ol{g}_{ij}(\frac{z_1}{z})}{g^*_{ij}(\frac{z_1}{z})}\Bigr)
  \Delta(e_i(z_1)) \Delta(e_j(z)) \Delta(e_i(z_2)) \\
&+\Delta\Bigl(\frac{\ol{g}_{ii}(\frac{z_1}{z_2})}{g^*_{ii}(\frac{z_1}{z_2})}\Bigr)
  \Delta(e_j(z)) \Delta(e_i(z_1)) \Delta(e_i(z_2))
 +\Delta\Bigl(
   \frac{\ol{g}_{ij}(\frac{z_1}{z})}{g^*_{ij}(\frac{z_1}{z})}
   \frac{\ol{g}_{ij}(\frac{z_2}{z})}{g^*_{ij}(\frac{z_2}{z})}\Bigr)
  \Delta(e_i(z_2)) \Delta(e_i(z_1)) \Delta(e_j(z)) \\
&-\Delta\Bigl(h_{ij} \frac{\ol{g}_{ij}(\frac{z_2}{z})}{g^*_{ij}(\frac{z_2}{z})}\Bigr)
  \Delta(e_i(z_2)) \Delta(e_j(z)) \Delta(e_i(z_1))
 +\Delta(e_j(z)) \Delta(e_i(z_2)) \Delta(e_i(z_1))
 = 0.
\end{split}
\end{align}
The remaining relations can be checked similarly. 

Next we prove that $\Delta$ is a homomorphism of topological $H$-algebras. It is clear that $\Delta$ preserves the $H^*$-bigradings. The compatibility with the moment maps can be checked as 
\begin{align*}
 \Delta(\mu_l(F)) = \mu_l(F) \totimes 1 = \mu_l^{\totimes}(F), \quad 
 \Delta(\mu_r(F)) = 1 \totimes \mu_r(F) = \mu_r^{\totimes}(F)
\end{align*}
for $F=F(P,p) \in \bbF\dbr{p}$, where $\mu_l^{\totimes}$ and $\mu_r^{\totimes}$ denote the moment maps of the tensor product $U \totimes U$ (see \eqref{eq:Halgd:mu-totimes}).
\end{proof}

\begin{rmk}\label{rmk:ADE:h}
The condition that \eqref{eq:ADE:DS} vanishes uniquely determines the function $h_{ij}$ in \eqref{eq:U:hij}.
\end{rmk}

Next, we introduce the counit of $U$. Recall the topological $H$-algebra $D_H\dbr{p}$ explained in \cref{prp:E:Halgd} \ref{i:E:Halgd:ve}.

\begin{lem}\label{lem:ADE:ve}
Let $\ve\colon U \to D_H\dbr{p}$ be the homomorphism of topological algebra with
\begin{align}\label{eq:U:ve0}
 \ve(\mu_l(F)) = \ve(\mu_r(F)) \ceq F T_0, \quad 
 \ve(q^{\pm c/2}) \ceq 1, \quad \ve(d) \ceq 0
\end{align}
for $F=F(P,p) \in \bbF\dbr{p}$ and
\begin{align}\label{eq:U:ve1}
 \ve(K^\pm_i) \ceq T_{Q_i}, \quad 
 \ve(e_i(z))  = \ve(f_i(z)) \ceq 0, \quad \ve(\psi_i^{\pm}(z)) \ceq T_{Q_i}.
\end{align}
Then $\ve$ is a topological $H$-algebra homomorphism.
\end{lem}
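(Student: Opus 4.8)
The plan is to verify two things: first, that $\ve$ is well-defined as a homomorphism of topological algebras, i.e.\ that it sends every defining relation of \cref{dfn:U:ADE} to a valid identity in $D_H\dbr{p}$; and second, that it intertwines the moment maps, which then upgrades it to a topological $H$-algebra homomorphism. The second point is immediate from \eqref{eq:U:ve0}: the moment maps of the unit object $D_H$ satisfy $\mu_l = \mu_r\colon F \mapsto F T_0$, and we have set $\ve(\mu_l(F)) = \ve(\mu_r(F)) = F T_0$. That $\ve$ then respects the $H^*$-bigrading is automatic, but one checks for sanity that the target grades agree with \cref{lem:U:Halg}, e.g.\ $\ve(e_i(z)) = 0 \in (D_H)_{-Q_i,0} = 0$ and $\ve(\psi^\pm_i(z)) = T_{Q_i} \in (D_H)_{-Q_i,-Q_i}$.

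The key observation for the first point is that $\ve(q^{\pm c/2}) = 1$ forces $\ve(q^c) = 1$, hence $\ve(p^*) = \ve(p q^{-2c}) = p$, and therefore
\[
 \ve\bigl(g^*_{ij}(x)\bigr) = g_{ij}(x), \quad \ve\bigl(G^{\pm*}_{ij}(x)\bigr) = G^\pm_{ij}(x).
\]
Combined with $\ve(e_i(z)) = \ve(f_i(z)) = 0$ and $\ve(d) = 0$, this collapses the bulk of the relations: every relation in which each monomial carries a factor $e$ or $f$ --- namely \eqref{eq:U:Fe}, \eqref{eq:U:Ff}, the first two equalities of \eqref{eq:U:Ke}, \eqref{eq:U:ppe}, \eqref{eq:U:pme}, the quadratic relations \eqref{eq:U:ee}, and the Serre-type relations \eqref{eq:U:eS}, \eqref{eq:U:fS} --- maps to $0 = 0$, and the relations involving $d$ likewise map to $0 = 0$.

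For the remaining relations I would compute in $D_H$ using the product rule $(f T_\alpha)(g T_\beta) = f\,(T_\alpha g)\,T_{\alpha+\beta}$. The conditions \eqref{eq:U:qc} hold since $\ve(q^{\pm c/2}) = 1$ is central and $\ve(\psi^\pm_i(z)) = \ve(K^\pm_i) = T_{Q_i}$ is invertible. For \eqref{eq:U:pp}, both forms become $T_{Q_i} T_{Q_j} = \ve(g^*_{ij}/g_{ij})\,T_{Q_j} T_{Q_i}$, which holds because the shifts commute and $\ve(g^*_{ij}/g_{ij}) = 1$ by the observation above; the third equality of \eqref{eq:U:Ke} is $[T_{Q_i},T_{Q_j}] = 0$. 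In \eqref{eq:U:ef} the left side is $[\ve(e_i),\ve(f_j)] = 0$, while the right side becomes $\tfrac{\delta_{i,j}}{q-q^{-1}}\delta(\tfrac{z}{w})\bigl(T_{Q_i} - T_{Q_i}\bigr) = 0$ after $q^{\pm c} \mapsto 1$, so the two $\psi$-terms cancel. Finally, \eqref{eq:U:FK} and \eqref{eq:U:Fp} are the moment-map shift relations for $K^\pm_i, \psi^\pm_i(z) \in U_{-Q_i,-Q_i}$: applying $\ve$, both sides evaluate to $F\,T_{Q_i}$, since $(F T_0)(T_{Q_i}) = F T_{Q_i} = (T_{Q_i})\bigl((T_{-Q_i}F)T_0\bigr)$.

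Once all relations are checked, $p$-adic continuity of $\ve$ is clear from its definition, so $\ve$ is a well-defined topological algebra homomorphism, and by the moment-map compatibility it is a topological $H$-algebra homomorphism. The main point requiring care is the treatment of the structure-function coefficients: one must use $\ve(p^*) = p$ to see that the ratios $g^*_{ij}/g_{ij}$ in \eqref{eq:U:pp} and the factors $G^{\pm*}_{ij}$ in \eqref{eq:U:ee} trivialize, and that the two $\psi$-currents on the right of \eqref{eq:U:ef} coincide and cancel; apart from this, the verification is routine.
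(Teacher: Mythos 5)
Your proof is correct and takes essentially the same approach as the paper's own proof, which likewise reduces the lemma to (a) well-definedness of $\ve$ on the defining relations --- dismissed there as ``direct calculations as in \cref{lem:U:Delta} (the calculation is much simpler)'' --- and (b) the explicit moment-map check $\ve(\mu_l(F)) = F(P,p)\,T_0 = \mu_l^{D_H}(F)$, $\ve(\mu_r(F)) = F(P,p)\,T_0 = \mu_r^{D_H}(F)$, together with the remark that the $H^*$-bigrading is clearly preserved. Your organizing observation that $\ve(q^{\pm c/2})=1$ forces $\ve(p^*)=p$, so that the starred structure functions collapse to unstarred ones and every relation in which each monomial contains an $e$, $f$, or $d$ factor maps to $0=0$, is precisely the detail the paper leaves implicit.
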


\begin{proof}
We can check that $\ve$ is well-defined as a topological algebra morphism by direct calculations as in \cref{lem:U:Delta} (the calculation is much simpler). On the check of $\ve$ being a topological $H$-algebra homomorphism, it is clear that $\ve$ preserves the $H^*$-bigradings. 
The compatibility with the moment maps can be checked as 
\begin{align*}
&\ve(\mu_l(F)) = \ve(F(P,p^*)) = F(P,p) = F(P,p) T_0 = \mu_l^{D_H}(F), \\
&\ve(\mu_r(F)) = \ve(F(P+h,p)) = F(P,p) = F(P,p) T_0 = \mu_r^{D_H}(F)
\end{align*}
for $F=F(P,p) \in \bbF\dbr{p}$, where $\mu_l^{D_H}$ and $\mu_r^{D_H}$ are the moment maps of $D_H$.
\end{proof}

To state the $H$-bialgebroid structure, let us write down the natural isomorphisms $D_H \totimes U \cong U \cong U \totimes D_H$ in \eqref{eq:Halgd:Delta-ve}. By the argument of \eqref{eq:ADE:DU=U} and \eqref{eq:ADE:UD=U}, we have
\begin{align}\label{eq:U:DU=U}
 (D_H \totimes U)_{\alpha,\beta} = \bbF T_{-\alpha} \otimes_{\bbF} U_{\alpha,\beta} 
 \lsto U_{\alpha,\beta}, \quad 
 F T_{-\alpha} \totimes a  = T_{-\alpha} \totimes \mu_l(F) a \lmto \mu_l(F) a
\end{align}
and
\begin{align}\label{eq:U:UD=U}
 (U \totimes D_H)_{\alpha,\beta} = U_{\alpha,\beta} \otimes_{\bbF} \bbF T_{-\beta} 
 \lsto U_{\alpha,\beta}, \quad 
 a \totimes F T_{-\beta} = \mu_r(F) a \totimes T_{-\beta} \lmto a \mu_r(F)
\end{align}
for $a \in U_{\alpha,\beta}$ and $F \in \bbF\dbr{p}$.
Now we can show:

\begin{prp}\label{prp:ADE:Halgd}
$(U,\Delta,\ve)$ is a topological $H$-bialgebroid. 
That is, the following relations hold under the isomorphisms $(U \totimes U) \totimes U \cong U \totimes (U \totimes U)$ and $D_H \totimes U \cong U \cong U \totimes D_H$.
\begin{align*}
 (\Delta \totimes \id)\Delta = (\id \totimes \Delta)\Delta, \quad 
 (\ve \totimes \id)\Delta = \id = (\id \totimes \ve)\Delta.
\end{align*}
\end{prp}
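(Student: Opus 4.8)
The plan is to reduce both axioms to a check on algebra generators. By \cref{lem:U:Delta} and \cref{lem:ADE:ve}, the maps $\Delta$ and $\ve$ are topological $H$-algebra homomorphisms, and the modified tensor product of such homomorphisms is again one; hence the composites $(\Delta\totimes\id)\Delta$, $(\id\totimes\Delta)\Delta$, $(\ve\totimes\id)\Delta$ and $(\id\totimes\ve)\Delta$, as well as $\id_U$, are all topological $H$-algebra homomorphisms out of $U$. Since $U$ is generated as a topological $H$-algebra by the elements in \eqref{eq:ADE:gen}, it suffices to verify the two identities on $\clM(\wtH^*)$, on $q^{\pm c/2}$, $d$, $K^\pm_i$, and on the Fourier modes of $e_i(z)$, $f_i(z)$, $\psi^\pm_i(z)$. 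On the image of the moment maps, which generate $\clM(\wtH^*)$, both axioms hold by the defining formulas \eqref{eq:U:Delta0} and \eqref{eq:U:ve0} together with the structure of $D_H$ and of the modified tensor product, and on $q^{\pm c/2}$, $d$, $K^\pm_i$ the group-like and primitive formulas make the check immediate.

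For the counit axiom on the currents, I would apply $(\ve\totimes\id)$ and $(\id\totimes\ve)$ to the formulas \eqref{eq:U:Delta1}. For $e_i(z)$ the term $e_i(z)\totimes 1$ is annihilated by $\ve$ in the left slot since $\ve(e_i(z))=0$, while in the remaining term $\ve$ collapses the central shift via $\ve(q^{\pm c/2})=1$ and replaces $\psi^+_i$ by $\ve(\psi^+_i(z))=T_{Q_i}$ from \eqref{eq:U:ve1}; concretely only the zero mode survives, giving $T_{Q_i}\totimes e_i(z)$. Because $e_i(z)\in U_{-Q_i,0}$, the grading index is $\alpha=-Q_i$ and $T_{Q_i}=T_{-\alpha}$, so the isomorphism \eqref{eq:U:DU=U} sends this to $\mu_l(1)e_i(z)=e_i(z)$, as required; the computation with $(\id\totimes\ve)$ and \eqref{eq:U:UD=U} gives the other equality. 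The cases $f_i(z)$ and $\psi^\pm_i(z)$ are analogous, and the only input is that the value $T_{Q_i}$ of $\ve$ on $\psi^\pm_i$ is precisely the shift matching the $H^*$-bigrading.

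The substantive step is coassociativity on $e_i(z)$, $f_i(z)$ and $\psi^\pm_i(z)$. I would expand both $(\Delta\totimes\id)\Delta$ and $(\id\totimes\Delta)\Delta$ into the triple product $U\totimes U\totimes U$, pass to Fourier modes, and compare coefficients, using that $q^{\pm c/2}$ and $K^\pm_i$ are group-like (so the slot central charges distribute additively), that $\Delta(\psi^\pm_i)$ is a product of shifted currents, and that $\Delta(e_i)$, $\Delta(f_i)$ are the two-term skew-primitive expressions. For $\psi^\pm_i(z)$ both composites yield the same fully factored triple product, and for $e_i(z)$ (and dually $f_i(z)$) both yield the same three-term sum once the central-charge shifts in the three slots are tracked carefully. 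This is the familiar coassociativity of the Drinfeld comultiplication, now verified inside the modified tensor product.

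The main obstacle is exactly this central-charge bookkeeping together with well-definedness across $\totimes$: one must check that the intermediate elements respect the moment-map relation \eqref{eq:Halgd:totimes} and transform correctly under the associativity isomorphism $(U\totimes U)\totimes U\cong U\totimes(U\totimes U)$, since a naive manipulation of the current arguments easily misplaces a factor of $q^{c/2}$. By contrast, the Serre-type relations \eqref{eq:U:eS} and \eqref{eq:U:fS} play no role here: coassociativity and the counit axioms are checked on generators, and the defining relations of $U$ are already respected by $\Delta$ and $\ve$ by \cref{lem:U:Delta} and \cref{lem:ADE:ve}; indeed the genuinely delicate compatibility with the Serre relations was the computation \eqref{eq:ADE:DS} carried out in the proof of \cref{lem:U:Delta}, not here.
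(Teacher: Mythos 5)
Your proposal is correct and follows essentially the same route as the paper: reduce both axioms to a generator-by-generator check (legitimate because $\Delta$, $\ve$ and their modified tensor composites are topological $H$-algebra homomorphisms, by \cref{lem:U:Delta} and \cref{lem:ADE:ve}), then verify the counit axiom on the currents via $\ve(\psi^\pm_i(z))=T_{Q_i}$ matched against the bigrading through the isomorphisms \eqref{eq:U:DU=U} and \eqref{eq:U:UD=U}, and coassociativity by expanding both composites into the triple product with careful central-charge bookkeeping. The paper's proof is exactly this computation written out for $e_i(z)$ and $\psi^+_i(z)$ with the remaining generators left as "similar," and you correctly note that the Serre-type relations enter only in \cref{lem:U:Delta}, not here.
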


\begin{proof}
We check the relations for each generator.
By direct calculation, we have 
\begin{align*}
& (\Delta \totimes 1)\Delta(e_i(z))
 =(\Delta \totimes 1)
  \bigl(e_i(z) \totimes 1 + \psi^+_i(q^{c_1/2}z) \totimes e_i(q^{c_1}z)\bigr) \\
&=e_i(z) \totimes 1 \totimes 1 + \psi^+_i(q^{\frac{c_1}{2}}z) \totimes e_i(q^{c_1}z) \totimes 1 
 +\psi^+_i(q^{\frac{c_1}{2}+\frac{c_2}{2}-\frac{c_2}{2}}z) \totimes 
  \psi^+_i(q^{\frac{c_1}{2}+\frac{c_2}{2}+\frac{c_1}{2}}z) \totimes e_i(q^{c_1+c_2}z) \\
&=e_i(z) \totimes 1 \totimes 1 + 
  \psi^+_i(q^{\frac{c_1}{2}}z) \totimes e_i(q^{c_1}z) \totimes 1 + 
  \psi^+_i(q^{\frac{c_1}{2}}z) \totimes 
  \psi^+_i(q^{c_1+\frac{c_2}{2}}z) \totimes e_i(q^{c_1+c_2}z), 
\\
& (1 \totimes \Delta)\Delta(e_i(z))
 =(1 \totimes \Delta)\bigl(e_i(z) \totimes 1 + 
  \psi^+_i(q^{c_1/2}z) \totimes e_i(q^{c_1}z)\bigr) \\
&=e_i(z) \totimes 1 \totimes 1 + 
  \psi^+_i(q^{\frac{c_1}{2}}z) \totimes e_i(q^{c_1}z) \totimes 1 + 
  \psi^+_i(q^{\frac{c_1}{2}}z) \totimes 
  \psi^+_i(q^{c_1+\frac{c_2}{2}}z) \totimes e_i(q^{c_1+c_2}z),
\end{align*}
which yields $(\Delta \totimes 1)\Delta(e_i(z))=(1 \totimes \Delta)\Delta(e_i(z))$. We also have 
\begin{align*}
  (\Delta \totimes 1)\Delta(\psi_i^+(z))
&=(\Delta \totimes 1)(\psi_i^+(zq^{\frac{c_2}{2}}) \totimes \psi_i^+(zq^{-\frac{c_1}{2}})) \\
&=\psi_i^+(q^{\frac{c_2}{2}+\frac{c_3}{2}}z) \totimes 
  \psi_i^+(\psi_i^+(q^{-\frac{c_1}{2}+\frac{c_3}{2}}z)) \totimes 
  \psi_i^+(q^{-\frac{c_1}{2}-\frac{c_2}{2}}z), \\
  (1 \totimes \Delta)\Delta(\psi_i^+(z))
&=(1 \totimes \Delta)(\psi_i^+(zq^{\frac{c_2}{2}}) \totimes \psi_i^+(zq^{-\frac{c_1}{2}})) \\
&=\psi_i^+(q^{\frac{c_2}{2}+\frac{c_3}{2}}z) \totimes 
  \psi_i^+(\psi_i^+(q^{-\frac{c_1}{2}+\frac{c_3}{2}}z)) \totimes 
  \psi_i^+(q^{-\frac{c_1}{2}-\frac{c_2}{2}}z), 
\end{align*}
which yields $(\Delta \totimes 1)\Delta(\psi^+_i(z))=(1 \totimes \Delta)\Delta(\psi^+_i(z))$. 
By similar calculations, we can check $(\Delta \totimes 1)\Delta=(1 \totimes \Delta)\Delta$ for the other generators. 

On the relations $(\ve \totimes \id)\Delta = \id = (\id \totimes \ve)\Delta$, we have 
\begin{align*}
  (\ve \totimes 1)\Delta(e_i(z))
&=(\ve \totimes 1)\bigl(e_i(z) \totimes 1 + 
   \psi^+_i(q^{c_1/2}z) \totimes e_i(q^{c_1}z)\bigr)
 = T_{Q_i} \totimes e_i(z) = e_i(z), \\
  (1 \totimes \ve)\Delta(e_i(z))
&=(1 \totimes \ve)\bigl(e_i(z) \totimes 1 + 
  \psi^+_i(q^{c_1/2}z) \totimes e_i(q^{c_1}z)\bigr) 
 =e_i(z) \totimes 1 = e_i(z); 
\\
  (\ve \totimes 1)\Delta(\psi_i^+(z))
&=(\ve \totimes 1)\bigl(\psi_i^+(q^{c_2/2}z) \totimes \psi_i^+(q^{-c_1/2}z)\bigr)
 =T_{Q_i} \totimes \psi_i^+(z) = \psi_i^+(z), \\
  (1 \totimes \ve)\Delta(\psi_i^+(z))
&=(1 \totimes \ve)\bigl(\psi_i^+(q^{c_2/2}z) \totimes \psi_i^+(q^{-c_1/2}z)\bigr) 
 =\psi_i^+(z) \totimes T_{Q_i} = \psi_i^+(z).
\end{align*}
Here we used \eqref{eq:U:DU=U} and \eqref{eq:U:UD=U}.
By similar calculations, we can check $(\ve \totimes \id)\Delta = \id = (\id \totimes \ve)\Delta$ for the other generators.
\end{proof}

Finally, we introduce the antipode of $U$.

\begin{lem}\label{lem:U:S}
We introduce a $\bbC$-linear map $S\colon U \to U$ by 
\begin{align}\label{eq:U:S}
 S(\mu_l(F)) \ceq \mu_r(F), \quad S(\mu_r(F)) \ceq \mu_l(F)
\end{align}
for $F=F(P,p) \in \bbF\dbr{p}$ and 
\begin{align}
&S(d) \ceq d, \quad S(q^{\pm c/2}) \ceq q^{\mp c/2}, \quad 
 S(K^\pm_i) \ceq (K^\pm_i)^{-1}, \quad S(\psi^\pm_i(z)) \ceq \psi^\pm_i(z)^{-1}, \\
\label{eq:U:S1}
&S(e_i(z)) \ceq -\psi_i^+(q^{-c/2}z)^{-1}e_i(q^{-c}z) , \quad 
 S(f_i(z)) \ceq - f_i(q^{-c}z)\psi_i^-(q^{-c/2}z)^{-1}.
\end{align}
Then $S$ is well-defined as an anti-homomorphism of topological algebras.
\end{lem}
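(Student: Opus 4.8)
The plan is to show that the formulas \eqref{eq:U:S}--\eqref{eq:U:S1} extend to an algebra homomorphism $U \to U^{\op}$, i.e.\ that $S$ sends every defining relation of \cref{dfn:U:ADE} to a valid identity after reversing each product. Since $U$ is generated by the elements in \eqref{eq:ADE:gen} subject to those relations, and we work $p$-adically, it suffices to check this relation by relation. First I would record the auxiliary commutation rules obtained by inverting the $\psi$-current relations: moving $\psi_i^{\pm}(z)^{-1}$ past $e_j,f_j$ in \eqref{eq:U:ppe}--\eqref{eq:U:pme} produces the reciprocal structure functions, and the Ding--Iohara condition $g_{ij}(z^{-1};p)=g_{ji}(z;p)^{-1}$ from \cref{dfn:U:DI} guarantees that these reciprocals remain inside the family $\{g_{ij},g^*_{ij}\}$ after the relabeling forced by the order reversal. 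These identities are exactly what make the dressed images $S(e_i(z))=-\psi_i^+(q^{-c/2}z)^{-1}e_i(q^{-c}z)$ and $S(f_i(z))=-f_i(q^{-c}z)\psi_i^-(q^{-c/2}z)^{-1}$ behave well.

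Next I would dispatch the linear blocks. The relations \eqref{eq:U:Fd}--\eqref{eq:U:Fp}, the $d$-relations, and \eqref{eq:U:Ke} follow at once from $S(\mu_l(F))=\mu_r(F)$, $S(\mu_r(F))=\mu_l(F)$, $S(K_i^\pm)=(K_i^\pm)^{-1}$, $S(\psi_i^{\pm}(z))=\psi_i^{\pm}(z)^{-1}$, $S(d)=d$ and $S(q^{\pm c/2})=q^{\mp c/2}$, together with the swap of the two moment maps; the invertibility and centrality statements in \eqref{eq:U:qc} are manifestly preserved. The $\psi$-block \eqref{eq:U:pp}--\eqref{eq:U:pme} is then verified by applying $S$, replacing each $\psi$ by its inverse, reversing the order, and using the reciprocal structure-function identities just recorded.

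For the quadratic relations I would substitute the dressed currents: applying $S$ to \eqref{eq:U:ee} and reversing order turns it into a relation among $\psi^+$-dressed $e$'s, and the prefactor $\psi_i^+(q^{-c/2}z)^{-1}$ is transported through $e_j(q^{-c}w)$ via \eqref{eq:U:ppe}; the resulting structure-function factor combines with $G^{\pm*}_{ij}$ to reproduce \eqref{eq:U:ee} once the argument shifts by $q^{-c}$ are tracked. The relation \eqref{eq:U:ef} is handled in the same spirit: its two delta-function terms are matched because $S(\psi_i^{\pm}(z))=\psi_i^{\pm}(z)^{-1}$ and the spectral arguments shift consistently by $q^{\mp c}$, so the right-hand side of \eqref{eq:U:ef} is recovered with the correct sign.

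The main obstacle is the Serre-type relations \eqref{eq:U:eS} and \eqref{eq:U:fS}. Here the nonlocal prefactor $\psi_i^+(q^{-c/2}z)^{-1}$ contained in $S(e_i(z))$ must be transported through the two remaining currents in each of the six monomials, producing a cascade of structure-function factors (again via \eqref{eq:U:ppe}) that must reassemble---after reversing the order of the three currents and accounting for the $z_1\leftrightarrow z_2$ symmetrization built into \eqref{eq:U:eS}---into exactly the same $\wt{g}^*_{ii}$, $\wt{g}^*_{ij}$, $h_{ij}$ pattern. The reciprocity $g_{ij}(z^{-1};p)=g_{ji}(z;p)^{-1}$ and the symmetry of $h_{ij}$ in \eqref{eq:U:hij} under $z_1\leftrightarrow z_2$ are the arithmetic facts that make the coefficients line up. The combinatorial skeleton of this computation (the six monomials and the single coefficient $h_{ij}$) is identical to that of the Ding--Iohara Serre relation \eqref{eq:DI:Serre}, whose antipode is established in \cref{fct:DI:U}; that computation therefore serves as the template, with the extra $\wt{g}^*$-dressing carried through. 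The relation \eqref{eq:U:fS} for the $f$-currents is entirely parallel, with $\psi^+\leftrightarrow\psi^-$, $g\leftrightarrow g^*$, and the $q$-power shifts conjugated accordingly.
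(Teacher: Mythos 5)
Your proposal takes essentially the same route as the paper's own proof: a relation-by-relation check that the assignment extends to an anti-homomorphism of topological algebras, in which the inverse-$\psi$ dressings in $S(e_i(z))$ and $S(f_i(z))$ are transported through the other currents via \eqref{eq:U:pp}--\eqref{eq:U:pme}, the Ding--Iohara reciprocity $g_{ij}(z^{-1};p)=g_{ji}(z;p)^{-1}$ makes the resulting structure-function factors cancel or recombine, and the Serre-type relations \eqref{eq:U:eS}, \eqref{eq:U:fS} are settled by exactly the cascade-and-reassemble computation you outline. This is the paper's argument, so your plan is correct in both approach and content.
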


\begin{proof}
We will only write down the check for a part of the defining relations of $U$. The block \eqref{eq:U:FK}--\eqref{eq:U:Fp} can checked as 
\begin{align*}
 S(K^\pm_i) S\bigl(\mu_r(F)\bigr)
&=(K^\pm_i)^{-1} F(P+h,p) 
 =F(P+h-\pair{Q_i,P+h},p) (K^\pm_i)^{-1} 
 =S\bigl(\mu_r(F')\bigr) S(K), 
\\
  S(e_i(z)) S(\mu_r(F))
&=-\psi_i^+(q^{-c/2}z)^{-1} e_i(q^{-c}z) F(P+h,p) \\ 
&=-\psi_i^+(q^{-c/2}z)^{-1} F(P+h, p) e_i(q^{-c}z) \\
&=-F(P+h-\pair{Q_i,P+h},p) \psi_i^+(q^{-c/2}z)^{-1} e_i(q^{-c}z)
 =S\bigl(\mu_r(F')\bigr) S(e_i(z))
\end{align*}
for $F=F(P,p), F'=F(P-\pair{Q_i,P},p) \in \bbF\dbr{p}$. 
The relations \eqref{eq:U:Ke} can be checked as
\begin{align*}
  S(\psi_j^+(w)) S(K^\pm_i)
&=\psi_j^+(w)^{-1} (K^\pm_i)^{-1} = (K^\pm_i)^{-1} \psi_j^+(w)^{-1} = S(K^\pm_i) S(\psi_j^+(w)).
\end{align*}
The block \eqref{eq:U:pp}--\eqref{eq:U:pme} can checked as
\begin{align*}
 S(e_j(w)) S(\psi_i^+(z))
= \psi_j^+(q^{-c/2}w)^{-1} e_j(q^{-c}w) \psi_i^+(z)^{-1}
=-\psi_j^+(q^{-c/2}w)^{-1} g^*_{ij}(q^{c/2}\tfrac{z}{w})
  \psi_i^+(z)^{-1} e_j(q^{-c}w) \\
=-\frac{g_{ij}(q^{c/2}\frac{z}{w})}{g^*_{ij}(q^{c/2}\frac{z}{w})}g^*_{ij}(q^{c/2}\tfrac{z}{w})
  \psi_i^+(z)^{-1}  \psi_j^+(q^{-c/2}w)^{-1} e_j(q^{-c}w)
=S\bigl(g_{ij}^*(q^{-c/2}\tfrac{z}{w})\bigr) S(\psi_i^+(z)) S(e_j(w)).
\end{align*}
As for the relation \eqref{eq:U:ef}, we have 
\begin{align*}
& [S(f_i(w)), S(e_i(z))]
 =[-f_i(q^{-c}w)\psi_i^-(q^{-c/2}w)^{-1}, - \psi_i^+(q^{-c/2}z)^{-1}e_i(q^{-c}z)] \\
&=f_i(q^{-c}w)\psi_i^-(q^{-c/2}w)^{-1}\psi_i^+(q^{-c/2}z)^{-1}e_i(q^{-c}z)
 -\psi_i^+(q^{-c/2}z)^{-1}e_i(q^{-c}z)f_i(q^{-c}w)\psi_i^-(q^{-c/2}w)^{-1},
\end{align*}
and the first term in the last line can be calculated as
\begin{align*}
&\text{(1st term)}
 =\frac{g_{ii}(q^{c}\frac{z}{w})}{g^*_{ii}(q^{-c}\frac{z}{w})} f_i(q^{-c}w) 
  \psi_i^+(q^{-c/2}z)^{-1} \psi_i^-(q^{-c/2}w)^{-1} e_i(q^{-c}z) \\
&=\frac{g_{ii}(q^{c}\frac{z}{w})}{g^*_{ii}(q^{-c}\frac{z}{w})} g_{ii}(q^c\tfrac{z}{w})^{-1}
   \psi_i^+(q^{-c/2}z)^{-1} f_i(q^{-c}w) \psi_i^-(q^{-c/2}w)^{-1} e_i(q^{-c}z) \\
&=g_{ii}^*(q^{-c}\tfrac{z}{w})^{-1} g_{ii}^*(q^{-c}\tfrac{z}{w}) 
  \psi_i^+(q^{-c/2}z)^{-1} f_i(q^{-c}w) e_i(q^{-c}z) \psi_i^-(q^{-c/2}w)^{-1} \\
&=\text{(2nd term)} - \tfrac{1}{q-q^{-1}} \psi_i^+(q^{-c/2}z)^{-1}
  \bigl(\delta(q^{-c}\tfrac{z}{w}) \psi_i^-(q^{-c/2}w)
       -\delta(q^{ c}\tfrac{z}{w}) \psi_i^+(q^{-c/2}z)\bigr) 
  \psi_i^-(q^{-c/2}w)^{-1} \\
&=\text{(2nd term)} - \tfrac{1}{q-q^{-1}}
  \bigl(\delta(q^{-c}\tfrac{z}{w}) \psi_i^+(q^{-c/2}z)^{-1}
       -\delta(q^{ c}\tfrac{z}{w}) \psi_i^-(q^{-c/2}w)^{-1}\bigr).
\end{align*}
Thus we have
\begin{align*}
  [S(f_i(w)), S(e_i(z))]
&=\tfrac{1}{q-q^{-1}}
  \bigl(\delta(q^{ c}\tfrac{z}{w}) \psi_i^-(q^{-c/2}w)^{-1}
       -\delta(q^{-c}\tfrac{z}{w}) \psi_i^+(q^{-c/2}z)^{-1}\bigr) \\
&=\tfrac{1}{q-q^{-1}}
  \bigl(S(\delta(q^{-c}\tfrac{z}{w})) S(\psi_i^-(q^{c/2}w))
       -S(\delta(q^{ c}\tfrac{z}{w})) S(\psi_i^+(q^{c/2}z))\bigr).
\end{align*}
We can also check the Serre relations as follows.
Using \eqref{eq:U:pp} and \eqref{eq:U:ppe}, we have 
\begin{align*}
&S(e_j(z)) S(e_i(z_2)) S(e_i(z_1))\\
&=-\psi^+_j(q^{-c/2}z)^{-1} e_j(q^{-c}z) \psi^+_i(q^{-c/2}z_2)^{-1}
   e_i(q^{-c}z_2) \psi^+_j(q^{-c/2}z_1)^{-1} e_j(q^{-c}z_1) \\
&=-g^*_{ij}\bigl(\tfrac{z_1}{z  }\bigr) g^*_{ij}\bigl(\tfrac{z_2}{z}\bigr)
   g^*_{ii}\bigl(\tfrac{z_1}{z_2}\bigr) 
   \psi^+_j(q^{-c/2}z)^{-1} \psi^+_i(q^{-c/2}z_2)^{-1} \psi^+_j(q^{-c/2}z_1)^{-1}
   e_j(q^{-c}z) e_i(q^{-c}z_2) e_i(q^{-c}z_1) \\
&=-g^*_{ij}\bigl(\tfrac{z_1}{z  }\bigr) g^*_{ij}\bigl(\tfrac{z_2}{z  }\bigr)
   g^*_{ii}\bigl(\tfrac{z_1}{z_2}\bigr) g^*_{ji}\bigl(\tfrac{z  }{z_1}\bigr)
   g^*_{ji}\bigl(\tfrac{z  }{z_2}\bigr) g^*_{ii}\bigl(\tfrac{z_2}{z_1}\bigr), \\
&\qquad \cdot 
   \psi^+_j(q^{-c/2}z)^{-1} \psi^+_i(q^{-c/2}z_2)^{-1} \psi^+_j(q^{-c/2}z_1)^{-1}
   e_j(q^{-c}z) e_i(q^{-c}z_1) e_i(q^{-c}z_2) e_j(q^{-c}z) \\
&=-\psi^+_j(q^{-c/2}z)^{-1} \psi^+_i(q^{-c/2}z_2)^{-1} \psi^+_j(q^{-c/2}z_1)^{-1}
   e_j(q^{-c}z) e_i(q^{-c}z_1) e_i(q^{-c}z_2) e_j(q^{-c}z), 
\end{align*}
and a similar calculation yields
\begin{align*}
&S(e_i(z_2)) S(e_j(z)) S(e_i(z_1)) \\
&=-\frac{g^*_{ij}(\tfrac{z_2}{z})}{g_{ij}(\tfrac{z_2}{z})}
   \psi^+_j(q^{-c/2}z)^{-1} \psi^+_i(q^{-c/2}z_2)^{-1} \psi^+_j(q^{-c/2}z_1)^{-1}
   e_i(q^{-c}z_1) e_j(q^{-c}z) e_i(q^{-c}z_2), 
\\
&S(e_i(z_2)) S(e_i(z_1)) S(e_j(z)) \\
&=-\frac{g^*_{ij}(\tfrac{z_1}{z})}{g_{ij}(\tfrac{z_1}{z})} 
   \frac{g^*_{ij}(\tfrac{z_2}{z})}{g_{ij}(\tfrac{z_2}{z})}
   \psi^+_j(q^{-c/2}z)^{-1} \psi^+_i(q^{-c/2}z_2)^{-1} \psi^+_j(q^{-c/2}z_1)^{-1}
   e_j(q^{-c}z) e_i(q^{-c}z_1) e_i(q^{-c}z_2),
\\
&S(e_j(z)) S(e_i(z_1)) S(e_i(z_2)) \\
&=-\frac{g^*_{ii}(\tfrac{z_1}{z_2})}{g_{ii}(\tfrac{z_1}{z_2})}
   \psi^+_j(q^{-c/2}z)^{-1} \psi^+_i(q^{-c/2}z_2)^{-1} \psi^+_j(q^{-c/2}z_1)^{-1}
   e_i(q^{-c}z_2) e_i(q^{-c}z_1) e_j(q^{-c}z), 
\\
&S(e_i(z_1)) S(e_j(z)) S(e_i(z_2)) \\
&=-\frac{g^*_{ij}(\tfrac{z_1}{z  })}{g_{ij}(\tfrac{z_1}{z  })}
   \frac{g^*_{ii}(\tfrac{z_1}{z_2})}{g_{ii}(\tfrac{z_1}{z_2})}
   \psi^+_j(q^{-c/2}z)^{-1} \psi^+_i(q^{-c/2}z_2)^{-1} \psi^+_j(q^{-c/2}z_1)^{-1}
   e_i(q^{-c}z_2) e_j(q^{-c}z) e_i(q^{-c}z_1), 
\\
&S(e_i(z_1)) S(e_i(z_2)) S(e_j(z)) \\
&=-\frac{g^*_{ij}(\tfrac{z_1}{z  })}{g_{ij}(\tfrac{z_1}{z  })}
   \frac{g^*_{ij}(\tfrac{z_2}{z  })}{g_{ij}(\tfrac{z_2}{z  })}
   \frac{g^*_{ii}(\tfrac{z_1}{z_2})}{g_{ii}(\tfrac{z_1}{z_2})}
   \psi^+_j(q^{-c/2}z)^{-1} \psi^+_i(q^{-c/2}z_2)^{-1} \psi^+_j(q^{-c/2}z_1)^{-1}
   e_j(q^{-c}z) e_i(q^{-c}z_2) e_i(q^{-c}z_1).
\end{align*}
Comparing with \eqref{eq:U:eS}, we conclude that the antipode $S$ is compatible with the Serre relation for $e_i(z)$. Similarly we can check the Serre relation \eqref{eq:U:fS} for $f_i(z)$.
\end{proof}

Now we have the main statement.

\begin{thm}\label{thm:ADE:Halgd}
The tuple $(\Delta,\ve,S)$ is a topological $H$-Hopf algebroid structure on the topological $H$-algebra $U=U_{q,p}(g,X_l)$. That is, the relations
\begin{align*}
 S(\mu_r(F)a) = S(a) \mu_l(F), \quad 
&S(a\mu_l(F)) = \mu_r(F)S(a), \\
 m(\id_U \totimes S)\Delta(a) = \mu_l(\ve(a)1), \quad 
&m(S \totimes \id_U)\Delta(a) = \mu_r\bigl(T_{\alpha}(\ve(a)1)\bigr)
\end{align*}
hold for any $a \in U_{\alpha,\beta}$ and $F \in \bbF\dbr{p}$, 
where $m$ is the multiplication of the topological algebra $U$. 
\end{thm}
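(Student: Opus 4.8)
The plan is to observe that the hard structural work is already in place: by \cref{prp:ADE:Halgd} the triple $(U,\Delta,\ve)$ is a topological $H$-bialgebroid, and by \cref{lem:U:S} the map $S$ is a well-defined anti-homomorphism of topological algebras. Consequently, to conclude that $(\Delta,\ve,S)$ is a topological $H$-Hopf algebroid it remains only to verify the four antipode relations displayed in the statement, which are the $\bbC\dbr{p}$-topological analogues of \eqref{eq:Halgd:S}. I would split the verification into the two moment-map relations and the two counit--antipode relations, interpreting $m(\id_U\otimes S)\Delta(a)$ as in \eqref{eq:Halgd:S}, namely as the ordinary tensor product of linear maps applied to $\Delta(a)$ followed by the multiplication $m$ of $U$.

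The first two relations are immediate from the definition of $S$ on the moment maps together with the anti-homomorphism property. Indeed, using $S(\mu_r(F))=\mu_l(F)$ and $S(\mu_l(F))=\mu_r(F)$ from \eqref{eq:U:S}, one has for $a\in U_{\alpha,\beta}$ and $F\in\bbF\dbr{p}$ that $S(\mu_r(F)a)=S(a)S(\mu_r(F))=S(a)\mu_l(F)$ and $S(a\mu_l(F))=S(\mu_l(F))S(a)=\mu_r(F)S(a)$, as required.

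For the counit--antipode relations I would check them on each generator in \eqref{eq:ADE:gen}, using the isomorphisms \eqref{eq:U:DU=U} and \eqref{eq:U:UD=U} to evaluate $\mu_l(\ve(a)1)$ and $\mu_r(T_\alpha(\ve(a)1))$. For $\mu_{l,r}(F)$, $d$ and $q^{\pm c/2}$ the identities are trivial, and for the grouplike-type generators $K^\pm_i$ and $\psi^\pm_i(z)$ they reduce, after collapsing $\totimes$ via $m$, to relations of the form $x\,S(x)=1=S(x)\,x$, matching $\ve(K^\pm_i)=\ve(\psi^\pm_i(z))=T_{Q_i}$ acting on the constant function $1$. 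The substantive cases are $e_i(z)$ and $f_i(z)$. Here I would expand $\Delta(e_i(z))$ from \cref{lem:U:Delta}, apply $\id\otimes S$ (resp.\ $S\otimes\id$) in Fourier modes, and then collapse the modified tensor product with $m$. The two terms of the coproduct cancel through identities of the shape $\psi^+_i(q^{c/2}z)\,\psi^+_i(q^{c/2}z)^{-1}=1$, yielding $0=\mu_l(\ve(e_i(z))1)$ and $0=\mu_r(T_{-Q_i}(\ve(e_i(z))1))$ since $\ve(e_i(z))=0$; the case of $f_i(z)$ is symmetric.

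The main obstacle is purely bookkeeping: the central factors $q^{\pm c_1/2}$ and $q^{\pm c_2/2}$ live in opposite tensor slots from the currents they decorate, and $S$ both inverts the relevant current and flips the sign of the central charge it touches. Thus, when one applies $\id\otimes S$ or $S\otimes\id$ and then collapses $a\totimes b\mapsto m(a\otimes b)$ (replacing $c_1,c_2$ by the genuine central charge $c$ of $U$), one must track carefully which slot each power of $q^{c/2}$ originates from; a careless collapse produces spurious, non-cancelling terms such as $e_i(q^{\pm c}z)$. Performing the collapse mode-by-mode, consistently with the formula $S(e_i(z))=-\psi^+_i(q^{-c/2}z)^{-1}e_i(q^{-c}z)$ of \eqref{eq:U:S1}, resolves this and produces the exact cancellation. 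Since all expressions have well-defined coefficients in the $p$-adic topology, the verification goes through in the topological setting without further analytic input.
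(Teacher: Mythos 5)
Your proposal is correct and takes essentially the same route as the paper's proof: the moment-map relations are obtained from \eqref{eq:U:S} together with the anti-multiplicativity of $S$ established in \cref{lem:U:S}, and the counit--antipode axioms are verified generator by generator with exactly the cancellations you describe (e.g.\ $m(\id_U\totimes S)\Delta(e_i(z))=e_i(z)-\psi^+_i(q^{c/2}z)\psi^+_i(q^{c/2}z)^{-1}e_i(z)=0$ after the central-charge bookkeeping $c_1,c_2\mapsto c$). The paper likewise writes out only the cases $\mu_l(F)$, $e_i(z)$, $\psi^+_i(z)$ and leaves the remaining generators to ``similar calculations,'' so your level of detail matches its proof.
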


\begin{proof}
The first line of the statement is built in the definition \eqref{eq:U:S} of $S$.
The second line can be checked on the generators as 
\begin{align*}
  m(\id \totimes S)\Delta(\mu_l(F)) 
&=m(\id \totimes S)(\mu_l(F) \totimes 1) = \mu_l(F), \quad
 \mu_l(\ve(\mu_l(F)) 1) = \mu_l(F T_0 1)=\mu_l(F); 
\\
  m(\id \totimes S)\Delta(e_i(z))
&=m(\id \totimes S)\bigl(e_i(z) \totimes 1 + 
   \psi^+_i(q^{c_1/2}z) \totimes e_i(q^{c_1}z)\bigr)\\
&=e_i(z)-\psi^+_i(q^{c/2}z)\psi^+_i(q^{c/2}z)^{-1}e_i(z) = 0, \\
  \mu_l(\ve(e_i(z)) 1) &= \mu_l(0) = 0; 
\\
  m(\id \totimes S)\Delta(\psi_i^+(z))
&=m(\id \totimes S)\psi_i^+(zq^{-c_2/2})\totimes \psi_i^+(zq^{c_1/2})
 =\psi_i^+(zq^{c/2})\cdot \psi_i^+(zq^{c/2})^{-1} = 1, \\
  \mu_l(\ve(\psi_i^+(z)) 1) 
&=\mu_l(T_{Q_i} 1) = \mu_l(1) = 1. 
\end{align*}
By a similar calculation on the other generators,
we get $m(1 \totimes S)\Delta(a) = \mu_l(\ve(a)\cdot 1)$. 
Similarly, we have 
\begin{align*}
  m(S \totimes \id)\Delta(\mu_l(F)) 
&=m(S \totimes \id)(\mu_l(F) \totimes 1) = \mu_r(F), \\
 \mu_r\bigl(T_0(\ve(\mu_l(F)) 1)\bigr) 
&= \mu_r\bigl(T_0(F)\bigr)=\mu_r(F); 
\\
  m(S \totimes \id)\Delta(e_i(z))
&=m(S \totimes \id)
 \bigl(e_i(z) \totimes 1 + 
   \psi^+_i(q^{c_1/2}z) \totimes e_i(q^{c_1}z)\bigr) \\
&=- \psi^+_i(q^{-c/2}z)^{-1} e_i(q^{-c}z) \cdot 1 - \psi^+_i(q^{-c/2}z)^{-1}e_i(q^{-c}z)  =0, \\
  \mu_r\bigl(T_{-Q_i}(\ve(e_i(z)) 1)\bigr) &=\mu_r(T_{-Q_i}(0)) = \mu_r(0) = 0;
\\
  m(S \totimes \id)\Delta(\psi_i^+(z))
&=m(S \totimes \id)
  \bigl(\psi_i^+(z q^{-c_2/2}) \totimes \psi_i^+(z q^{c_1/2})\bigr) 
 =\psi_i^+(q^{-c/2}z)^{-1} \cdot \psi_i^+(q^{-c/2}z) = 1, \\
  \mu_r\bigl(T_{-Q_i}(\ve(\psi_i^+(z)) 1)\bigr)
&=\mu_r\bigl(T_{-Q_i}(T_{Q_i} 1)\bigr)
 =\mu_r(T_{-Q_i}(1)) = \mu_r(1) = 1. 
\end{align*}
The other generators can be checked similarly, and we have $m(S \totimes \id)\Delta(a)=\mu_r\bigl(T_{\alpha}(\ve(a) 1)\bigr)$ for any $a \in U_{\alpha, \beta}$.
\end{proof}

\begin{dfn}\label{dfn:ADE:U}
The obtained $H$-Hopf algebroid $U_{q,p}(g,X_l)$ is called \emph{the dynamical Ding-Iohara algebroid of type $X_l$} for $X=ADE$.
\end{dfn}

We close this subsection by the relation of $U_{q,p}(g,X_l)$ and the elliptic algebra $U_{q,p}(\wh{\frg}(X_l))$. By \cref{prp:U:ADE}, they are isomorphic as topological algebras. By comparing the $H$-Hopf algebroid structures, we have:

\begin{cor}\label{cor:U:ADE}
Let $A=A(X_l)$ be the Cartan matrix of type $X_l=A_l,D_l$ or $E_{6,7,8}$.
In \cref{dfn:U:ADE}, set the structure functions $g=\{g_{ij}(z;p) \mid i,j \in I\}$ to be
\[
 g_{ij}(x;p) = \frac{G_{ij}^+(x;p)}{G_{ij}^-(x;p)} \ceq 
 \frac{q^{-b_{ij}} \theta(q^{b_{ij}}x;p)}{\theta(q^{-b_{ij}}x;p)},
\]
where $B=(b_{ij})_{i,j \in I}$ is the symmetrization $A=D B$.
Then we have an isomorphism of topological $H$-Hopf algebroids
\[
 U_{q,p}(g,X_l) \cong U_{q,p}(\wh{\frg}(X_l)), 
\]
mapping each generator in the dynamical Ding-Iohara algebroid $U_{q,p}(g,X_l)$ to the corresponding one in the elliptic algebra $U_{q,p}(\wh{\frg}(X_l))$.
\end{cor}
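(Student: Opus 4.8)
The plan is to upgrade the algebra isomorphism already produced in \cref{prp:U:ADE} to an isomorphism of topological $H$-Hopf algebroids, by checking that it is compatible with every piece of structure. Write $\phi\colon U_{q,p}(g,X_l)\to U_{q,p}(\wh{\frg}(X_l))$ for the isomorphism of topological algebras furnished by \cref{prp:U:ADE} under the specialization \eqref{eq:U:gijE}; by construction $\phi$ carries each generator $\clM(\wtH^*),q^{\pm c/2},d,K^\pm_i,e_i(z),f_i(z),\psi^\pm_i(z)$ to the like-named generator on the elliptic side. It then suffices to verify, in order, that $\phi$ respects the $H$-algebra data (bigrading and moment maps), that the induced map $\phi\totimes\phi$ on modified tensor products is well defined, and that $\phi$ intertwines the comultiplications, counits, and antipodes of \cref{lem:U:Delta,lem:ADE:ve,lem:U:S} with those of \cref{prp:E:Halgd}.

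First I would compare the $H$-algebra structures of \cref{lem:U:Halg} and \cref{lem:E:Halg}. On both sides the $H^*$-bigrading and the moment maps $\mu_l(F)=F(P,p^*)$, $\mu_r(F)=F(P+h,p)$ are given by literally the same formulas, phrased purely through conjugation by $q^{P},q^{P+h}$ and through substitution into meromorphic functions. Since $\phi$ fixes $\clM(\wtH^*)$ and sends each generator to its counterpart, it preserves the bigrading, $\phi(U_{\alpha,\beta})=U_{\alpha,\beta}$, and satisfies $\phi\circ\mu_l=\mu_l\circ\phi$ and $\phi\circ\mu_r=\mu_r\circ\phi$; thus $\phi$ is an isomorphism of topological $H$-algebras. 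Because the modified tensor product $\totimes$ (see \eqref{eq:Halgd:to}, \eqref{eq:Halgd:totimes}) depends on an $H$-algebra only through its moment maps, this compatibility guarantees that $\phi\totimes\phi$ is a well-defined isomorphism between the two modified tensor squares.

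Next I would match the coalgebroid maps on generators. The crucial observation is that the Drinfeld-type formulas for $\Delta$ in \cref{lem:U:Delta}, and the formulas for $\ve$ and $S$ in \cref{lem:ADE:ve,lem:U:S}, do not involve the structure functions $g_{ij}$ at all: they are expressed solely in terms of $e_i(z),f_i(z),\psi^\pm_i(z),K^\pm_i$ and the $q$-shifts $q^{c_1/2},q^{c_2/2}$. Comparing term by term with \eqref{eq:E:De}--\eqref{eq:E:Dp-} and with parts \ref{i:E:Halgd:ve} and \ref{i:E:Halgd:S} of \cref{prp:E:Halgd}, the two comultiplications, the two counits (both valued in the common $D_H\dbr{p}$), and the two antipodes agree on each generator. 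Hence $(\phi\totimes\phi)\circ\Delta=\Delta\circ\phi$, $\ve\circ\phi=\ve$, and $\phi\circ S=S\circ\phi$ hold on generators; since both sides of each equality are homomorphisms (an anti-homomorphism for $S$) of topological algebras, they agree everywhere, and $\phi$ is an isomorphism of topological $H$-Hopf algebroids.

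The verification is therefore almost entirely formal: the genuine analytic content — that the specialization \eqref{eq:U:gijE} turns the $g$-dependent quadratic and Serre-type relations of \cref{dfn:U:ADE} into those of \cref{dfn:E:U} — is already discharged in \cref{prp:U:ADE}, while the Hopf algebroid axioms themselves are established in \cref{thm:ADE:Halgd} and \cref{prp:E:Halgd}. The only step that requires genuine care is the well-definedness of $\phi\totimes\phi$, which is precisely why the moment-map compatibility (and not merely the bigrading) must be secured before the comultiplications are compared; once that is in hand, nothing beyond reading off the generator formulas is needed.
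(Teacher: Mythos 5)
Your proposal is correct in substance and follows essentially the same route as the paper: the paper obtains the corollary by combining the algebra isomorphism of \cref{prp:U:ADE} with a generator-by-generator comparison of the two $H$-Hopf algebroid structures, which is exactly your observation that the bigrading, the moment maps, and the Drinfeld-type formulas for $\Delta$, $\ve$, $S$ involve no structure functions and are therefore automatically intertwined by the algebra isomorphism $\phi$.

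One logical point needs adjusting, however. You cite \cref{prp:E:Halgd} as an independently established fact, i.e.\ that the elliptic algebra already carries the stated Hopf algebroid structure; but in this paper the proof of \cref{prp:E:Halgd} for simply-laced types is itself deferred to \cref{prp:U:ADE} and to the present corollary, so as written your argument is circular. The fix costs nothing beyond what you already verified: since $\phi$ is an isomorphism of topological $H$-algebras fixing the generators, transporting the structure $(\Delta,\ve,S)$ of \cref{thm:ADE:Halgd} along $\phi$ produces well-defined maps on $U_{q,p}(\wh{\frg}(X_l))$ that satisfy the $H$-Hopf algebroid axioms, and your term-by-term comparison shows these transported maps are given by precisely the formulas listed in \cref{prp:E:Halgd}; this proves \cref{prp:E:Halgd} (simply-laced case) and the corollary simultaneously, with no circular reference. (A small side remark for that comparison: as printed, \eqref{eq:E:Dp+}--\eqref{eq:E:Dp-} and \eqref{eq:U:Delta1} differ by the sign of the central shifts $q^{\pm c_1/2}$, $q^{\pm c_2/2}$ on $\psi^\pm_i$ — an apparent typo on one side — so ``agree on each generator'' holds only after reconciling that discrepancy.)
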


%%%%%%%%%%%%%%%%%%%%%%%%%%%%%%%%%%%%%%%%%%%%%%%%%%%%%%%%%%%%%%%%%%%%%%%%%%%%%%%%%%%%%%%%%%
\subsection{Hopf algebra limit}\label{ss:ADE:p=0}

Here we discuss the limit $p \to 0$.

\begin{prp}\label{prp:ADE:p=0}
Let $X_l=A_l,D_l$ or $E_{6,7,8}$, and denote by $U''_{q,p}(g,X_l) \subset U_{q,p}(g,X_l)$ the topological $H$-Hopf sub-algebroid of the dynamical Ding-Iohara algebroid generated by 
\[
 q^{\pm c/2}, \ e_{i,m}, \ f_{i,m}, \ \psi^{\pm}_{i,m} \quad (i \in I, \, m  \in \bbZ)
\]
i.e., generators in \eqref{eq:ADE:gen} except $\clM(\wtH^*)$, $d$, and $K_i^{\pm}$.
Then, taking the limit $p \to 0$ of $U''_{q,p}(g,X_l)$, we have a topological Hopf algebra isomorphism 
\[
 \lim_{p \to 0} U''_{q,p}(g,X_l) \cong U_q(\ol{g},X_l), 
\]
where $\ol{g} \ceq \{\ol{g}_{ij}(z) \mid i,j \in I\}$ denotes the set of the limit structure functions in \eqref{eq:U:olg}, and $U_q(\ol{g},X_l)$ denotes the Ding-Iohara quantum algebra of the simply-laced type $X_l$ with structure functions $\ol{g}$ (see \cref{dfn:DI:U}, \cref{fct:DI:U} and \cref{rmk:DI:ADE})
\end{prp}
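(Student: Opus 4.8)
The plan is to prove the statement in two stages: first an isomorphism of topological algebras, and then compatibility with the full Hopf structures. Throughout I use that $p^* = pq^{-2c}$ also tends to $0$, so that both families of structure functions degenerate to the same limit,
\[
 \lim_{p\to 0} g_{ij}(z;p) = \lim_{p\to 0} g^*_{ij}(z) = \ol{g}_{ij}(z),
\]
by \eqref{eq:U:olg}; consequently the correction factors satisfy $\lim_{p\to 0}\wt{g}_{ij}(z) = \lim_{p\to 0}\wt{g}^*_{ij}(z) = 1$. Since $U''_{q,p}(g,X_l)$ is a topological $\bbC\dbr{p}$-algebra, I read $\lim_{p\to 0}$ as the reduction modulo $p$, whose defining relations are obtained by setting $p=0$ in the defining relations of $U''_{q,p}(g,X_l)$.

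First I would carry out the algebra-level matching. Setting $p=0$ in \eqref{eq:U:pp}--\eqref{eq:U:pme} turns every ratio $g^*_{ij}/g_{ij}$ into $1$ and every $g^{(*)}_{ij}$ into $\ol{g}_{ij}$, so these relations become exactly the quadratic relations \eqref{eq:DI:ppe}, \eqref{eq:DI:pme} and the $\psi\psi$ relations of the Ding-Iohara algebra with structure functions $\ol{g}$; similarly \eqref{eq:U:ee}, \eqref{eq:U:ef} reduce to \eqref{eq:DI:ee}, \eqref{eq:DI:ef}. The only nonroutine point is the Serre-type relations. Since all the prefactors $\wt{g}^*_{ii}, \wt{g}^*_{ij}$ in \eqref{eq:U:eS} tend to $1$, and the function $h_{ij}$ in \eqref{eq:U:hij} is by construction the expression \eqref{eq:DI:hij} with $g$ replaced by $\ol{g}$, the relation \eqref{eq:U:eS} collapses precisely to the cubic relation \eqref{eq:DI:Serre} for $X=e$ (with $w=z$); the same argument applies to \eqref{eq:U:fS} and $X=f$. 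Thus the $p=0$ reductions of the defining relations of $U''_{q,p}(g,X_l)$ coincide, generator by generator, with the defining relations of $U_q(\ol{g},X_l)$ (\cref{dfn:DI:U}, \cref{rmk:DI:ADE}), and matching generators gives a topological algebra isomorphism $\lim_{p\to 0}U''_{q,p}(g,X_l) \cong U_q(\ol{g},X_l)$.

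Next I would verify that this isomorphism intertwines the coalgebra data. The comultiplication formulas \eqref{eq:U:Delta1} (together with those for $e_i(z),f_i(z)$) are already written in exactly the shape of the Ding-Iohara coproduct \eqref{eq:DI:Delta0}--\eqref{eq:DI:Delta2}, and the antipode \eqref{eq:U:S1} coincides formally with \eqref{eq:DI:S}. Hence, once the modified tensor product is identified with the ordinary one and the $D_H$-valued counit with a scalar-valued one, $(\Delta,\ve,S)$ go over to the Ding-Iohara $(\Delta,\ve,S)$ of \cref{fct:DI:U}. The point is that on $U''_{q,p}(g,X_l)$, which by definition omits the dynamical sector $\clM(\wtH^*)$ and the generators $d,K^\pm_i$, the moment maps take only scalar values, so the relevant base reduces to $\bbC\dbr{p}$; accordingly $U''\totimes U''$ becomes the ordinary tensor product $U''\otimes_{\bbC\dbr{p}}U''$, which at $p=0$ is $U''\otimes_{\bbC}U''$, while $D_H$ collapses to $\bbC$ with each shift $T_{Q_i}$ (the value of $\ve$ on $\psi^\pm_i(z)$) acting as the identity, matching $\ve(\psi^\pm_i(z))=1$.

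The main obstacle is precisely this last degeneration of the $H$-Hopf algebroid structure to an ordinary Hopf algebra. One must check that $U''_{q,p}(g,X_l)$ is genuinely closed under $\Delta,\ve,S$, so that it is an $H$-Hopf sub-algebroid: $\Delta$ stays inside $U''\totimes U''$ since its right-hand sides involve only $e_i,f_i,\psi^\pm_i$ and $q^{\pm c/2}$; $S$ stays inside $U''$ because the $\psi^\pm_i(z)$ are invertible there; and $\ve$ maps $U''$ into the subalgebra of $D_H$ spanned by the pure shifts $T_\alpha$, $\alpha\in\clQ$. Then one argues that the $\clQ$-bigrading survives as a mere grading while the function sector $\bbF=\clM(H^*)$ trivializes in the limit, so that all three operations descend to the ordinary graded Hopf algebra operations on $U_q(\ol{g},X_l)$. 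The topological bookkeeping of \cref{rmk:DI:top} must also be tracked, since the infinite sums appearing in the $\psi$--$e$ relations and in the coproduct have to remain well-defined after passing to $p=0$.
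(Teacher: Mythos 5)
Your proof is correct and follows essentially the same route as the paper's own (quite terse) proof: compare the defining relations term by term using $\wt{g}_{ij},\wt{g}^*_{ij}\to 1$ as $p\to 0$ (so that \eqref{eq:U:eS}, \eqref{eq:U:fS} collapse to \eqref{eq:DI:Serre}, since $h_{ij}$ in \eqref{eq:U:hij} is exactly \eqref{eq:DI:hij} evaluated at $\ol{g}$), then note that $\totimes$ degenerates to the ordinary tensor product and match the formulas for $(\Delta,\ve,S)$ against \eqref{eq:DI:Delta0}--\eqref{eq:DI:S}. Your additional care about why the moment-map sector trivializes on $U''$ and why $D_H$ collapses to scalars is a finer elaboration of what the paper asserts in a single line, not a different method.
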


\begin{proof}
For the algebra structure, compare the defining relations \eqref{eq:DI:qc}--\eqref{eq:DI:Serre} and \eqref{eq:U:qc}--\eqref{eq:U:fS}, noting that the functions $\wt{g}_{ij}(z)$ and $\wt{g}^*_{ij}(z)$ in the Serre-type relations \eqref{eq:U:eS} and \eqref{eq:U:fS} reduce to $1$ in the limit $p \to 0$. For the Hopf algebra structure, note that the modified tensor product $\totimes$ reduces to the ordinary tensor product $\otimes$ in the limit $p \to 0$, and compare the formulas \eqref{eq:DI:Delta0}--\eqref{eq:DI:S} and \eqref{eq:U:Delta0}--\eqref{eq:U:Delta1}, \eqref{eq:U:ve0}, \eqref{eq:U:ve1}, \eqref{eq:U:S}--\eqref{eq:U:S1}.
\end{proof}

\begin{rmk}
The limit Hopf algebra $U_q(\ol{g},X_l)$ is independent of the parameter $p$. To keep a parameter, say $p'$, in the limit, then use \cref{rmk:ADE:top} \ref{i:rmk:ADE:top:p'} for the class of the structure functions $g$.
\end{rmk}

Thus we reached the goal of this note for simply-laced root systems.

%%%%%%%%%%%%%%%%%%%%%%%%%%%%%%%%%%%%%%%%%%%%%%%%%%%%%%%%%%%%%%%%%%%%%%%%%%%%%%%%%%%%%%%%%%
%%%%%%%%%%%%%%%%%%%%%%%%%%%%%%%%%%%%%%%%%%%%%%%%%%%%%%%%%%%%%%%%%%%%%%%%%%%%%%%%%%%%%%%%%%
\section{Dynamical Ding-Iohara algebroids of non-simply-laced root systems}\label{s:ns}

In this section, we extend the dynamical Ding-Iohara algebroid associated to the simply-laced simple root systems in \cref{dfn:ADE:U} to the non-simply-laced systems.

%%%%%%%%%%%%%%%%%%%%%%%%%%%%%%%%%%%%%%%%%%%%%%%%%%%%%%%%%%%%%%%%%%%%%%%%%%%%%%%%%%%%%%%%%%
\subsection{Strange formulas}\label{ss:ns:h}

%Let us start with proving some fundamental formulas. 
Let us start with the recollection of \cref{prp:U:ADE}, where we recovered the elliptic algebra $U_{q,p}(\wh{\frg}(X_l))$ from our dynamical Ding-Iohara algebroid $U_{q,p}(g,X_l)$ of simply-laced root systems $X_l$ by setting the structure functions as \eqref{eq:U:gijE}. There we used a strange equality \eqref{eq:ADE:h=2q}, which claims that the term
\[
 h_{ij} \ceq 
 \frac{(\ol{g}_{ii}(\frac{z_1}{z_2})+1)(\ol{g}_{ij}(\frac{z_1}{z})\ol{g}_{ij}(\frac{z_2}{z})+1)}
      { \ol{g}_{ij}(\frac{z_2}{z})+\ol{g}_{ii}(\frac{z_1}{z_2})\ol{g}_{ij}(\frac{z_1}{z})}.
\] 
in the Serre-type relations \eqref{eq:U:eS} and \eqref{eq:U:fS} of $U_{q,p}(g,X_l)$ reduces under the setting
\begin{align}\label{eq:ns:olg-sp}
 \ol{g}_{ii}(z) = \frac{1-q^2 z}{q^2-z}, \quad 
 \ol{g}_{ij}(z) = \frac{q-    z}{1-q z}. 
\end{align}
to the $q$-integers $[2]_i$ (see \eqref{eq:E:qi}) in those \eqref{eq:E:eS} and \eqref{eq:E:fS} of $U_{q,p}(\wh{\frg}(X_l))$. As mentioned in \cref{rmk:ADE:h}, we found the term $h_{ij}$ by the compatibility with the Drinfeld-type comultiplication $\Delta$ in \eqref{lem:U:Delta}.

For the discussion of non-simply-laced types, let us give a slight modification of the equality \eqref{eq:ADE:h=2q}.

\begin{lem}\label{lem:ns:sr2}
For a parameter $q^d$ and variables $z_{12} \ceq z_1/z_2$, $z_{10} \ceq z_1/z$ and $z_{20} \ceq z_2/z$ and the rational functions 
\begin{align}\label{eq:ns:ADEolg}
 g^1_2 \ceq \frac{1-q^{2d} z_{12}}{q^{2d}-z_{12}}, \quad  
 g^m_0 \ceq \frac{q^d-z_{m0}}{1-q^d z_{m0}} \quad (m=1,2),
\end{align}
the following equality holds 
\begin{align}\label{eq:ns:2q}
 [2]_{q^d}  \bigl(\ceq \tfrac{q^d-q^{-d}}{q-q^{-1}}\bigr) = 
 \frac{(1+g^1_2)+(g^1_0 g^2_0+g^1_2 g^1_0 g^2_0)}{g^2_0+g^1_2 g^1_0}. 
 %\frac{(g^1_2+1)(g^1_0 g^2_0+1)}{g^2_0+g^1_0 g^1_2}. 
\end{align}
\end{lem}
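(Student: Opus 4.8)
The plan is to verify the displayed rational identity \eqref{eq:ns:2q} directly, treating it as the $q \to q^d$ specialization of the equality \eqref{eq:ADE:h=2q} used in the proof of \cref{prp:U:ADE}. Writing $a \ceq q^d$, the functions $g^1_2$ and $g^m_0$ of \eqref{eq:ns:ADEolg} are precisely the specialized structure functions $\ol{g}_{ii}(z_{12})$ and $\ol{g}_{ij}(z_{m0})$ of \eqref{eq:ns:olg-sp} with $q$ replaced by $a$, so the right-hand side of \eqref{eq:ns:2q} is exactly $h_{ij}$ of \eqref{eq:U:hij} under this substitution, and the claim is that it collapses to the constant $[2]_{q^d}=a+a^{-1}$.

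The first step is to factor the numerator. Since $g^1_0 g^2_0 + g^1_2 g^1_0 g^2_0 = g^1_0 g^2_0 (1 + g^1_2)$, we have
\[
 (1 + g^1_2) + (g^1_0 g^2_0 + g^1_2 g^1_0 g^2_0) = (1 + g^1_2)(1 + g^1_0 g^2_0),
\]
so that \eqref{eq:ns:2q} is equivalent to the single identity
\[
 (1 + g^1_2)(1 + g^1_0 g^2_0) = (a + a^{-1})(g^2_0 + g^1_2 g^1_0).
\]

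The key point, which I would stress, is the constraint $z_{12} = z_{10}/z_{20}$ forced by $z_{12}=z_1/z_2$ and $z_{m0}=z_m/z$: the identity is false for independent $z_{12},z_{10},z_{20}$ and holds only on this locus. Using the elementary simplifications
\[
 1 + g^1_2 = \frac{(a^2+1)(1 - z_{12})}{a^2 - z_{12}}, \qquad
 1 + g^1_0 g^2_0 = \frac{(1+a^2)(1 + z_{10}z_{20}) - 2a(z_{10}+z_{20})}{(1 - a z_{10})(1 - a z_{20})},
\]
I would substitute $z_{12}=z_{10}/z_{20}$ to clear the remaining denominator $a^2-z_{12}$, multiply out both sides by the common denominator $(a^2 z_{20} - z_{10})(1 - a z_{10})(1 - a z_{20})$, and check that the two resulting polynomials in $z_{10},z_{20},a$ coincide.

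The only real difficulty is the algebraic bookkeeping of this cross-multiplication; it is routine but must be done with care, and the constraint $z_{12}=z_{10}/z_{20}$ has to be invoked before clearing denominators, since otherwise the two sides genuinely differ. As a consistency check one may evaluate the reduced rational function at $z_{10}=0$ (whence $z_{12}=0$, $g^1_0=a$, $g^1_2=a^{-2}$), where the factored ratio collapses at once via $(1+a^{-2})(1+a g^2_0) = (a+a^{-1})(a^{-1}+g^2_0)$ and $g^2_0 + g^1_2 g^1_0 = a^{-1}+g^2_0$ to the value $a+a^{-1}$; setting $d=1$ recovers \eqref{eq:ADE:h=2q}.
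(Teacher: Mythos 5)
Your proposal is correct and is essentially the paper's own proof, which consists of the single phrase ``Direct calculation'': factoring the numerator as $(1+g^1_2)(1+g^1_0 g^2_0)$, imposing $z_{12}=z_{10}/z_{20}$, and clearing denominators does reduce the right-hand side of \eqref{eq:ns:2q} to the constant $q^d+q^{-d}$. Your emphasis on the constraint $z_{12}=z_{10}/z_{20}$ is accurate and worth recording — the identity genuinely fails for independent $z_{12},z_{10},z_{20}$ (e.g.\ at $q^d=2$, $z_{10}=\tfrac13$, $z_{20}=\tfrac15$, $z_{12}=\tfrac17$ the ratio equals $5\neq\tfrac52$) — a point the paper's one-line proof leaves implicit.
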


\begin{proof}
Direct calculation. 
\end{proof}

Let us give some observations of the formula \eqref{eq:ns:2q}.
\begin{itemize}
\item 
The variables $z_1,z_2$ and $z$ come from the Serre-type relations \eqref{eq:E:eS} and \eqref{eq:E:fS} of the elliptic algebra $U_{q,p}(\wh{\frg})$, and the number $2=\abs{\{1,2\}}$ corresponds to the quantity $a=1-a_{ij}=2$ determined by the off-diagonal entry $a_{ij}=-1$ of the Cartan matrix $A=A(X_l)$ (see the line before \eqref{eq:E:wtgam}).

\item
Set $z_{21} \ceq z_2/z_1$ and $g^2_1 \ceq (1-q^{2d}z_{21})/(q^{2d}-z_{21})$. Then we have 
$g^2_1 = (g^1_2)^{-1}$, which is a rewritten form of the Ding-Iohara condition \eqref{eq:DI:cond} or \eqref{eq:U:olg-cond}.

\item
Let us focus on the invariance of \eqref{eq:ns:2q} under the permutation of variables $z_1$ and $z_2$. Then, the denominator and the numerator of the right hand side of \eqref{eq:ns:2q} can be regarded as the $\frS_2$-orbit summations in the following sense. Consider the set of rational functions
\begin{align}\label{eq:ns:R2}
 R_2 \ceq \{(g^1_2)^{p_{12}} (g^1_0)^{p_1} (g^2_0)^{p_2} \mid p_{12},p_1,p_2 = 0,1\}.
\end{align}
The symmetric group $\frS_2$ acts on $R_2$ by letting the generator $s_1 \ceq (1,2) \in \frS_2$ act as 
\begin{align}\label{eq:ns:s1-R2}
 s_1\bigl((g^1_2)^{p_{12}} (g^1_0)^{p_1} (g^2_0)^{p_2}\bigr) \ceq 
 g^1_2 (g^2_1)^{p_{12}} (g^2_0)^{p_1} (g^1_0)^{p_2} = 
 (g^1_2)^{1-p_{12}} (g^1_0)^{p_2} (g^2_0)^{p_1}.
\end{align}
In the second equality, we used $g^2_1 = (g^1_2)^{-1}$. By the condition on the power indices $p_{12},p_1,p_2$, we find that this action is well-defined. For $f \in R_2$, we denote its orbit sum by $\wh{f} \ceq \sum_{\sigma \in \frS_2} \sigma(f)$. Then, we have
\[
 \wh{1} = 1+g^1_2, \quad \wh{g^1_0 g^2_0} = g^1_0 g^2_0 + g^1_2 g^1_0 g^2_0, \quad 
 \wh{g^2_0} = g^2_0 + g^1_2 g^1_0,
\]
so that \eqref{eq:ns:2q} can be rewritten as 
\begin{align}\label{eq:ns:hij2}
 [2]_{q^d} = \frac{\wh{1}+\wh{g^1_0 g^2_0}}{\wh{g^2_0}}.
\end{align}
\end{itemize}

Now we turn to the non-simply-laced type $X_l$, where the $q$-binomials $\bnm{3}{s}{i}$ and $\bnm{4}{s}{i}$ appear in the Serre-type relations \eqref{eq:E:eS} and \eqref{eq:E:fS} of the elliptic algebra $U_{q,p}\bigl(\wh{\frg}(X_l)\bigr)$. We want to find a correct term $h_{ij}$ which yields Serre-type relations of the would-be Ding-Iohara algebroids of types $B,C,F$ under the following three principles.
\begin{clist}
\item 
The compatibility with the Drinfeld-type comultiplication $\Delta$ in \eqref{lem:U:Delta}.
\item
The invariance under the permutation of variables $z_i$'s.
\item
Recovering the $q$-binomials under the specialization \eqref{eq:ns:olg-sp}.
\end{clist}
In this \cref{ss:ns:h}, we give the consequence and check the second and third principle. The compatibility of these functions with the Drinfeld-type comultiplication will be shown in the following subsections. 

First, we consider the $q$-integer $[3]_{i}$ appearing in the types $B,C$ and $F$. Following the above observations, let us consider the variables
\begin{align}\label{eq:ns:BCF-z}
 z_{mn} \ceq z_m/z_n \quad (1\le m < n \le 3), \quad z_{m0} \ceq z_m/z \quad (1 \le m \le 3)
\end{align}
and the rational functions
\begin{align}\label{eq:ns:BCF-sp}
 g^m_n \ceq \frac{1-q^{2d} z_{mn}}{q^{2d}-z_{mn}} \quad (1 \le m < n \le 3), \quad 
 g^m_0 \ceq \frac{q^{2d}-z_{m0}}{1-q^{2d} z_{m0}} \quad (1 \le m \le 3). 
\end{align}
Then, following \eqref{eq:ns:R2}, we define a set $R_3$ of rational functions by 
\begin{align}\label{eq:ns:R3}
 R_3 \ceq \bigl\{\tprd_{1 \le m < n \le 3}(g^m_n)^{p_{mn}} \tprd_{1 \le m \le 3}(g^m_0)^{p_m}
                 \mid p_{mn},p_m = 0,1 \bigr\}. 
\end{align}
We also define an action of the symmetric group $\frS_3$ on $R_3$ by letting the generators $s_1 \ceq (1,2)$ and $s_2 \ceq (2,3)$ act as 
\begin{align}
\label{eq:ns:s1-R3}
&s_1\Bigl(\prod_{1 \le m < n \le 3}(g^m_n)^{p_{mn}} \prod_{1 \le m \le 3}(g^m_0)^{p_m}\Bigr)
\ceq g^1_2\prod_{1 \le m < n \le 3}\bigl(g^{s_1(m)}_{s_1(n)}\bigr)^{p_{mn}}
          \prod_{1 \le m     \le 3}\bigl(g^{s_1(m)}_{0}\bigr)^{p_{m}}, \\
\label{eq:ns:s2-R3}
&s_2\Bigl(\prod_{1 \le m < n \le 3}(g^m_n)^{p_{mn}} \prod_{1 \le m \le 3}(g^m_0)^{p_m}\Bigr)
\ceq g^2_3\prod_{1 \le m < n \le 3}\bigl(g^{s_2(m)}_{s_2(n)}\bigr)^{p_{mn}}
          \prod_{1 \le m     \le 3}\bigl(g^{s_2(m)}_{0     }\bigr)^{p_m}.
\end{align}
We can check $s_1(R_3) \subset F$ using $s_1(g^m_n),s_1(g^m_0)\in R_3$ except for $g^1_2$ and $g^n_m=(g^m_n)^{-1}$. We can similarly show $s_2(R_3) \subset R_3$, and hence the $\frS_3$-action on $R_3$ is well-defined. Note that this well-definedness only uses $g^n_m=(g^m_n)^{-1}$, which is a rewritten form of the Ding-Iohara condition \eqref{eq:DI:cond} or \eqref{eq:U:olg-cond}.

We seek an analogue of \eqref{eq:ns:hij2} for the $q$-integer $[3]_{q^d}$, i.e., want to express $[3]_{q^d}$ as a ratio of the $\frS_3$-orbit summations in $R_3$. We abbreviate the product of the functions by $g^{11}_{23} \ceq g^1_2 g^1_3$ and so on, and denote the orbit sum of $f \in R_3$ by $\wh{f} \ceq \sum_{\sigma \in \frS_3}\sigma(f)$. For example, we have 
\[
 \wh{1} \ceq \sum_{\sigma \in \frS_3} \sigma(1) = 
 1+g^1_2+g^2_3+g^{11}_{23}+g^{12}_{33}+g^{112}_{233}. 
\]
Under these notation, we can obtain the following formula by tedious but direct calculation. 
\begin{align}\label{eq:ns:hij3}
 [3]_{q^d} \bigl(\ceq \tfrac{q^{3d}-q^{-3d}}{q^d-q^{-d}}\bigr) =
 \frac{\wh{g^{123}_{000}}-\wh{1}}{\wh{g^{23}_{00}}-\wh{g^3_0}}
\end{align}
Thus, we found the desired term $h_{ij}$ for the types $B,C$ and $F$. For later use, we rename it as $h^3_{ij}$, and summarize the result in:

\begin{cor}\label{cor:ns:BCF}
Let $X_l=B_l,C_l$ or $F_4$, and $A=A(X_l)=(a_{rs})_{r,s \in I}$ be the Cartan matrix in \cref{sss:E:root}. Let $(i,j) \in I^2 = \{1,\dotsc,l\}^2$ be the (unique) pair such that the entry $a_{ij}$ is equal to $-2$. Also, let $g=\{g_{rs}(z;p) \mid r,s \in I\}$ be the structure functions (\cref{dfn:U:DI}), and $\ol{g}_{mn}(z)$ be the limit $\lim_{p \to 0}g_{ij}(z;p)$.
\begin{enumerate}
\item 
For the variables \eqref{eq:ns:BCF-z}, we set the rational functions 
\[
 g^m_n \ceq \ol{g}_{ii}(z_{mn}) \quad (1 \le m < n \le 3), \quad 
 g^m_0 \ceq \ol{g}_{ij}(z_{m0}) \quad (1 \le m \le 3). 
\]
Then \eqref{eq:ns:s1-R3} and \eqref{eq:ns:s2-R3} define an $\frS_3$-action on the set $R_3$ given by \eqref{eq:ns:R3}.

\item
Using the abbreviation of the product $g^{11}_{23} \ceq g^1_2 g^1_3$ and so on, and of the orbit sum $\wh{f} \ceq \sum_{\sigma \in \frS_3}\sigma(f)$, we define
\begin{align}\label{eq:ns:h3}
 h^3_{ij} \ceq \frac{\wh{g^{123}_{000}}-\wh{1}}{\wh{g^{23}_{00}}-\wh{g^3_0}}.
\end{align}
We also specialize the structure functions as 
\[
 g_{rs}(z;p) = q^{-b_{rs}} \frac{\theta(q^{b_{rs}}z;p)}{\theta(q^{-b_{rs}}z;p)} 
 \quad (r,s \in I)
\]
(see \eqref{eq:U:gijE}) with the following symmetrization $A=D B$, $B=(b_{rs})_{r,s \in I}$.
\begin{clist}
\item 
For the type $B_l$, we take $D(B_l)^{-1} \ceq (d_1,\dotsc,d_l)=(1,\dotsc,1,\thf)$, 
so that we have
\begin{align*}
 B(B_l) = \begin{psm} 2 & {-1} \\ {-1} & 2 & {-1} \\ & \ddots & \ddots & \ddots \\ 
                     &  & {-1} & 2 & {-1} \\ & & & {-1} & 1 \end{psm}.
\end{align*}
We also have $(i,j)=(l,l-1)$, $q_i=q^{d_i}=q^{1/2}$ (recall \eqref{eq:E:qi}) and 
\begin{align}\label{eq:B:olg}
\begin{split}
&\ol{g}_{rr}(z) = \frac{1-q^{2} z}{q^{2}-z} \quad (1 \le r < l=i),\quad  
 \ol{g}_{ii}(z) = \frac{1-q^{ } z}{q^{ }-z},\\
&\ol{g}_{rs}(z) = \frac{  q^{ }-z}{1  -q z} \quad (1 \le r,s \le l, \, \abs{r-s}=1).
\end{split}
\end{align}
Then, using the notation \eqref{eq:E:bini}, we have 
\begin{align}\label{eq:B:hij3}
 h^3_{ij} = q+1+q^{-1} = \frac{q^{3/2}-q^{-3/2}}{q^{1/2}-q^{-1/2}} = 
\bnm{3}{1}{i} = \bnm{3}{2}{i}.
\end{align}
Also, for $(r,s) \in I^2$ such that $\abs{r-s}=1$ and $(r,s) \ne (i,j)$, we have 
\begin{align}\label{eq:B:hij2}
  h_{rs} \ceq \frac{\wh{1}+\wh{g^{12}_{00}}}{\wh{g^2_0}} = \bnm{2}{1}{r},
\end{align}
where $q_r=q$ and $g^m_n$ is given by \eqref{eq:ns:ADEolg} with $d \ceq d_r=1$.

\item
For the type $C_l$, we take $D(C_l)^{-1}=(d_1,\dotsc,d_l)=(1,\dotsc,1,2)$, so that
\begin{align*}
 B(C_l) = \begin{psm} 2 & {-1} \\ {-1} & 2 & {-1} \\ & \ddots & \ddots & \ddots \\ 
                     &  & {-1} & 2 & {-2} \\ & & & {-2} & 4 \end{psm}, 
\end{align*}
$(i,j)=(l-1,l)$ and 
\begin{align}\label{eq:C:olg}
\begin{aligned}
&\ol{g}_{rr}(z) = \frac{1-q^2 z}{q^2-z} \quad (1 \le r <l=j), & 
&\ol{g}_{jj}(z) = \frac{1-q^4 z}{q^4-z},\\
&\ol{g}_{rs}(z) = \frac{q-z}{1-q z} \quad (1 \le r,s<l, \ \abs{r-s}=1), & 
&\ol{g}_{ij}(z) = \ol{g}_{ji}(z) = \frac{q^2-z}{1-q^2 z}.
\end{aligned}
\end{align}
Then we have  $q_i=q$ and
\begin{align}\label{eq:C:hij3}
 h^3_{ij} = \frac{q^3-q^{-3}}{q-q^{-1}} = \bnm{3}{1}{i} = \bnm{3}{2}{i}.
\end{align}
For $(r,s) \in I^2$ such that $\abs{r-s}=1$ and $(r,s) \ne (i,j)$, we have 
\begin{align}\label{eq:C:hij2}
 h_{rs} \ceq \frac{\wh{1}+\wh{g^{12}_{00}}}{\wh{g^2_0}} = \bnm{2}{1}{r}, 
\end{align}
where $q_r=q$ ($r<l-1$) or $q_r=q^2$ ($r=l$), and $g^m_n$ is given by \eqref{eq:ns:ADEolg} with $d=d_r$.

\item
For the type $F_4$, we take $D(F_4)^{-1}=(d_1,\dotsc,d_4)=(2,2,1,1)$, so that 
\begin{align*}
 B(F_4) = \begin{psm} 4 & {-2} \\ {-2} & 4 & {-2} \\  
                        & {-2} & 2 & {-1} \\ & & {-1} & 2 \end{psm}, 
\end{align*}
$(i,j)=(3,2)$ and 
\begin{align}\label{eq:F:olg}
\begin{aligned}
&\ol{g}_{11}(z) = \ol{g}_{22}(z) = \frac{1-q^4 z}{q^4-z}, &
&\ol{g}_{33}(z) = \ol{g}_{44}(z) = \frac{1-q^2 z}{q^2-z}, \\
&\ol{g}_{rs}(z) = \frac{q^2-z}{1-q^2 z} \quad (\{r,s\}=\{1,2\},\{2,3\}), & 
&\ol{g}_{rs}(z) = \frac{q-z}{1-qz} \quad (\{r,s\}=\{3,4\}).
\end{aligned}
\end{align}
Then we have $q_i=q$ and 
\begin{align}\label{eq:F:hij3}
 h^3_{ij} = \frac{q^3-q^{-3}}{q-q^{-1}} = \bnm{3}{1}{i} = \bnm{3}{2}{i}.
\end{align}
For $(r,s) \in I^2$ such that $\abs{r-s}=1$ and $(r,s) \ne (i,j)$, we have
\begin{align}\label{eq:F:hij2}
 h_{rs} \ceq \frac{\wh{1}+\wh{g^{12}_{00}}}{\wh{g^2_0}} = \bnm{2}{1}{r}, 
\end{align}
where $q_r=q^2$ ($r=1,2$) or $q_r=q$ ($r=3,4$), and $g^m_n$ is given by \eqref{eq:ns:ADEolg} with $d=d_r$.
\end{clist}
\end{enumerate}
\end{cor}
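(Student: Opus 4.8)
The plan is to reduce the corollary to two facts already in hand: the rational-function identity \eqref{eq:ns:hij3}, valid for the generic functions \eqref{eq:ns:BCF-sp} with an arbitrary parameter $q^d$, and \cref{lem:ns:sr2} for the companion statement one degree lower. For part (1) the generic $\frS_3$-action of \eqref{eq:ns:s1-R3}--\eqref{eq:ns:s2-R3} was already shown to be well-defined as soon as $g^n_m = (g^m_n)^{-1}$ holds, so all that remains is to verify this single identity for $g^m_n \ceq \ol{g}_{ii}(z_{mn})$. This is immediate from the Ding--Iohara condition \eqref{eq:U:olg-cond} in the diagonal case $i=j$, namely $\ol{g}_{ii}(z^{-1}) = \ol{g}_{ii}(z)^{-1}$, whence $g^n_m = \ol{g}_{ii}(z_{mn}^{-1}) = (g^m_n)^{-1}$; the remaining well-definedness is quoted verbatim from the discussion preceding the statement, the $g^m_0 = \ol{g}_{ij}(z_{m0})$ being merely permuted among themselves.

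For part (2) I would first compute the limit functions. Since $\theta(x;p) \to 1-x$ as $p \to 0$, the specialization \eqref{eq:U:gijE} gives $\ol{g}_{rs}(z) = q^{-b_{rs}}(1-q^{b_{rs}}z)/(1-q^{-b_{rs}}z) = (q^{-b_{rs}}-z)/(1-q^{-b_{rs}}z)$, and substituting the entries $b_{rs} = d_r a_{rs}$ read off from the chosen symmetrizations $B(B_l), B(C_l), B(F_4)$ reproduces \eqref{eq:B:olg}, \eqref{eq:C:olg}, \eqref{eq:F:olg} entry by entry. The one point worth recording separately is that the distinguished pair $(i,j)$ with $a_{ij}=-2$ satisfies $b_{ii} = 2d_i$ and $b_{ij} = -2d_i$, so that $\ol{g}_{ii}(z) = (1-q^{2d_i}z)/(q^{2d_i}-z)$ and $\ol{g}_{ij}(z) = (q^{2d_i}-z)/(1-q^{2d_i}z)$.

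The key observation is then that, taking $d \ceq d_i$, the functions $g^m_n = \ol{g}_{ii}(z_{mn})$ and $g^m_0 = \ol{g}_{ij}(z_{m0})$ coincide exactly with the generic functions \eqref{eq:ns:BCF-sp} to which \eqref{eq:ns:hij3} applies. Invoking \eqref{eq:ns:hij3} therefore yields $h^3_{ij} = [3]_{q^{d_i}} = [3]_i$, and the $q$-binomial claim follows from \eqref{eq:E:bini}: since $[1]_i=1$ one has $\bnm{3}{1}{i} = [3]_i[1]_i = [3]_i$ and $\bnm{3}{2}{i} = [3]_i$, giving \eqref{eq:B:hij3}, \eqref{eq:C:hij3}, \eqref{eq:F:hij3} (the explicit value $q+1+q^{-1}$ in type $B_l$ being just $[3]_{q^{1/2}}$). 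The companion claims \eqref{eq:B:hij2}, \eqref{eq:C:hij2}, \eqref{eq:F:hij2} are obtained the same way one degree lower: for a pair $(r,s)$ with $a_{rs}=-1$ one has $b_{rr}=2d_r$ and $b_{rs}=-d_r$, so $\ol{g}_{rr}(z_{12})$ and $\ol{g}_{rs}(z_{m0})$ match the functions $g^1_2, g^m_0$ of \cref{lem:ns:sr2} with parameter $d \ceq d_r$, and that lemma gives $h_{rs} = [2]_{q^{d_r}} = [2]_r = \bnm{2}{1}{r}$.

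The genuinely laborious step is concealed in the quoted identity \eqref{eq:ns:hij3}; granting it, the corollary is pure bookkeeping, and the one thing to watch is the per-type accounting of which index of each pair carries the symmetrization weight, i.e.\ which of the two currents is the doubled one, since this is exactly what fixes $q_i$ (resp.\ $q_r$) in the resulting $q$-binomials and hence the stated exponents in each type.
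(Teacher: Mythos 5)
Your proposal is correct and follows essentially the same route as the paper: both reduce the corollary to the generic identities \eqref{eq:ns:hij3} and \eqref{eq:ns:hij2} (\cref{lem:ns:sr2}) by checking that under the specialization \eqref{eq:U:gijE} the limits $\ol{g}_{ii},\ol{g}_{ij}$ (resp.\ $\ol{g}_{rr},\ol{g}_{rs}$) coincide with the generic rational functions \eqref{eq:ns:BCF-sp} (resp.\ \eqref{eq:ns:ADEolg}) with $d=d_i$ (resp.\ $d=d_r$), and handle part (1) via $g^n_m=(g^m_n)^{-1}$ from the Ding--Iohara condition. Your write-up just makes explicit the bookkeeping ($\theta(x;p)\to 1-x$, $b_{ii}=2d_i$, $b_{ij}=-2d_i$) that the paper's proof leaves implicit.
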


\begin{proof}
For the type $B_l$, the formula \eqref{eq:B:olg} of $h^3_{ij}$ is obtained from \eqref{eq:ns:hij3} since $\ol{g}_{ii}$ and $\ol{g}_{ij}$ in \eqref{eq:B:olg} are given by \eqref{eq:ns:BCF-sp} with $d=d_i=\hf$. The formula \eqref{eq:B:hij2} of $h_{rs}$ is the consequence of \eqref{eq:ns:hij2} since we have $a_{rs}=-1$. The formulas for the other types can be obtained by similar arguments using \eqref{eq:ns:hij3} and \eqref{eq:ns:hij2}.
\end{proof}

We can similarly discuss the $q$-binomials $\bnm{4}{s}{i}$ appearing in the type $G_2$. Define the variables
\begin{align}\label{eq:ns:G-z}
 z_{m,n} \ceq z_m/z_n \quad (1 \le m < n \le 4), \quad 
 z_{m,0} \ceq z_m/z   \quad (1 \le m \le 4),
\end{align}
the rational functions
\begin{align}\label{eq:ns:G-sp}
 g^m_n \ceq \frac{1-q^{2d}z_{m,n}}{q^{2d}-z_{m,n}} \quad (1 \le m < n \le 4), \quad 
 g^m_0 \ceq \frac{q^{3d}-z_{m,0}}{1-q^{3d}z_{m,0}} \quad (1 \le m     \le 4),
\end{align}
and the set of rational functions
\begin{align}\label{eq:ns:R4}
 R_4 \ceq \bigl\{\tprd_{1 \le m < n \le 4}(g^m_n)^{p_{mn}} \tprd_{1 \le m \le 4}(g^m_0)^{p_m}
                \mid p_{mn},p_m = 0,1 \bigr\}. 
\end{align}
Similarly as the $\frS_2$-action \eqref{eq:ns:s1-R2} on $R_2$ and the $\frS_3$-action \eqref{eq:ns:s1-R3} and \eqref{eq:ns:s2-R3} on $R_3$, we can define the $\frS_4$-action on $R_4$ by letting the generators $s_r=(r,r+1) \in \frS_4$ ($r=1,2,3$) act as 
\begin{align}\label{eq:ns:sr-R4}
 s_r\Bigl(\prod_{1 \le m < n \le 4}(g^m_n)^{p_{mn}} \prod_{1 \le m \le 4}(g^m_0)^{p_m}\Bigr)
 \ceq g^r_{r+1} \prod_{1 \le m < n \le 4}(g^{s_r(m)}_{s_r(n)})^{p_{mn}}
                \prod_{1 \le m     \le 4}(g^{s_r(m)}_{     0})^{p_{m }}.
\end{align}
Then, using the symbols $\wh{f} \ceq \sum_{\sigma \in \frS_4}\sigma(f)$ for $f \in R_4$, $g^{11}_{23} \ceq g^1_2 g^1_3$ and so on, we have the following formulas.
\begin{align}
\label{eq:ns:G01}
&0=\wh{g_{003}^{121}}+\wh{g_{004}^{122}}-\wh{g_{004}^{232}}-\wh{g_{003}^{141}}, \\
\label{eq:ns:G02}
&0=\wh{g_{003}^{121}}+\wh{g_{004}^{122}}+\wh{g_{004}^{141}}+\wh{g_{004}^{231}}, \\
\label{eq:ns:G03}
&0=\wh{g_{003}^{121}}+\wh{g_{004}^{122}}+\wh{g_{004}^{131}}+
  2\wh{g_{004}^{141}}+\wh{g_{004}^{241}}, \\
\label{eq:ns:G-4q}
&[4]_{q^d} = 
 \Bigl(\wh{1}+\wh{g^{1234}_{0000}}+\wh{g^{34}_{00}} \cdot \bnm{4}{2}{q^d}\Bigr)
  / \bigl(\wh{g^4_0}+\wh{g^{234}_{000}}\bigr), \\
\label{eq:ns:G-42q}
& \bnm{4}{2}{q^d} = \Bigl(
 \tfrac{1}{2}(\wh{g_{00004}^{12341}}+\wh{g_{00004}^{12342}}+\wh{g_4^1}+\wh{g_4^2})
-\tfrac{2}{3} \wh{g_{00}^{13}}
+\tfrac{5}{3}(\wh{g_{00}^{14}}-\wh{g_{00 }^{23}})
-\tfrac{4}{3} \wh{g_{00}^{34}}+\wh{g_{003}^{121}}
+\tfrac{1}{3} \wh{g_{004}^{122}}\Bigr) / \wh{g^{34}_{00}}.
\end{align}
Here we used $[n]_{q^d} \ceq \frac{q^{nd}-q^{-nd}}{q^d-q^{-d}}$, $[n]_{q^d}! \ceq \prod_{m=1}^n [m]_{q^d}$ and $\bnm{n}{m}{q^d} \ceq \frac{[n]_{q^d}!}{[m]_{q^d}! \, [n-m]_{q^d}!}$.

For the later reference, let us summarize the result in:

\begin{cor}\label{cor:ns:G}
Let $X_l=G_2$ with Cartan matrix $A(G_2)=(a_{ij})_{i,j \in I} = \begin{psm}2 & {-1} \\ {-3} & 2\end{psm}$, $g=\{g_{rs}(z;p) \mid r,s \in I\}$ be the structure functions (\cref{dfn:U:DI}), and $\ol{g}_{rs}(z) \ceq \lim_{p \to 0}g_{rs}(z;p)$. We set $(i,j) \ceq (2,1)$ so that $a_{ij}=-3$.
\begin{enumerate}
\item 
For the variables \eqref{eq:ns:G-z}, set the rational functions
\[
 g^m_n \ceq \ol{g}_{22}(z_{mn}) \quad (1 \le m < n \le 4), \quad 
 g^m_0 \ceq \ol{g}_{21}(z_{m0}) \quad (1 \le m     \le 4). 
\]
Then \eqref{eq:ns:sr-R4} defines an $\frS_4$-action on the set $R_4$ given by \eqref{eq:ns:R4}.

\item
Using the orbit sum $\wh{f} \ceq \sum_{\sigma \in \frS_4}\sigma(f)$ and the abbreviations $g^{11}_{23} \ceq g^1_2 g^1_3$ and so on, we define
\begin{align}\label{eq:ns:h4}
\begin{split}
 h^{4,2}_{21} 
&\ceq \text{the RHS of \eqref{eq:ns:G-42q}} \\
&=\Bigl(
 \tfrac{1}{2}(\wh{g_{00004}^{12341}}+\wh{g_{00004}^{12342}}+\wh{g_4^1}+\wh{g_4^2})
-\tfrac{2}{3} \wh{g_{00}^{13}}
+\tfrac{5}{3}(\wh{g_{00}^{14}}-\wh{g_{00 }^{23}})
-\tfrac{4}{3} \wh{g_{00}^{34}}+\wh{g_{003}^{121}}
+\tfrac{1}{3} \wh{g_{004}^{122}}\Bigr) / \wh{g^{34}_{00}}, \\
 h^{4,1}_{21} 
&\ceq \text{the RHS of \eqref{eq:ns:G-4q}} \\
&=\Bigl(\wh{1}+\wh{g^{1234}_{0000}}+\wh{g^{34}_{00}} \cdot h^{4,2}_{21} \Bigr)/
  \bigl(\wh{g^4_0}+\wh{g^{234}_{000}}\bigr).
\end{split}
\end{align}
We also define $J$ to be the ideal in the symmetric function ring of variables $z_1,\dotsc,z_4$ generated by the right hand sides of \eqref{eq:ns:G01}, \eqref{eq:ns:G02} and \eqref{eq:ns:G03}.
We also specialize the structure functions as 
\[
 g_{rs}(z;p) = q^{-b_{rs}} \frac{\theta(q^{b_{rs}}z;p)}{\theta(q^{-b_{rs}}z;p)} 
 \quad (r,s \in I)
\]
(see \eqref{eq:U:gijE}) with the symmetrization 
\[
 D(G_2)^{-1}=\diag(3,1), \quad 
 B(G_2) = (b_{rs})_{r,s \in I} = \begin{psm}6 & {-3} \\ {-3} & 2 \end{psm},
\]
so that the functions $\ol{g}_{rs}$ are given by 
\[
 \ol{g}_{11}(z) = \frac{1-q^6 z}{q^6-z}, \quad 
 \ol{g}_{12}(z) = \ol{g}_{21}(z) = \frac{q^3-z}{1-q^3 z}, \quad 
 \ol{g}_{22}(z) = \frac{1-q^2 z}{q^2-z}.
\]
Under this specialization, we have $q_i=q_2=q$ and 
\[
 J=0, \quad h^{4,1}_{21} = [4]_2, \quad h^{4,2}_{21} = \bnm{4}{2}{2}.
\]
We also have $q_j=q_1=q^3$ and 
\[
 h_{12} \ceq \frac{\wh{1}+\wh{g^{12}_{00}}}{\wh{g^2_0}} = \bnm{2}{1}{1},
\]
where $g^m_n$ is given by \eqref{eq:ns:ADEolg} with $d=d_1=3$.
\end{enumerate}
\end{cor}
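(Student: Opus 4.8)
The plan is to follow the pattern of the proof of \cref{cor:ns:BCF}, reducing the whole statement to the four-variable ``master identities'' \eqref{eq:ns:G01}--\eqref{eq:ns:G-42q}, which play for $\frS_4$ the role that \eqref{eq:ns:hij2} and \eqref{eq:ns:hij3} play for $\frS_2$ and $\frS_3$. I would first dispose of part (1). The only issue is the well-definedness of the $\frS_4$-action \eqref{eq:ns:sr-R4} on the set $R_4$ of \eqref{eq:ns:R4}, and this is proved exactly as for the $\frS_3$-action on $R_3$: one checks $s_r(R_4) \subseteq R_4$ for each generator $s_r = (r,r+1)$, the only factor that would leave the admissible index range $1 \le m<n \le 4$ being $g^{r+1}_r = (g^r_{r+1})^{-1}$, which is absorbed by the explicit prefactor $g^r_{r+1}$ in \eqref{eq:ns:sr-R4}; here one uses $g^n_m = (g^m_n)^{-1}$, a rewriting of the Ding-Iohara condition \eqref{eq:U:olg-cond}. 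The same identity gives $s_r^2 = \id$, and the braid and commutation relations of $\frS_4$ are verified directly on the generators $g^m_n, g^m_0$. Thus the $\frS_4$-action, the orbit sums $\wh{f}$, and hence the functions $h^{4,1}_{21}, h^{4,2}_{21}$ of \eqref{eq:ns:h4} are well-defined.

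For part (2) I would observe that, with the symmetrization $D(G_2)^{-1} = \diag(3,1)$, $B(G_2) = \begin{psm}6 & {-3} \\ {-3} & 2\end{psm}$, the specialized functions $\ol{g}_{22}(z) = (1-q^2 z)/(q^2-z)$ and $\ol{g}_{21}(z) = (q^3-z)/(1-q^3 z)$ are \emph{exactly} the functions $g^m_n, g^m_0$ of \eqref{eq:ns:G-sp} with $d = 1$ — the exponents $q^{2d}$ and $q^{3d}$ in \eqref{eq:ns:G-sp} being tailored to the Cartan entry $a_{ij} = -3$, so that $q^{2d} = q^2$ and $q^{3d} = q^3$. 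Consequently the master identities apply verbatim with $q^d = q^{d_2} = q = q_2 = q_i$: identities \eqref{eq:ns:G01}--\eqref{eq:ns:G03} say that the three generators of the ideal $J$ vanish, whence $J = 0$; identity \eqref{eq:ns:G-42q} gives $h^{4,2}_{21} = \bnm{4}{2}{2}$; and substituting this into the defining formula \eqref{eq:ns:h4} for $h^{4,1}_{21}$ turns it into the right-hand side of \eqref{eq:ns:G-4q}, which equals $[4]_2$. Finally, the pair $(r,s) = (1,2)$ has $a_{12} = -1$, so $h_{12} = (\wh{1}+\wh{g^{12}_{00}})/\wh{g^2_0}$ is the $\frS_2$-instance of \cref{lem:ns:sr2} (equivalently \eqref{eq:ns:hij2}) applied with $d = d_1 = 3$; since $q_j = q_1 = q^3$ this yields $h_{12} = [2]_{q^3} = \bnm{2}{1}{1}$.

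The real weight of the argument — and the step I expect to be the main obstacle — lies not in \cref{cor:ns:G} itself but in establishing the master identities \eqref{eq:ns:G01}--\eqref{eq:ns:G-42q} (assumed above as earlier results). Each side is an $\frS_4$-orbit sum of products among the rational functions $g^m_n, g^m_0$ in the variables $z_1, \dotsc, z_4, z$; after clearing the common denominators ($\wh{g^{34}_{00}}$ for \eqref{eq:ns:G-42q}, and $\wh{g^4_0}+\wh{g^{234}_{000}}$ for \eqref{eq:ns:G-4q}) these become equalities of Laurent polynomials that I would verify by a direct, computer-assisted comparison of numerators, the $\frS_4$-symmetry cutting down the number of independent monomials considerably. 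I expect no conceptual difficulty, only heavy bookkeeping; the nonhomogeneous rational coefficients $\tfrac{1}{2}, \tfrac{2}{3}, \tfrac{5}{3}, \tfrac{4}{3}, \tfrac{1}{3}$ in \eqref{eq:ns:G-42q}, together with the freedom recorded by the ideal $J$, reflect the fact that — unlike the $\frS_2$ and $\frS_3$ cases — the orbit-sum presentation of $\bnm{4}{2}{q^d}$ is not unique, so a consistent choice must be fixed (cf. \cref{rmk:G:h}).
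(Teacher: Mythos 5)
Your proposal is correct and takes essentially the same route as the paper: well-definedness of the $\frS_4$-action via the Ding-Iohara condition $g^n_m=(g^m_n)^{-1}$ (exactly as in the $\frS_3$ case), identification of the specialized $\ol{g}_{22},\ol{g}_{21}$ with the rational functions \eqref{eq:ns:G-sp} at $q^d=q_2=q$ so that the master identities \eqref{eq:ns:G01}--\eqref{eq:ns:G-42q} give $J=0$, $h^{4,2}_{21}=\bnm{4}{2}{2}$, $h^{4,1}_{21}=[4]_2$, and the $\frS_2$-formula \eqref{eq:ns:hij2} with $d=d_1=3$ for $h_{12}=\bnm{2}{1}{1}$. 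The paper likewise treats the master identities as established by direct (computer-algebra) verification rather than proving them in the corollary, as recorded in the remark immediately following it.
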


\begin{rmk}
We found out the formulas \eqref{eq:ns:h3} and \eqref{eq:ns:h4} with the help of computer algebra system under the three principles explained above.
\end{rmk}

%%%%%%%%%%%%%%%%%%%%%%%%%%%%%%%%%%%%%%%%%%%%%%%%%%%%%%%%%%%%%%%%%%%%%%%%%%%%%%%%%%%%%%%%%%
\subsection{Types \texorpdfstring{$B_l,C_l,F_4$}{Bl,Cl,F4}}\label{ss:ns:BCF}

In this subsection, we define the dynamical Ding-Iohara algebroids of types $B,C,F$. The construction is basically similar to the simply-laced case, but the difference appears in the existence of the quartic Serre-type relations \eqref{eq:BCF:eS2} and \eqref{eq:BCF:fS2}. 

Hereafter in this \cref{ss:ns:BCF}, we denote by $X_l$ one of the finite root systems of types $B_l,C_l$ and $F_4$. To define the dynamical Ding-Iohara algebroid, we choose a symmetrization $A=D B$ of the Cartan matrix $A=A(X_l)$ (see \cref{sss:E:root}). Our choice in this \cref{ss:ns:BCF} is the same one in \cref{cor:ns:BCF}, i.e., 
\begin{align*}
\begin{split}
 D(B_l)^{-1} &= (d_1,\dotsc,d_l) = (1,\dotsc,1,\thf), \\
 D(C_l)^{-1} &= (d_1,\dotsc,d_l) = (1,\dotsc,1,2), \\
 D(F_4)^{-1} &= (d_1,\dotsc,d_4) = (2,2,1,1).
\end{split}
\end{align*}
and
\begin{align}\label{eq:BCF:B}
\begin{split}
 B(B_l) = \begin{psm} 2 & {-1} \\ {-1} & 2 & {-1} \\ & \ddots & \ddots & \ddots \\ 
                     &  & {-1} & 2 & {-1} \\ & & & {-1} & 1 \end{psm}, \quad
 B(C_l) = \begin{psm} 2 & {-1} \\ {-1} & 2 & {-1} \\ & \ddots & \ddots & \ddots \\ 
                     &  & {-1} & 2 & {-2} \\ & & & {-2} & 4 \end{psm}, \quad
 B(F_4) = \begin{psm} 4 & {-2} \\ {-2} & 4 & {-2} \\  
                        & {-2} & 2 & {-1} \\ & & {-1} & 2 \end{psm}.
\end{split}
\end{align}
Note that the choice for type $F_4$ is different from \eqref{eq:E:D}.

We will also use the symbols in \cref{s:ADE} and \cref{ss:ns:h}. For example, we denote $g_{ij}(z) \ceq g_{ij}(z;p)$, $g_{ij}^*(z) \ceq g_{ij}(z;p^*)$ and $\ol{g}_{ij}(z) \ceq \lim_{p \to 0}g_{ij}(z;p)$ for a given structure function $g_{ij}(z;p)$.

\begin{dfn}\label{dfn:U:BCF}
Let $A=A(X_l)=(a_{ij})_{i,j \in I}$ be the Cartan matrix of type $X_l=B_l,C_l$ or $F_4$, and take the symmetrization $A=D B$ as \eqref{eq:BCF:B}, a complex parameter $q$ with $0<\abs{q}<1$, a formal parameter $p$, and the structure functions $\{g_{i,j}(x;p) \mid i,j \in I\}$ as in \cref{dfn:U:DI}.
We define  
\[
 U_{q,p}(g,X_l)
\]
to be the topological algebra over $\bbC\dbr{p}$ with the same generators $\clM(\wtH^*)$, $q^{\pm c/2}$, $K^{\pm}_i$, $e_{i,m}$, $f_{i,m}$, $\psi^{\pm}_{i,m}$ as the simply-laced case \eqref{eq:ADE:gen}, and with the same relations \eqref{eq:U:qc}--\eqref{eq:U:ef} and the following Serre-type relations.
\begin{itemize}
\item
For the pair $(i,j) \in I^2$ with $a_{ij}=-1$, 
we have the Serre-type relations \eqref{eq:U:eS} and \eqref{eq:U:fS}.
\item
For the pair $(i,j) \in I^2$ with $a_{ij}=-2$, set
\begin{align}\label{eq:BCF:Phi2}
\begin{split}
 \Phi_2(z;z_1,z_2,z_3) \ceq \ 
&\wt{g}^*_{ij}(z_{10}) \wt{g}^*_{ij}(z_{20}) \wt{g}^*_{ij}(z_{30}) e_i(z_3)e_i(z_2)e_i(z_1)e_j(z) \\ 
&-h^3_{ij} 
 \wt{g}^*_{ij}(z_{20}) \wt{g}^*_{ij}(z_{30}) e_i(z_3)e_i(z_2)e_j(z)e_i(z_1) \\
&+h^3_{ij} 
 \wt{g}^*_{ij}(z_{30}) e_i(z_3)e_j(z)e_i(z_2)e_i(z_1) 
 -e_j(z)e_i(z_3)e_i(z_2)e_i(z_1)
\end{split}
\end{align}
with $\wt{g}^*_{ij}(x) \ceq \ol{g}_{ij}(x)/g^*_{ij}(x)$, $z_{m0} \ceq z_m/z$ and 
$h^3_{ij} \ceq \bigl(\wh{g^{123}_{000}}-\wh{1}\bigr)/\bigl(\wh{g^{23}_{00}}-\wh{g^3_0}\bigr)$ in \eqref{eq:ns:h3}. Then the Serre-type relation for $e_i(z),e_j(z)$ is 
\begin{align}\label{eq:BCF:eS2}
\begin{split}
&\Phi_2(z;z_1,z_2,z_3)
+\wt{g}^*_{ii}(z_{12}) \Phi_2(z;z_2,z_1,z_3) \\ &
+\wt{g}^*_{ii}(z_{12}) \wt{g}^*_{ii}(z_{13}) \Phi_2(z;z_2,z_3,z_1)
+\wt{g}^*_{ii}(z_{23}) \Phi_2(z;z_1,z_3,z_2) \\ & 
+\wt{g}^*_{ii}(z_{13}) \wt{g}^*_{ii}(z_{23}) \Phi_2(z;z_3,z_1,z_2)
+\wt{g}^*_{ii}(z_{12}) \wt{g}^*_{ii}(z_{13}) \wt{g}^*_{ii}(z_{23}) \Phi_2(z;z_3,z_2,z_1) = 0, 
\end{split}
\end{align}
where we used $\wt{g}^*_{ii}(x) \ceq \ol{g}_{ii}(x)/g^*_{ii}(x)$ and $z_{mn} \ceq z_m/z_n$.
%Similarly, setting 
%\begin{align*}
% \Psi_2(z;z_1,z_2,z_3) \ceq \ 
%&\wt{g}_{ij}(z_{10}) \wt{g}_{ij}(z_{20}) \wt{g}_{ij}(z_{30}) f_i(z_3)f_i(z_2)f_i(z_1)f_j(z) \\ 
%&-h^3_{ij} \wt{g}_{ij}(z_{20}) \wt{g}_{ij}(z_{30})f_i(z_3)f_i(z_2)f_j(z)f_i(z_1) \\
%&+h^3_{ij} \wt{g}_{ij}(z_{30}) f_i(z_3)f_j(z)f_i(z_2)f_i(z_1) - f_j(z)f_i(z_3)f_i(z_2)f_i(z_1)
%\end{align*}
%with $\wt{g}_{ij}(x) \ceq g_{ij}(x)/\ol{g}_{ij}(x)$,
%the Serre-type relation for $f_i(z),f_j(z)$ is given by 
%\begin{align}\label{eq:BCF:fS2}
%\begin{split}
%&\Psi_2(z;z_1,z_2,z_3) 
%+\wt{g}_{ii}(z_{12}) \Psi_2(z;z_2,z_1,z_3) \\ &
%+\wt{g}_{ii}(z_{12}) \wt{g}_{ii}(z_{13}) \Psi_2(z;z_2,z_3,z_1)
%+\wt{g}_{ii}(z_{23}) \Psi_2(z;z_1,z_3,z_2) \\ &
%+\wt{g}_{ii}(z_{13}) \wt{g}_{ii}(z_{23}) \Psi_2(z;z_3,z_1,z_2)
%+\wt{g}_{ii}(z_{12}) \wt{g}_{ii}(z_{13}) \wt{g}_{ii}(z_{23}) \Psi_2(z;z_3,z_2,z_1) = 0.
%\end{split}
%\end{align}
Similarly, setting 
\begin{align*}
 \Psi_2(z;z_1,z_2,z_3) \ceq \ 
&\wt{g}_{ij}(z_{10}) \wt{g}_{ij}(z_{20}) \wt{g}_{ij}(z_{30}) f_j(z)f_i(z_1)f_i(z_2)f_i(z_3) \\ 
&-h^3_{ij} \wt{g}_{ij}(z_{20}) \wt{g}_{ij}(z_{30})f_i(z_1)f_j(z)f_i(z_2)f_i(z_3) \\
&+h^3_{ij} \wt{g}_{ij}(z_{30}) f_i(z_1)f_i(z_2)f_j(z)f_i(z_3) - f_i(z_1)f_i(z_2)f_i(z_3)f_j(z)
\end{align*}
with $\wt{g}_{ij}(x) \ceq \ol{g}_{ij}(x)/g_{ij}(x)$,
the Serre-type relation for $f_i(z),f_j(z)$ is given by 
\begin{align}\label{eq:BCF:fS2}
\begin{split}
&\Psi_2(z;z_1,z_2,z_3) 
+\wt{g}_{ii}(z_{12}) \Psi_2(z;z_2,z_1,z_3) \\ &
+\wt{g}_{ii}(z_{12}) \wt{g}_{ii}(z_{13}) \Psi_2(z;z_2,z_3,z_1)
+\wt{g}_{ii}(z_{23}) \Psi_2(z;z_1,z_3,z_2) \\ &
+\wt{g}_{ii}(z_{13}) \wt{g}_{ii}(z_{23}) \Psi_2(z;z_3,z_1,z_2)
+\wt{g}_{ii}(z_{12}) \wt{g}_{ii}(z_{13}) \wt{g}_{ii}(z_{23}) \Psi_2(z;z_3,z_2,z_1) = 0.
\end{split}
\end{align}
\end{itemize}
\end{dfn}

\begin{rmk}\label{rmk:BCF:Serre}
A few comments on the definition are in order.
\begin{enumerate}
\item 
By the same discussion of \cref{rmk:E:U} \ref{i:rmk:E:U:d'}, the topological algebra $U_{q,p}(g,X_l)$ is independent, up to isomorphism, of the choice of a symmetrization of the Cartan matrix $A$.
\item
The Serre-type relation \eqref{eq:BCF:eS2} can be rewritten as
\[
 \sum_{\sigma \in \frS_3} (\sigma \Phi_2)(z;z_1,z_2,z_3) = 0,
\]
where the $\frS_3$-action on a function $\phi(z_1,z_2,z_3)$ is given by
\begin{align}\label{eq:BCF:eS-S3}
 (s_1 \phi)(z_1,z_2,z_3) = \wt{g}^*_{ii}(z_{12}) \phi(z_2,z_1,z_3), \quad 
 (s_2 \phi)(z_1,z_2,z_3) = \wt{g}^*_{ii}(z_{23}) \phi(z_1,z_3,z_2)
\end{align}
for the generators $s_1=(1,2)$ and $s_2=(2,3)$ of $\frS_3$.
Similarly, \eqref{eq:BCF:fS2} can be rewritten as
\[
 \sum_{\sigma \in \frS_3} (\sigma \Psi_2)(z;z_1,z_2,z_3) = 0
\]
where the $\frS_3$-action is given by \eqref{eq:BCF:eS-S3}, replacing $\wt{g}^*_{ii}$ by $\wt{g}_{ii}$.
\end{enumerate}
\end{rmk}

As in the case of simply-laced types (\cref{prp:U:ADE}), the Serre-type relations \eqref{eq:BCF:eS2} and \eqref{eq:BCF:fS2} are designed to recover those \eqref{eq:E:eS} and \eqref{eq:E:fS} of the elliptic algebras $U_{q,p}\bigl(\wh{\frg}(X_l)\bigr)$. More precisely, we have:

\begin{prp}\label{prp:U:BCF}
In \cref{dfn:U:BCF} of $U_{q,p}(g,X_l)$ for type $X_l=B_l,C_l$ or $F_4$, 
set the structure functions $g=\{g_{ij}(z;p) \mid i,j \in I\}$ as
\begin{align*}%\label{eq:BCF:gijE}
 g_{ij}(x;p) = \frac{G_{ij}^+(x;p)}{G_{ij}^-(x;p)} \ceq 
 \frac{q^{-b_{ij}} \theta(q^{b_{ij}}x;p)}{\theta(q^{-b_{ij}}x;p)},
\end{align*}
where $B=(b_{ij})_{i,j \in I}$ is the symmetrization $A=D B$ of the Cartan matrix $A=A(X_l)$. Then,  the map sending the generators to the corresponding elements gives an isomorphism of topological algebras
\[
 U_{q,p}(g,X_l) \cong U_{q,p}\bigl(\wh{g}(X_l)\bigr).
\]
\end{prp}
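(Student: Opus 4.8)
The plan is to follow the blueprint of \cref{prp:U:ADE} for the simply-laced case. First I would observe that the two topological algebras share the same generating set \eqref{eq:ADE:gen}, and that by \cref{dfn:U:BCF} the algebra $U_{q,p}(g,X_l)$ carries precisely the relations \eqref{eq:U:qc}--\eqref{eq:U:ef} together with the Serre-type relations. Invoking \cref{lem:E:pef}, \cref{lem:E:pp} and \cref{lem:E:ee}, the relations \eqref{eq:U:qc}--\eqref{eq:U:ef} match the corresponding relations of $U_{q,p}(\wh{\frg})$ in \cref{dfn:E:U} verbatim once $g_{ij}$ is specialized as in the statement, since the rewriting into $G^\pm$, $g$, $g^*$ form is insensitive to whether $X_l$ is simply-laced. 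Thus the entire content of the proposition reduces to the comparison of the Serre-type relations, which I would split into two cases according to the Cartan entry.

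For the pairs $(i,j)$ with $a_{ij}=-1$, the cubic relations \eqref{eq:U:eS} and \eqref{eq:U:fS} are formally identical to the simply-laced ones, so I would reuse the computation \eqref{eq:ADE:eS5}--\eqref{eq:ADE:eS1}: express each factor $\ol{g}/g^*$ through $\wtgm$ by the analogues of \eqref{eq:ADE:olg/g*} and \eqref{eq:ADE:olg/g}, substitute $h_{ij}=\bnm{2}{1}{i}$ (now supplied by \eqref{eq:B:hij2}, \eqref{eq:C:hij2}, \eqref{eq:F:hij2} of \cref{cor:ns:BCF} with the type-dependent $q_i=q^{d_i}$), and reorganize into \eqref{eq:E:eS} and \eqref{eq:E:fS} with $a=2$. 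The only difference from \cref{prp:U:ADE} is that $d_i$ may differ from $1$, but \cref{cor:ns:BCF} has already absorbed this into the binomial, so nothing new is required here.

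The crux is the single pair $(i,j)$ with $a_{ij}=-2$, where I must show the quartic relation \eqref{eq:BCF:eS2} collapses to \eqref{eq:E:eS} with $a=3$ (and \eqref{eq:BCF:fS2} to \eqref{eq:E:fS}). Using the $\frS_3$-orbit-sum form from \cref{rmk:BCF:Serre}, \eqref{eq:BCF:eS2} reads $\sum_{\sigma\in\frS_3}(\sigma\Phi_2)=0$, where the $\frS_3$-action carries the prefactors $\wt{g}^*_{ii}$. Under the specialization these $\wt{g}^*_{ii}=\ol{g}_{ii}/g^*_{ii}$ become ratios of the function $\wtgm^*$ exactly as in \eqref{eq:ADE:olg/g}, reproducing the outer self-interaction product of \eqref{eq:E:eS}. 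Meanwhile the four monomials inside $\Phi_2$ carry coefficients $1,-h^3_{ij},+h^3_{ij},-1$ together with the $\wt{g}^*_{ij}$ factors; rewriting the latter via \eqref{eq:ADE:olg/g*} and using $\bnm{3}{0}{i}=\bnm{3}{3}{i}=1$ and $\bnm{3}{1}{i}=\bnm{3}{2}{i}=h^3_{ij}$ from \eqref{eq:B:hij3}, \eqref{eq:C:hij3}, \eqref{eq:F:hij3} identifies these four monomials precisely with the inner sum $\sum_{s=0}^{3}(-1)^s\bnm{3}{s}{i}$ of \eqref{eq:E:eS}. The $f$-relation is handled identically with $p^*$ replaced by $p$ and $\wtgm^*$ by $\wtgm$, reducing \eqref{eq:BCF:fS2} to \eqref{eq:E:fS}.

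The main obstacle I anticipate is the bookkeeping of this quartic identity: unlike the cubic case with $6$ terms, here the $\frS_3$-symmetrization of the four-term $\Phi_2$ produces $24$ monomials, and one must insert the correct trivial $\wtgm^*$-fractions so that each monomial acquires a uniform prefactor and the whole expression factors as the double sum in \eqref{eq:E:eS}. I would organize this by fixing, for each $\sigma\in\frS_3$, the ordered triple $(z_{\sigma(1)},z_{\sigma(2)},z_{\sigma(3)})$ and verifying that the four $\Phi_2$-terms for that $\sigma$ match the $s=0,1,2,3$ terms of the inner sum; the consistency of the $\wt{g}^*_{ij}$-exponents with the permutation action is guaranteed by the Ding-Iohara condition $\ol{g}_{ij}(z^{-1})=\ol{g}_{ji}(z)^{-1}$, exactly as it guaranteed well-definedness of the $\frS_3$-action on $R_3$ in \cref{ss:ns:h}.
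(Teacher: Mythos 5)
Your proposal is correct and follows essentially the same route as the paper: reduce everything to the Serre-type relations, dispose of the $a_{ij}=-1$ cubic case by the argument of \cref{prp:U:ADE} together with the formulas for $h_{rs}$ in \cref{cor:ns:BCF}, and then match, for each $\sigma\in\frS_3$, the four monomials of $(\sigma\Phi_2)$ (with coefficients $1,-h^3_{ij},h^3_{ij},-1$ and the $\wt{g}^*$ factors rewritten as $\wtgm^*$-ratios) against the inner sum $\sum_{s=0}^3(-1)^s\bnm{3}{s}{i}$ of \eqref{eq:E:eS} with $a=3$, using $h^3_{ij}=\bnm{3}{1}{i}=\bnm{3}{2}{i}$. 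This term-by-term, permutation-by-permutation identification up to a common overall $\wtgm^*$-factor is exactly how the paper's proof proceeds.
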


\begin{proof}
As in the proof of \cref{prp:U:ADE}, the non-trivial point lies only in the Serre-type relations. For $(i,j) \in I^2$ with $a_{ij}=-1$, the cubic relations in $U_{q,p}(g,X_l)$ are equivalent to those of $U_{q,p}\bigl(\wh{g}(X_l)\bigr)$ by the argument of \cref{prp:U:ADE} and the formulas \eqref{eq:B:hij2}, \eqref{eq:C:hij2} and \eqref{eq:F:hij2} of $h_{rs}$ in \cref{cor:ns:BCF}. Thus, it is enough to show that the quartic relations \eqref{eq:BCF:eS2} and \eqref{eq:BCF:fS2} for $(i,j) \in I^2$ with $a_{ij}=-2$ are equal to \eqref{eq:E:eS} and \eqref{eq:E:fS}.

In the case $X_l=B_l$, we have $(i,j)=(l,l-1)$, and by the formula \eqref{eq:B:hij3} in \cref{cor:ns:BCF}, we have 
\[
 h^3_{ij} = \bnm{3}{1}{i} = \bnm{3}{2}{i}, 
\]
using \eqref{eq:E:bini} with $q_i=q^{d_i}=q^{1/2}$.
Then the first term $\Phi_2(z;z_1,z_2,z_3)$ of \eqref{eq:BCF:eS2} is equal to
\begin{align*}
  \Phi_2&(z;z_1,z_2,z_3) = 
  \frac{-1}{\wt{\gamma}^*(z_{23},z_{13},z_{12};q^2) \wt{\gamma}^*(z_{30},z_{20},z_{10};q^{-1})}
  \cdot \wt{\gamma}^*(z_{23},z_{13},z_{12};q^2)\Bigl( \\
& \wt{\gamma}^*(z_{30},z_{20},z_{10};q^{-1}) e_j(z  ) e_i(z_3) e_i(z_2) e_i(z_1) 
 -\bnm{3}{1}{i}
  \wt{\gamma}^*(z_{03},z_{20},z_{10};q^{-1}) e_i(z_3) e_j(z  ) e_i(z_2) e_i(z_1) \\
&+\bnm{3}{2}{i}
  \wt{\gamma}^*(z_{03},z_{02},z_{10};q^{-1}) e_i(z_3) e_i(z_2) e_j(z  ) e_i(z_1) 
 -\wt{\gamma}^*(z_{03},z_{02},z_{01};q^{-1}) e_i(z_3) e_i(z_2) e_i(z_1) e_j(z  ) \Bigr) \\
&=\frac{-1}{\wt{\gamma}^*(z_{23},z_{13},z_{12};q^2) \wt{\gamma}^*(z_{30},z_{20},z_{10};q^{-1})}
  \cdot \bigl(\text{the term in \eqref{eq:E:eS}, $a=3$ with $\sigma=(13)$}\bigr).
\end{align*}
Here we used the abbreviation $\wt{\gamma}^*(x,y,z;q) \ceq \wt{\gamma}(x;q,p^*) \wt{\gamma}(y;q,p^*) \wt{\gamma}(z;q,p^*)$ with $\wt{\gamma}$ in \eqref{eq:E:wtgam}.
A similar calculation yields that the second, third, fourth, fifth and sixth terms in \eqref{eq:BCF:eS2} are equal, up to the factor $-\wt{\gamma}^*(z_{23},z_{13},z_{12};q^2) \wt{\gamma}^*(z_{30},z_{20},z_{10};q^{-1})$, to the terms in \eqref{eq:E:eS}, $a=3$ with $\sigma=(132),(23),(123),(12)$ and $e$, respectively. Thus, \eqref{eq:BCF:eS2} is equivalent to \eqref{eq:E:eS}. In the same way, we can show the equivalence of \eqref{eq:BCF:fS2} and \eqref{eq:E:fS}.

Similar arguments work for the types $C_l$ and $F_4$. We omit the detail.
\end{proof}

The main statement of this \cref{ss:ns:BCF} is:

\begin{thm}\label{thm:BCF:Halgd}
$U_{q,p}(g,X_l)$ has a Hopf algebroid structure with $(\Delta,\ve,S)$ given by the formulas in \cref{ss:ADE:Halgd}.
\end{thm}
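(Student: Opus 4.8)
The plan is to follow verbatim the six-step scheme that produced \cref{thm:ADE:Halgd} in the simply-laced case, isolating the only genuinely new ingredient. Recall that the $H$-algebra structure (\cref{lem:U:Halg}), the well-definedness of $\Delta,\ve$ as topological $H$-algebra homomorphisms (\cref{lem:U:Delta}, \cref{lem:ADE:ve}), the coassociativity and counit axioms (\cref{prp:ADE:Halgd}), the well-definedness of the antipode $S$ (\cref{lem:U:S}), and the antipode axioms (\cref{thm:ADE:Halgd}) were all obtained from the generators $\clM(\wtH^*),q^{\pm c/2},d,K^\pm_i,e_i(z),f_i(z),\psi^\pm_i(z)$ and the defining relations \eqref{eq:U:qc}--\eqref{eq:U:ef}. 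Since for type $B_l,C_l,F_4$ the generators coincide with the simply-laced ones and the relations \eqref{eq:U:qc}--\eqref{eq:U:ef} are identical, the portions of these checks concerning \eqref{eq:U:qc}--\eqref{eq:U:ef}, together with the coassociativity, counit, and antipode axioms — which are verified solely on the generators and never invoke the Serre relations — transport word for word. Moreover $\ve$ sends $e_i(z)$ and $f_i(z)$ to $0$, so it annihilates both sides of every Serre-type relation and its well-definedness needs no Serre check. Thus the entire proof reduces to the compatibility of $\Delta$ and $S$ with the Serre-type relations.

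Next I would dispose of the Serre-type relations indexed by pairs $(i,j)$ with $a_{ij}=-1$: these are again the cubic relations \eqref{eq:U:eS} and \eqref{eq:U:fS}, so their compatibility with $\Delta$ and $S$ is literally the Serre computation of \cref{lem:U:Delta} (culminating in the vanishing of \eqref{eq:ADE:DS}) and its counterpart at the end of \cref{lem:U:S}. Only the quantum integer $h_{rs}$ changes, but by \eqref{eq:B:hij2}, \eqref{eq:C:hij2}, \eqref{eq:F:hij2} of \cref{cor:ns:BCF} it still equals the value forced by compatibility. Hence everything reduces to checking that $\Delta$ and $S$ respect the quartic relations \eqref{eq:BCF:eS2} and \eqref{eq:BCF:fS2} attached to the unique pair with $a_{ij}=-2$.

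For $\Delta$, I would apply it to the left-hand side of \eqref{eq:BCF:eS2}, substitute $\Delta(e_i(z))=e_i(z)\totimes 1+\psi^+_i(q^{c_1/2}z)\totimes e_i(q^{c_1}z)$ for each of the three $e_i$ currents and for $e_j$, and use \eqref{eq:U:pp}--\eqref{eq:U:pme} to pull every $\psi^+$ factor to the left. The $2^4$ resulting monomials group by tensor bidegree, i.e.\ by how many of the four currents occupy the right leg; each group factors as a scalar function times a left-leg expression $\totimes$ a right-leg expression, exactly as in the passage to \eqref{eq:ADE:DS}. The groups in which one leg carries a full $\frS_3$-symmetrized copy of $\Phi_2$ vanish by \eqref{eq:BCF:eS2} holding in that leg, while the surviving mixed groups cancel precisely because of the strange formula \eqref{eq:ns:h3} defining $h^3_{ij}$; this is the quartic analogue of the uniqueness observation in \cref{rmk:ADE:h}. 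The $f$-relation \eqref{eq:BCF:fS2} is handled identically, with $p^*$ replaced by $p$ and the legs interchanged. For $S$, I would argue as in the Serre part of \cref{lem:U:S}: since $S$ reverses products, applying it to the six $\frS_3$-summands of \eqref{eq:BCF:eS2} and reordering the resulting $\psi^+$-conjugated currents via \eqref{eq:U:pp} and \eqref{eq:U:ppe} carries them to the six summands of the relation read in the opposite order, which by the symmetry described in \cref{rmk:BCF:Serre} is again \eqref{eq:BCF:eS2}.

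The main obstacle is the mixed-term cancellation for $\Delta$ on the quartic relation: unlike the cubic case it involves the double products $\wt{g}^*_{ii}(z_{mn})$ of \cref{dfn:U:BCF} multiplying the $\psi^+$-conjugation scalars produced when moving $\psi^+_i$ past $e_i$ and $e_j$, and the cancellation hinges on the exact coefficients in \eqref{eq:ns:h3} (equivalently on \eqref{eq:ns:hij3}). This is the step that was located with computer algebra, and verifying it amounts to a finite but lengthy rational-function identity in $z,z_1,z_2,z_3$; I would organize the bookkeeping by the $\frS_3$-orbit sums $\wh{(\cdot)}$ of \cref{ss:ns:h}, so that the identity collapses to the defining expression of $h^3_{ij}$.
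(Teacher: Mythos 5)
Your proposal is correct, and its skeleton is the paper's: the paper's proof likewise opens by declaring that only the compatibility of $\Delta$, $\ve$, $S$ with the quartic relations \eqref{eq:BCF:eS2}, \eqref{eq:BCF:fS2} needs checking, everything else (including the cubic relations for $a_{ij}=-1$) being verbatim the argument of \cref{ss:ADE:Halgd}, and $\ve$, $S$ being dealt with exactly as you say. The divergence is in how the crucial $\Delta$-computation is organized. You use the classical bidegree decomposition familiar from the non-dynamical Hopf-algebra setting: extreme components vanish because one leg carries the Serre relation, and only the mixed components invoke \eqref{eq:ns:h3}. The paper instead sorts all $6\times4$ terms to the single common order $\Delta(e_j(z))\Delta(e_i(z_3))\Delta(e_i(z_2))\Delta(e_i(z_1))$, using \eqref{eq:U:pp} and \eqref{eq:U:ppe}; because the prefactors were designed as $\wt{g}^*_{ij}=\ol{g}_{ij}/g^*_{ij}$, every $g^*$ produced by the sorting is absorbed, and the whole image collapses to $\bigl((\wh{g^{123}_{000}}-\wh{1})-h^3_{ij}(\wh{g^{23}_{00}}-\wh{g^3_0})\bigr)\totimes 1$ times that one product, which is zero by the very definition \eqref{eq:ns:h3} of $h^3_{ij}$ — a single identity, no case division, and this is also what makes the uniqueness statement \cref{rmk:BCF:h} immediate. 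Your route is equally valid (the $\clQ$-bigrading of $U\totimes U$ even forces each bidegree component to vanish separately), but in the dynamical setting it needs one ingredient you do not mention: the scalars $\wt{g}^*$, $g^*$, $g$ are moment-map images, so $\Delta$ of the prefactors lands in the \emph{left} leg only (by \eqref{eq:U:Delta0}), the $\psi^+_i$'s do not commute but only $g^*/g$-commute by \eqref{eq:U:pp}, and one must push scalars across $\totimes$ via the identification $\mu_r(F)a\totimes b=a\totimes\mu_l(F)b$ to see that, e.g., the all-right component really equals $\psi^+\dotsm\psi^+\totimes(\text{Serre relation in the right leg})$. That cross-leg trading is exactly what the paper's remark that the exchanges of $\psi^+$'s in the left leg and of $e_i$'s in the right leg ``occur simultaneously'' encodes; without it, your claim that an extreme component carries a full $\frS_3$-symmetrized copy of $\Phi_2$ in one leg is not literally true as written, though it becomes true after that bookkeeping.
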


\begin{proof}
We only check that the morphisms $\Delta,\ve$ and $S$ are compatible with \eqref{eq:BCF:eS2} and \eqref{eq:BCF:fS2}. The rest of the proof is completely same as in \cref{ss:ADE:Halgd}. For $\Delta$, we have 
\begin{align*} 
&\Delta(e_i(z_1))\Delta(e_i(z_2))\Delta(e_i(z_3))\Delta(e_j(z)) \\
&=(e_i(z_1) \totimes 1+ \psi^+_i(q^{c_1/2}z_1) \totimes e_i(q^{c_1}z_1))
  (e_i(z_2) \totimes 1+ \psi^+_i(q^{c_1/2}z_2) \totimes e_i(q^{c_1}z_2)) \\
& \quad \cdot
  (e_i(z_3) \totimes 1+ \psi^+_i(q^{c_1/2}z_3) \totimes e_i(q^{c_1}z_3))
  (e_j(z  ) \totimes 1+ \psi^+_j(q^{c_1/2}z  ) \totimes e_j(q^{c_1}z)). 
\end{align*}
We expand it and exchange each term with \eqref{eq:U:pp} and \eqref{eq:U:ppe}. Note that the exchange between two $\psi^+$ in LHS and the exchange between two $e_i$ in RHS of the tensor product occur simultaneously. Then we find 
\begin{align*}
&\Delta(e_i(z_1))\Delta(e_i(z_2))\Delta(e_i(z_3))\Delta(e_j(z)) \\
&=\Bigl(\bigl(g^*_{ij}(z_{10}) g^*_{ij}(z_{20}) g^*_{ij}(z_{30}) 
              g^*_{ii}(z_{12}) g^*_{ii}(z_{13}) g^*_{ii}(z_{23}) \bigr) \totimes 1 \Bigr)
  \Delta(e_j(z)) \Delta(e_i(z_3)) \Delta(e_i(z_2)) \Delta(e_i(z_1)). 
\end{align*}
By similar calculation, we can check that each $g^*$ in \eqref{eq:BCF:eS2} are canceled by sorting $\Delta(e_i(z_k))$ and $\Delta(e_j(z))$ in the order $\Delta(e_j(z))\Delta(e_i(z_3))\Delta(e_i(z_2))\Delta(e_i(z_1))$. Thus we obtain 
\begin{align}\label{eq:BCF:DE-hij3}
 \Delta(\ref{eq:BCF:eS2}) = 
 \Bigl(\bigl((\wh{g^{123}_{000}}-\wh{1})-h^3_{ij}(\wh{g^{23}_{00}}-\wh{g^3_0})\bigr) 
       \totimes 1\Bigr)
 \Delta(e_j(z)) \Delta(e_i(z_3)) \Delta(e_i(z_2)) \Delta(e_i(z_1)) = 0. 
\end{align}
Similarly, for \ref{eq:BCF:fS2}, we have 
\begin{align}\label{eq:BCF:DF-hij3}
\Delta(\ref{eq:BCF:fS2})
=&\Bigl(1 \totimes \bigl((\wh{g^{123}_{000}}-\wh{1})-h^3_{ij}(\wh{g^{23}_{00}}-\wh{g^3_0})\bigr) \Bigr)
              \Delta(f_i(z_1)) \Delta(f_i(z_2)) \Delta(f_i(z_3)) \Delta(f_j(z))=0.
\end{align}
The compatibility with $\ve$ is clear. For the antipode $S$, it can be checked in the same manner as in \cref{lem:U:S}. 
\end{proof}

\begin{rmk}\label{rmk:BCF:h}
As in the case of the function $h_{ij}$ (\cref{rmk:ADE:h}), the function $h^3_{ij}$ is uniquely determined by the vanishing condition of the Drinfeld-type comultiplication \eqref{eq:BCF:DE-hij3} and \eqref{eq:BCF:DF-hij3} of the Serre-type relation. %the principles proposed in \cref{ss:ns:h}. The compatibility with the Drinfeld coproduct is discussed in the proof of \cref{thm:BCF:Halgd}. 
\end{rmk}

%%%%%%%%%%%%%%%%%%%%%%%%%%%%%%%%%%%%%%%%%%%%%%%%%%%%%%%%%%%%%%%%%%%%%%%%%%%%%%%%%%%%%%%%%%
\subsection{Type \texorpdfstring{$G_2$}{G2}}\label{ss:ns:G}

Finally we construct the dynamical Ding-Iohara algebroid of type $G_2$. Recall that the Cartan matrix of type $G_2$ is $A(G_2)=(a_{ij})_{i,j \in I} = \begin{psm} 2 & {-1} \\ {-3} & 2 \end{psm}$. We take the symmetrization given in \cref{cor:ns:G}, i.e., 
\[
 D(G_2)^{-1}= \diag(3,1), \quad B(G_2) = \begin{psm} 6 & {-3} \\ {-3} & 2 \end{psm}.
\] 
In this case, the Serre-type relations for $(i,j)=(2,1)$ are quite complicated in the form \eqref{eq:BCF:eS2} and \eqref{eq:BCF:fS2}. However, using the symmetric group action as in \cref{rmk:BCF:Serre}, we can display them in compact form as follows.

\begin{dfn}\label{dfn:U:G}
Let $A(G_2)$ be the Cartan matrix of type $G_2$, and take a complex parameter $q$ with $0<\abs{q}<1$, a formal parameter $p$, and the structure functions $g=\{g_{i,j} \mid i,j \in I\}$. We define  $U_{q,p}(g,G_2)$ to be the topological algebra over $\bbC\dbr{p}$ with the same generators $\clM(\wtH^*)$, $q^{\pm c/2}$, $K^{\pm}_i$, $e_{i,m}$, $f_{i,m}$, $\psi^{\pm}_{i,m}$ as in \eqref{eq:ADE:gen}, and with the same relations \eqref{eq:U:qc}--\eqref{eq:U:ef} and the following Serre-type relations.
\begin{itemize}
\item
For $(i,j)=(1,2)$, we have the Serre-type relations \eqref{eq:U:eS} and \eqref{eq:U:fS}.
\item
For $(i,j)=(2,1)$, we set 
\begin{align*}
 \Phi_3(z;z_1,z_2,z_3,z_4) \ceq \ 
 \wt{g}^*_{21}(z_{10}) \wt{g}^*_{21}(z_{20}) \wt{g}^*_{21}(z_{30}) \wt{g}^*_{21}(z_{10}) 
  e_2(z_4)e_2(z_3)e_2(z_2)e_2(z_1)e_1(z) \\ 
-h^{4,1}_{21} \wt{g}^*_{21}(z_{20}) \wt{g}^*_{21}(z_{30}) \wt{g}^*_{21}(z_{40})
  e_2(z_4)e_2(z_3)e_2(z_2)e_1(z)e_2(z_1) \\ 
+h^{4,2}_{21} \wt{g}^*_{21}(z_{30}) \wt{g}^*_{21}(z_{40}) 
  e_2(z_4)e_2(z_3)e_1(z)e_2(z_1)e_1(z) \\ 
-h^{4,1}_{21} \wt{g}^*_{21}(z_{40}) e_2(z_4)e_1(z)e_2(z_3)e_2(z_2)e_2(z_1) \\
+e_1(z)e_2(z_4)e_2(z_3)e_2(z_2)e_2(z_1),
\end{align*}
where we used $\wt{g}^*_{ij}(x) \ceq \ol{g}_{ij}(x)/g^*_{ij}(x)$, $z_{m0} \ceq z_m/z_0$ and the functions in \eqref{eq:ns:h4}:  
\begin{align}\label{eq:G:h4}
\begin{split}
 h^{4,1}_{ij} \ceq &\bigl(\wh{1}+\wh{g}^{1234}_{0000}+\wh{g}^{34}_{00} \cdot h^{4,2}_{ij}\bigr) / 
 \bigl(\wh{g}^4_0+\wh{g}^{234}_{000}\bigr), \\
 h^{4,2}_{ij} \ceq &\bigl(
 \tfrac{1}{2}(\wh{g}_{00004}^{12341}+\wh{g}_{00004}^{12342}+\wh{g}_4^1+\wh{g}_4^2)
-\tfrac{2}{3} \wh{g}_{00}^{13}+\tfrac{5}{3}(\wh{g}_{00}^{14}-\wh{g}_{00}^{23})
-\tfrac{4}{3} \wh{g}_{00}^{34}+\wh{g}_{003}^{121}+\tfrac{1}{3}\wh{g}_{004}^{122} + J.
\end{split}
\end{align}
The symbol $+J$ in the last part means that we can add any element of the ideal $J$ defined right after \eqref{eq:ns:h4}. Then the Serre-type relation for $e_i(z),e_j(z)$ is 
\begin{align}\label{eq:G:eS3}
 \sum_{\sigma \in \frS_4} (\sigma \Phi_3)(z;z_1,z_2,z_3,z_4) = 0
\end{align}
where the $\frS_4$-action on a function $\phi(z_1,z_2,z_3,z_4)$ is given by 
\begin{align}\label{eq:G:S4a}
 (s_r \phi) (z_1,z_2,z_3,z_4) \ceq 
 \wt{g}^*_{22}(z_{r,r+1}) \phi(z_{s_r(1)},z_{s_r(2)},z_{s_r(3)},z_{s_r(4)})
 \quad (r=1,2,3)
\end{align}
for the generators $s_r=(r,r+1)$ of $\frS_4$. 

Similarly, setting $\wt{g}_{21}(x) \ceq \ol{g}_{21}(x)/g_{21}(x)$ and 
%\begin{align*}
%\Psi_3(z;z_1,z_2,z_3,z_4) \ceq 
%\bigl(\Phi_3(z;z_1,z_2,z_3,z_4), \ 
%\text{replacing $\wt{g}^*_{21}$ by $\wt{g}_{21}$ and $f_k$ by $e_k$}\bigr),
%\end{align*}
\begin{align*}
 \Psi_3(z;z_1,z_2,z_3,z_4) \ceq 
 \wt{g}_{21}(z_{10})\wt{g}_{21}(z_{20})\wt{g}_{21}(z_{30})\wt{g}_{21}(z_{40})
 f_1(z)f_2(z_1)f_2(z_2)f_2(z_3)f_2(z_4) \\
-h^{4,1}_{21}\wt{g}_{21}(z_{20})\wt{g}_{21}(z_{30})\wt{g}_{21}(z_{40})
 f_2(z_1)f_1(z)f_2(z_2)f_2(z_3)f_2(z_4) \\
+h^{4,2}_{21}\wt{g}_{21}(z_{30})\wt{g}_{21}(z_{40})
 f_2(z_1)f_2(z_2)f_1(z)f_2(z_3)f_2(z_4) \\
-h^{4,1}_{21}\wt{g}_{21}(z_{40})f_2(z_1)f_2(z_2)f_2(z_3)f_1(z)f_2(z_4) \\
+f_2(z_1)f_2(z_2)f_2(z_3)f_2(z_4)f_1(z),
\end{align*}
%\begin{align*}
%&\Psi_3(z;z_1,z_2,z_3,z_4) \ceq 
% \frac{g_{21}(\frac{z_1}{z})}{\ol{g}_{21}(\frac{z_1}{z})}
% \frac{g_{21}(\frac{z_2}{z})}{\ol{g}_{21}(\frac{z_2}{z})}
% \frac{g_{21}(\frac{z_3}{z})}{\ol{g}_{21}(\frac{z_3}{z})}
% \frac{g_{21}(\frac{z_4}{z})}{\ol{g}_{21}(\frac{z_4}{z})}
% f_2(z_4)f_2(z_3)f_2(z_2)f_2(z_1)f_1(z) \\ 
%&-h^4_{21}
% \frac{g_{21}(\frac{z_2}{z})}{\ol{g}_{21}(\frac{z_2}{z})}
% \frac{g_{21}(\frac{z_3}{z})}{\ol{g}_{21}(\frac{z_3}{z})}
% \frac{g_{21}(\frac{z_4}{z})}{\ol{g}_{21}(\frac{z_4}{z})}
% f_2(z_4)f_2(z_3)f_2(z_2)f_1(z)f_2(z_1) \\
%&+h^3_{21}
% \frac{g_{21}(\frac{z_3}{z})}{\ol{g}_{21}(\frac{z_3}{z})}
% \frac{g_{21}(\frac{z_4}{z})}{\ol{g}_{21}(\frac{z_4}{z})}
% f_2(z_4)f_2(z_3)f_1(z)f_2(z_1)f_1(z) \\
%&-h^4_{21}
% \frac{g_{21}(\frac{z_4}{z})}{\ol{g}_{21}(\frac{z_4}{z})}
% f_2(z_4)f_1(z)f_2(z_3)f_2(z_2)f_2(z_1)
%+f_1(z)f_2(z_4)f_2(z_3)f_2(z_2)f_2(z_1),
%\end{align*}
the Serre-type relation for $f_i(z),f_j(z)$ is given by
\begin{align}\label{eq:G:fS3}
 \sum_{\sigma \in \frS_4} (\sigma \Psi_3)(z;z_1,z_2,z_3,z_4) = 0,
\end{align}
where the $\frS_4$-action is given by \eqref{eq:G:S4a}, replacing $\wt{g}^*_{22}(x)$ by $\wt{g}_{22}(x)$.
\end{itemize}
\end{dfn}

\begin{rmk}
By the same discussion of \cref{rmk:E:U} \ref{i:rmk:E:U:d'}, the topological algebra $U_{q,p}(g,X_l)$ is independent, up to isomorphism, of the choice of a symmetrization of the Cartan matrix $A$.
\end{rmk}

Similarly as in the simply-laced types (\cref{prp:U:ADE}) and the $BCF$ types (\cref{prp:U:BCF}), the specialization \eqref{eq:U:gijE} of the structure functions reduces $U_{q,p}(g,G_2)$ to the elliptic algebra $U_{q,p}\bigl(\wh{\frg}(G_2)\bigr)$.

\begin{prp}\label{prp:U:G}
In \cref{dfn:U:G} of $U_{q,p}(g,G_2)$, 
set the structure functions $g=\{g_{ij}(x;p) \mid i,j \in \{1, 2\}\}$ as
\begin{align*}%\label{eq:G:gijE}
 g_{ij}(x;p) = \frac{G_{ij}^+(x;p)}{G_{ij}^-(x;p)} \ceq 
 \frac{q^{-b_{ij}} \theta(q^{b_{ij}}x;p)}{\theta(q^{-b_{ij}}x;p)},
\end{align*}
where $B=(b_{ij})_{i,j \in \{1, 2\}}= \begin{psm} 6 & {-3} \\ {-3} & 2 \end{psm}$ is the symmetrization of the Cartan matrix $G_2$ we are taking. Then we have an isomorphism of topological algebras
\[
 U_{q,p}(g,X_l) \cong U_{q,p}(\wh{g}).
\]
\end{prp}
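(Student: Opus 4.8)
The plan is to follow verbatim the strategy of \cref{prp:U:ADE} and \cref{prp:U:BCF}. The two topological algebras have the same generators, and by \cref{lem:E:pef}, \cref{lem:E:pp} and \cref{lem:E:ee} all the relations \eqref{eq:U:qc}--\eqref{eq:U:ef} of $U_{q,p}(g,G_2)$ become, under the specialization \eqref{eq:U:gijE}, the corresponding relations of $U_{q,p}(\wh{\frg}(G_2))$ in \cref{dfn:E:U}. Thus only the Serre-type relations require checking, and there are two families. For the pair $(i,j)=(1,2)$ with $a_{12}=-1$ we are in the case $a\ceq 1-a_{12}=2$, and the relations \eqref{eq:U:eS}, \eqref{eq:U:fS} reduce to \eqref{eq:E:eS}, \eqref{eq:E:fS} by the computation of \cref{prp:U:ADE} together with the equality $h_{12}=\bnm{2}{1}{1}$ recorded in \cref{cor:ns:G}.

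The substantial case is the pair $(i,j)=(2,1)$ with $a_{21}=-3$, where \eqref{eq:G:eS3} and \eqref{eq:G:fS3} must be matched with \eqref{eq:E:eS} and \eqref{eq:E:fS} for $a\ceq 1-a_{21}=4$. First I would invoke \cref{cor:ns:G}, which asserts that under \eqref{eq:U:gijE} one has $q_i=q_2=q$, the ideal $J$ vanishes, and
\begin{align*}
 h^{4,1}_{21} = [4]_2 = \bnm{4}{1}{2} = \bnm{4}{3}{2}, \qquad
 h^{4,2}_{21} = \bnm{4}{2}{2}.
\end{align*}
Since also $\bnm{4}{0}{2}=\bnm{4}{4}{2}=1$, the five coefficients $1,-h^{4,1}_{21},h^{4,2}_{21},-h^{4,1}_{21},1$ occurring in $\Phi_3$ are exactly $(-1)^s\bnm{4}{s}{2}$ for $s=0,\dotsc,4$, i.e.\ the inner ($s$-)sum of \eqref{eq:E:eS}. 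The vanishing of $J$ guarantees that the freedom ``$+J$'' in the definition \eqref{eq:G:h4} of $h^{4,2}_{21}$ is immaterial here.

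Next I would rewrite each factor $\wt{g}^*_{21}(z_{m0})=\ol{g}_{21}(z_{m0})/g^*_{21}(z_{m0})$ and $\wt{g}^*_{22}(z_{mn})=\ol{g}_{22}(z_{mn})/g^*_{22}(z_{mn})$ as quotients of $\wtgm$-functions, exactly by the identities \eqref{eq:ADE:olg/g*} and \eqref{eq:ADE:olg/g} used in \cref{prp:U:ADE}. After inserting the requisite trivial fractions, the single term $\Phi_3(z;z_1,z_2,z_3,z_4)$ becomes a fixed nonzero prefactor — a product of $\wtgm^*$-functions — times exactly one summand of \eqref{eq:E:eS} for $a=4$, namely the one indexed by the order-reversing permutation, precisely as in the opening display of the proof of \cref{prp:U:BCF}. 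The $\frS_4$-action \eqref{eq:G:S4a}, whose generators $s_r$ multiply by $\wt{g}^*_{22}(z_{r,r+1})$, then reproduces under this rewriting the ordered products $\prod_{1\le m<n\le 4}\wtgm(\tfrac{z_{\sigma(n)}}{z_{\sigma(m)}};q^2,p^*)$ appearing in \eqref{eq:E:eS}, so that the orbit sum $\sum_{\sigma\in\frS_4}(\sigma\Phi_3)$ equals a nonzero common factor times the left-hand side of \eqref{eq:E:eS}. Hence \eqref{eq:G:eS3} is equivalent to \eqref{eq:E:eS}; running the same argument with $\wt{g}_{ij}$ and $p$ in place of $\wt{g}^*_{ij}$ and $p^*$ gives the equivalence of \eqref{eq:G:fS3} with \eqref{eq:E:fS}. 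Combining the two families yields the claimed isomorphism.

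The main obstacle I anticipate is the bookkeeping in the middle step: verifying that the $\frS_4$-prefactors produced by \eqref{eq:G:S4a} assemble, after the $\wtgm$-rewriting, into exactly the symmetric product of \eqref{eq:E:eS} with no spurious factors, and that each of the five $s$-terms lands in the correct position carrying the correct $q$-binomial coefficient. This is a finite rational-function identity, but one genuinely longer than \eqref{eq:ADE:eS5}--\eqref{eq:ADE:eS1}; once \cref{cor:ns:G} supplies the values $h^{4,1}_{21}$, $h^{4,2}_{21}$ and $J=0$, the remaining verification is mechanical and is most safely confirmed with the aid of a computer algebra system.
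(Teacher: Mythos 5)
Your proposal is correct and follows essentially the same route as the paper: the paper's own proof of \cref{prp:U:G} simply says ``by the same argument as in the $BCF$ types (\cref{prp:U:BCF}), but using \cref{cor:ns:G} instead of \cref{cor:ns:BCF}'', and your write-up is a faithful expansion of exactly that argument — non-Serre relations via \cref{lem:E:pef,lem:E:pp,lem:E:ee}, the cubic relations via \cref{prp:U:ADE} with $h_{12}=\bnm{2}{1}{1}$, and the quartic relations via the $\wtgm$-rewriting of \cref{prp:U:BCF} with the values $h^{4,1}_{21}=[4]_2$, $h^{4,2}_{21}=\bnm{4}{2}{2}$ and $J=0$ supplied by \cref{cor:ns:G}.
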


\begin{proof}
By the same argument in the $BCF$ types (\cref{prp:U:BCF}), but now using \cref{cor:ns:G} instead of \cref{cor:ns:BCF}.
\end{proof}

\begin{thm}\label{thm:G:Halgd}
$U_{q,p}(g,G_2)$ has a Hopf algebroid structure with $(\Delta,\ve,S)$ defined in \cref{ss:ADE:Halgd}. 
\end{thm}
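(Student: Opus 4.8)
The plan is to follow the strategy of \cref{thm:BCF:Halgd} verbatim. The topological $H$-algebra structure (\cref{lem:U:Halg}), the well-definedness of $\Delta$ on the quadratic block \eqref{eq:U:qc}--\eqref{eq:U:ef}, the coassociativity and counit axioms (\cref{prp:ADE:Halgd}), and the antipode axioms (\cref{lem:U:S} and \cref{thm:ADE:Halgd}) all carry over without change, since these relations and the formulas for $\Delta,\ve,S$ on the generators are literally the same as in the simply-laced case treated in \cref{ss:ADE:Halgd}. Hence the only genuinely new point is to verify that $\Delta$, $\ve$ and $S$ respect the quintic Serre-type relations \eqref{eq:G:eS3} and \eqref{eq:G:fS3} for the pair $(i,j)=(2,1)$.

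For the counit this is immediate, since $\ve(e_i(z))=\ve(f_i(z))=0$ kills every monomial appearing in $\Phi_3$ and $\Psi_3$. For the antipode, the computation is the exact analogue of the Serre check at the end of \cref{lem:U:S}: applying $S$ reverses the order of the five currents and replaces each $e_2(z_m)$ by $-\psi_2^+(q^{-c/2}z_m)^{-1}e_2(q^{-c}z_m)$ (and $e_1(z)$ likewise), and sorting the resulting products into a canonical order using \eqref{eq:U:pp} and \eqref{eq:U:ppe} reproduces the $\frS_4$-twisted combination of terms constituting \eqref{eq:G:eS3}. As in \cref{lem:U:S}, I would record the five sorted products and match coefficients; the same computation in the second tensor slot handles \eqref{eq:G:fS3}.

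The substantive step is the compatibility of $\Delta$ with \eqref{eq:G:eS3}. Following the pattern of \eqref{eq:BCF:DE-hij3}, I would expand $\Delta(e_2(z_1))\Delta(e_2(z_2))\Delta(e_2(z_3))\Delta(e_2(z_4))\Delta(e_1(z))$ using \eqref{eq:U:Delta1}, and sort the currents in each tensor slot into the order $e_1 e_2 e_2 e_2 e_2$ by repeated use of \eqref{eq:U:pp} and \eqref{eq:U:ppe}. Exactly as in the $BCF$ case, the exchanges of two $\psi^+$'s in the left factor and of two $e_2$'s in the right factor occur simultaneously, so the factors $g^*_{22}(z_{mn})$ and $g^*_{21}(z_{m0})$ produced by sorting cancel precisely against the $\wt{g}^*_{22}$ and $\wt{g}^*_{21}$ prefactors built into $\Phi_3$ and the $\frS_4$-action \eqref{eq:G:S4a}. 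After this cancellation, the image under $\Delta$ of the left-hand side of \eqref{eq:G:eS3} equals
\[
 \bigl(\Xi \totimes 1\bigr)\,
 \Delta(e_1(z))\Delta(e_2(z_4))\Delta(e_2(z_3))\Delta(e_2(z_2))\Delta(e_2(z_1)),
\]
where $\Xi$ is an explicit $\bbC$-linear combination of the $\frS_4$-orbit sums $\wh{f}$ ($f \in R_4$, see \eqref{eq:ns:R4}) whose coefficients involve $h^{4,1}_{21}$ and $h^{4,2}_{21}$.

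I expect the main obstacle to be the bookkeeping that shows $\Xi=0$. Unlike the $B,C,F$ case, where $\Xi$ was the single scalar $(\wh{g^{123}_{000}}-\wh{1})-h^3_{ij}(\wh{g^{23}_{00}}-\wh{g^3_0})$ annihilated by the very definition \eqref{eq:ns:h3} of $h^3_{ij}$, here $\Xi$ is a combination of several orbit sums. The definitions \eqref{eq:G:h4} of $h^{4,1}_{21}$ and $h^{4,2}_{21}$ are engineered, via \eqref{eq:ns:G-4q} and \eqref{eq:ns:G-42q}, so that the ``diagonal'' part of $\Xi$ vanishes, while the remaining part of $\Xi$ is a $\bbC$-linear combination of the right-hand sides of \eqref{eq:ns:G01}, \eqref{eq:ns:G02} and \eqref{eq:ns:G03}, i.e.\ an element of the ideal $J$. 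This is exactly the freedom recorded by the term $+J$ in \eqref{eq:G:h4}: altering $h^{4,2}_{21}$ modulo $J$ leaves $\Xi=0$ intact (compare \cref{rmk:G:h}). Thus $\Xi=0$, so $\Delta$ respects \eqref{eq:G:eS3}, and the identical computation with \eqref{eq:U:pp} and \eqref{eq:U:ppe} applied in the second tensor slot gives the compatibility with \eqref{eq:G:fS3}. Verifying that $\Xi$ really decomposes in this way is the delicate point, which (as for the strange formulas in \cref{cor:ns:G}) I would carry out with the aid of a computer algebra system.
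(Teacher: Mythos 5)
Your skeleton matches the paper's proof: everything reduces to checking compatibility of $\Delta$, $\ve$, $S$ with the quintic Serre-type relations \eqref{eq:G:eS3} and \eqref{eq:G:fS3}, the counit and antipode checks go exactly as you describe, and for $\Delta$ the sorting argument of \cref{thm:BCF:Halgd} extracts a scalar factor $\Xi \totimes 1$ in front of the sorted product $\Delta(e_1(z))\Delta(e_2(z_4))\dotsm\Delta(e_2(z_1))$. However, your justification of the crucial vanishing $\Xi=0$ is a genuine gap, and as stated it would actually fail.

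You claim that, unlike the $B,C,F$ case, $\Xi$ splits into a ``diagonal'' part killed by \eqref{eq:ns:G-4q}--\eqref{eq:ns:G-42q} plus a remainder lying in the ideal $J$, to be verified by computer algebra. Taken literally this does not prove the theorem: for \emph{general} structure functions $g$, the generators of $J$ (the right-hand sides of \eqref{eq:ns:G01}--\eqref{eq:ns:G03}, formed from the general $\ol{g}_{ij}$) are nonzero — they vanish only under the theta-function specialization of \cref{cor:ns:G}, where $J=0$. So if $\Xi$ really had a nonzero component in $J$, then $\Delta$ would fail to respect \eqref{eq:G:eS3} precisely for the general $U_{q,p}(g,G_2)$ that the theorem concerns. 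What actually happens, and what the paper does, is exactly parallel to the $B,C,F$ case, contrary to your claim that $G_2$ is different here: since $h^{4,1}_{21}$ and $h^{4,2}_{21}$ are ratios of orbit sums they are invariant under the twisted $\frS_4$-action, so the sorting collects everything into the single scalar
\[
 \Xi = \wh{1}+\wh{g^{1234}_{0000}}
 -h^{4,1}_{21}\bigl(\wh{g^4_0}+\wh{g^{234}_{000}}\bigr)
 +h^{4,2}_{21}\,\wh{g^{34}_{00}},
\]
and this vanishes identically by the very definition \eqref{eq:G:h4} of $h^{4,1}_{21}$ as $\bigl(\wh{1}+\wh{g}^{1234}_{0000}+\wh{g}^{34}_{00}\,h^{4,2}_{21}\bigr)/\bigl(\wh{g}^4_0+\wh{g}^{234}_{000}\bigr)$. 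This holds for \emph{any} representative of the $J$-coset of $h^{4,2}_{21}$: a shift $h^{4,2}_{21} \mapsto h^{4,2}_{21}+j$ shifts $h^{4,1}_{21}$ by $j\,\wh{g^{34}_{00}}/\bigl(\wh{g^4_0}+\wh{g^{234}_{000}}\bigr)$, and the two shifts cancel in $\Xi$. No computer algebra and no decomposition involving $J$ enter at this step; the ideal $J$ and the strange formulas \eqref{eq:ns:G01}--\eqref{eq:ns:G03} matter only for the specialization statements (\cref{cor:ns:G}, \cref{prp:U:G}), where one needs $h^{4,1}_{21}$ and $h^{4,2}_{21}$ to reduce to $q$-binomials. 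This is also the content of \cref{rmk:G:h}: compatibility with $\Delta$ pins down only the single linear relation between $h^{4,1}_{21}$ and $h^{4,2}_{21}$ that is built into \eqref{eq:G:h4}, nothing more.
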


\begin{proof}
The proof is essentially the same as \cref{thm:BCF:Halgd}. Let us only explain that the comultiplication $\Delta$ is compatible with \eqref{eq:G:eS3} and \eqref{eq:G:fS3}. By a similar calculation as loc.\ cit., we find 
\begin{align*}
&\Delta(e_2(z_1))\Delta(e_2(z_2))\Delta(e_2(z_3))\Delta(e_2(z_4))\Delta(e_1(z)) \\
&=\Bigl(\bigl(g^*_{21}(z_{10}) g^*_{21}(z_{20}) g^*_{21}(z_{30}) g^*_{21}(z_{40})
              g^*_{22}(z_{12}) g^*_{22}(z_{13}) g^*_{22}(z_{14}) g^*_{22}(z_{23})
              g^*_{22}(z_{24}) g^*_{22}(z_{34}) \bigr) \totimes 1\Bigr)\\
& \qquad 
 \Delta(e_1(z))\Delta(e_2(z_4))\Delta(e_2(z_3))\Delta(e_2(z_2))\Delta(e_2(z_1)), 
\end{align*}
and so on. With these equations and the $\frS_4$-invariance of $h^{4,1}_{21}$ and $h^{4,2}_{21}$, we get 
\begin{align}\label{eq:G:DE}
\begin{split}
\Delta(\ref{eq:G:eS3})
&=\Bigl(\bigl(\wh{1}+\wh{g^{1234}_{0000}}-h^{4,1}_{21}(\wh{g^4_0}+\wh{g^{234}_{000}})
                    +h^{4,2}_{21}\wh{g^{34}_{00}}\bigr) \totimes 1\Bigr) \\
& \quad \cdot
  \Delta(e_1(z))\Delta(e_2(z_4))\Delta(e_2(z_3))\Delta(e_2(z_2))\Delta(e_2(z_1)) 
 =0.
\end{split}
\end{align}
Applying the same discussion for \ref{eq:G:fS3}, we obtain
\begin{align}\label{eq:G:DF}
\begin{split}
\Delta(\ref{eq:G:fS3})
&=\Bigl( 1 \totimes \bigl(\wh{1}+\wh{g^{1234}_{0000}}-h^{4,1}_{21}(\wh{g^4_0}+\wh{g^{234}_{000}})
                    +h^{4,2}_{21}\wh{g^{34}_{00}}\bigr)\Bigr) \\
& \quad \cdot
  \Delta(f_2(z_1))\Delta(f_2(z_2))\Delta(f_2(z_3))\Delta(f_2(z_4))\Delta(f_1(z)) 
 =0.
\end{split}
\end{align}
\end{proof}

\begin{rmk}\label{rmk:G:h}
Note that the vanishing conditions \eqref{eq:G:DF} and \eqref{eq:G:DF} only yield a linear dependence of the functions $h^{4,1}_{21}$ and $h^{4,2}_{21}$. Thus, unlike the other root systems (\cref{rmk:ADE:h,rmk:BCF:h}), the compatibility with the comultiplication does not determine the functions $h$ in the Serre-type relations (see the three principles in \cref{ss:ns:h}).
\end{rmk}

%%%%%%%%%%%%%%%%%%%%%%%%%%%%%%%%%%%%%%%%%%%%%%%%%%%%%%%%%%%%%%%%%%%%%%%%%%%%%%%%%%%%%%%%%%
\subsection{Hopf algebra limit} \label{ss:ns:DI}

Here we discuss the limit $p \to 0$. Unlike the simply-laced case (\cref{prp:ADE:p=0}), there seems to be no definition of the Ding-Iohara quantum algebra of a non-simply-laced type in literature.

\begin{dfn}\label{dfn:ns:DI}
Let $X_l=B_l,C_l,F_4$ or $G_2$. Using the Cartan matrix $A=(a_{ij})_{i,j \in I}$ of type $X_l$, the structure functions $g$ as in \cref{ss:pre:DI} and a complex number $q$ with $\abs{q} \ne 0,1$, we define 
\[
 U = U_q(g,A_l), 
\]
to be the topological algebra over $\bbC$ generated by the elements 
\[
 q^{\pm c/2}, \ e_{i,n}, \ f_{i,n}, \ \psi^+_{i,n}, \ \psi^-_{i,n} \quad (i \in I, \, n \in \bbZ)
\]
with the relations \eqref{eq:DI:qc}--\eqref{eq:DI:eeff}, the Serre-type relations \eqref{eq:DI:Serre} for $i,j \in I$ such that $a_{ij}=-1$, and the following higher Serre-type relations. We use the generating currents in \eqref{eq:DI:gc}.
\begin{itemize}
\item 
In the case $X_l=B_l,C_l$ or $F_4$, for $i,j \in I$ such that $a_{ij}=-2$, we set 
\begin{align*}
 \ol{\Phi}_2(z;z_1,z_2,z_3) \ceq \ 
&          e_i(z_3)e_i(z_2)e_i(z_1)e_j(z)
 -h^3_{ij} e_i(z_3)e_i(z_2)e_j(z)e_i(z_1) \\
&+h^3_{ij} e_i(z_3)e_j(z)e_i(z_2)e_i(z_1)
         - e_j(z)e_i(z_3)e_i(z_2)e_i(z_1)
\end{align*}
with $h^3_{ij} \ceq \bigl(\wh{g^{123}_{000}}-\wh{1}\bigr)/\bigl(\wh{g^{23}_{00}}-\wh{g^3_0}\bigr)$ in \eqref{eq:ns:h3}. Then the Serre-type relation for $e_i(z),e_j(z)$ is 
\begin{align}%\label{eq:p=0:BCF-eS}
 \sum_{\sigma \in \frS_3} (\sigma \ol{\Phi}_2)(z;z_1,z_2,z_3)=0,
\end{align}
where $\frS_3$ acts on a function $\phi(z_1,z_2,z_3)$ by permuting the indices of $z_k$.
Similarly, the Serre-type relation for $f_i(z),f_j(z)$ is 
\begin{align}%\label{eq:p=0:BCF-fS}
 \sum_{\sigma \in \frS_3} (\sigma \ol{\Psi}_2)(z;z_1,z_2,z_3)=0,
\end{align}
where $\ol{\Psi}_2$ is defined from $\ol{\Phi}_2$ by replacing $e_i,e_j$ by $f_i,f_j$, and the $\frS_3$-action is the same as $\ol{\Phi}_2$.

\item 
In the case $X_l=G_2$, for $(i,j)=(2,1)$, we set 
\begin{align*}
 \ol{\Phi}_3(z;z_1,z_2,z_3,z_4) \ceq \ 
&              e_2(z_4)e_2(z_3)e_2(z_2)e_2(z_1)e_1(z) 
 -h^{4,1}_{21} e_2(z_4)e_2(z_3)e_2(z_2)e_1(z)e_2(z_1) \\
&+h^{4,2}_{21} e_2(z_4)e_2(z_3)e_1(z)e_2(z_1)e_1(z)   
 -h^{4,1}_{21} e_2(z_4)e_1(z)e_2(z_3)e_2(z_2)e_2(z_1) \\
&+             e_1(z)e_2(z_4)e_2(z_3)e_2(z_2)e_2(z_1),
\end{align*}
with $h^{4,1}_{ij}$ and $h^{4,2}_{ij}$ given in \eqref{eq:G:h4}. 
Then the Serre-type relation for $e_i(z),e_j(z)$ is 
\begin{align}%\label{eq:p=0:G-eS}
 \sum_{\sigma \in \frS_4} (\sigma \ol{\Phi}_3)(z;z_1,z_2,z_3,z_4)=0,
\end{align}
where $\frS_4$ acts on a function $\phi(z_1,z_2,z_3,z_4)$ by permuting the indices of $z_k$.
Similarly, the Serre-type relation for $f_i(z),f_j(z)$ is 
\begin{align}%\label{eq:p=0:G-fS}
 \sum_{\sigma \in \frS_4} (\sigma \ol{\Psi}_3)(z;z_1,z_2,z_3,z_4)=0,
\end{align}
where $\ol{\Psi}_3$ is defined from $\ol{\Phi}_3$ by replacing $e_i,e_j$ by $f_i,f_j$, and the $\frS_4$-action is the same as $\ol{\Phi}_2$.
\end{itemize}
\end{dfn}

Note that $\ol{\Phi}_{2}$ is equal to $\lim_{p \to 0} \Phi_2$, where $\Phi_2$ is given by \eqref{eq:BCF:Phi2}. We have similar relations for $\ol{\Phi}_3, \ol{\Psi}_2$ and $\ol{\Psi}_3$. Then \cref{thm:BCF:Halgd,thm:G:Halgd} implies the following analogue of \cref{fct:DI:U} and \cref{rmk:DI:ADE} for the non-simply-laced case.

\begin{prp}\label{prp:ns:DI}
For $X_l=B_l,C_l,F_4$ or $G_2$, the topological algebra $U=U_q(g,X_l)$ in \cref{dfn:ns:DI} has a topological Hopf algebra structure given by the formulas \eqref{eq:DI:Delta0}--\eqref{eq:DI:S}. 
The resulting topological Hopf algebra is called \emph{the Ding-Iohara quantum algebra of the non-simply-laced type $X_l$}.
\end{prp}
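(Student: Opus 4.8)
The plan is to obtain $U_q(g,X_l)$ as the $p \to 0$ degeneration of the dynamical Ding-Iohara algebroid $U_{q,p}(g,X_l)$, exactly as in the simply-laced case \cref{prp:ADE:p=0}. Concretely, I would form the topological $H$-Hopf sub-algebroid $U''_{q,p}(g,X_l) \subset U_{q,p}(g,X_l)$ generated by $q^{\pm c/2}, e_{i,m}, f_{i,m}, \psi^\pm_{i,m}$ (omitting $\clM(\wtH^*)$, $d$ and $K^\pm_i$), which carries the structure maps $(\Delta,\ve,S)$ of \cref{thm:BCF:Halgd} and \cref{thm:G:Halgd}, and then pass to the limit $p \to 0$. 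As observed in the proof of \cref{prp:ADE:p=0}, in this limit the modified tensor product $\totimes$ collapses to the ordinary tensor product $\otimes$ and the $H$-Hopf algebroid degenerates to an ordinary topological Hopf algebra over $\bbC$; it therefore suffices to check that this degeneration reproduces both the defining relations of \cref{dfn:ns:DI} and the Hopf formulas \eqref{eq:DI:Delta0}--\eqref{eq:DI:S}.

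For the algebra structure, I would compare relations termwise. The relations \eqref{eq:U:pp}--\eqref{eq:U:ef} reduce to the Ding-Iohara relations \eqref{eq:DI:qc}--\eqref{eq:DI:ef} with every $g_{ij}$ and $g^*_{ij}$ replaced by its limit $\ol{g}_{ij}$, since $g_{ij}(z;p), g_{ij}(z;p^*) \to \ol{g}_{ij}(z)$ as $p \to 0$. The essential new point is the higher Serre-type relations. Because $\wt{g}^*_{ij}(z) = \ol{g}_{ij}(z)/g^*_{ij}(z) \to 1$ and $\wt{g}_{ij}(z) \to 1$, the building blocks $\Phi_2, \Phi_3$ of \eqref{eq:BCF:Phi2} and \cref{dfn:U:G} converge to $\ol{\Phi}_2, \ol{\Phi}_3$ (and similarly $\Psi_2, \Psi_3 \to \ol{\Psi}_2, \ol{\Psi}_3$), while the twisted symmetric-group actions \eqref{eq:BCF:eS-S3} and \eqref{eq:G:S4a} lose their $\wt{g}^*_{ii}$ (resp.\ $\wt{g}_{ii}$) prefactors and reduce to the plain permutation of the indices of the $z_k$. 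Hence \eqref{eq:BCF:eS2}, \eqref{eq:BCF:fS2}, \eqref{eq:G:eS3} and \eqref{eq:G:fS3} become exactly the higher Serre-type relations of \cref{dfn:ns:DI}, which identifies $\lim_{p\to0} U''_{q,p}(g,X_l)$ with $U_q(g,X_l)$ as topological algebras.

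For the coalgebra structure, the formulas \eqref{eq:U:Delta0}--\eqref{eq:U:Delta1}, \eqref{eq:U:ve0}, \eqref{eq:U:ve1} and \eqref{eq:U:S}--\eqref{eq:U:S1} specialize, under $\totimes \to \otimes$, to the Ding-Iohara formulas \eqref{eq:DI:Delta0}--\eqref{eq:DI:S}. The well-definedness of $\Delta$ against the higher Serre-type relations is inherited as the $p \to 0$ limit of the identities \eqref{eq:BCF:DE-hij3}, \eqref{eq:BCF:DF-hij3} for types $B,C,F$ and \eqref{eq:G:DE}, \eqref{eq:G:DF} for type $G_2$: the scalar prefactor $(\wh{g^{123}_{000}}-\wh{1}) - h^3_{ij}(\wh{g^{23}_{00}}-\wh{g^3_0})$ vanishes by the defining identity \eqref{eq:ns:h3} of $h^3_{ij}$, and the $G_2$ prefactor vanishes by \eqref{eq:G:h4}; crucially these identities already involve only the limit functions $\ol{g}$, so nothing is lost in the limit. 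Compatibility of $\ve$ and $S$ with all relations, together with the Hopf algebra axioms, then follow by specializing the corresponding topological $H$-Hopf algebroid statements of \cref{thm:BCF:Halgd} and \cref{thm:G:Halgd} to $p = 0$.

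The delicate point, as in \cref{prp:ADE:p=0}, is the collapse of the Hopf-algebroid counit to the scalar counit of \cref{fct:DI:U}: one must verify that $\ve(\psi^\pm_i(z)) = T_{Q_i} \in D_H\dbr{p}$ degenerates to $\ve(\psi^\pm_i(z)) = 1$ upon discarding $\clM(\wtH^*), d, K^\pm_i$ and evaluating on the constant function, consistently with $D_H$ degenerating to $\bbC$. Apart from this bookkeeping of the moment maps and the counit target, the argument is a routine specialization: the genuine computational content — the compatibility of $\Delta$ with the quartic and quintic Serre-type relations — has already been carried out in \cref{thm:BCF:Halgd} and \cref{thm:G:Halgd}, and the only thing one really checks afresh is that the twisted $\frS_3$- and $\frS_4$-actions converge to plain permutation actions, which is immediate from $\wt{g}^*_{ii}, \wt{g}_{ii} \to 1$.
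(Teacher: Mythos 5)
Your proposal is correct and follows essentially the paper's own route: the paper derives this proposition from \cref{thm:BCF:Halgd} and \cref{thm:G:Halgd} by noting that $\ol{\Phi}_2 = \lim_{p \to 0}\Phi_2$ (and similarly for $\ol{\Phi}_3,\ol{\Psi}_2,\ol{\Psi}_3$), so that the compatibility of $(\Delta,\ve,S)$ with the higher Serre-type relations --- whose key vanishing prefactors \eqref{eq:ns:h3} and \eqref{eq:G:h4} involve only the limit functions $\ol{g}$ --- specializes at $p=0$ to exactly the checks needed for the Hopf algebra structure \eqref{eq:DI:Delta0}--\eqref{eq:DI:S}. The paper compresses this into one sentence before the proposition and relegates the $p \to 0$ bookkeeping (collapse of $\totimes$ to $\otimes$, of the twisted $\frS_3$- and $\frS_4$-actions to plain permutations, and of the counit target) to \cref{prp:ns:p=0}, proved ``as in \cref{prp:ADE:p=0}'', so your more detailed write-up matches its proof in substance.
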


Now we have an analogue of \cref{prp:ADE:p=0} for the non-simply-laced case.
The proof is straightforward as in \cref{prp:ADE:p=0}, and we omit it.

\begin{prp}\label{prp:ns:p=0}
Let $X_l=B_l,C_l$ or $F_4$, and denote by $U''_{q,p}(g,X_l) \subset U_{q,p}(g,X_l)$ the topological $H$-Hopf sub-algebroid of the dynamical Ding-Iohara algebroid generated by 
\[
 q^{\pm c/2}, \ e_{i,m}, \ f_{i,m}, \ \psi^{\pm}_{i,m} \quad (i \in I, \, m  \in \bbZ)
\]
i.e., generators in \eqref{eq:ADE:gen} except $\clM(\wtH^*)$, $d$, and $K_i^{\pm}$.
Then, taking the limit $p \to 0$ of $U''_{q,p}(g,X_l)$, we have a topological Hopf algebra isomorphism 
\[
 \lim_{p \to 0} U''_{q,p}(g,X_l) \cong U_q(\ol{g},X_l), 
\]
where $\ol{g} \ceq \{\ol{g}_{ij}(z) \mid i,j \in I\}$ denotes the set of the limit structure functions $\ol{g}_{ij}(z)$ in \eqref{eq:U:olg}, and $U_q(\ol{g},X_l)$ denotes the Ding-Iohara quantum algebra of the non-simply-laced type $X_l$ with structure functions $\ol{g}$.
\end{prp}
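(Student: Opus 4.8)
The plan is to mirror \cref{prp:ADE:p=0} term by term, the only new input being the higher Serre-type relations of \cref{dfn:U:BCF}. First I would record the degenerations that occur as $p\to0$. Since $p^*=pq^{-2c}$ likewise tends to $0$, both $g_{ij}(z;p)$ and $g^*_{ij}(z)=g_{ij}(z;p^*)$ converge to the limit $\ol{g}_{ij}(z)$ of \eqref{eq:U:olg}, and $G^{\pm}_{ij},G^{\pm*}_{ij}\to\ol{G}^{\pm}_{ij}$; hence the prefactors $\wt{g}_{ij}(z)$ and $\wt{g}^*_{ij}(z)$ of \cref{dfn:U:ADE} both tend to $1$. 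On the coalgebra side, once $\clM(\wtH^*)$, $d$ and $K^{\pm}_i$ are discarded, the moment maps of $U''_{q,p}(g,X_l)$ survive only on constants $\bbC\dbr{p}$ and there act as the unit, so the modified tensor product $\totimes$ collapses to $\otimes_{\bbC\dbr{p}}$ exactly as in the case $\frh=0$ recalled in \cref{ss:pre:Halgd}; taking $p\to0$ this becomes $\otimes_{\bbC}$, and the $H$-Hopf algebroid $U''$ degenerates to an ordinary topological Hopf algebra.

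The algebra isomorphism is then a direct comparison of generators and relations. The generating sets of $U''_{q,p}(g,X_l)$ and of $U_q(\ol{g},X_l)$ (\cref{dfn:ns:DI}) coincide, and the relations \eqref{eq:U:qc}--\eqref{eq:U:ef} pass to \eqref{eq:DI:qc}--\eqref{eq:DI:eeff} after replacing each $g^{(*)}_{ij}$, $G^{\pm(*)}_{ij}$ by its limit, precisely as in \cref{prp:ADE:p=0}. For the cubic Serre-type relations with $a_{ij}=-1$ the coefficient $h_{ij}$ of \eqref{eq:U:hij} is built from $\ol{g}$ alone and is $p$-free, while the prefactors tend to $1$, so \eqref{eq:U:eS} and \eqref{eq:U:fS} become \eqref{eq:DI:Serre}. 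The one substantive step is the quartic relations for the pair with $a_{ij}=-2$: here I would invoke the identities $\ol{\Phi}_2=\lim_{p\to0}\Phi_2$ and $\ol{\Psi}_2=\lim_{p\to0}\Psi_2$ noted before the statement (valid because $h^3_{ij}$ of \eqref{eq:ns:h3} depends only on $\ol{g}$ and the factors $\wt{g}^*_{ij}(z_{m0})$ in \eqref{eq:BCF:Phi2} tend to $1$), and then observe that the twisted $\frS_3$-action \eqref{eq:BCF:eS-S3} degenerates to the plain permutation action of \cref{dfn:ns:DI}, since its twisting factors $\wt{g}^{*}_{ii}(z_{mn})$ (resp.\ $\wt{g}_{ii}(z_{mn})$) all tend to $1$. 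Thus the compact form $\sum_{\sigma\in\frS_3}(\sigma\Phi_2)=0$ of \eqref{eq:BCF:eS2} passes to $\sum_{\sigma\in\frS_3}(\sigma\ol{\Phi}_2)=0$, the defining relation of $U_q(\ol{g},X_l)$, and likewise for $f$; the $G_2$ case is identical with $\frS_4,\Phi_3,\Psi_3$ and the $p$-free $h^{4,1}_{ij},h^{4,2}_{ij}$ of \eqref{eq:G:h4}.

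Finally, for the coalgebra structure I would compare the formulas of \cref{lem:U:Delta}, \cref{lem:ADE:ve} and \cref{lem:U:S} (which by \cref{thm:BCF:Halgd} define $(\Delta,\ve,S)$ on $U_{q,p}(g,X_l)$) with \eqref{eq:DI:Delta0}--\eqref{eq:DI:S} of \cref{fct:DI:U}: under $\totimes\rightsquigarrow\otimes$ and $p=0$ these match verbatim, the only bookkeeping being the $q^{c_1/2},q^{c_2/2}$ factors, which agree by construction. Since no $p$-dependent coefficient enters this comparison, there is no genuine obstacle; the real care is entirely in the previous paragraph, namely confirming that every occurrence of $p$ in the Serre-type relations sits in the prefactors $\wt{g}^{(*)}$ and in the twisting of the symmetric-group action — all of which trivialize — while the coefficients $h^3_{ij}, h^{4,1}_{ij}, h^{4,2}_{ij}$ are $p$-independent. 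Assembling the three comparisons shows that every defining datum of $U''_{q,p}(g,X_l)$ specializes at $p=0$ to that of $U_q(\ol{g},X_l)$, giving the claimed isomorphism of topological Hopf algebras.
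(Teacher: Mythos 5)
Your proposal is correct and takes essentially the same approach as the paper: the paper omits this proof as ``straightforward as in \cref{prp:ADE:p=0}'' (having just noted $\ol{\Phi}_2=\lim_{p\to0}\Phi_2$ and its analogues), and your argument is precisely that comparison spelled out — prefactors $\wt{g}_{ij},\wt{g}^*_{ij}\to1$, the $p$-independence of $h^3_{ij}$ (and $h^{4,1}_{ij},h^{4,2}_{ij}$), the degeneration of the twisted symmetric-group action to the plain one, and the collapse of $\totimes$ to $\otimes$ for the Hopf structure.
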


Thus we reached the goal of this note for all finite root systems.

%%%%%%%%%%%%%%%%%%%%%%%%%%%%%%%%%%%%%%%%%%%%%%%%%%%%%%%%%%%%%%%%%%%%%%%%%%%%%%%%%%%%%%%%%
%%%%%%%%%%%%%%%%%%%%%%%%%%%%%%%%%%%%%%%%%%%%%%%%%%%%%%%%%%%%%%%%%%%%%%%%%%%%%%%%%%%%%%%%%
\section{Concluding remarks}\label{s:cnc}

Let us give some possible directions of future works as concluding remarks.
\begin{itemize}
\item
As mentioned in the introduction \cref{s:0} and \cref{rmk:ADE:h,rmk:BCF:h,rmk:G:h}, unlike the other root systems, the Serre relations of type $G_2$ are not determined uniquely in this note. Recall that to generalize the quantum affine algebras $U_q(\wh{\frg})$ (resp.\ the elliptic quantum algebras $U_{q,p}(\wh{\frg})$) to the  Ding-Iohara quantum algebras $U_q(g,X_l)$ (resp.\ the dynamical Ding-Iohara algebroids $U_{q,p}(g,X_l)$), certain $q$-binomials in the original Serre relations are replaced by the functions $h_{ij}$. We impose some principles to the functions (see \cref{ss:ns:h}), and then, except for the type $G_2$, we can determine the functions $h_{ij}$ and $h^3_{ij}$ uniquely (\cref{rmk:ADE:h,rmk:BCF:h}). However, as for the  type $G_2$, the functions $h^{4,1}_{ij}$ and $h^{4,2}_{ij}$ can be determined only up to the ideal $J$ in \eqref{rmk:G:h}. We may resolve this ambiguity by building some additional principles on the functions $h$. 

\item
This note only considers elliptic quantum groups attached to finite root systems, or to finite-dimensional simple Lie algebras. However, there are important elliptic quantum groups attached to the Lie algebra $\fgl_n$. For example, the paper \cite{Ko18} studies the elliptic algebra $U_{q,p}(\wh{\fgl}_n)$ whose algebra structure is akin to $U_{q,p}(\wh{\frg})$ in \cref{ss:pre:E}, and reveals its relation to Felder's elliptic quantum group. Another example is the elliptic quantum toroidal algebra $U_{q,t,p}(\fgl_{1,\tor})$ introduced in a recent paper \cite{KO}, which can be regarded as a dynamical analogue of the quantum toroidal algebra $U_{q,t}(\fgl_{1,\tor})$ mentioned in \cref{ss:pre:DI}, \eqref{eq:DI:qtor}. It would be nice to extend our dynamical Ding-Iohara algebroid $U_{q,p}(g,X_l)$ to the $\fgl_n$ case, and construct a class of Hopf algebroids $U_{q,p}(g,\fgl_n)$ including the algebras $U_{q,p}(\wh{\fgl}_n)$ and $U_{q,t,p}(\fgl_{1,\tor})$.

\item
To the best of our knowledge, it is still unclear whether there is a sort of Drinfeld double construction for the Ding-Iohara quantum algebra for general structure functions $g$, which was already mentioned in the final paragraph of \cite{DI}. We have a similar problem for our dynamical Ding-Iohara algebroids. The point is the construction of well-behaved Hopf pairing and its dynamical analogue. We guess that this problem relates to the elliptic weight functions \cite[Chap.\ 6]{Ko}.

\item
Finally, let us recall that there is a Hall-algebraic construction of quantum groups. It is known that the quantum affine algebras and the quantum toroidal algebras (both with Drinfeld comultiplication) can be constructed as Drinfeld double of Ringel-Hall algebras. Is there a dynamical analogue of this construction? If such a construction exists, does it relate to our dynamical Ding-Iohara algebroids?
%$\SL(2,\bbZ)$-automorphism of Ding-Iohara-Miki algebra
\end{itemize}

\begin{Ack}
The authors would like to express gratitude to Professor Hitoshi Konno for valuable comments.
S.Y.\ is supported by JSPS KAKENHI Grant Number 19K03399.
\end{Ack}

%%%%%%%%%%%%%%%%%%%%%%%%%%%%%%%%%%%%%%%%%%%%%%%%%%%%%%%%%%%%%%%%%%%%%%%%%%%%%%%%%%%%%%%%%%
%%%%%%%%%%%%%%%%%%%%%%%%%%%%%%%%%%%%%%%%%%%%%%%%%%%%%%%%%%%%%%%%%%%%%%%%%%%%%%%%%%%%%%%%%%

\end{document}